\begin{document}

%\begin{titlepage}
\title[Graph and hypergraph colouring via nibble methods: A survey]{Graph and hypergraph colouring via nibble methods:\\
A survey}

\date{November 16, 2021}

\email{\{D.Y.Kang.1, T.J.Kelly, D.Kuhn, D.Osthus\}@bham.ac.uk, abhishekmethuku@gmail.com}
\address{School of Mathematics, University of Birmingham,
Edgbaston, Birmingham, B15 2TT, United Kingdom}

\author[Kang]{Dong Yeap Kang}

\author[Kelly]{Tom Kelly}

\author[K\"uhn]{Daniela K\"uhn}

\author[Methuku]{Abhishek Methuku}

\author[Osthus]{Deryk Osthus}

\thanks{This project has received partial funding from the European Research
Council (ERC) under the European Union's Horizon 2020 research and innovation programme (grant agreement no. 786198, D.~K\"uhn and D.~Osthus).
The research leading to these results was also partially supported by the EPSRC, grant nos. EP/N019504/1 (D.~Kang, T. Kelly and D.~K\"uhn) and EP/S00100X/1 (A.~Methuku and D.~Osthus).}

\begin{abstract}
    This paper provides a survey of methods, results, and open problems on graph and hypergraph colourings, with a particular emphasis on semi-random `nibble' methods.  We also give a detailed sketch of some aspects of the recent proof of the Erd\H os-Faber-Lov\'asz conjecture.
    
    %\normalsize
    %\tableofcontents
\end{abstract}

%\end{titlepage}

%\pagenumbering{gobble}
\maketitle
%\pagenumbering{arabic}

\section{Introduction}

The theory of graph and hypergraph colouring is fundamental to combinatorics, with %.  It is a flourishing research area which has seen 
numerous applications to other areas of combinatorics and beyond.  It has also given rise to the introduction and development of techniques that have had a major impact far beyond the settings for which they were initially developed.   In this paper, we survey results, open problems, and methods in the area, with a focus on one such technique called the `R\"odl nibble' or the `semi-random method'. %, which has become one of the most important tools in the probabilistic method.  
We also provide a detailed outline of some ideas involved in the authors' recent proof of the Erd\H os-Faber-Lov\'asz conjecture~\cite{KKKMO2021}, %---one of the oldest and most famous hypergraph colouring problems---
with the R\"odl nibble playing an important role.

\subsection{Background}
A \textit{hypergraph} $\cH$ is a pair $\cH = (V, E)$, where $V$ is a set of elements called \textit{vertices} and $E \subseteq 2^{V}$ is a set of subsets of $V$ called \textit{edges}.  For convenience, we often identify a hypergraph $\cH$ with its edge set and use $V(\cH)$ to denote its vertex set.  A \textit{proper edge-colouring} of a hypergraph $\cH$ is an assignment of colours to the edges of $\cH$ such that no two edges of the same colour share a vertex, and a \textit{proper vertex-colouring} (often simply called a proper colouring) of $\cH$ is an assignment of colours to the vertices of $\cH$ such that no edge contains vertices all of the same colour.  The \textit{chromatic index} of $\cH$, denoted $\chi'(\cH)$, is the minimum number of colours used by a proper edge-colouring of $\cH$, and the \textit{chromatic number}, denoted $\chi(\cH)$, is the minimum number of colours used by a proper vertex-colouring of $\cH$.  
A hypergraph $\cH$ is \textit{$k$-uniform} if every edge $e \in \cH$ satisfies $|e| = k$, and a \textit{graph} is a 2-uniform hypergraph.  A \textit{matching} $M\subseteq \cH$ in a hypergraph $\cH$ is a set of disjoint edges, and an independent set $I \subseteq V(\cH)$ in $\cH$ is a set of vertices that contains no edge of $\cH$ (as a subset).  The maximum size of an independent set in $\cH$, denoted $\alpha(\cH)$, is called the \textit{independence number} of $\cH$.  

%The study of graphs is a mature field in its own right.  Indeed, 
%Many problems as well as methods in the study of hypergraphs have origins in graph theory.  In particular, 
Bounding the chromatic index of a graph or hypergraph is closely related to the problem of finding large matchings (note that the colour classes of a proper edge-colouring form a matching).
Matching theory is a classical subject in the study of graphs and is well developed, dating back to the work of K\"onig~\cite{K31}, Egerv\'ary~\cite{E31}, and Hall~\cite{H35}in the 1930s.  Tutte's theorem~\cite{T47} provides a characterization of graphs that contain a perfect matching, and Edmonds'~\cite{E65} `Blossom algorithm' finds a maximum matching in a graph in polynomial time.  In contrast, there is no polynomial-time algorithm known to compute the independence number or chromatic number of a graph, or the size of a largest matching in a $k$-uniform hypergraph for $k \geq 3$.  Indeed, these problems were all among Karp's~\cite{K72} original twenty-one NP-complete problems.  It is also NP-complete to compute the chromatic index $\chi'(G)$ of a graph $G$~\cite{H81}.  However, every graph $G$ trivially satisfies $\chi'(G) \geq \Delta(G)$, where $\Delta(G) \coloneqq \max_{v\in V(G)} d_G(v)$ and $d_G(v) \coloneqq |\{e \in E(G) : e \ni v\}|$, and Vizing's theorem~\cite{vizing1965} implies that $\chi'(G) \leq \Delta(G) + 1$ ($\Delta(G)$ is called the \textit{maximum degree of $G$}, $d_G(v)$ is the \textit{degree of $v$ in $G$}, and these definitions, as well as the lower bound, extend naturally to hypergraphs).  More generally, it is natural to seek similar bounds for hypergraphs.

Consequently, there is a rich literature and active research on proving bounds on the chromatic index and chromatic number of hypergraphs.  As we will describe in this survey, the `R\"odl nibble' method has 
played a major role in the growth of this field.
Though this survey is mainly concerned with hypergraphs (rather than graphs), several results and colouring problems for hypergraphs arise naturally from the graph case, so we also provide the relevant context on the latter.  Similarly, we provide background on the study of matchings and independent sets in graphs and hypergraphs. 
%spurred the growth of this field tremendously and has underpinned many of its seminal results.  
Some of the earlier developments in the area are described in the surveys of F\"uredi~\cite{furedi1988} and Kahn~\cite{kahn1995asymptotics, kahn1997}.  Some aspects are also covered in the book of Molloy and Reed~\cite{MR02} on graph colouring with the probabilistic method.
%, all of which are heavily influenced by the nibble method.  
For some recent surveys on perfect matchings in hypergraphs, see~\cite{K19, kuhn_osthus_2009, RR10, Z16}. % Another detailed but less recent survey on matchings and coverings in hypergraphs was compiled by .)

\subsection{The R\"odl nibble}\label{intro:nibble}

In its basic form, the R\" odl nibble is a probabilistic approach for constructing a combinatorial substructure, such as a matching or independent set, within some host structure (such as a hypergraph) which exhibits some weak quasirandom properties.  The substructure is built bit by bit by iterating a step called a `nibble', in which elements of the host structure are selected randomly.  
%The context of matchings is particularly illustrative.  In this case, in each `nibble', edges are selected randomly, usually independently and with a small, uniform, probability, and any pair of edges that share a vertex are removed from the selection.  The result is a matching, and the next `nibble' is applied after removing any vertex contained in an edge of the matching, so that edges selected in different nibbles are disjoint.  The key point of is that, at each stage, with high probability the `leftover' part of the host structure is still quasirandom, allowing the nibble to ultimately produce a matching that is nearly perfect.  
This approach enabled R\"odl~\cite{rodl1985} to prove the conjecture of Erd\H os and Hanani~\cite{ErdosHanani} on the existence of approximate combinatorial designs (see Theorem~\ref{thm:erdos-hanani}) in 1985.  

Preceding R\"odl's~\cite{rodl1985} work, Ajtai, Koml{\'{o}}s, and Szemer\'edi~\cite{ajtai1981dense} showed in 1981 that a similar approach produces large independent sets in graphs with bounded average degree and no \textit{triangles} (i.e.~three pairwise adjacent vertices).  
%In one of the first applications of the probabilistic method, Erd\H os~\cite{E59} famously showed that there exist graphs of arbitrarily large girth (and in particular with no triangles) and chromatic number, so the condition of bounded average degree is necessary, and Tur\' an's theorem~\cite{T41} implies that graphs with $n$ vertices and average degree at most $d$ contain an independent set of size at least $n / (d + 1)$.  To be more precise, the Ajtai-Koml{\'{o}}s-Szemer\'edi theorem~\cite{ajtai1981dense} improves this bound by a factor of $\Omega(\log d)$, implying that the independence number of triangle-free graphs is close to that of binomial random graphs of similar density.  
Ultimately, the work of R\"odl~\cite{rodl1985} and of Ajtai, Koml{\'{o}}s, and Szemer\'edi~\cite{ajtai1981dense} led to numerous important developments in the theory of hypergraph colouring.  Frankl and R\"odl~\cite{FranklRodl} and Pippenger (unpublished) showed that R\"odl's `nibble' method produces nearly perfect matchings in hypergraphs under much more general conditions than those considered in~\cite{rodl1985}.  In particular, every regular uniform hypergraph with comparatively small codegree has a nearly perfect matching; Pippenger and Spencer~\cite{PippengerSpencer}, coining the term `nibble', generalized this further in 1989 by showing that $D$-regular hypergraphs have chromatic index tending to $D$ as $D\rightarrow\infty$, as long as the codegree is $o(D)$.  (Here a hypergraph $\cH$ is \textit{$D$-regular} if all of its vertices have degree $D$ and \textit{regular} if it is $D$-regular for some $D$, and the \textit{codegree} of $\cH$ is the maximum of the codegrees of all the pairs of distinct vertices of $\cH$, where the codegree of distinct vertices $u, v\in V(\cH)$ is $|\{e \in \cH : e \supseteq \{u, v\}\}|$.)  The Pippenger-Spencer theorem was further generalized to list edge-colourings by Kahn~\cite{kahn1996asymptotically} in 1996.  Meanwhile, the Ajtai-Koml{\'{o}}s-Szemer\'edi theorem~\cite{ajtai1981dense} was generalized in 1982 by Ajtai, Koml{\'o}s, Pintz, Spencer, and Szemer{\'e}di~\cite{AKPSS82}, who showed that the bound also holds for uniform hypergraphs, and also by Johansson~\cite{johansson1996} in 1996, who proved a bound on the chromatic number of triangle-free graphs of bounded maximum degree.  In 2013, Frieze and Mubayi~\cite{FM13} showed that both of these results have a common generalization in the setting of vertex-colouring hypergraphs.

These two threads of research, of edge-colouring and of vertex-colouring with the `nibble' method, have developed somewhat in parallel, sometimes intertwining.  They also both converge in the authors'~\cite{KKKMO2021} recent resolution of the Erd\H os-Faber-Lov\'asz conjecture.
% The Erd\H os-Faber-Lov\'asz conjecture can be expressed in several different ways.  One such way concerns the chromatic index of \textit{linear} hypergraphs (hypergraphs of codegree one), and another concerns the chromatic number of the union of \textit{complete} graphs (graphs in which every pair of its vertices is an edge).  These two formulations can be shown to be equivalent via the notion of duality in hypergraphs.
Indeed, in~\cite{KKKMO2021}, we apply generalizations of the Pippenger-Spencer theorem~\cite{PippengerSpencer} as well as results inspired by Johansson's theorem~\cite{johansson1996} on vertex-colouring `locally  sparse' graphs.

\subsection{Organization of the paper}

This paper is organized as follows.  In Section~\ref{section:edge-colouring}, we survey results on hypergraph matchings and edge-colouring hypergraphs, and in Section~\ref{section:vertex-colouring}, we survey results on independent sets and vertex-colourings of graphs and hypergraphs.  In Section~\ref{section:EFL}, we present the history of the Erd\H os-Faber-Lov\'asz conjecture, and in Section~\ref{section:EFL-proof} we describe ideas involved in its recent proof~\cite{KKKMO2021}.

\subsection{Basic definitions and notation}

We say a vertex $v \in V(\cH)$ is \textit{covered} by a matching $M$ if there is an edge $e\in M$ such that $e\ni v$, and we say a set $X \subseteq V(\cH)$ is \textit{covered} by $M$ if every vertex in $X$ is covered by $M$.  A matching $M$ in $\cH$ is \textit{perfect} if it covers $V(\cH)$.  The maximum size of a matching in $\cH$, denoted $\nu(\cH)$, is called the \textit{matching number} of $\cH$.  Note that in a proper edge-colouring, each colour is assigned to the edges of a matching, and in a proper vertex-colouring, each colour is assigned to the vertices of an independent set.  

We usually denote a graph by $G$, with vertex set $V(G)$ and edge set $E(G)$.
The \textit{line graph} of a hypergraph $\cH$, denoted $L(\cH)$, is the graph $G\coloneqq L(\cH)$ where $V(G)$ is the edge set of $\cH$, and $e, f \in V(G)$ are adjacent in $G$ if $e\cap f \neq \varnothing$.  For an edge $e\in \cH$, we write $N_\cH(e)$ for short instead of $N_{L(\cH)}(e)$ to denote the neighbourhood of $e$ in the line graph of $\cH$.  Note that the matchings in $\cH$ are in one-to-one correspondence with the independent sets of $L(\cH)$ and that $\chi'(\cH) = \chi(L(\cH))$.

The \textit{fractional chromatic number} of a hypergraph $\cH$, denoted $\chi_f(\cH)$, is the smallest $k \in \mathbb R$ for which there exists a probability distribution on the independent sets of $\cH$ satisfying $\Prob{v \in I} \geq 1 / k$ for every $v\in V(\cH)$ if $I$ is drawn according to the distribution, and the \textit{fractional chromatic index} of $\cH$ is defined as $\chi'_f(\cH) = \chi_f(L(\cH))$.  The \textit{list chromatic number} of a hypergraph $\cH$, denoted $\chi_\ell(\cH)$, is the minimum $k \in \mathbb N$ such that the following holds: if $C$ is an assignment of `lists of colours' $C(v) \subseteq \mathbb N$ for each $v\in V(\cH)$ satisfying $|C(v)| \geq k$ for all $v\in V(\cH)$, then $\cH$ has a proper vertex-colouring $\phi$ such that $\phi(v) \in C(v)$ for every $v\in V(\cH)$.  The \textit{list chromatic index} of $\cH$ is defined as $\chi'_\ell(\cH) = \chi_\ell(L(\cH))$.  It is well-known that every hypergraph $\cH$ satisfies $|V(\cH)| / \alpha(\cH) \leq \chi_f(\cH) \leq \chi(\cH) \leq \chi_\ell(\cH)$ and $|\cH| / \nu(\cH) \leq \chi'_f(\cH) \leq \chi'(\cH) \leq \chi'_\ell(\cH)$.

Some authors define a hypergraph to be a pair $\cH = (V, E)$ where $E$ is a multi-set of subsets of $V$; in this survey, we refer to such an object as a \textit{multi-hypergraph}, and if every $e \in \cH$ has size two, then $\cH$ is a \textit{multigraph}.  

For $n \in \mathbb{N}$, we write $[n] := \{k \in \mathbb{N} \: : \:1 \leq k \leq n \}$. We write $c = a \pm b$ if $a-b \leq c \leq a+b$. In Sections~\ref{section:EFL} and~\ref{section:EFL-proof}, we use the `$\ll$' notation in proofs. Whenever we write a hierarchy of constants, they have to be chosen from right to left. More precisely, if we claim that a result holds whenever $0 < a \ll b \le 1$, then this means that there exists a non-decreasing function $f : (0,1] \mapsto (0,1]$ such that the result holds for all $0 < a,b \le 1$ with $a \le f(b)$. We will not calculate these functions explicitly. Hierarchies with more constants are defined in a similar way.
We use `$\log$' to denote the natural logarithm, which is relevant in Section~\ref{section:vertex-colouring}.

Our graph theory notation is standard, but one may refer to~\cite[Section 3]{KKKMO2021} for a comprehensive list of the notation we use.

\section{Matchings and edge-colouring}\label{section:edge-colouring}

\subsection{Early results}\label{edge-colouring:early}

A \textit{partial Steiner system} with parameters $(t, k, n)$ is a $k$-uniform $n$-vertex hypergraph such that every set of $t$ vertices is contained in at most one edge, and a \textit{(full) Steiner system} with parameters $(t, k, n)$ is a $k$-uniform $n$-vertex hypergraph such that every set of $t$ vertices is contained in precisely one edge.  Note that a Steiner system with parameters $(t, k, n)$ has $\left.\binom{n}{t}\big/\binom{k}{t}\right.$ edges, %and likewise a partial Steiner system with parameters $(t, k, n)$ has at most $\left.\binom{n}{t}\middle/\binom{n}{k}\right.$ edges.  
which implies that $\binom{k}{t}{\big|}\binom{n}{t}$.
The so-called Existence conjecture for designs asserts that, apart from finitely many exceptions, this and a few other trivial divisibility conditions are sufficient to ensure the existence of a Steiner system with parameters $(t, k, n)$.  In 1963, Erd\H os and Hanani~\cite{ErdosHanani} asked for an approximate version of this conjecture, which was confirmed by R\"odl~\cite{rodl1985} in 1985, initiating the use of the celebrated `nibble' method, as follows.
\begin{theorem}[R\"odl~\cite{rodl1985}]\label{thm:erdos-hanani}
For every $k > t \geq 1$ and $\eps > 0$, there exists $n_0$ such that the following holds.  For every $n \geq n_0$, there exists a partial Steiner system with parameters $(t, k, n)$ and at least $(1 - \eps)\left.\binom{n}{t}\big/\binom{k}{t}\right.$ edges.
\end{theorem}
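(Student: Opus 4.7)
The plan is to recast the statement as a matching problem in a suitable auxiliary hypergraph and then apply R\"odl's semi-random nibble directly. Set $V(\cH) \coloneqq \binom{[n]}{t}$ and, for each $K \in \binom{[n]}{k}$, put an edge $e_K \coloneqq \{T \in \binom{[n]}{t} : T \subseteq K\}$ into $\cH$. Then $\cH$ is $\binom{k}{t}$-uniform with $\binom{n}{t}$ vertices and $\binom{n}{k}$ edges; each vertex lies in exactly $D \coloneqq \binom{n-t}{k-t}$ edges, so $\cH$ is $D$-regular, and any two distinct vertices lie in at most $\binom{n-t-1}{k-t-1} = O(n^{k-t-1}) = o(D)$ common edges. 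Matchings in $\cH$ correspond bijectively to partial Steiner systems with parameters $(t,k,n)$, so the theorem reduces to producing a matching covering all but an $\eps$-fraction of $V(\cH)$.

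Fix a small parameter $p = p(\eps) > 0$ and iterate the following \emph{nibble} step. At stage $i$, with current hypergraph $\cH_i$ ($\cH_0 \coloneqq \cH$) assumed to be approximately $D_i$-regular with codegree $o(D_i)$, include each edge of $\cH_i$ independently with probability $p/D_i$ to form a random set $R_i$, and let $M_i \subseteq R_i$ consist of the edges of $R_i$ meeting no other edge of $R_i$. Define $\cH_{i+1}$ to be the subhypergraph of $\cH_i$ on the uncovered vertices, keeping only those edges whose every vertex remains uncovered. A first-moment calculation (using codegree $o(D_i)$ to justify approximate independence) gives $\Exp{|V(\cH_{i+1})|} \approx e^{-p}|V(\cH_i)|$ and, for a surviving vertex $v$, $\Exp{d_{\cH_{i+1}}(v)} \approx e^{-p(\binom{k}{t}-1)} D_i \eqqcolon D_{i+1}$, and also keeps the pairwise codegrees $o(D_{i+1})$. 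Sharp concentration of each of these random variables about its expectation follows from a bounded-differences or edge-exposure martingale argument (Azuma-Hoeffding, or Talagrand's inequality).

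Iterating for $R \approx p^{-1}\log(1/\eps)$ rounds leaves at most $\eps|V(\cH)|$ uncovered vertices with positive probability, so $M \coloneqq \bigsqcup_i M_i$ is a matching of size at least $(1-\eps)\binom{n}{t}/\binom{k}{t}$; translating back, the corresponding family of $k$-sets is the desired partial Steiner system.

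The main obstacle is the inductive maintenance of the pseudorandom hypothesis across the $R$ iterations: one needs to show that with positive probability the multiplicative errors in the degree bound, the codegree bound, and the vertex count do not blow up as they accumulate. This forces a delicate trade-off between $p$ (small, to keep per-step errors small), $R \sim p^{-1}\log(1/\eps)$, and $n_0$ (very large, so that the initial codegree-to-degree ratio $O(1/n)$ dominates the accumulated slack). The codegree concentration is the most technical ingredient, since codegrees are sums of indicators with nontrivial dependencies; fortunately the design hypergraph is sufficiently structured that it reduces to a routine second-moment estimate. This blueprint is the prototype for the Frankl-R\"odl and Pippenger-Spencer extensions discussed above, which abstract away the concrete form of $\cH$ and retain only the $(D, o(D))$-pseudorandomness used here.
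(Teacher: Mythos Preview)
Your proposal is correct and follows essentially the same route as the paper: recast the problem as finding a nearly perfect matching in the auxiliary $\binom{k}{t}$-uniform hypergraph $\cH^*_{t,k,n}$ on $\binom{[n]}{t}$, observe that it is $\binom{n-t}{k-t}$-regular with codegree at most $\binom{n-t-1}{k-t-1} = o(D)$, and then run the semi-random nibble exactly as the paper sketches after Theorem~\ref{thm:pippenger}. Your description of the per-step degree shrinkage, the role of the codegree bound in ensuring approximate independence, and the accumulation of errors across iterations all match the paper's outline.
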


The Existence conjecture was proved by Keevash~\cite{keevash2014existence} in 2014, by combining a generalization of Theorem~\ref{thm:erdos-hanani} (briefly discussed in Section~\ref{edge-colouring:pseudo}), with an `absorption' technique called `randomized algebraic construction'.  Glock, K\"uhn, Lo, and Osthus~\cite{GKLO16} provided a purely combinatorial proof, using `iterative absorption' instead of the algebraic approach of Keevash.

A partial Steiner system $\cH$ with parameters $(t, k, n)$ corresponds to a matching $M \coloneqq \{\binom{e}{t} : e \in \cH\}$ in the $\binom{k}{t}$-uniform auxiliary hypergraph with vertex set $\binom{V(\cH)}{t}$ and edge set $\{\binom{X}{t} : X \in \binom{V(\cH)}{k}\}$.  In particular, a Steiner system with parameters $(t, k, n)$ exists if and only if the hypergraph $\cH^*_{t,k,n}$ with vertex set $\binom{[n]}{t}$ and edge set $\{\binom{X}{t} : X \in \binom{[n]}{k}\}$ has a perfect matching, and Theorem~\ref{thm:erdos-hanani} is equivalent to the statement that $\cH^*_{t,k,n}$ contains a matching covering all but a vanishing proportion of its vertices as $n\rightarrow \infty$.  
This result holds much more generally for hypergraphs satisfying mild pseudorandomness conditions involving the degrees and codegrees.
Indeed, Frankl and R\"odl~\cite{FranklRodl} proved that if $\cH$ is an $N$-vertex, $D$-regular hypergraph with codegree at most $D / (\log N)^4$, then $\cH$ has a matching covering all but $o(N)$ vertices as $N\rightarrow\infty$.  Since $\cH^*_{t,k,n}$ is $\binom{n - t}{k - t}$-regular and has codegree at most $\binom{n - t - 1}{k - t - 1}$, this result generalizes Theorem~\ref{thm:erdos-hanani}.
Pippenger generalized this result further, by relaxing the codegree condition, as follows.  (Pippenger's result was not published, but a proof is given e.g.~in~\cite[Theorem 8.4]{furedi1988}.)

\begin{theorem}[Pippenger]\label{thm:pippenger}
    For every $k, \eps > 0$, there exists $\delta > 0$ such that the following holds.
    If $\cH$ is an $n$-vertex $k$-uniform $D$-regular hypergraph with codegree at most $\delta D$, then there is a matching in $\cH$ covering all but at most $\eps n$ vertices.
\end{theorem}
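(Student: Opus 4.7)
The plan is to apply R\"odl's nibble method iteratively. Fix a small constant $\gamma = \gamma(k, \eps) > 0$ and let $T$ be the smallest integer with $(1-\gamma e^{-k\gamma})^T < \eps/2$. I will construct a nested sequence $\cH = \cH_0 \supseteq \cH_1 \supseteq \cdots \supseteq \cH_T$ together with matchings $M_t \subseteq \cH_t$, maintaining the inductive invariant that $\cH_t$ is approximately $D_t$-regular with codegree $o(D_t)$, where $D_t \approx D(1-\gamma e^{-k\gamma})^{(k-1)t}$ and $|V(\cH_t)| \approx (1-\gamma e^{-k\gamma})^t n$. To form $\cH_{t+1}$ from $\cH_t$, independently include each edge of $\cH_t$ in a random set $E_t'$ with probability $\gamma/D_t$; let $M_t \subseteq E_t'$ consist of the edges of $E_t'$ that intersect no other edge of $E_t'$; and set $V(\cH_{t+1}) \coloneqq V(\cH_t) \setminus V(M_t)$, with edge set $\cH_{t+1} \coloneqq \{e \in \cH_t : e \cap V(M_t) = \varnothing\}$.

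For the expected behaviour of a single nibble, the approximate regularity and bounded codegree yield $|N_{\cH_t}(e)| = k(D_t-1)(1 + o(1))$ for every $e \in \cH_t$, so the probability that a selected edge $e \in E_t'$ survives into $M_t$ is $(1-\gamma/D_t)^{|N_{\cH_t}(e)|} \approx e^{-k\gamma}$. Consequently, each vertex of $\cH_t$ is covered by $M_t$ with probability approximately $\gamma e^{-k\gamma}$, and the expected new degree of a surviving vertex is approximately $D_t(1-\gamma e^{-k\gamma})^{k-1}$, which matches the target $D_{t+1}$. The crucial step is then to prove that the number of surviving vertices, the new degree of each remaining vertex, and the new codegree of each remaining pair all deviate from their expectations by a factor of $1 + o(1)$ except on an event of probability $o(n^{-2})$. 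For this I would apply a bounded-differences martingale inequality (such as Azuma's), revealing the edges of $E_t'$ one at a time; the codegree hypothesis bounds how much swapping the outcome on a single edge can alter each of these counts, which is exactly what such an inequality requires.

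The main obstacle is this concentration step: the codegree hypothesis is used both to make the expectations above accurate to within $1 + o(1)$ and to provide the Lipschitz-type bounds that give failure probabilities of order $o(n^{-2})$. One must also verify that the codegree-to-degree ratio does not grow out of control across the $T$ rounds; choosing a hierarchy $0 < 1/D \ll \delta \ll \gamma \ll \eps, 1/k$ together with $T = T(k, \gamma, \eps)$ ensures this ratio remains small throughout. After $T$ iterations, a union bound over the rounds shows that with positive probability the combined set $M_0 \cup \cdots \cup M_{T-1}$ is a matching in $\cH$ covering all but at most $\eps n$ vertices.
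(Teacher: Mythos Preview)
Your plan is the R\"odl nibble and matches the paper's sketch in all essentials: iterate a random sparse selection, keep the isolated selected edges as a partial matching, delete covered vertices, and track approximate regularity and the codegree-to-degree ratio across a bounded number $T$ of rounds with concentration at each step.

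There is one technical divergence worth flagging. In the paper's sketch, $\cH_{i+1}$ is obtained by deleting every vertex that lies in \emph{any} edge of the random sample $X_i$, not only those lying in the isolated edges $N_i$. With that convention the event ``$u$ is removed'' is simply ``some edge through $u$ was selected'', which depends only on the edges through $u$; the codegree bound then directly gives the near-independence needed for the degree computation and for Azuma-type concentration. Under your convention (delete $V(M_t)$), the event ``$u$ is removed'' also depends on the second neighbourhood of $u$ in the line graph, which makes both the expected-degree calculation (you must condition on $v$ surviving) and the Lipschitz bounds more delicate. Your variant still works, and it has the pleasant feature that the uncovered set is exactly $V(\cH_T)$; but if you find the concentration step awkward, switching to the paper's ``delete $V(X_i)$'' convention will simplify it, at the modest cost of separately bounding the number of deleted-but-unmatched vertices (those in $V(X_i)\setminus V(N_i)$), which is $O(\gamma)\,|V(\cH_i)|$ per round.
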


As mentioned in Section~\ref{intro:nibble}, Theorem~\ref{thm:pippenger} is proved with the nibble method, which we now sketch in more detail.  Each step of the nibble process produces a matching in a nearly $D_i$-regular subhypergraph $\cH_i\subseteq\cH$ (beginning with $\cH_1 \coloneqq \cH$ and $D_1 \coloneqq D$), as follows.  First, select a set of edges $X_i\subseteq \cH_i$ randomly, where each edge $e\in\cH_i$ is included in $X_i$ independently with probability $\eps' / D_i$ where $\eps' > 0$ is a small constant depending on $k$ and $\eps$.  Then, let $N_i\subseteq X_i$ be the matching consisting of the edges of $X_i$ that are disjoint from the rest\COMMENT{i.e.~$N_i\coloneqq\{e \in X_i : e\cap f = \varnothing~\forall f \in X_i\setminus\{e\}\}$}.  Crucially, each vertex is in an edge of $X_i$ with probability close to $1 - e^{-\eps'}$, and moreover, the codegree condition ensures that two distinct vertices are in an edge of $X_i$ somewhat independently.  This fact implies that the hypergraph $\cH_{i+1} $ obtained by removing every vertex in an edge of $X_i$ is nearly $D_{i+1}$-regular, where $D_{i+1} \coloneqq e^{-\eps'(k - 1)}D_i$, which in turn allows the nibble process to continue.  
Each edge $e \in \cH$ is in $X_i$ with probability roughly $(\eps' / D_i)e^{-\eps' k(i - 1)} =\COMMENT{$(\eps' / D)e^{\eps'(k - 1)(i - 1)}e^{-\eps' k(i - 1)} =$} (\eps' / D) e^{-\eps'(i - 1)}$ (indeed, $e$ is in $\cH_i$ with probability roughly $e^{-\eps' k (i - 1)}$ and conditioning on this, is selected in $X_i$ with probability $\eps' / D_i$).  
Each edge in $X_i$ is then kept in $N_i$ with probability roughly $e^{-\eps' k}$.  Thus, after $t$ steps of the nibble, each edge $e\in\cH$ is contained in $M\coloneqq \bigcup_{i=1}^t N_i$ with probability close to $\sum_{i=1}^t(\eps' / D)e^{-\eps'(i - 1)}e^{-\eps' k} =\COMMENT{$(\eps' / D) e^{-\eps' k}\sum_{i=1}^t e^{-\eps'(i - 1)}$} \alpha / D$, where $\alpha \coloneqq \eps'e^{-\eps' k}(1 - e^{-\eps' t}) / (1 - e^{-\eps'})$.  In particular, $M$ is a matching and the expected number of uncovered vertices is essentially at most $(1 - \alpha)n \leq \eps n$ (if $\eps'$ and $t^{-1}$ are small enough)\COMMENT{by the linearity of expectation, and since $\cH$ is $D$-regular, the expected size of $M$ is roughly $(\alpha / D)|\cH| = \alpha n / k$, and since $\cH$ is $k$-uniform, $M$ covers roughly $\alpha n$ vertices in expectation}.  Kahn~\cite{kahn1996linear} and Kahn and Kayll~\cite{KK97} proved generalizations of Theorem~\ref{thm:pippenger} where the regularity and codegree conditions are replaced with the existence of a fractional matching satisfying a certain `local sparsity' condition, which can be used to guide the nibble process. %(the fractional matching $e \mapsto 1 / D$ for every $e\in\cH$ suffices under the hypotheses of Theorem~\ref{thm:pippenger}).

It is natural to wonder if the `random greedy algorithm' (which would select $X_i$ to consist of a single edge chosen uniformly at random from $\cH_i$ in the process above) also produces a nearly perfect matching under the conditions of Theorem~\ref{thm:pippenger}.  Indeed, this result was obtained independently by Spencer~\cite{spencer1995asymptotic} and by R\"odl and Thoma~\cite{rodl1996asymptotic}.  To prove this, Spencer~\cite{spencer1995asymptotic} considered a branching process, and R\"{o}dl and Thoma~\cite{rodl1996asymptotic} showed that the random greedy algorithm produces a matching with a similar distribution as the nibble process.  Note that these results immediately yield a (randomized) polynomial-time Monte Carlo algorithm for finding the matching guaranteed by Theorem~\ref{thm:pippenger}.  The proof of Theorem~\ref{thm:pippenger} given in~\cite{PippengerSpencer} also yields such an algorithm.
R\"{o}dl and Thoma\COMMENT{at RSA in 1995 (see~\cite{grable1996nearly})} also asked if there is an NC-algorithm (and in particular, a deterministic, polynomial-time algorithm) for finding such a matching, and  Grable~\cite{grable1996nearly} answered their question in the affirmative.  

In 1989, Pippenger and Spencer~\cite{PippengerSpencer} generalized Theorem~\ref{thm:pippenger} to edge-colouring, as follows.
\begin{theorem}[Pippenger and Spencer~\cite{PippengerSpencer}]\label{thm:pippenger-spencer}
For every $k, \eps > 0$, there exists $\delta > 0$ such that the following holds.\COMMENT{Pippenger and Spencer also have an $n_0$, but it is redundant.  The codegree condition implies that $n \geq D \geq 1/\delta$.}
If $\cH$ is a $k$-uniform $D$-regular hypergraph of codegree at most $\delta D$, then $\chi'(\cH) \leq (1 + \eps)D$.
\end{theorem}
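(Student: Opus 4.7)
The plan is to extend the nibble argument sketched above for Theorem~\ref{thm:pippenger} so that each sampled edge is additionally assigned a uniformly random colour from a palette $[C]$ with $C := \lceil (1+\eps/2)D \rceil$, producing a proper partial edge-colouring at each nibble step, and then to finish by a short greedy cleanup. Essentially, I would run Pippenger's matching nibble in parallel across $C$ colour classes, exploiting the fact that $C$ is slightly larger than $D$ to leave some slack for colour clashes.

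\textbf{Nibble with colours.} Initialize $\cH_1 := \cH$ and $D_1 := D$. At step $i$, with a (nearly) $D_i$-regular $\cH_i$ of codegree at most $\delta_i D_i$, include each $e \in \cH_i$ in a random set $X_i$ independently with probability $\alpha / D_i$ for some small $\alpha = \alpha(k, \eps)$, and independently give each sampled $e$ a uniformly random colour $\chi_i(e) \in [C]$. Let $N_i \subseteq X_i$ consist of those $e \in X_i$ which are vertex-disjoint from every other edge in $X_i$ and whose colour $\chi_i(e)$ is not already blocked by a previously coloured edge incident to $e$. Then $N_1, \ldots, N_i$ together form a proper partial edge-colouring, and one forms $\cH_{i+1}$ by deleting every vertex covered by these matchings. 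As in the sketch of Theorem~\ref{thm:pippenger}, the small-codegree hypothesis together with a concentration inequality (for instance Azuma--Hoeffding or Talagrand) yields that $\cH_{i+1}$ is nearly $D_{i+1}$-regular with $D_{i+1} \approx e^{-\alpha(k-1)} D_i$ and codegree still $o(D_{i+1})$. A parallel concentration argument shows that, with high probability, at each surviving vertex $v$ the number of \emph{blocked} colours is at most $D - D_{i+1} + o(D)$, so at least $C - (D - D_{i+1}) - o(D) \geq D_{i+1} + \eps D/3$ colours remain available at $v$, making rejection from a colour clash only a lower-order effect.

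\textbf{Cleanup and main obstacle.} After $T = O_{k,\eps}(\log(1/\eps))$ nibble steps one has $D_{T+1} \leq \eta D$ and $|V(\cH_{T+1})| \leq \eta |V(\cH)|$, so the residual uncoloured subhypergraph has maximum degree at most $\eta D$, and a greedy edge-colouring finishes it with at most $k\Delta \leq \eps D/2$ fresh colours, yielding $\chi'(\cH) \leq (1+\eps/2)D + \eps D/2 \leq (1+\eps)D$. The main obstacle is the \emph{joint} concentration of the residual degrees, codegrees, and the blocked-colour counts at every surviving vertex: a single random colour assignment affects many of these quantities at once, so one must use bounded-differences concentration with a careful union bound and then control the cumulative error over the $T$ nibble steps via the parameter choices $\alpha, \delta_i, T$. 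Balancing these requirements is what forces $\delta$ to be chosen small in terms of $k$ and $\eps$, and is the real technical heart of the argument.
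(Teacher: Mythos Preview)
Your argument has a genuine gap in the transition from the nibble to the cleanup. By forming $\cH_{i+1}$ through \emph{deleting every vertex} covered by the coloured matchings, you are in effect running the Pippenger matching nibble verbatim: any edge surviving to $\cH_{i+1}$ is vertex-disjoint from all of $N_1,\dots,N_i$, so the union $\bigcup_i N_i$ is a single matching in $\cH$. Consequently your random colour assignment is vacuous---no colour is ever ``blocked'' at an edge of $\cH_i$---and at termination each vertex $v$ lies in at most one coloured edge (from the unique step at which $v$ is removed). The ``residual uncoloured subhypergraph'' you must bound is $\cH\setminus\bigcup_i N_i$, not $\cH_{T+1}$; its maximum degree is $D-O(1)$, not $\eta D$, and the greedy cleanup cannot finish in $\eps D/2$ colours.

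The paper describes two correct routes. Pippenger and Spencer's own proof builds the colour classes \emph{serially}: run the matching nibble once to obtain a nearly perfect matching $M_1$, remove its \emph{edges} (not vertices) from $\cH$, and repeat; after roughly $D$ rounds---organised into groups of size $o(D)$ selected in a semi-random way so that the residual stays near-regular---the leftover hypergraph has maximum degree $o(D)$ and is finished greedily. Kahn's later proof of Theorem~\ref{thm:asym-list-edge-col} does build colour classes in parallel as you attempt, but crucially the hypergraph shrinks by \emph{removing coloured edges}, and one tracks the list of still-available colours at each edge throughout; the analysis shows these lists remain large relative to the residual degrees. Your proposal conflates the bookkeeping of the two approaches: you need either vertex-deletion with serial matching extraction, or edge-deletion with genuine colour-list tracking, but not vertex-deletion with parallel colouring.
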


Since every hypergraph $\cH$ satisfies $\nu(\cH) \geq |\cH| / \chi'(\cH)$ and moreover $|\cH| = D |V(\cH)| / k$ if $\cH$ is $D$-regular and $k$-uniform, Theorem~\ref{thm:pippenger-spencer} implies Theorem~\ref{thm:pippenger}. In fact, in Pippenger and Spencer's~\cite{PippengerSpencer} proof of Theorem~\ref{thm:pippenger-spencer}, they used the argument described above to select nearly perfect matchings randomly with the nibble process, which ultimately form most of the colour classes.  Roughly, they show that after selecting $D$ such matchings---in groups of size $o(D)$, selected iteratively in a `semi-random' way (which could also be considered a nibble process)---the remaining hypergraph has small maximum degree and can thus be properly edge-coloured with at most $\eps D$ colours in a `greedy' fashion.

Pippenger and Spencer~\cite{PippengerSpencer} actually proved the slightly stronger version of Theorem~\ref{thm:pippenger-spencer} that applies if every vertex of $\cH$ has degree $(1 \pm \delta)D$, rather than precisely $D$.  Kahn~\cite{kahn1992coloring} observed that Theorem~\ref{thm:pippenger-spencer} holds more generally for $k$-bounded hypergraphs of maximum degree at most $D$, by showing that such hypergraphs can be `embedded' in a nearly $D$-regular $k$-uniform hypergraph with the same or larger chromatic index (a hypergraph $\cH$ is \textit{$k$-bounded} if every $e\in\cH$ satisfies $|e| \leq k$).  This sequence of results culminated in Kahn's~\cite{kahn1996asymptotically} generalization of the Pippenger-Spencer theorem to list colouring, as follows.

\begin{theorem}[Kahn~\cite{kahn1996asymptotically}]\label{thm:asym-list-edge-col}
For every $k, \eps > 0$, there exists $\delta > 0$ such that the following holds.  If $\cH$ is a $k$-bounded hypergraph of maximum degree at most $D$ and codegree at most $\delta D$, then $\chi'_\ell(\cH) \leq (1 + \eps)D$.
\end{theorem}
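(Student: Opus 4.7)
My plan is to adapt the semi-random nibble approach sketched for Theorems~\ref{thm:pippenger} and~\ref{thm:pippenger-spencer} to the list setting, combining two main ingredients: Kahn's embedding reduction (noted just above the statement) and a ``two-dimensional'' nibble on edge-colour pairs.

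First, I would invoke the embedding mentioned in the paragraph preceding Theorem~\ref{thm:asym-list-edge-col} to reduce to the case where $\cH$ is $k$-uniform and nearly $D$-regular. The only thing to check is that the embedding is compatible with list colouring, which follows by assigning each ``padding'' edge a private list of length at least $(1+\eps)D$ disjoint from all existing lists, so that any proper list edge-colouring of the supergraph restricts to one of $\cH$.

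Second, I would run a semi-random procedure on edge-colour pairs. Set $L_0(e) := C(e)$ for every $e \in \cH$. At step $i$, independently activate each pair $(e, c)$ with $c \in L_i(e)$ with probability $\eps'/|L_i(e)|$, for a small constant $\eps' = \eps'(k, \eps) > 0$. If $(e, c)$ is activated and $(f, c)$ is not activated for any $f \in N_\cH(e)$, colour $e$ with $c$; then remove the coloured edges, and for each surviving edge $e$ delete from $L_i(e)$ every colour now appearing on an edge sharing a vertex with $e$, producing $L_{i+1}(e)$. The key quantities to track are the list sizes $\ell_i(e) := |L_i(e)|$ and the ``colour-degrees'' $t_i(e, c) := |\{f \in N_\cH(e) : c \in L_i(f)\}|$. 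A calculation analogous to the one sketched after Theorem~\ref{thm:pippenger} shows that in expectation both quantities scale by comparable factors per step, so the ratio $\ell_i(e)/\max_c t_i(e, c)$ stays bounded below by $1 + \eps/2$ throughout.

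The main obstacle, and the core of Kahn's contribution, is proving that $\ell_i$ and $t_i$ are sharply concentrated around their expectations despite the strong dependencies introduced by the list structure: each colour decision on one edge propagates through its neighbours' lists and thence into their colour-degrees. This is handled by a careful application of Talagrand's inequality, where the codegree hypothesis is used crucially — codegree at most $\delta D$ ensures that each atomic coin flip affects only a bounded number of the tracked variables, yielding the Lipschitz-type bounds concentration requires. After $T = \Theta(\log(1/\eps))$ nibble steps, one is left with a residual hypergraph whose list sizes still exceed the maximum colour-degree by a factor of at least $1 + \eps/2$; a final phase using either the Lov\'asz Local Lemma on the sparse residual or an additional greedy/nibble stage (in the spirit of the proof of Theorem~\ref{thm:pippenger-spencer}) then completes a proper list edge-colouring of $\cH$.
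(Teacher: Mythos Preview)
The paper is a survey and does not give its own proof of Theorem~\ref{thm:asym-list-edge-col}; it only remarks that Kahn's argument is a nibble ``in which, rather than selecting colour classes one by one, in each step of the nibble, edges are assigned a colour randomly from their lists, so the colour classes are constructed in parallel'' (and later notes the connection to matchings in the incidence hypergraph). Your outline is consistent with that description and with Kahn's actual argument in~\cite{kahn1996asymptotically}: the embedding reduction, the parallel nibble tracking list sizes $\ell_i(e)$ and colour-degrees $t_i(e,c)$, the use of the codegree bound to obtain concentration, and a final cleanup phase are all the right ingredients.

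Two minor points of comparison. First, in Kahn's formulation each edge \emph{selects} a tentative colour uniformly from its current list (with some activation probability), rather than activating edge--colour pairs independently; your variant is equivalent up to lower-order terms but you should note that with your setup an edge may activate several colours, and say how you resolve this (e.g.\ keep one arbitrarily). Second, Kahn's concentration step in~\cite{kahn1996asymptotically} predates the now-standard use of Talagrand's inequality in this context and proceeds via a more hands-on martingale argument; invoking Talagrand (or bounded-differences) as you do is the modern route and is fine, but the genuinely delicate point is that $t_i(e,c)$ depends on the \emph{lists} of neighbouring edges, so a single coin flip can in principle affect many $t_i$'s through cascading list deletions --- the codegree bound is what caps this influence, and making that precise is where the real work lies. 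Your sketch flags this correctly but does not carry it out; as a proof \emph{proposal} that is acceptable, but be aware it is the heart of the matter.
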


The so-called `List Edge Colouring conjecture'---first posed by Vizing in 1975 and asked by many others since (see e.g.~\cite{JT95})---asserts that every graph $G$ satisfies $\chi'_\ell(G) = \chi'(G)$, and Theorem~\ref{thm:asym-list-edge-col} for $k = 2$ confirms this conjecture asymptotically.  Kahn's proof of Theorem~\ref{thm:asym-list-edge-col} is also based on a nibble argument but is notably different from Pippenger and Spencer's~\cite{PippengerSpencer} proof of Theorem~\ref{thm:pippenger-spencer}.  In particular, rather than selecting colour classes one by one, in each step of the nibble, edges are assigned a colour randomly from their lists, so the colour classes are constructed in parallel.

For \textit{linear} hypergraphs (i.e., hypergraphs of codegree one), Molloy~\cite[Theorem 10]{M18} generalized Theorem~\ref{thm:asym-list-edge-col} to the setting of `correspondence colouring' (also known as DP-colouring) in 2018, and Bonamy, Delcourt, Lang, and Postle~\cite[Theorem 7]{BDLP20} generalized this result further by proving a `local version'. 

Several results also strengthen Theorems~\ref{thm:pippenger}--\ref{thm:asym-list-edge-col} by improving the asymptotic error terms.  This is the focus of Section~\ref{edge-colouring:asymptotic}. We conclude this subsection by discussing two open problems from the late 1990s.
Both of these are conjectured to hold for multi-hypergraphs.  In fact, Theorems~\ref{thm:pippenger}--\ref{thm:asym-list-edge-col} also hold for multi-hypergraphs (but the codegree conditions also bound the number of copies of each edge).

\begin{conjecture}[Kahn~\cite{kahn1996asymptotically}]\label{conj:kahn-frac}
  For every $k, \eps > 0$, there exists $K$ such that the following holds.  If $\cH$ is a $k$-bounded multi-hypergraph, then $\chi'_\ell(\cH) \leq \max\{(1 + \eps)\chi'_f(\cH), K\}$.
\end{conjecture}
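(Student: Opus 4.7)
Let $D^* \coloneqq \chi'_f(\cH)$. Since the edges through any vertex of $\cH$ form a clique in $L(\cH)$, we have $\Delta(\cH) \leq \omega(L(\cH)) \leq \chi_f(L(\cH)) = D^*$, so every edge has at most $k(D^*-1)$ neighbours in its line graph. Thus when $D^* \leq K_0$ for some $K_0 = K_0(k,\eps)$, the trivial greedy list-colouring bound gives $\chi'_\ell(\cH) \leq kK_0$, and taking $K \coloneqq kK_0$ disposes of this case. We therefore assume $D^*$ exceeds any prescribed function of $k$ and $\eps$, and aim to show $\chi'_\ell(\cH) \leq (1+\eps) D^*$. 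The key structural input is LP duality applied to the definition of the fractional chromatic index: there is a probability distribution $\mathcal{D}$ on the matchings of $\cH$ such that $\Prob{e \in M} \geq 1/D^*$ for every $e \in \cH$ when $M \sim \mathcal{D}$.

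The plan is an iterative nibble driven by $\mathcal{D}$, in the spirit of Kahn's proof of Theorem~\ref{thm:asym-list-edge-col} but using fractional rather than local degree information. Fix a list assignment $L$ with $|L(e)| \geq \lceil(1+\eps) D^*\rceil$. In a single nibble step, sample $t \coloneqq \lceil \eps' D^* \rceil$ matchings $M_1,\dots,M_t$ independently from $\mathcal{D}$ (for a small constant $\eps' = \eps'(\eps) > 0$), and attempt to assign a fresh colour $c_j$ to each $M_j$, drawing $c_j$ from $\bigcap_{e \in M_j} L(e)$ via an auxiliary pairing argument that matches matchings to colours while respecting list constraints. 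Each edge $e$ is successfully coloured in this step with probability roughly $1 - e^{-\eps'}$ by the bound $\Prob{e \in M} \geq 1/D^*$, and a Talagrand-type concentration argument should show that, with high probability, the residual hypergraph $\cH'$ satisfies $\chi'_f(\cH') \leq (1+o(1))e^{-\eps'} D^*$ and the residual list sizes satisfy $|L'(e)| \geq (1+\eps)(1+o(1)) e^{-\eps'} D^*$, preserving the list-size-to-fractional-index ratio. Iterating $O(\eps'^{-1})$ such rounds reduces the problem to a residual instance of fractional chromatic index below $K_0$, where the greedy fallback above finishes the colouring.

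The principal obstacle is proving that $\chi'_f$ of the residual hypergraph indeed decays at the required geometric rate $e^{-\eps'}$. Because the codegree of $\cH$ can be as large as $D^*$ (take, for instance, a multi-graph consisting of many parallel edges between two vertices), Theorem~\ref{thm:asym-list-edge-col} is not applicable as a black box, and one must exploit the global fractional structure rather than local degree or codegree statistics. What is really required is a robust form of LP duality stating that, after random deletion of the edges coloured in a nibble step, the restriction of $\mathcal{D}$ to surviving matchings is still approximately optimal as a fractional edge-colouring of $\cH'$. Establishing this stability statement -- either directly via entropy or coupling methods, or indirectly by extending $\cH$ to a nearly regular auxiliary hypergraph of codegree $o(D^*)$ whose degree mirrors $\mathcal{D}$ and then applying Theorem~\ref{thm:asym-list-edge-col} -- is where I expect the real difficulty to lie, and is presumably why the conjecture has resisted a direct application of the existing nibble machinery.
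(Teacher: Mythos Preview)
This statement is Conjecture~\ref{conj:kahn-frac} in the paper, and the paper does \emph{not} prove it; immediately after stating it, the authors write that ``even the weaker version of this conjecture, with the list chromatic index replaced by the chromatic index, is wide open. Only the case $k = 2$ is known.'' So there is no proof in the paper to compare your proposal against.

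Your proposal is not a proof either, and you say as much in your final paragraph. The plan---drive a nibble by an optimal fractional edge-colouring, colour a small batch of sampled matchings in each round, and argue that the residual fractional chromatic index decays geometrically---is a natural one, but the step you flag as ``the principal obstacle'' is precisely the missing idea. Showing that $\chi'_f(\cH')$ drops by a factor of roughly $e^{-\eps'}$ after one nibble round is not a concentration statement about a local quantity: $\chi'_f$ is the optimum of a global LP, and there is no known mechanism (Talagrand-type or otherwise) that controls it under random edge-deletion in a general $k$-bounded multi-hypergraph with unbounded codegree. The alternative you mention---embedding $\cH$ into an auxiliary hypergraph of codegree $o(D^*)$ to which Theorem~\ref{thm:asym-list-edge-col} applies---would, if it could be carried out, essentially resolve the conjecture, but no such embedding is known in this generality. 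Even the settled case $k=2$ (Kahn~\cite{K00}) required a substantial argument tailored to multigraphs that does not extend along either of these routes. In short, your proposal correctly identifies the shape of a nibble attack and correctly locates where it breaks; what remains is the open problem itself.
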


Even the weaker version of this conjecture, with the list chromatic index replaced by the chromatic index, is wide open.  Only the case $k = 2$ is known.  For 2-bounded hypergraphs (i.e.~graphs with edge-multiplicity $1$), the conjecture follows from Vizing's theorem~\cite{vizing1965} for the chromatic index and from Theorem~\ref{thm:asym-list-edge-col} for the list chromatic index\COMMENT{since every graph $G$ satisfies $\chi'_f(G) \geq \Delta(G)$. (If $v\in V(G)$ has maximum degree in $G$, then if a matching $M$ in $G$ is randomly sampled according to a probability distribution satisfying $\Prob{e \in M} \geq 1 / k$ for each $e \in M$, then $v$ is covered by an edge of $M$ with probability at least $\Delta(G) / k$.  In particular, $\Delta(G) \leq k$.)}.  
As shown by Seymour~\cite{seymour1979} using Edmonds' Matching Polytope theorem~\cite{edmonds1965}, every multigraph $G$ satisfies $\chi'_f(G) = \max\{\Delta(G), \Gamma(G)\}$, where 
\begin{equation*}
    \Gamma(G) \coloneqq \max\left\{ \frac{2|E(H)|}{|V(H)|-1} : {H \subseteq G},~|V(H)|\geq 3~\text{and is odd}\right\}.
\end{equation*}
Kahn~\cite{kahn1996multi} proved that every multigraph $G$ satisfies $\chi'(G) \leq (1 + o(1))\chi_f(G)$ and in~\cite{K00} extended this result to list colouring, thus confirming Conjecture~\ref{conj:kahn-frac} in full for the case $k = 2$.  For the ordinary chromatic index, even more is now known in this case.  In the 1970s, Goldberg~\cite{goldberg1973} and Seymour~\cite{seymour1979} independently conjectured\COMMENT{Andersen~\cite{andersen1977} also makes a conjecture that he showed is equivalent to the Goldberg-Seymour conjecture, citing Goldberg~\cite{goldberg1973}.} that every multigraph $G$ satisfies $\chi'(G) \leq \max\{\Delta(G) + 1 , \lceil\Gamma(G)\rceil\}$.  Thus, Kahn's result~\cite{kahn1996multi} confirmed the Goldberg-Seymour conjecture asymptotically.
Recently, a full proof (which does not rely on probabilistic arguments) of the Goldberg-Seymour conjecture was obtained by Chen, Jing, and Zang~\cite{cjz2019}.

The next conjecture was posed by Alon and Kim~\cite{AK97}.  A hypergraph $\cH$ is called \textit{$t$-simple} if every two distinct edges of $\cH$ have at most $t$ vertices in common; in particular, a hypergraph is $1$-simple if and only if it is linear.

\begin{conjecture}[Alon and Kim~\cite{AK97}]\label{conj:alon-kim}
 For  every $k\geq t \geq 1$ and $\eps > 0$, there exists $D_0$ such that the following holds.  For every $D \geq D_0$, if $\cH$ is a $k$-uniform, $t$-simple multi-hypergraph with maximum degree at most $D$, then
 \begin{equation*}
 \chi'(\cH) \leq (t - 1 + 1/t + \eps)D.
 \end{equation*}
\end{conjecture}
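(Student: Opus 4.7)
The natural plan is to adapt the semi-random nibble method that proves Theorem~\ref{thm:asym-list-edge-col}, taking a palette $[C]$ of size $C \coloneqq (t - 1 + 1/t + \eps)D$ and iterating a Kahn-style random colour assignment: in each round every still-uncoloured edge picks a colour uniformly and independently from $[C]$ and retains it iff no adjacent edge picked the same colour. After $O_\eps(1)$ rounds the residual hypergraph should have maximum degree at most $\eps D$, and it can then be completed greedily using $O(\eps D)$ extra colours drawn from a fresh palette, which is absorbed into the error term of the target bound.

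The key quantity to track is the expected number of edges through a vertex $v$ that survive one round. The main obstacle is that in a $t$-simple hypergraph with $t \geq 2$ the codegree of a pair of vertices can be as large as $D$, so conflict events at $v$ are far from independent and a naive Pippenger--Spencer analysis only yields the trivial bound $\chi'(\cH) \leq kD$. The structural fact that makes the target constant plausible is that for each vertex $v$ the link hypergraph
\begin{equation*}
    \cH_v \coloneqq \{e \setminus \{v\} : v \in e \in \cH\}
\end{equation*}
is $(k-1)$-uniform and $(t-1)$-simple. This suggests an induction on $t$ whose base case $t=1$ is (essentially) Theorem~\ref{thm:asym-list-edge-col} restricted to linear hypergraphs; for the inductive step, I would apply the inductive hypothesis in a fractional or list-colouring form to each $\cH_v$ and then use the resulting fractional structure to bias the random colour assignment so that the number of monochromatic conflicts at $v$ is controlled at the required rate $D/C$ rather than the naive $kD/C$.

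The hardest step is to recover the \emph{exact} constant $t - 1 + 1/t$ in the expected degree drop. Coarse bookkeeping over conflict pairs gives only $tD$, and the $1/t$ saving reflects the precise fractional optimum of the link hypergraphs, which must be captured without loss at every level of the induction in order for the recursion not to degrade. I expect this requires a delicate fractional or entropy argument on each $\cH_v$, followed by Talagrand-type concentration to upgrade the expected degree decrease to a whp statement and permit the usual nibble iteration. Matching this exact constant, rather than some weaker $c_tD$ with $c_t \to t$, is essentially the whole content of the conjecture and the reason it remains open for $t \geq 2$ despite substantial effort.
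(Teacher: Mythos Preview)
This statement is a \emph{conjecture}, not a theorem: the paper does not prove it and explicitly says that apart from the cases $k=2$ (Vizing and Shannon) and $t=1$ (Pippenger--Spencer), ``the remaining cases are still open.'' So there is no proof in the paper to compare your proposal against.

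Your write-up is really a discussion of where a nibble approach runs into trouble, not a proof, and you are candid about this in your final paragraph. The gap you name --- recovering the exact constant $t-1+1/t$ from the link hypergraphs at every level of an induction on $t$ --- is precisely the open part of the problem. The observation that each link $\cH_v$ is $(k-1)$-uniform and $(t-1)$-simple is natural, but the inductive scheme you sketch does not actually close: the inductive hypothesis gives a bound on $\chi'(\cH_v)$, not a fractional structure on $\cH_v$ strong enough to bias the colour assignment at $v$, and there is no known mechanism to convert the former into the latter without loss. The paper notes that Alon and Kim established the conjecture for intersecting hypergraphs and that F\"uredi, Kahn, and Seymour proved the $k=t$ case for the fractional chromatic index, which is consistent with your intuition that the constant reflects a fractional optimum; but turning that into a bound on the ordinary chromatic index for general $k\geq t\geq 2$ is exactly what nobody knows how to do.

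In short: your proposal correctly diagnoses why the conjecture is hard, but it is not a proof, and the paper does not contain one either.
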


The conjecture is true for $k = 2$ by Vizing's theorem~\cite{vizing1965} for $t = 1$  and by a result of Shannon~\cite{S49} for $t = 2$.  For $k > t = 1$, the conjecture follows from Theorem~\ref{thm:pippenger-spencer} (together with the observation of Kahn in~\cite{kahn1996asymptotically} mentioned above).  Kahn (see~\cite{AK97}) conjectured that the $t$-simple condition in Conjecture~\ref{conj:alon-kim} can be relaxed to requiring that the `$(t+1)$-codegrees' are small (i.e.~ every set of $t + 1$ vertices is contained in at most $\delta D$ edges, for some $\delta > 0$), which if true, would generalize Theorem~\ref{thm:pippenger-spencer}.  The remaining cases are still open. The case $k = t$ (without the `$ + \eps$' in the bound) was proved by F{\"u}redi, Kahn, and Seymour~\cite{FKS93} for the fractional chromatic index.  
%Their result also confirmed their conjectured bound\COMMENT{of $\max_{v \in V(\cH)}\sum_{e\ni v}|e| - 1 + 1 / |e|$} (see~\cite[Conjecture 1.6]{FKS93}) on the fractional chromatic index for the special case of uniform hypergraphs.

Alon and Kim~\cite{AK97} showed that Conjecture~\ref{conj:alon-kim} holds for \textit{intersecting} hypergraphs (i.e.~hypergraphs with matching number one), and they gave a construction to show that if Conjecture \ref{conj:alon-kim} is true, then it would be asymptotically tight for every $k \ge t$ for which there exists a projective plane of order $t-1$. We sketch their construction here. Let $D$ be a large integer divisible by $t$, let $m \coloneqq t^2 - t + 1$, and fix a projective plane $P$ of order $t-1$ with $m$ lines $\ell_1, \ell_2, \dots, \ell_m$ on a set of $m$ points. For each of the lines $\ell_i$, let $\mathcal F_i$ be a collection of $D/t$ sets of size $k$ containing $\ell_i$, so that all the $m D/t$ sets $\{A \setminus \ell_i : 1 \leq i \leq m \text{ and } A \in \mathcal F_i\}$ are pairwise disjoint and disjoint from $P$. Let $\cH$ be the $k$-uniform hypergraph whose edge-set is $\bigcup_i \mathcal F_i$. Then clearly $\cH$ is intersecting, $k$-uniform, and $t$-simple, its maximum degree is at most $D$, and it has $mD/t = (t -1 + 1/t)D$ edges. Thus, $\chi'(\cH) \ge (t -1 + 1/t)D$.

\subsection{Asymptotic improvements}\label{edge-colouring:asymptotic}

Let $\cH$ be a $k$-uniform, $D$-regular hypergraph on $n$ vertices. Recall that Pippenger's theorem (Theorem~\ref{thm:pippenger}) shows that if the codegree of $\cH$ is $o(D)$, then there is a matching in $\cH$ covering all but at most $o(n)$ vertices. However, his proof does not supply an explicit estimate for the error term $o(n)$. Also recall that Theorems~\ref{thm:pippenger-spencer} and~\ref{thm:asym-list-edge-col} imply that if the codegree of $\cH$ is $o(D)$, then the chromatic index of $\cH$ is $D + o(D)$. Sharpening these error terms is useful for many applications, and considerable progress has been made towards this end with improved analysis and variations of the nibble method, with more powerful concentration inequalities. In this subsection we will discuss many such results.

Grable~\cite{grable1999more} proved that if the codegree is at most $D^{1 - \delta}$ in Theorem~\ref{thm:pippenger} then there is a matching covering all but at most $n (D / \log n)^{- \delta / (2k - 1 + o(1))}$ vertices. In 1997, Alon, Kim, and Spencer~\cite{AKS1997} improved this bound for linear hypergraphs by showing the following.

\begin{theorem}[Alon, Kim, and Spencer~\cite{AKS1997}]\label{thm:aks}
  Let $k \geq 3$. Let $\cH$ be a $k$-uniform $D$-regular $n$-vertex  linear hypergraph. Then $\cH$ has a matching containing all but at most $O(nD^{-\frac{1}{k-1}} \log^{c_k} D)$ vertices, where $c_k = 0$ for $k > 3$ and $c_3 = 3/2$.
\end{theorem}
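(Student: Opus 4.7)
The approach is to run a R\"odl-type nibble with sharper concentration than in the proof of Theorem~\ref{thm:pippenger}, exploiting linearity to get tight control over the degree evolution. Fix a small constant $\varepsilon>0$, set $D_i := D\,e^{-\varepsilon(k-1)(i-1)}$, and, starting from $\mathcal H_1 := \mathcal H$, produce $\mathcal H_{i+1}$ from $\mathcal H_i$ as follows: include each edge of $\mathcal H_i$ in a random subset $X_i$ independently with probability $p_i := \varepsilon/D_i$, extract the matching $N_i$ of edges of $X_i$ meeting no other edge of $X_i$, and delete the vertices covered by $N_i$. Iterate for $T$ steps, where $T$ is chosen so that $D_T \asymp D^{1/(k-1)} \log^{c_k} D$; the uncovered vertices will form the error term, and $M := \bigcup_{i\le T} N_i$ is the output matching.

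The heart of the proof is an inductive claim: after $i$ steps, every vertex $v\in V(\mathcal H_{i+1})$ satisfies $d_{\mathcal H_{i+1}}(v) = (1\pm \delta_i) D_{i+1}$ with very high probability, and every pair $u,v$ still has codegree $O(\log D)$. To establish this, expose the random choices one edge at a time and apply a martingale inequality---Azuma-Hoeffding suffices when $k\ge 4$, but the $k=3$ case requires Talagrand's inequality to obtain the correct $\log^{3/2}D$ factor. The linearity assumption enters crucially in the Lipschitz analysis: since any two edges of $\mathcal H$ share at most one vertex, modifying a single coin flip changes the degree of a given vertex by $O(1)$ rather than by the codegree, giving per-step deviation of order $\sqrt{(\log n)/D_i}$. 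The accumulated error after $T$ steps is then a geometric series dominated by its last term $\sqrt{(\log n)/D_T}$, which is $o(1)$ by the choice of $T$.

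Because $\mathcal H$ is $k$-uniform and $D$-regular, the expected number of vertices uncovered by $M$ is $\Theta(n D_T/D)$, which evaluates to the claimed $O(nD^{-1/(k-1)}\log^{c_k} D)$; a final concentration step for this total completes the proof. I expect the main obstacle to be the $k=3$ case, where the codegree-one assumption is essentially tight: one must simultaneously track degrees, codegrees, and (in effect) the number of length-two paths between pairs of vertices, since those paths generate correlations that inflate the variance of the degree martingale by a logarithmic factor. Balancing this residual variance against the stopping time $T$ is what forces the extra $\log^{3/2}D$ factor when $k=3$, while for $k\ge 4$ the higher uniformity dissolves such correlations and the polylogarithmic factor disappears.
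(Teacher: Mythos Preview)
The paper is a survey and does not contain a proof of this theorem; it is simply cited as a result of Alon, Kim, and Spencer~\cite{AKS1997}, so there is no ``paper's own proof'' to compare against. Your sketch is in the right spirit---the actual argument in~\cite{AKS1997} is indeed a refined nibble with martingale concentration tracking degree deviations step by step---but a couple of details in your write-up are off. You say one must maintain that ``every pair $u,v$ still has codegree $O(\log D)$'', but linearity already forces all codegrees to be at most one throughout, so this is not the right auxiliary quantity; what actually needs to be controlled (especially for $k=3$) is a count of short paths/common neighbourhoods that governs the variance of the degree increments. Also, your Lipschitz justification (``modifying a single coin flip changes the degree of a given vertex by $O(1)$'') is too optimistic: flipping the inclusion of one edge $e$ can affect whether many edges through a fixed vertex $v$ survive into $\mathcal H_{i+1}$, via the vertices of $e$ that $v$'s edges meet, so the effect is not $O(1)$ but depends on the current degree structure---this is precisely why one needs the more careful concentration analysis (and why $k=3$ picks up the extra logarithmic factor). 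These are fixable, but as written the sketch glosses over the step that carries the real content.
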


Based on computer simulations (see e.g.~\cite{alonspencer2016}), Alon, Kim, and Spencer conjectured that the simple random greedy algorithm outlined in the previous subsection should also produce a matching containing all but at most $O(nD^{-\frac{1}{k-1}} \log^{O(1)} D)$ vertices.  The results of Spencer~\cite{spencer1995asymptotic} and of R\"odl and Thoma~\cite{rodl1996asymptotic} mentioned after Theorem~\ref{thm:pippenger} only show that the random greedy algorithm produces a matching covering all but $o(n)$ vertices.  This error term was sharpened by Wormald~\cite{wormald1999} and Bennett and Bohman~\cite{BB2019} but the conjecture is still open.

Kostochka and R\"odl~\cite{KR1998} extended Theorem~\ref{thm:aks} to hypergraphs with small codegrees $C$ (i.e., satisfying $C \leq D^{1-\gamma}$ for some $\gamma > 0$). In 2000, Vu~\cite{vu2000} further extended the result of Kostochka and R\"odl~\cite{KR1998} by removing the assumption $C < D^{1-\gamma}$ on the codegree. More precisely, he showed that every $k$-uniform $D$-regular $n$-vertex hypergraph with codegree at most $C$ contains a matching covering all but at most $O(n({D}/{C})^{-\frac{1}{k-1}}\log^c D)$ vertices for some constant $c > 0$.  He also obtained stronger bounds if one makes additional assumptions on the `$t$-codegrees' for $t > 2$.

Very recently, Kang, K\"{u}hn, Methuku, and Osthus~\cite{KKMO2020} improved Theorem~\ref{thm:aks}, and the results of Kostochka and R\"odl~\cite{KR1998}, and of Vu~\cite{vu2000} for hypergraphs with small codegree. In the case when $\cH$ is linear, they showed the following.

\begin{theorem}[Kang, K\"{u}hn, Methuku, and Osthus~\cite{KKMO2020}] \label{thm:kkmo}
 Let $k > 3$ and let $0 < \gamma,\mu < 1$ and 
		$0 < \eta < \frac{k-3}{(k-1)(k^3 - 2k^2 - k + 4)}$. Then there exists $n_0 = n_0(k, \gamma , \eta ,\mu)$ such that the following holds for $n \geq n_0$ and $D \geq \exp(\log^\mu n)$.
		
%		Let $H$ be a $k$-uniform $D$-regular hypergraph on $N$ vertices with codegree at most $C \leq D^{1-\gamma}$. Then $H$ contains a matching covering all but at most $N (\frac{D}{C})^{-\frac{1}{k-1} - \eta}$ vertices.

	If $\cH$ is a $k$-uniform $D$-regular linear hypergraph on $n$ vertices, then $\cH$ contains a matching covering all but at most $n D^{-\frac{1}{k-1} - \eta}$ vertices.	%	In particular, if $H$ is simple the number of uncovered vertices is at most $N D^{-\frac{1}{k-1} - \eta}$.
\end{theorem}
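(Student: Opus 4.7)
The plan is to refine the R\"odl nibble analysis underlying Theorem~\ref{thm:aks}, carrying it out for more rounds and with tighter per-round error control than in Alon-Kim-Spencer, in order to replace the factor $\log^{c_k} D$ with the polynomial saving $D^{-\eta}$. Following the template sketched after Theorem~\ref{thm:pippenger}, in round $i$ I would sample each edge of the current hypergraph $\cH_i$ independently with probability $\eps'/D_i$, keep the matching $N_i$ of isolated selected edges, and pass to the induced hypergraph on the uncovered vertices. The key structural observation is that the linearity of $\cH$ --- and its approximate preservation under nibbling --- means that two distinct edges through a common vertex share only that vertex, so ``collision'' events between different selected edges are only weakly coupled, which underpins the strong concentration we need.

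To extract the $D^{-\eta}$ saving, I would push the process for roughly $t = (1 + (k-1)\eta)\log D / (\eps'(k-1))$ rounds, so that the expected uncovered fraction is $e^{-\eps' t} = D^{-1/(k-1)-\eta}$. The central quantity to control is the per-round multiplicative error in the vertex survival probabilities: if at each step this error is bounded by $D_i^{-\Omega(1)}$, then telescoping over $t = O(\log D)$ rounds contributes only a factor of $1+o(1)$ and the target bound is met. To achieve such a polynomial per-step error --- rather than the polylogarithmic error of Alon-Kim-Spencer --- I would apply Talagrand's inequality (or, for higher moments, Kim-Vu polynomial concentration) to the edge-inclusion product space, bounding the Lipschitz and certificate constants by exploiting linearity. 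In parallel with degrees, I would also track a family of auxiliary statistics (e.g.~joint survival counts of pairs of edges sharing a vertex, and survival counts of short linear ``trails'' of edges) whose stability propagates the linearity-like pseudorandomness from round $i$ to round $i+1$.

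Once the nibble terminates, a residual hypergraph $\cH^*$ remains, on which one applies a short clean-up phase --- either Theorem~\ref{thm:pippenger} applied to $\cH^*$, or a greedy augmentation argument exploiting the fact that $k > 3$ leaves genuine slack --- to absorb the last few uncovered vertices into a matching of the desired size. The arithmetic constraint $\eta < \frac{k-3}{(k-1)(k^3 - 2k^2 - k + 4)}$ should emerge from balancing the per-step multiplicative error against the number of rounds and the sharpness of the available concentration inequalities; the factor $k-3$ in the numerator reflects that the method degenerates at $k=3$, which is consistent with the stronger log-power $c_3 = 3/2$ in Theorem~\ref{thm:aks}.

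The main obstacle is maintaining the strong pseudorandomness needed for polynomial per-step error all the way through the deep rounds. There, $D_i$ has shrunk to a small power of $D$, and the Talagrand tail $\exp(-\Omega(D_i^{\Omega(1)}))$ only barely suffices for a union bound over the $n$ vertices --- this is precisely where the hypothesis $D \geq \exp(\log^\mu n)$ enters, since it guarantees that $D_i$ dominates any fixed power of $\log n$ in every round. Designing the auxiliary statistics so that their concentration genuinely propagates linearity-preserving control from $\cH_i$ to $\cH_{i+1}$, rather than degrading by a $\log D_i$ factor per step as in the standard analysis, is the technically delicate part of the argument.
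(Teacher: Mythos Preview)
Your approach has a fundamental arithmetic obstacle that cannot be overcome by sharper concentration alone. In the nibble, the degree evolves as $D_t \approx D e^{-\eps'(k-1)t}$ while the uncovered fraction is roughly $e^{-\eps' t}$; hence $D_t \approx D \cdot (\text{uncovered fraction})^{k-1}$. By the time the uncovered fraction reaches $D^{-1/(k-1)}$ we already have $D_t \approx 1$, and the hypergraph has essentially no edges left. Reaching an uncovered fraction of $D^{-1/(k-1)-\eta}$ by running more rounds, as you propose, would require $D_t \approx D^{-(k-1)\eta} < 1$, which is meaningless: the process has terminated long before that point. This $D^{-1/(k-1)}$ threshold is precisely the nibble barrier that Alon--Kim--Spencer reach (up to logarithms), and no refinement of the per-step error --- Talagrand, Kim--Vu, or otherwise --- can push the nibble itself past it.

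The proof in~\cite{KKMO2020} breaks this barrier by a different mechanism. The nibble is run only up to (or near) its natural limit, but one shows that, in addition to the large matching, the process leaves behind many well-distributed \emph{augmenting stars}: configurations that allow edges to be swapped in and out of the matching so as to cover additional uncovered vertices. A second phase then uses these stars to augment the nibble matching, driving the uncovered count below $n D^{-1/(k-1)}$ by the polynomial factor $D^{-\eta}$. The constraint on $\eta$ arises from the density and distribution of these augmenting structures, not from a concentration-versus-rounds trade-off. Your vague ``greedy augmentation'' clean-up gestures in the right direction, but you have placed it after a nibble phase you claim to run well past its death; the real content of the argument is proving that the nibble \emph{produces} such augmenting stars in a controlled, well-distributed way, and that idea is absent from your proposal.
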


Their approach consists of showing that the R\"odl nibble process not only constructs a large matching but it also produces many well-distributed `augmenting stars' which can then be used to significantly augment the matching constructed by the R\"odl nibble process.

Below we discuss results concerning improvements on the chromatic index of hypergraphs. In 2000, Molloy and Reed~\cite{MR2000} sharpened the error term in Theorem~\ref{thm:asym-list-edge-col}. For linear hypergraphs their result can be stated as follows. 

\begin{theorem}[Molloy and Reed~\cite{MR2000}]\label{thm:molloy-reed}
  If $\cH$ is a $k$-uniform linear hypergraph with maximum degree at most $D$, then $\chi_{\ell}'(\cH) \leq D + O(D^{1-1/k} \log^4 D)$.
\end{theorem}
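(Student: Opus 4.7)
The plan is to refine Kahn's semi-random list edge-colouring procedure underlying Theorem~\ref{thm:asym-list-edge-col}, exploiting the fact that a linear hypergraph has codegree exactly $1$ to sharpen all concentration estimates. I will run a \emph{wasteful colouring procedure}: at each round $i$, every still-uncoloured edge $e$ of the residual hypergraph $\cH_i$ (with $\cH_0 \coloneqq \cH$) picks a tentative colour uniformly at random from its current list $L_i(e)$, and keeps it if and only if no edge of $\cH_i$ that intersects $e$ picks the same colour. The retained colours form a partial proper edge-colouring; they are deleted from the lists of all intersecting edges before the next round. The two key random variables to track are the list size $L_i(e)$ and, for each colour $c$, the \emph{colour degree} $d_i(e,c)$, defined as the number of edges of $\cH_i$ meeting $e$ whose list contains $c$. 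A standard heuristic shows that if initially $|L_0(e)| \geq D$ and $d_0(e,c) \leq D$, then in expectation $L_i(e)$ and $d_i(e,c)$ shrink by essentially the same retention probability $p$, so the ratio $d_i(e,c)/L_i(e)$ stays close to $1$, which is what sustains the process.

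Next, I would establish sharp concentration of $L_i(e)$ and $d_i(e,c)$ around their expected values via Talagrand's inequality applied round by round. Here linearity is decisive: any two edges of $\cH$ share at most one vertex, so two distinct neighbours of $e$ in $L(\cH)$ meet $e$ in different vertices, and the number of short cycles in the line graph that contain $e$ and witness $c$ is negligible. This tight control of dependencies verifies the Lipschitz/certifiable hypotheses of Talagrand with small constants and yields per-round deviations of order $\sqrt{L_i(e)\log D}$ and $\sqrt{d_i(e,c)\log D}$, much smaller than what one obtains in the general bounded-codegree setting. A union bound over $e$, over $c\in L_i(e)$, and over at most $T = \Theta(\log D)$ rounds costs only $O(\log D)$ extra logarithmic factors and succeeds with high probability.

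I would then iterate for $T$ rounds chosen so that the residual maximum list size and maximum colour degree both drop to $D_T = \Theta(D^{1-1/k}\log^3 D)$; this is the first scale at which the per-step concentration error becomes comparable to $D_i$, so one must stop. Tracking the accumulation of errors inductively (each round multiplies them by roughly $p^{-1}$ and adds $\sqrt{D_i \log D}$), one finds a total error that is at most $O(D^{1-1/k}\log^c D)$ for an absolute constant $c$. Absorbing the union-bound loss gives the final $\log^4 D$ factor. After the iteration, each remaining uncoloured edge has a list of size $\Omega(D^{1-1/k}\log^3 D)$ which exceeds the number of its still-uncoloured neighbours by at least a constant factor, so the residual hypergraph can be list-edge-coloured using no additional colours by an application of the symmetric Lov\'asz Local Lemma.

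The main obstacle is the second paragraph: getting the concentration strong enough, \emph{at every round}, so that the compounded error stays polylogarithmic in $D$ rather than polynomial. In particular, one has to verify that the Talagrand Lipschitz constant for $d_i(e,c)$ is $O(1)$ and the certificate size is $O(L_i(e))$, which uses the linearity hypothesis through the observation that any colour $c$ deleted from $L_i(e)$ is `blamed' on a single neighbour of $e$. Handling the early rounds, where the variance is largest relative to the target error, and choosing the correct stopping time $T$ to balance the $\sqrt{D_i \log D}$ per-round error against the shrinkage rate $p$, are the delicate points that ultimately produce the $\log^4 D$ factor in the stated bound.
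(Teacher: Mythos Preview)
The paper does not prove Theorem~\ref{thm:molloy-reed}; it is quoted as a result of Molloy and Reed~\cite{MR2000} without proof, so there is no ``paper's own proof'' to compare against. Your outline is broadly the approach of~\cite{MR2000} (and of~\cite[Chapter~14]{MR02}): run a wasteful semi-random colouring, track list sizes and colour degrees with Talagrand-type concentration, iterate until the residual parameters are small, then finish greedily via the Local Lemma.

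That said, two points in your sketch need correction or expansion. First, your claim that ``two distinct neighbours of $e$ in $L(\cH)$ meet $e$ in different vertices'' is false: many neighbours of $e$ can share the \emph{same} vertex of $e$ (indeed, up to $D-1$ of them at each of the $k$ vertices). Linearity only ensures that two neighbours $f,g$ of $e$ meeting $e$ at the same vertex $v$ satisfy $f\cap g=\{v\}$, so the neighbourhood of $e$ in $L(\cH)$ decomposes into $k$ near-cliques of size at most $D-1$. This clique structure, not the absence of common vertices, is what drives the concentration analysis. Second, you assert that the process should be stopped when the parameters reach $\Theta(D^{1-1/k}\log^3 D)$ but give no mechanism for the exponent $1-1/k$. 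In the Molloy--Reed analysis, the $k$-dependence enters because the colour-degree shrinkage and the list-size shrinkage differ by a factor governed by the $k$ groups of neighbours of each edge; the compounded multiplicative drift over $\Theta(\log D)$ rounds is what produces the $D^{-1/k}$ gain. Your sketch treats the ratio $d_i(e,c)/L_i(e)$ as staying ``close to $1$'', which if literally true would not yield any saving over $D$; the point is precisely that this ratio decays geometrically, and quantifying that decay is where $k$ appears. Without this, the proposal does not yet explain the stated bound.
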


For graphs, this result improves a result of H{\"a}ggkvist and Janssen~\cite{HJ97} and provides the best known general bound for the List Edge Colouring conjecture.%
\COMMENT{(H{\"a}ggkvist and Janssen~\cite{HJ97} also proved that every $n$-vertex complete graph has list chromatic index at most $n$, and Schauz~\cite{S14} proved that if $n - 1$ is an odd prime, then every $n$-vertex complete graph has list chromatic index at most $n - 1$.  These results confirm the List Edge Colouring conjecture in their respective cases, but the conjecture is still not known to hold for all complete graphs).}  
Molloy and Reed~\cite{MR2000} actually proved a more general result showing that every $k$-uniform hypergraph $\cH$ with maximum degree at most $D$ and codegree at most $C$ has list chromatic index at most $D + O(D(D/C)^{-1/k} (\log D / C)^4)$, which also gave the best known bound on the ordinary chromatic index $\chi'(\cH)$. Very recently, Kang, K\"{u}hn, Methuku, and Osthus~\cite{KKMO2020} showed that this bound on the chromatic index can be improved further. For linear hypergraphs their result can be stated as follows.

\begin{theorem}[Kang, K\"{u}hn, Methuku, and Osthus~\cite{KKMO2020}] 
 \label{thm:kkmo-colouring}
Let $k \geq 3$, and let $D, n, C > 0$, $0 < \gamma,\mu < 1$, and $0 < \eta < \frac{k-2}{k(k^3 + k^2 - 2k + 2)}$. Then there exists $n_0 = n_0(k , \gamma , \eta, \mu)$ 
	such that the following holds for all $n \geq n_0$ and $D \geq \exp(\log^\mu n)$.
		
	If $\cH$ is a $k$-uniform linear hypergraph on $n$ vertices with maximum degree at most $D$, then $\chi'(\cH) \leq D + D^{1 - 1/k - \eta}$.
\end{theorem}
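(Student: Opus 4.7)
The plan is to combine the iterative ``semi-random'' strategy of Pippenger and Spencer (used for Theorem~\ref{thm:pippenger-spencer}) with the augmenting-star refinement of the R\"odl nibble developed by the same authors for Theorem~\ref{thm:kkmo}. First I would invoke Kahn's embedding trick from~\cite{kahn1992coloring} to assume without loss of generality that $\cH$ is, in addition, nearly $D$-regular: short edges are padded with fresh isolated vertices, so linearity is preserved, and it suffices to edge-colour a nearly $D$-regular, $k$-uniform linear hypergraph. The colour classes $M_1, M_2, \ldots$ would then be extracted iteratively, so that at step $i$ the remaining sub-hypergraph $\cH_i \coloneqq \cH \setminus (M_1 \cup \cdots \cup M_{i-1})$ is nearly $D_i$-regular with $D_i \approx D - i$ and still linear (linearity is inherited automatically by sub-hypergraphs).

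To produce $M_i$, apply the augmented-nibble proof of Theorem~\ref{thm:kkmo} to $\cH_i$: a short R\"odl nibble builds an initial matching $M_i^0$, and the many well-distributed augmenting $k$-stars guaranteed by the linearity of $\cH_i$ are then used to absorb most of the remaining uncovered vertices, yielding a matching $M_i$ that leaves only a $D_i^{-1/(k-1) - \eta'}$-fraction of vertices uncovered for some $\eta' > \eta$. Performing these extractions in batches of size $o(D)$ and combining Talagrand/Azuma concentration with the union bound within each batch (as in Pippenger and Spencer's proof of Theorem~\ref{thm:pippenger-spencer}) would preserve near-regularity throughout the process. After $D - t$ iterations with $t \coloneqq \lceil D^{1 - 1/k - \eta}\rceil$, a book-keeping argument shows that the leftover hypergraph has maximum degree at most $(1+o(1))t$, and Theorem~\ref{thm:molloy-reed} finishes it with $t + O(t^{1 - 1/k} \log^4 t) = (1+o(1))\, D^{1 - 1/k - \eta}$ additional colours, giving a grand total of at most $D + D^{1 - 1/k - \eta}$.

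The main obstacle is running the augmenting-star argument, which was designed as a one-shot technique, inside an iterative extraction process. Each round must leave $\cH_{i+1}$ not only with the correct maximum degree but also with enough residual quasirandomness for the next round's nibble-and-augment step to succeed with high probability; in particular, the distribution of the small auxiliary subgraphs used as potential augmenting stars must not drift far from its initial behaviour over $\Theta(D)$ dependent rounds. Tracking these correlations, and balancing the per-round ``cost'' of each nibble against the number of iterations and the cost of finishing the leftover, is the delicate core of the argument and is precisely what forces the specific upper bound $\eta < (k-2)/(k(k^3 + k^2 - 2k + 2))$ appearing in the statement.
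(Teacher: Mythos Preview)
Your iterative extraction plan is plausible in spirit but is not the route taken in~\cite{KKMO2020} (as described in this survey), and the obstacle you flag is precisely the reason why. The paper explains that Theorem~\ref{thm:kkmo-colouring} is derived from a \emph{pseudorandom} version of Theorem~\ref{thm:kkmo} (namely~\cite[Theorem~7.1]{KKMO2020}) via the incidence-hypergraph trick of Definition~\ref{def:incidence-hypergraph} and Observation~\ref{obs:incidence-hypergraph}. One forms $\cH^* \coloneqq \hgInc_D(\cH)$, which is $(k+1)$-uniform and linear, and runs the augmented nibble \emph{once} on $\cH^*$ to obtain a single nearly perfect matching $M$ that is pseudorandom with respect to the family $\{[D]\times\{v\} : v\in V(\cH)\}$. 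By Observation~\ref{obs:incidence-hypergraph}\ref{incidence-hypergraph:colouring}, $M$ decomposes into $D$ pairwise edge-disjoint matchings $M_1,\dots,M_D$ of $\cH$, and pseudorandomness with respect to this family forces every vertex to be covered by all but $D^{1-1/k-\eta}$ of the $M_i$, so the leftover has maximum degree at most $D^{1-1/k-\eta}$ and can be finished greedily.

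The advantage of this route over yours is exactly that it sidesteps your ``main obstacle'': the augmenting-star analysis is performed once, on a single fixed linear hypergraph, rather than being iterated $\Theta(D)$ times on a drifting sequence of residual hypergraphs whose quasirandomness and star-distribution would need to be tracked round by round. Your plan would require proving that the nibble-plus-augmentation step, run on $\cH_i$, leaves behind a residual $\cH_{i+1}$ that still supports the augmenting-star argument with essentially the same parameters; you acknowledge this but do not indicate how to do it, and it is not clear that the one-shot analysis of Theorem~\ref{thm:kkmo} survives such iteration without substantial new work. The incidence-hypergraph reduction converts the problem of finding many good matchings into the problem of finding one pseudorandom matching, which is what makes the argument go through cleanly.
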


Theorems~\ref{thm:kkmo}--\ref{thm:kkmo-colouring} are unlikely to be best possible.  The best lower bounds we know come from the following construction, in which every matching leaves $\Omega(n / D)$ vertices uncovered. Consider an $m$-vertex $k$-uniform $D$-regular linear hypergraph $\cH$ such that $m = O(D)$ and $m-1$ is divisible by $k$ (e.g., using a Steiner system $S(2,k,m)$), so the union of $n/m$ disjoint copies of $\cH$ yields an $n$-vertex $k$-uniform $D$-regular hypergraph with at least $\Omega(n/D)$ vertices uncovered by any matching.

If we know more about the hypergraph $\cH$, then the bound given in Theorem~\ref{thm:kkmo} can be improved further.  For example, if $\cH$ is a Steiner triple system (i.e.,~a Steiner system with parameters $(2, 3, n)$), then $\cH$ is $(n-1)/2$-regular and linear, and Brouwer~\cite{brouwer1981size} conjectured the following in 1981.

\begin{conjecture}[Brouwer~\cite{brouwer1981size}]\label{conj:brouwer}
Every Steiner triple system with $n$ vertices has a matching of size at least $\frac{n-4}{3}$.
\end{conjecture}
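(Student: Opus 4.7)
The plan is to attack Conjecture~\ref{conj:brouwer} in two stages: first produce a near-perfect matching via the R\"odl nibble, and then augment it to within a constant of perfection using the highly restrictive structure of Steiner triple systems.

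For the first stage, observe that a Steiner triple system on $n$ vertices is $3$-uniform, linear, and $D$-regular with $D = (n-1)/2$. Applying Theorem~\ref{thm:aks} with $k = 3$ therefore yields a matching $M_0$ covering all but $O(n^{1/2}\log^{3/2} n)$ vertices, which establishes a weak form of the conjecture. The serious difficulty lies in reducing the error term from $O(n^{1/2}\log^{3/2} n)$ down to a \emph{constant}.

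For the second stage, I would try to exploit the fact that Steiner triple systems have extraordinarily rigid combinatorial structure: any two vertices determine a unique third, so alternating and augmenting configurations are severely constrained but also easy to locate. A natural attack is to arrange, during the nibble, for the leftover set $U$ of uncovered vertices to be \emph{well distributed}, in the sense that for any triple $\{u_1, u_2, u_3\} \subseteq U$ the unique Steiner triples through the pairs $\{u_i, u_j\}$ meet $M_0$ in sufficiently many distinct edges to produce a short augmenting structure. One would then iterate: swap an edge of $M_0$ for two new edges that collectively absorb a triple of uncovered vertices, and continue driving down $|U|$. If this loop can be sustained until only a bounded number of vertices remain uncovered, the divisibility condition $n \equiv 1, 3 \pmod{6}$ that governs the existence of Steiner triple systems would let us close the gap and obtain the bound $(n-4)/3$ by finite case analysis.

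The main obstacle is the absorption. Unlike most nibble applications, where leaving $o(n)$ vertices uncovered is already satisfactory, pushing all the way down to a constant requires that \emph{every} possible configuration of leftovers admits an augmenting structure inside a fixed, sparse hypergraph, with no freedom to add auxiliary edges. In particular, the well-distribution property needed above is not automatic from the nibble analysis and would have to be built into the selection probabilities with additional conditioning or a parallel tracking of ``augmenter counts'', in the spirit of the augmenting stars of Kang, K\"uhn, Methuku, and Osthus~\cite{KKMO2020}. I would not expect this outline to settle Conjecture~\ref{conj:brouwer}; rather, it points to the kind of augmentation-beyond-nibble strategy that has driven the recent progress on the problem, and that must be developed substantially further before a constant error can be reached.
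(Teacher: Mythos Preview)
This statement is a \emph{conjecture}, not a theorem: the paper does not prove it, and indeed Brouwer's conjecture remains open. You recognize this yourself, writing that you ``would not expect this outline to settle Conjecture~\ref{conj:brouwer}'', so there is no proof to compare.

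For context, the best result the paper records is that of Keevash, Pokrovskiy, Sudakov, and Yepremyan~\cite{KPSY2020}, who showed that every Steiner triple system has a matching covering all but $O(\log n / \log\log n)$ vertices. That is already far stronger than the $O(n^{1/2}\log^{3/2} n)$ you obtain from Theorem~\ref{thm:aks}, and it was achieved precisely by combining the nibble with absorption-type arguments exploiting pseudorandomness, much in the spirit of your second stage. So your outline is pointed in the right direction, but the gap between $O(\log n/\log\log n)$ and a constant is the genuine open problem, and your sketch does not supply any new idea for closing it. In particular, the ``well-distribution'' and ``augmenter-count tracking'' you allude to are exactly the kinds of properties already engineered in~\cite{KPSY2020} and~\cite{KKMO2020}; pushing them to a constant error appears to require something qualitatively new.
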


Recently, combining the nibble method with the robust expansion
properties of edge-coloured pseudorandom graphs, Keevash, Pokrovskiy, Sudakov, and Yepremyan~\cite{KPSY2020} showed that every Steiner triple system has a matching covering all but at most $O(\log n / \log \log n)$ vertices. 

A related problem is a famous conjecture of Ryser, Brualdi and Stein~\cite{br1991, ryser1967, stein1975} which states that every $n \times n$ Latin square has a transversal of order $n - 1$ and moreover, if $n$ is odd, then it has a full transversal. The best known bound for this problem was given in~\cite{KPSY2020} where the authors showed that every $n \times n$ Latin square contains a transversal of order $n - O({\log n}/{\log \log n})$. The problem of finding large transversals in Latin squares can be rephrased as a problem about finding large matchings in hypergraphs. Indeed, we can construct a $3$-uniform hypergraph $\cH_{\cL}$ on $3n$ vertices from an $n \times n$ Latin square $\cL$ as follows. The vertex set of $\cH_{\cL}$ is $V(\cH_{\cL}) = R \cup C \cup S$ where $R$, $C$ and $S$ are the rows, columns and symbols of $\cL$. For every entry of $\cL$ we add an edge to $\cH_{\cL}$ -- if the $(i,j)$-th entry of $\cL$ contains a symbol $s$ then we add the edge $\{i,j,s\}$ to $\cH_{\cL}$. Clearly, $\cH_{\cL}$ is $n$-regular and $3$-partite, and a matching in $\cH$ corresponds to a transversal in $\cL$. Thus the Ryser-Brualdi-Stein conjecture can be regarded as the `partite-version' of Brouwer's conjecture.

Similarly, it is interesting to determine the maximum chromatic index of an $n$-vertex Steiner triple system (or an $n \times n$ Latin square).  Meszka, Nedela, and Rosa~\cite{MNR2006} conjectured the following in 2006.

\begin{conjecture}[Meszka, Nedela, and Rosa~\cite{MNR2006}]\label{conj:mnr}
%If $\cH$ is a Steiner triple system with $n > 7$ vertices, then $\chi'(\cH) \in [g(n),g(n)+2]$.
If $\cH$ is a Steiner triple system with $n > 7$ vertices, then $\chi'(\cH) \leq (n - 1)/2 + 3$ and moreover, if $n \equiv 3~(\mathrm{mod}~6)$, then $\chi'(\cH) \leq (n - 1)/2 + 2$.
\end{conjecture}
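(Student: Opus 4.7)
The plan is to follow a reserve-nibble-absorb strategy analogous to the one used for the Erd\H os-Faber-Lov\'asz conjecture in~\cite{KKKMO2021} and outlined in Section~\ref{section:EFL-proof}, since an $n$-vertex Steiner triple system $\cH$ is $(n-1)/2$-regular, linear, and $3$-uniform, so that $\chi'(\cH) \geq \Delta(\cH) = (n-1)/2$ with equality precisely when $\cH$ is resolvable. The difficulty is that the target gap of only $2$ or $3$ colours is far below the additive error $O(n^{2/3 - \eta})$ coming from Theorem~\ref{thm:kkmo-colouring}, so a much sharper procedure is needed, and in particular no direct application of the nibble-based theorems of Section~\ref{edge-colouring:asymptotic} will suffice.

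I would begin by randomly reserving a small quasirandom set of edges $\mathcal{R} \subseteq \cH$ together with $2$ or $3$ \emph{spare} colours, and then apply a nibble-based list edge-colouring procedure in the spirit of Kahn's proof of Theorem~\ref{thm:asym-list-edge-col} to $\cH \setminus \mathcal{R}$, using a palette of size $(n-1)/2$. The goal of this main phase is a proper partial edge-colouring $\phi$ whose uncoloured set $\mathcal{B}$ is small and quasirandomly distributed, and whose colour classes each leave only $o(n)$ vertices uncovered in a well-distributed manner. In a second absorption phase, I would use $\mathcal{R}$ together with short Vizing-style swap configurations to re-colour $\mathcal{B}$: each uncoloured edge should be insertable into an existing colour class via a bounded cascade of swaps along edges of $\mathcal{R}$, with only a few unavoidable conflicts spilling over onto the spare colours. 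The improvement in the case $n \equiv 3 \pmod 6$ should come from aligning the repair step with near-resolution classes of $\cH$, whose existence in this congruence class is structurally more robust, saving one of the three spare colours.

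The main obstacle is the absorption. Even the best current bounds on nearly perfect matchings in Steiner triple systems, due to Keevash, Pokrovskiy, Sudakov, and Yepremyan~\cite{KPSY2020}, leave $\Omega(\log n / \log\log n)$ vertices uncovered per colour class, and standard absorption techniques typically consume $\omega(1)$ extra colours rather than a bounded number. Closing the gap between ``$O(n^{2/3})$ extra colours'' and ``$3$ extra colours'' appears to demand a genuinely new structural statement: morally, that every Steiner triple system admits a proper edge-colouring with $(n-1)/2$ colours leaving only $O(1)$ edges uncoloured. Achieving this uniformly, without assuming any a priori decomposition into parallel classes, is where the probabilistic nibble machinery would have to be fused with substantially new insights into the rigidity of Steiner triple systems, and this fusion seems to be the crux of the conjecture.
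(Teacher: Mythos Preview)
The statement you are addressing is Conjecture~\ref{conj:mnr}, which the paper presents as an \emph{open} problem; there is no proof in the paper for you to compare against. Immediately after stating the conjecture, the paper only records the trivial lower bound $\chi'(\cH)\geq (n-1)/2$, notes that constructions show the conjectured bound would be tight, and then remarks that the best known upper bound is $n/2 + O(n^{2/3 - 1/100})$, coming from Theorem~\ref{thm:kkmo-colouring}. So the paper's ``proof'' consists of nothing beyond stating that the conjecture is open and quoting the current record.

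Your proposal is not really a proof either, and to your credit you essentially say so: you correctly observe that the nibble-based results of Section~\ref{edge-colouring:asymptotic} give an additive error of order $n^{2/3-\eta}$, that the absorption machinery of Section~\ref{section:EFL-proof} and even the results of~\cite{KPSY2020} cannot close the gap down to a bounded number of extra colours, and that a genuinely new structural ingredient would be required. That diagnosis matches the paper's assessment. Where your write-up goes astray is only in framing: it is presented as a ``plan'' for a proof, but the final paragraph concedes that the crux step is missing. There is no gap to name beyond the one you already identify yourself --- the conjecture is open, and neither the paper nor your outline resolves it.
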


Since an $n$-vertex Steiner triple system is $(n - 1)/2$-regular, it is obvious that $\chi'(\cH)\geq(n - 1)/2$, and equality holds if and only if $\cH$ can be decomposed into perfect matchings. Hence, if $n \equiv 1~(\mathrm{mod}~6)$, then $\chi'(\cH)\geq(n + 1)/2$.
In fact, there are constructions of Steiner triple systems with $n$ vertices which show that Conjecture~\ref{conj:mnr}, if true, is tight ~\cite{BCHW2017, meszka2013, RW1971, VSSRRCCC1993}. 
Similarly, for Latin squares the following conjecture was posed independently by Cavenagh and Kuhl~\cite{ck2015} in 2015 and Besharati,  Goddyn,  Mahmoodian, and Mortezaeefarbeen~\cite{bgmm2016} in 2016.

\begin{conjecture}\label{conj:latin}
    Let $\cL$ be an $n \times n$ Latin square.  If $\cH_{\cL}$ is the corresponding $3$-uniform $3$-partite hypergraph, then $\chi'(\cH_{\cL}) \leq n+2$ and moreover, if $n$ is odd, then $\chi'(\cH_{\cL}) \leq n+1$.
\end{conjecture}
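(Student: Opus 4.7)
The plan is to attempt this by adapting techniques from the authors' proof of the Erd\H os-Faber-Lov\'asz conjecture~\cite{KKKMO2021}, combined with recent advances on transversals in Latin squares. Note that $\cH_{\cL}$ is $n$-regular, $3$-uniform, $3$-partite, and linear, so $\chi'(\cH_{\cL}) \geq n$ trivially, and the target is only an additive constant above this lower bound. The entire difficulty lies in closing this constant-sized gap.

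First, I would produce a near-optimal initial colouring. Theorem~\ref{thm:kkmo-colouring} already gives $\chi'(\cH_{\cL}) \leq n + O(n^{2/3 - \eta})$, which is far from the conjectured $n+2$. To compress this excess I would reserve, in advance, a small set of `absorbing' colours, and then run a carefully controlled R\"odl nibble on the remaining colour budget, exploiting the $3$-partite linear structure so that each colour class is a partial transversal missing only a tiny number of rows, columns, and symbols. The point of reserving colours up front (rather than adding them at the end) is to keep control over which edges are left uncoloured and where they sit.

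Second, I would perform an absorption phase. After the nibble, a small collection of uncoloured edges remains, and I would use the reserved colours together with augmenting-path swaps (in the spirit of Vizing's theorem and Kahn's proof of Theorem~\ref{thm:asym-list-edge-col}) to redistribute edges among the near-perfect matchings until the leftovers can be incorporated. For odd $n$, a single full transversal---such as the near-full transversal of size $n - O(\log n/\log\log n)$ of~\cite{KPSY2020}, or one supplied by a resolution of Ryser-Brualdi-Stein---would serve as an anchor for this redistribution.

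The main obstacle is that the conjecture essentially subsumes Ryser-Brualdi-Stein: an edge-colouring with $n+1$ colours of an $n^2$-edge hypergraph forces, by averaging, at least one colour class of size exactly $n$, which is a full transversal of $\cL$. Hence any successful approach must simultaneously produce, within a fixed decomposition, one matching of size $n$ and $n$ further matchings of nearly optimal size. Current nibble-plus-absorption methods leave $\Theta(\log n / \log\log n)$ uncovered vertices per large matching, and reducing this to $O(1)$ simultaneously across $n+O(1)$ matchings appears to require a genuinely new ingredient: either a resolution of Ryser-Brualdi-Stein itself, or an absorbing gadget tailored to the $3$-partite linear setting which can soak up all of the defect using only a bounded number of extra colours.
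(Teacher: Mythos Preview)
This statement is labelled as a \emph{Conjecture} in the paper, and the paper does not prove it. The surrounding text explicitly records it as an open problem, notes that it implies the Ryser--Brualdi--Stein conjecture, and states that the best known bound is only $\chi'(\cH_{\cL}) \leq n + O(n^{2/3 - 1/100})$, obtained via Theorem~\ref{thm:kkmo}. There is therefore no ``paper's own proof'' to compare your attempt against.

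Your proposal is not a proof either, and to your credit you say so yourself. The outline you give---reserve absorbing colours, run a nibble to produce near-perfect partial transversals, then absorb the defects---is a reasonable description of the general machinery, but you correctly identify the genuine gap: an $(n+1)$-colouring forces a colour class of size $n$, i.e.\ a full transversal, so the odd-$n$ case already contains Ryser--Brualdi--Stein. More than that, it requires not just one full transversal but a decomposition of $\cH_{\cL}$ into $n+O(1)$ matchings each of size $n-O(1)$, which is strictly stronger. The nibble-plus-absorption template from~\cite{KKKMO2021} and~\cite{KPSY2020} currently leaves a per-matching defect of order $\log n/\log\log n$, and nothing in your sketch explains how to bring this down to $O(1)$. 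Your final paragraph is an accurate diagnosis of why the problem is open rather than a step towards closing it.
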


Note that Conjecture~\ref{conj:mnr} implies Conjecture~\ref{conj:brouwer}\COMMENT{Suppose for a contradiction that $\cH$ is a Steiner triple system with no matching of size at least $(n - 4)/3$ and $\chi'(\cH) \leq (n + 1)/2 + 2$.  Since $\nu(\cH)\chi'(\cH) \geq |\cH| = n(n - 1)/6$ and $\nu(\cH) \leq (n - 6)/3$ (as $\nu(\cH)$ is an integer and $n$ is congruent to $1$ or $3$ mod $6$), we have $(n^2 - n - 30)/6 = ((n - 6)/3)((n + 5)/2) \geq (n^2 - n)/6$, a contradiction.}, and Conjecture~\ref{conj:latin} implies the Ryser-Brualdi-Stein conjecture\COMMENT{Suppose for a contradiction that $\cL$ is a Latin square with no transversal of size $n - 1$ such that $\chi'(\cH_\cL) \leq n + 2$.  Since $\nu(\cH_\cL)\chi'(\cH_\cL) \geq |\cH_L| = n^2$ and $\nu(\cH_\cL) \leq n - 2$, we have $n^2 - 4 = (n - 2)(n + 2) \geq n^2$, a contradiction.  Similarly, if $n$ is odd, $\cL$ has no transversal of size $n$, and $\chi'(\cH_\cL) \leq n + 1$, then we have $(n - 1)(n + 1) \geq n^2$, a contradiction. }. %Conjectures~\ref{conj:mnr} and~\ref{conj:latin} are wide open. 
Theorem~\ref{thm:kkmo} implies that every $n$-vertex Steiner triple system has chromatic index at most $n/2 + O(n^{2/3 - 1/100})$ and every hypergraph corresponding to an $n \times n$ Latin square has chromatic index at most $n + O(n^{2/3 - 1/100})$; currently these bounds are the best known.

\subsection{Pseudorandom hypergraph matchings}\label{edge-colouring:pseudo}

Let $\cH$ be a $k$-uniform $D$-regular hypergraph on $n$ vertices, and let $M\subseteq\cH$ be a random matching generated by the nibble process, such that $M$ covers all but at most $\eps n$ vertices of $\cH$ (with high probability), where $\eps \in (0, 1)$.  A heuristic argument suggests that each vertex of $\cH$ is left uncovered by $M$ roughly independently with probability $\eps$.  In many applications (including our proof in~\cite{KKKMO2021}), it is useful to find a nearly perfect matching guaranteed by Theorem~\ref{thm:pippenger} with additional `pseudorandom' properties that are compatible with this heuristic.  
In this subsection, we discuss some results that provide nearly perfect pseudorandom hypergraph matchings and some of their applications.
In particular, we show how a `pseudorandom version' of Pippenger's theorem (Theorem~\ref{thm:pippenger}) is in fact equivalent to the Pippenger-Spencer theorem (Theorem~\ref{thm:pippenger-spencer}). 

The first pseudorandom hypergraph matching result of this sort was proved by Alon and Yuster~\cite{AY2005} in 2005.\COMMENT{The following is a weaker but more concise, and still applicable version of the Alon-Yuster theorem~\cite{AY2005}:
\begin{theorem}[Alon and Yuster~\cite{AY2005}]\label{thm:alon-yuster}
  For every $k, \eps > 0$ there exist $\delta, n_0 > 0$ such that the following holds.  Suppose $\cH$ is an $n$-vertex $k$-uniform hypergraph and $\cF$ is a collection of subsets of $V(\cH)$ such that $|\cF| \leq n^{\log n}$.  If $n \geq n_0$, every vertex of $\cH$ has degree $(1 \pm \delta)D$, and $\cH$ has codegree at most $D / \log^{9k} n$, then there exists a matching $M$ of $\cH$ such that every $S \in \cF$ with $|S| \geq D^{1/3}\log^3n$ satisfies $|S \setminus V(M)| \leq \eps |S|$.
\end{theorem}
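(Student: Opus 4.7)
The plan is to apply the R\"odl nibble process as outlined after Theorem~\ref{thm:pippenger} for a constant number of rounds, and in parallel verify concentration of $|S \setminus V(M)|$ simultaneously for every $S \in \cF$. First, fix $\eps' > 0$ small and $t \in \mathbb{N}$ large enough (depending on $k$ and $\eps$) so that after $t$ nibble rounds, the expected fraction of uncovered vertices in any vertex set $S$ is at most $\eps/3$; this follows from the same geometric-sum calculation that controls the total number of uncovered vertices in the sketch of Theorem~\ref{thm:pippenger}. Then choose $\delta = \delta(k, \eps, t) > 0$ sufficiently small.

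Running the nibble for $t$ rounds with target degrees $D_i \coloneqq e^{-\eps'(k-1)(i-1)}D$, I would maintain by induction on $i$ that with high probability $\cH_i$ is $(1 \pm \delta)D_i$-regular with codegree at most $D_i/\log^{8k}n$, and that for every $S \in \cF$ with $|S_i| \geq D^{1/3}\log^3 n$, where $S_i \coloneqq S \cap V(\cH_i)$, we have $|S_{i+1}| = (1 \pm \delta)e^{-\eps'}|S_i|$. Once $|S_i|$ drops below $D^{1/3}\log^3 n$ it may be dropped from the tracking, which is harmless since the theorem only requires a bound of $\eps|S|$ on the final uncovered part of $S$ and the threshold itself is $\le \eps |S|$ after a harmless adjustment of constants. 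Each inductive statement is about a function of the independent Bernoulli coordinates $\{\mathbf{1}[e \in X_i] : e \in \cH_i\}$, for which I would aim for a failure probability of at most $\exp(-\Omega(\log^2 n))$. A union bound over the $t$ rounds, the $O(n^2)$ degree and codegree quantities, and the $|\cF| \leq n^{\log n}$ tracked sets then gives overall failure probability $o(1)$, and telescoping the estimate $|S_{i+1}| \leq (1 + \delta)e^{-\eps'}|S_i|$ across $i = 1, \dots, t$ yields $|S \setminus V(M)| = |S_{t+1}| \leq \eps|S|$ for every large $S \in \cF$.

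The main obstacle is obtaining the strong concentration described above. A naive edge-exposure martingale is too weak: there are $\Omega(nD_i)$ independent coordinates each with Lipschitz constant $O(k)$, producing only $O(\sqrt{nD_i})$-scale deviations, which are useless when $|S|$ is as small as $D^{1/3}\log^3 n$. The appropriate tool is Talagrand's inequality (or Kim--Vu polynomial concentration), applied with a self-certifying witness: for each $v \in S_i$ that remains uncovered, every edge of $X_i$ containing $v$ must have a conflicting edge in $X_i$, and the codegree hypothesis bounds the number of candidate conflicts per edge by $kD_i/\log^{9k}n$. This keeps the effective variance of $|S_{i+1}|$ essentially linear in $|S_i|$ rather than quadratic, which is precisely the role of the codegree assumption $C \leq D/\log^{9k}n$ and what ultimately produces the $D^{1/3}\log^3 n$ threshold in the statement.
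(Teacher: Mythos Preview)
The paper does not prove this theorem; it is cited from Alon and Yuster and the statement itself appears only inside a \verb|\COMMENT| block. What the paper does say (in Section~\ref{edge-colouring:pseudo}) is that Alon and Yuster obtained their result \emph{via the Pippenger--Spencer theorem} (Theorem~\ref{thm:pippenger-spencer}), and that pseudorandom matching statements of this kind can alternatively be proved ``more directly'' by a careful analysis of the nibble itself. Your proposal is this second, direct route, so it is a genuinely different approach from the one the paper attributes to~\cite{AY2005}. The direct route buys you a self-contained argument that makes the role of each hypothesis visible; going through Pippenger--Spencer is shorter once that theorem is in hand but hides where the parameters come from.

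There is, however, a real gap in your sketch. You write $|S \setminus V(M)| = |S_{t+1}|$, but these quantities are not equal. A vertex $v \in S$ may be removed from $\cH_i$ in some round (because an edge through $v$ lands in $X_i$) yet fail to be covered by the final matching $M = \bigcup_i N_i$, since every such edge may intersect another edge of $X_i$ and be discarded when passing from $X_i$ to $N_i$. Your telescoping only controls $|S_{t+1}|$, the vertices that survive every round; the ``removed but unmatched'' vertices are never accounted for. Your last paragraph, which speaks of conflicting edges in $X_i$, reads as though you sense this distinction, but it is absent from the inductive scheme you actually set up. To repair the argument you must either track $|S \setminus V(\bigcup_{j<i} N_j)|$ directly, or bound separately (in expectation and in concentration) the number of $S$-vertices wasted in each round. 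The codegree hypothesis enters in both places: it keeps $\cH_i$ nearly regular, and it bounds the number of possible conflicts per selected edge so that the wasted fraction is $O(\eps')$ per round with the requisite $\exp(-\Omega(\log^2 n))$ tail.
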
}
With a slightly stronger assumption regarding codegrees,
Ehard, Glock, and Joos~\cite{EGJ2020} recently proved a stronger and more flexible version. % of the result of Alon and Yuster~\cite{AY2005}.  
The following is an immediate corollary of~\cite[Theorem 1.2]{EGJ2020}.

\begin{theorem}[Ehard, Glock, and Joos~\cite{EGJ2020}]\label{thm:egj}
    For every $k\geq 2$ and $\delta \in (0, 1)$, there exists $D_0$ such that the following holds for all $D\geq D_0$ and $\eps \coloneqq \delta / (50 k^2)$.  Suppose that $\cH$ is a $k$-uniform hypergraph and $\cF$ is a collection of subsets of $V(\cH)$ such that $|\cF| \leq \exp(D^{\eps^2})$ and $\sum_{v\in S}d_\cH(v) \geq k D^{1 + \delta}$ for every $S \in \cF$.  If $\cH$ has maximum degree at most $D$, codegree at most $D^{1 - \delta}$, and $e(\cH) \leq \exp(D^{\eps^2})$, then there exists a matching $M$ of $\cH$ such that every $S \in \cF$ satisfies $|S \cap V(M)| = (1 \pm D^{-\eps})\sum_{v \in S}d_\cH(v) / D$.
\end{theorem}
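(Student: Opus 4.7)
I would prove Theorem~\ref{thm:egj} by a careful, concentration-driven implementation of the R\"odl nibble process described after Theorem~\ref{thm:pippenger}, tracking at every round $i$ the weighted quantities $w_i(S) \coloneqq \sum_{v \in S_i} d_{\cH_i}(v)$ for each $S \in \cF$, as well as the degrees $d_{\cH_i}(v)$ themselves, where $\cH_i$ is the remaining hypergraph and $S_i \coloneqq S \setminus V(\bigcup_{j<i} N_j)$. The heuristic is that in each round an edge is put into the partial matching $N_i$ with probability roughly $\eps' e^{-\eps' k}/D$ (independently of $D_i$ once properly normalised), so that a vertex $v$ of original degree $d_\cH(v)$ is covered by $M \coloneqq \bigcup_{i\le t} N_i$ with probability close to $\alpha\, d_\cH(v)/D$, where $\alpha = \eps' e^{-\eps' k}(1-e^{-\eps' t})/(1-e^{-\eps'})$; choosing $t = \Theta(\log D)$ and $\eps'$ small enough makes $\alpha = 1 - O(D^{-\eps})$, which matches the target expectation $\sum_{v\in S} d_\cH(v)/D$.

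First I would verify the pseudorandom trajectory: with high probability, after $i$ rounds every surviving vertex satisfies $d_{\cH_i}(v) = (1 \pm \eta_i) e^{-\eps'(k-1)i} d_\cH(v)$ and $w_i(S) = (1 \pm \eta_i') e^{-\eps' k i} \sum_{v\in S} d_\cH(v)$, with $\eta_i, \eta_i'$ slowly growing. The codegree assumption $D^{1-\delta}$ is exactly what decouples the events ``$v$ dies in round $i$'' across distinct vertices enough to make these one-step heuristic expectations correct up to a negligible multiplicative error.

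The main technical step is concentration. At each round $i$ I would expose the edges of $X_i$ one by one, form a Doob martingale for each tracked variable, and apply Freedman's inequality. A single edge inclusion shifts $w_i(S)$ by at most $k \cdot \max_v d_{\cH_i}(v) \le k D_i$, and the predictable quadratic variation is of order $D_i \cdot w_i(S)$ (the factor $D_i$ from the degrees of vertices hit, and $w_i(S)$ from the total mass at risk). This produces a one-step tail bound of the form $\exp(-D^{c\delta})$ for some $c = c(k)>0$. Since $|\cF| \le \exp(D^{\eps^2})$ and $e(\cH) \le \exp(D^{\eps^2})$, a union bound over all tracked events, over all rounds $i \le t$, and over all $S \in \cF$ (and all vertices for the degree events) succeeds once $\eps$ is chosen small enough relative to $\delta$ and $k$.

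The main obstacle will be those $S \in \cF$ whose weight $\sum_{v\in S} d_\cH(v)$ only just exceeds the threshold $kD^{1+\delta}$, since the relative error $D^{-\eps}$ is hardest to guarantee there. This is precisely why the hypothesis demands weight at least $kD^{1+\delta}$: the variance of $w_i(S)$ scales with $D_i \cdot w_i(S)$, so the standard deviation is of order $\sqrt{D_i \cdot w_i(S)}$, and the hypothesis forces the multiplicative fluctuation $\sqrt{D/w_i(S)} \le D^{-\delta/2}$, which after accumulating over the $\Theta(\log D)$ rounds still beats $D^{-\eps}$ with $\eps = \delta/(50k^2)$. A secondary issue is the terminal phase of the nibble, where $D_i$ has shrunk: one stops at a $t$ with $D_t$ still a small polynomial in $D$, so that the uncovered residue is already of the right order and no further clean-up is required.
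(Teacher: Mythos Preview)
The paper does not itself prove Theorem~\ref{thm:egj}; it is quoted as an immediate corollary of~\cite[Theorem~1.2]{EGJ2020}. However, the paper does comment on how Ehard, Glock, and Joos prove it: via a generalization of the Molloy--Reed list edge-colouring theorem (Theorem~\ref{thm:molloy-reed}). Roughly, one first obtains a near-optimal edge-colouring of $\cH$ with about $D$ colours, and a random colour class then contains each edge with probability close to $1/D$; concentration of the weight functions $\omega_S(e)=|e\cap S|$ yields the stated conclusion. Your proposal instead attacks the result by a direct analysis of the nibble process, which the paper explicitly flags as a legitimate alternative (``one could also prove Theorem~\ref{thm:egj} `more directly' by a more careful analysis of the proof of Theorem~\ref{thm:pippenger}''). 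So your route is genuinely different from the one EGJ take: the edge-colouring route gets uniform per-edge inclusion probabilities ``for free'' from the colouring and then only needs a single concentration step, whereas your route builds the matching from scratch and must maintain concentration round by round. The edge-colouring route is cleaner for the weighted statement and handles irregular degrees automatically; the direct nibble is more self-contained but more technically demanding.

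One point you gloss over deserves comment. Your heuristic ``$v$ is covered with probability $\alpha\,d_\cH(v)/D$'' is really an edge statement in disguise: the target $|S\cap V(M)| \approx \sum_{v\in S}d_\cH(v)/D$ is exactly $\sum_{e\in M}|e\cap S| \approx \sum_{e\in\cH}|e\cap S|/D$, i.e.\ each \emph{edge} lands in $M$ with probability close to $1/D$. In the nibble on a genuinely irregular hypergraph (only $\Delta(\cH)\le D$ is assumed), edges incident to low-degree vertices do not follow the same survival trajectory as those incident to near-maximum-degree vertices, so your round-by-round formulas $d_{\cH_i}(v)=(1\pm\eta_i)e^{-\eps'(k-1)i}d_\cH(v)$ and the edge-inclusion probability $\alpha/D$ do not hold as stated. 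The standard fix is to first embed $\cH$ in a nearly $D$-regular $k$-uniform linear hypergraph (as noted after Theorem~\ref{thm:pippenger-spencer}) and track the weight functions through the embedding; without that step your trajectory equations need a different justification.
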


Note that if $\cH$ is $D$-regular in Theorem~\ref{thm:egj} and $V(\cH) \in \cF$, then $M$ covers all but at most $nD^{-\eps}$ vertices of $\cH$.  Moreover, for every $S\in\cF$, at most $|S|D^{-\eps}$ vertices are uncovered by $M$, as we would expect if each vertex was uncovered with probability $D^{-\eps}$.
Ehard, Glock, and Joos~\cite{EGJ2020} actually proved a stronger version of Theorem~\ref{thm:egj} involving weight functions on the edges of $\cH$ of the form $\omega : \cH \rightarrow \mathbb R_{\geq0}$.  The `pseudorandomness' heuristic suggests that every edge is in $M$ with probability $1 / D$, and thus the expected total weight of edges in $M$ should be $\sum_{e\in\cH}\omega(e) / D$.

Hypergraph matching results, particularly ones with pseudorandomness guarantees, are widely applicable in combinatorics and beyond.  We give some examples here.  Ford, Green, Konyagin, Maynard, and Tao~\cite{FGKMT17} proved a pseudorandom generalization of Theorem~\ref{thm:pippenger} (stated for \textit{coverings} rather than matchings and which allows for non-uniform hypergraphs).  They used it to improve bounds on gaps between prime numbers.  As we saw in Section~\ref{edge-colouring:asymptotic}, in~\cite{KKMO2020} and~\cite{KPSY2020}, pseudorandom properties of hypergraph matchings can be `bootstrapped' to produce a larger matching.  Furthermore, in some applications of the `absorption method', such as~\cite{CDGKO20, FJS19, GKLO16, GKMO21, keevash2014existence}, a matching in an auxiliary hypergraph is used to construct a nearly spanning structure which is complemented by an `absorbing structure', so that the pseudorandom properties can be exploited for `absorption', which results in a spanning structure\COMMENT{The `absorption' argument in the proofs of the existence of designs in~\cite{keevash2014existence} and~\cite{GKLO16} is actually used to obtain a decomposition, but as discussed in Section~\ref{edge-colouring:early}, $(t, k, n)$-Steiner systems correspond to perfect matchings in $\cH^*_{t, k, n}$.}.  
Hypergraph matchings with pseudorandomness properties can also be used to construct approximate decompositions (see e.g.~\cite{GJKKO21, GKLO16, keevash2014existence, KKKO20}) or edge-colourings.  Indeed, in the proof of the Erd\H os-Faber-Lov\'asz conjecture, we use Theorem~\ref{thm:egj} to obtain a partial edge-colouring of a linear hypergraph in which each colour class has pseudorandom properties that enable some of the uncovered vertices to be absorbed\COMMENT{The absorption argument uses Hall's theorem~\cite{H35} applied to a set of `reservoir' edges to find a matching augmenting each colour class.  The pseudorandomness of each colour class is essential for verifying that the Hall's condition holds.  The proof of Kom{l\'{o}}s, S{\'{a}}rk{\"{o}}zy, and Szemer{\'{e}}di~\cite{kss1997} of the `Blow-up lemma' could be considered a precursor to this argument.} (see Section~\ref{efl:small} for more details).  As an illustration of this approach, we show how Theorem~\ref{thm:egj} implies a version of the Pippenger-Spencer theorem.  
%\subsubsection{Approximate decompositions via pseudorandom hypergraph matchings}
First we need the following definition and observation.

%Indeed, although absorption was first introduced systematically by R{\"o}dl, Rucinski, and Szemer{\'e}di~\cite{RRS06} in 2006, such arguments appeared earlier; 

%The proof~\cite{kss1997} of the blow up lemma is also based on a similar idea where one first embeds most of the vertices of a bounded graph $H$ using a randomised algorithm, and then finishes the embedding of $H$ using pseudorandom properties of the leftover vertices.

\begin{definition}\label{def:incidence-hypergraph}
  For every hypergraph $\cH$ and $t \in \mathbb N$, we define the \textit{$t$-wise incidence hypergraph} $\cH^* \coloneqq \hgInc_t(\cH)$ to be the hypergraph with 
  \begin{itemize}
      \item vertex set $\cH \cup \left([t]\times V(\cH)\right)$ and
      \item edge set $\{\{e\}\cup (\{i\}\times e) : e \in \cH, i \in [t]\}$.
  \end{itemize}
\end{definition}
That is, for every $e = \{v_1, \dots, v_k\} \in \cH$, we include $t$ edges in the $t$-wise incidence hypergraph $\cH^* \coloneqq \hgInc_t(\cH)$, where each such edge is of the form $\{e, (i, v_1), \dots, (i, v_k)\}$ for some $i \in [t]$.

\begin{obs}\label{obs:incidence-hypergraph}
  Let $\cH$ be a hypergraph, and let $\cH^* \coloneqq \hgInc_t(\cH)$ be the $t$-wise incidence hypergraph.  The following holds.
  \begin{enumerate}[(a)]
      \item\label{incidence-hypergraph:uniformity} If $\cH$ is $k$-uniform, then $\cH^*$ is $(k + 1)$-uniform.
      \item\label{incidence-hypergraph:codegree} The codegree of $\cH^*$ is at most the codegree of $\cH$.
      \item\label{incidence-hypergraph:degree} For every $v \in V(\cH)$ and $i \in [t]$, $d_{\cH^*}((i, v)) = d_\cH(v)$, and for every $e \in \cH$, $d_{\cH^*}(e) = t$.
      \item\label{incidence-hypergraph:colouring}  A set $M \subseteq \cH^*$ is a matching if and only if $M_1, \dots, M_t$, where $M_i \coloneqq \{e \in \cH : \exists f \in M,~f\supseteq \{i\}\times e\}$, are pairwise edge-disjoint matchings in $\cH$.  In particular, the chromatic index of $\cH$ is at most $t$ if and only if $\cH^*$ contains a matching covering $\cH$.
  \end{enumerate}
\end{obs}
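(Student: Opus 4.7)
The statement is a direct unpacking of Definition~\ref{def:incidence-hypergraph}, so the plan is simply to check each of the four claims in turn against the structure of the edges $f_{e,i} \coloneqq \{e\} \cup (\{i\} \times e)$ of $\cH^* = \hgInc_t(\cH)$. I do not expect any real obstacle; the main thing to watch for is a clean case analysis in~\ref{incidence-hypergraph:codegree} and~\ref{incidence-hypergraph:colouring}.

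For~\ref{incidence-hypergraph:uniformity}, I observe that $\{e\}$ and $\{i\}\times e$ are disjoint subsets of $V(\cH^*)$, the first of size $1$ and the second of size $|e|$, so $|f_{e,i}| = |e| + 1 = k+1$ whenever $\cH$ is $k$-uniform. For~\ref{incidence-hypergraph:degree}, the edges containing $(i, v)$ are exactly those $f_{e,i}$ with $e \ni v$, giving $d_{\cH^*}((i,v)) = d_\cH(v)$, and the edges containing a fixed $e \in \cH$ are exactly $f_{e, 1}, \dots, f_{e, t}$, so $d_{\cH^*}(e) = t$.

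For~\ref{incidence-hypergraph:codegree}, I split the codegree computation into cases on the types of the two vertices. Any edge of $\cH^*$ contains exactly one element of $\cH$ (namely $e$) and its elements of $[t]\times V(\cH)$ all share the same first coordinate $i$. Hence two distinct $e, e' \in \cH$ have codegree $0$; an $e \in \cH$ and a $(i,v) \in [t]\times V(\cH)$ have codegree at most $1$ (the only candidate is $f_{e,i}$, and only if $v\in e$); and two $(i, v), (j, w)$ have codegree $0$ when $i\neq j$ and, when $i = j$, equal to the codegree of $v, w$ in $\cH$. In every case the codegree is at most the codegree of $\cH$.

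For~\ref{incidence-hypergraph:colouring}, the key observation is that two edges $f_{e,i}$ and $f_{e', i'}$ are disjoint in $\cH^*$ if and only if $e \neq e'$ (else $e \in f_{e,i}\cap f_{e',i'}$) and additionally either $i \neq i'$ or $i=i'$ with $e\cap e' = \varnothing$. Reading this off the definition of $M_i$, the condition that $M$ is a matching is equivalent to saying that no $e\in\cH$ lies in two distinct $M_i$'s (i.e.\ the $M_i$ are pairwise edge-disjoint) and that within each $M_i$ the edges of $\cH$ are pairwise disjoint (i.e.\ each $M_i$ is a matching in $\cH$). The ``in particular'' part is then immediate: a proper edge-colouring of $\cH$ with at most $t$ colours is exactly a partition of $\cH$ into $t$ (possibly empty) matchings $M_1, \dots, M_t$, which by the equivalence just proved is the same as a matching of $\cH^*$ covering every $e\in\cH$.
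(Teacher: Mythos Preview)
The paper states this as an observation and gives no proof; your argument is the natural verification and is correct, matching exactly what the paper implicitly relies on.

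One tiny technicality in \ref{incidence-hypergraph:codegree}: the pair consisting of $e\in\cH$ and $(i,v)$ with $v\in e$ has codegree exactly $1$ in $\cH^*$, and ``$1\le$ codegree of $\cH$'' requires $\cH$ to have at least one edge of size at least two. This is of course the case in every use of the observation in the paper (which applies it to $k$-uniform hypergraphs with $k\ge 2$), so it does not affect anything, but you might add a word to acknowledge it.
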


Using Observation~\ref{obs:incidence-hypergraph}, we can show that, under a slightly stronger codegree condition, Theorem~\ref{thm:egj} implies Theorem~\ref{thm:pippenger-spencer} (i.e.~that the chromatic index of a $D$-regular hypergraph of small codegree tends to $D$), as follows.  %Let $k, \eps > 0$, let $n_0 > 0$ from Theorem~\ref{thm:alon-yuster} with $\eps / k$ playing the role of $\eps$, and s
Let $k\geq 2$, $\delta\in(0, 1)$, and $\eps \coloneqq \delta/(50(k+1)^2)$, and
suppose $\cH$ is a $k$-uniform, $n$-vertex, $D$-regular hypergraph, with codegree at most $D^{1 - \delta}$, such that $D \geq \log^{\eps^{-2}}n$. %$C \coloneqq D / \log^{9(k+1)}n'$, where $n' \coloneqq (1 + 1/k)Dn$.% \geq n_0$.  
By Observation~\ref{obs:incidence-hypergraph}\ref{incidence-hypergraph:uniformity}--\ref{incidence-hypergraph:degree}, $\cH^* \coloneqq \hgInc_D(\cH)$ is a $(k + 1)$-uniform, $D$-regular hypergraph with codegree at most $D^{1-\delta}$, and $e(\cH^*) = D\cdot e(\cH) = D^2 n / k \leq \exp(D^{\eps^2})$.  
Let $\cF \coloneqq \{[D]\times\{v\} : v \in V(\cH)\}$, and note that $\cF$ is a collection of subsets of $V(\cH^*)$ such that $|\cF| \leq n \leq \exp(D^{\eps^2})$ and every $S \in \cF$ satisfies $\sum_{v\in S}d_{\cH^*}(v) = D^2$. % \geq kD^{1 + \delta}$.
Thus, if $D$ is sufficiently large, then by Theorem~\ref{thm:egj}, there exists a matching $M$ in $\cH^*$ such that $|S \cap V(M)| \geq (1 - D^{-\eps})|S|$ for every $S \in \cF$.  By Observation~\ref{obs:incidence-hypergraph}\ref{incidence-hypergraph:colouring}, $M_1, \dots, M_D$, where $M_i \coloneqq \{e \in \cH : \exists f \in M,~f\supseteq \{i\}\times e\}$, are pairwise edge-disjoint matchings, and moreover, by the construction of $\cF$, every $v\in V(\cH)$ is covered by all but $D^{1 - \eps}$ of these matchings.  In particular, $\chi'(\cH') \leq D$ and $\Delta(\cH\setminus \cH') \leq D^{1 - \eps}$ where $\cH' \coloneqq \bigcup_{i=1}^D M_i$.  Hence, $\chi'(\cH\setminus \cH') \leq k(\Delta(\cH\setminus \cH')- 1)+ 1 \leq kD^{1 - \eps}$, so $\chi'(\cH) \leq \chi'(\cH') + \chi'(\cH\setminus \cH') \leq D + kD^{1 - \eps} = D + o(D)$, as desired. 
%The same argument can be used to show $\chi'(\cH) \leq (1 + \eps)D$ with Theorem~\ref{thm:alon-yuster} under the assumption that $\cH$ has codegree at most $D / \log^{9(k+1)}n'$, where $n' \coloneqq (1 + 1/k)Dn$, if $n' \geq n_0$ from Theorem~\ref{thm:alon-yuster} with $k + 1$ and $\eps / k$ playing the roles of $k$ and $\eps$, respectively.  

%Theorem~\ref{thm:alon-yuster} is proved with Theorem~\ref{thm:pippenger-spencer}, and 
Theorem~\ref{thm:egj} is actually proved via a generalization of Theorem~\ref{thm:molloy-reed} (which implies Theorem~\ref{thm:pippenger-spencer}).  Thus, the above argument is based on `circular logic', but it demonstrates that in the setting of Theorem~\ref{thm:pippenger}, the existence of nearly perfect pseudorandom hypergraph matchings is in some sense equivalent to the existence of a nearly optimal proper edge-colouring (the above comments about the proof of Theorem~\ref{thm:egj} also apply to the result of Alon and Yuster~\cite{AY2005} on pseudorandom matchings, which is proved via Theorem~\ref{thm:pippenger-spencer}).  Moreover,  Kahn's~\cite{kahn1996asymptotically} proof of Theorem~\ref{thm:asym-list-edge-col} (in the case when all lists are the same) more closely resembles the approach described here, wherein a nibble process is used to construct a matching in the incidence hypergraph, than it does Pippenger and Spencer's~\cite{PippengerSpencer} proof of Theorem~\ref{thm:pippenger-spencer}. 

Note that one could also prove Theorem~\ref{thm:egj} %s~\ref{thm:alon-yuster} and~\ref{thm:egj}
`more directly' by a more careful analysis of the proof of Theorem~\ref{thm:pippenger} -- the reason being essentially that the matchings chosen in each step of the nibble intersect the sets in $\cF$ as one would expect a random set would.  This intuition is made rigorous in~\cite{KKMO2020}, where Theorem~\ref{thm:kkmo-colouring} is derived from~\cite[Theorem~7.1]{KKMO2020}, a pseudorandom version of Theorem~\ref{thm:kkmo}.   
Moreover, the approach of finding an edge-colouring via Theorem~\ref{thm:egj} and Observation~\ref{obs:incidence-hypergraph} is very versatile and was used e.g.~in~\cite{KKMO2020, KKKMO2021} (see Section~\ref{efl:small}).

\section{Independent sets and vertex-colouring}\label{section:vertex-colouring}

\subsection{Independence number}\label{subsection:independence-number}

Prior to R\"odl's~\cite{rodl1985} proof of the Erd\H os-Hanani conjecture~\cite{ErdosHanani}, in 1981, Ajtai, Koml\'os, and Szemer\'edi~\cite{ajtai1981dense} employed a similar semi-random approach to show that every triangle-free graph has a large independent set.

\begin{theorem}[Ajtai, Koml{\'{o}}s, and Szemer\'edi~\cite{AKS1980, ajtai1981dense}]
    \label{thm:tri-free-ind-set-bound}
    There exists an absolute constant $c > 0$ such that the following holds.  If $G$ is an $n$-vertex triangle-free graph of average degree at most $d$, then
  \begin{equation*}
      \alpha(G) \geq c\left(\frac{n}{d}\right)\log d.
  \end{equation*}
\end{theorem}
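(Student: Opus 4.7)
The plan is to construct $I$ by iterating a semi-random ``nibble'' in the spirit of the matching nibbles described after Theorem~\ref{thm:pippenger}, but tailored to exploit the fact that in a triangle-free graph the neighbourhood of every vertex is itself an independent set. Before starting the process I would pass to a subgraph $G_0 \subseteq G$ of maximum degree at most $2d$: since the degree sum of $G$ is at most $dn$, fewer than $n/2$ vertices have $G$-degree exceeding $2d$, so deleting all of them yields a triangle-free subgraph on at least $n/2$ vertices in which every degree is at most $2d$. Proving the bound for this subgraph (with $D \coloneqq 2d$ in place of $d$) would suffice, since $(n/2)/D \cdot \log D = \Omega((n/d)\log d)$.

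Next, I would maintain a triangle-free subgraph $G_i$ on $n_i$ vertices of maximum degree at most $D_i$, starting from $(G_0, n_0, D_0) = (G_0, |V(G_0)|, D)$. At step $i$, sample each vertex of $G_i$ independently with probability $p_i \coloneqq \eps/D_i$ to form a set $S_i$, and let $T_i \coloneqq \{v \in S_i : N_{G_i}(v)\cap S_i = \varnothing\}$; by construction $T_i$ is independent in $G_i$, so it can be added to $I$, after which I would set $G_{i+1} \coloneqq G_i - (T_i \cup N_{G_i}(T_i))$. A short computation gives $\Ex{|T_i|} \geq p_i n_i (1-p_i)^{D_i} = \Omega(n_i/D_i)$. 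Triangle-freeness enters in analysing the residual degrees: for $u \in V(G_i)$ and a neighbour $v \in N_{G_i}(u)$, the triangle-free condition guarantees that the neighbourhoods $N(v)$ (for $v \in N(u)$) are pairwise almost disjoint (they share only $u$), so the events ``$v$ is removed at step $i$'' for distinct $v \in N(u)$ are nearly independent and each occurs with probability $\Theta(\eps)$. This would show $\Ex{d_{G_{i+1}}(u) \mid u \in V(G_{i+1})} \leq (1 - \eps')D_i$ for some $\eps' = \eps'(\eps) > 0$, and an analogous computation gives $\Ex{n_{i+1}} \geq (1-O(\eps))n_i$. Both drops being of the same order, the ratio $n_i/D_i$ stays within a constant factor of $n_0/D_0 = \Theta(n/d)$.

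To turn these one-step expectations into a statement that holds with positive probability for every round simultaneously, I would apply Talagrand's inequality (or a bounded-differences martingale) to each of $|T_i|$, $n_{i+1}$, and the maximum degree $D_{i+1}$, showing each is concentrated within $o(\cdot)$ of its expectation. Provided $D$ is sufficiently large in terms of the absolute constant $c$, this allows the process to be iterated $T = \Theta(\log D)$ times before $D_i$ becomes a constant, yielding $|I| \geq \sum_{i=0}^{T-1}|T_i| = \Omega(T \cdot n/d) = \Omega((n/d)\log d)$ as required; once $D_i$ is constant one finishes trivially by a greedy argument. I expect the main obstacle to be the middle step: establishing the geometric decay of $D_i$ vertex-by-vertex rather than only on average, since this is where triangle-freeness must be leveraged carefully both to get the right constant and to ensure that the events controlling the new degrees are sufficiently decoupled for Talagrand-type concentration to apply.
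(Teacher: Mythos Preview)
The paper is a survey and does not itself prove this theorem; it simply attributes it to \cite{ajtai1981dense} (the original semi-random nibble proof) and \cite{AKS1980} (an alternative deterministic Cauchy--Schwarz argument). Your overall plan---iterate a random nibble that shrinks the maximum degree geometrically while keeping $n_i/D_i$ roughly constant, accumulating $\Theta(\log D)$ batches of size $\Theta(n/D)$---matches the spirit of the former.

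There is, however, a genuine gap at exactly the point you flag as the main obstacle. You assert that triangle-freeness makes the neighbourhoods $N(v)$ for $v\in N(u)$ ``pairwise almost disjoint (they share only $u$)'', and you use this to argue that the removal events for distinct $v\in N(u)$ are nearly independent, feeding into your concentration step for $D_{i+1}$. This assertion is false: in $K_{d,d}$, which is triangle-free, every $v$ on one side has the \emph{same} neighbourhood, so these sets coincide entirely rather than being almost disjoint. What triangle-freeness actually gives is $N(u)\cap N(v)=\varnothing$ whenever $u$ and $v$ are \emph{adjacent}---adjacent vertices have no common neighbour---but it says nothing about common neighbours of two non-adjacent vertices $v_1,v_2\in N(u)$. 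Your near-independence claim, and with it the concentration argument for the per-vertex degree drop, therefore rests on the wrong consequence of the hypothesis. The expectation bound on $d_{G_{i+1}}(u)$ may still go through by linearity alone, but converting an average-degree decay into a maximum-degree decay (which your iteration needs) requires a different justification. In the original argument the role of triangle-freeness is not to decouple second neighbourhoods but to ensure each $N(u)$ is itself an independent set, and the analysis of how the degree parameters evolve is organised around that fact rather than around disjointness of second neighbourhoods; you would need to revisit that step.
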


This result has spawned intensive research over the last four decades.  Theorem~\ref{thm:tri-free-ind-set-bound} and a hypergraph analogue of it due to Koml{\'o}s, Pintz, Spencer, and Szemer{\'e}di~\cite{KPS82}, have surprising applications to number theory and geometry, respectively.  Improving and generalizing Theorem~\ref{thm:tri-free-ind-set-bound} is also a problem of major importance within combinatorics, in part due to connections to Ramsey theory and to the study of random graphs and algorithms.
%Note that the famous Tur\' an's theorem~\cite{T41} can be stated as follows: if $G$ is an $n$-vertex graph of average degree at most $d$, then $\alpha(G) \geq n / (d + 1)$.  Theorem~\ref{thm:tri-free-chi-bound} shows that this bound can be improved by a factor logarithmic in $d$ if $G$ is triangle-free.

In~\cite{ajtai1981dense}, Ajtai, Koml{\'{o}}s, and Szemer\'edi used Theorem~\ref{thm:tri-free-ind-set-bound} to construct an infinite \textit{Sidon sequence} (i.e.~a sequence of positive integers in which the pairwise sums are all distinct) with `high density'; in particular, for every $n$, the sequence contains $\Omega((n\log n)^{1/3})$ integers less than $n$.  Erd\H os conjectured that for every $\eps > 0$, there exists an infinite Sidon sequence containing $\Omega(n^{1/2 - \eps})$ integers less than $n$, and this problem is still open.  The best known result is due to Rusza~\cite{Ruzsa98}, who proved the weaker version with $1/2$ replaced with $\sqrt 2 - 1$ in the exponent, and Cilleruelo~\cite{C14} provided an explicit construction of such a sequence.

A new proof of Theorem~\ref{thm:tri-free-ind-set-bound} was given in~\cite{AKS1980} by Ajtai, Koml{\'{o}}s, and Szemer\'edi (written by Spencer), which uses the Cauchy-Schwarz inequality to build the independent set deterministically, rather than with a random nibble process.  Theorem~\ref{thm:tri-free-ind-set-bound} is used in~\cite{AKS1980} to prove the \textit{Ramsey number} bound $R(3, k) = O(k^2 / \log k)$.  (The Ramsey number $R(\ell, k)$ is the smallest $n$ such that every red-blue edge-colouring of the $n$-vertex complete graph contains either a red copy of $K_\ell$ or a blue copy of $K_k$.)  The matching lower bound $R(3,k) = \Omega(k^2 / \log k)$ was later established by Kim~\cite{kim1995_ramsey}, also using a semi-random approach.  Theorem~\ref{thm:tri-free-ind-set-bound} was improved by Shearer~\cite{Sh83, Sh91}, who showed that the constant $c$ can be replaced with $1 - o(1)$ in Theorem~\ref{thm:tri-free-ind-set-bound}, as conjectured by Ajtai, Koml{\'{o}}s, and Szemer\'edi~\cite{ajtai1981dense}.  Although Shearer's proof is more similar to the Cauchy-Schwartz approach of~\cite{AKS1980} than the random nibble approach of~\cite{ajtai1981dense}, his proof implies that the random greedy algorithm produces an independent set with expected size at least $(1 - o(1))(n / d)\log d$ in every $n$-vertex triangle-free graph of average degree $d$.

Improving the value of the leading constant in Theorem~\ref{thm:tri-free-ind-set-bound}, or determining if $1 - o(1)$ is best possible, is an interesting open problem.  Bollob\'{a}s~\cite{B81} proved that there are $n$-vertex $d$-regular triangle-free graphs with $\alpha(G) \leq 2(n/d) \log d$ (by considering random $d$-regular graphs), so Shearer's bound~\cite{Sh83} is within a factor of about at most two of best possible.  Shearer's result also implies that $R(3, k) \leq (1 + o(1))k^2 / \log k$, which is still the best known upper bound, and any further improvement to the value of $c$ in Theorem~\ref{thm:tri-free-ind-set-bound} would improve this bound on the Ramsey number as well and be a major breakthrough.  Fiz Pontiveros, Griffiths, and Morris~\cite{PGM20} and independently Bohman and Keevash~\cite{BK21} showed that $R(3, k) \geq (1/4 - o(1))k^2/\log k$, so Shearer's bound on $R(3, k)$ is also within a factor of about at most four of best possible. 

Theorem~\ref{thm:tri-free-ind-set-bound} holds more generally for $k$-uniform hypergraphs where $k \geq 2$, as follows.  An \textit{$\ell$-cycle} in a $k$-uniform hypergraph is a set of $\ell$ edges spanned by at most $\ell(k - 1)$ vertices, which does not contain an $\ell'$-cycle for $\ell' < \ell$, and the \textit{girth} of a $k$-uniform hypergraph is the length of its shortest cycle (or infinity if there is no cycle).  
In 1982, Koml{\'o}s, Pintz, Spencer, and Szemer{\'e}di~\cite{KPS82} proved an analogue of Theorem~\ref{thm:tri-free-ind-set-bound} for $3$-uniform hypergraphs of girth at least five and used this result to disprove Heilbronn's conjecture on the Heilbronn triangle problem, which asks for the minimum area of a triangle formed by any three points out of a set of $n$ points placed in the unit disk.  Heilbronn conjectured that this area is at most $O(n^{-2})$ for any set of $n$ points, but Koml{\'o}s, Pintz, Spencer, and Szemer{\'e}di~\cite{KPS82} used their hypergraph analogue of Theorem~\ref{thm:tri-free-ind-set-bound} to construct a set of $n$ points in which the minimum area of a triangle with its vertices among those points is at least $\Omega(n^{-2}\log n)$.
Ajtai, Koml{\'o}s, Pintz, Spencer, and Szemer{\'e}di~\cite{AKPSS82} later generalized the result of~\cite{KPS82} by showing that every $k$-uniform hypergraph on $n$ vertices with girth at least five and average degree at most $d$ contains an independent set of size at least $\Omega(n(\log d / d)^{1 / ( k - 1)})$.  
Duke, Lefmann, and R{\"o}dl~\cite{DLR95} strengthened this result by showing that for $k \geq 3$, this bound holds for hypergraphs of girth at least three (that is, for linear hypergraphs), confirming a conjecture of Spencer~\cite{S90} in a strong sense.  
Note that for $k = 2$, this bound matches the one in Theorem~\ref{thm:tri-free-ind-set-bound}.  
%Spencer~\cite{S72} extended Tur\'an's theorem to hypergraphs by showing that every $n$-vertex $k$-uniform hypergraph of average degree at most $d$ has an independent set of size $\Omega(n / d^{1/(k-1)})$, and the result of Duke, Lefmann, and R{\"o}dl~\cite{DLR95} improves this bound when $k \geq 3$ by a factor of $(\log d)^{1/(k-1)}$ for linear hypergraphs.
Notably, the proofs in~\cite{KPS82} and~\cite{AKPSS82} use a random nibble approach like in~\cite{ajtai1981dense}.  The proof in~\cite{DLR95} proceeds by a reduction to the case of hypergraphs of girth at least 5, whence the result follows from the result in~\cite{AKPSS82}.

Ajtai, Erd{\H{o}}s, Koml{\'o}s, and Szemer{\'e}di~\cite{AEKS81} suggested that Theorem~\ref{thm:tri-free-ind-set-bound} may still hold for $K_r$-free graphs for any fixed $r$ (and it may even hold more generally for vertex-colouring -- see Conjecture~\ref{conj:fm}), and they proved the weaker result that $K_r$-free graphs on $n$ vertices of average degree at most $d$ have an independent set of size at least $\Omega((n / d)\log\log d)$\COMMENT{The bound $c(n / d)\log((\log d) / r)$ actually holds for any $r$ for some absolute constant $c$.}.  Later, a breakthrough of Shearer~\cite{Sh95} in 1995 improved this bound to $\Omega((n / d)\log d / \log \log d)$, which, up to the leading constant factor, is still the best known.
 Alon~\cite{alon1996independence} proved that Theorem~\ref{thm:tri-free-ind-set-bound} holds more generally for graphs where the neighbourhood of every vertex has bounded chromatic number.  These results of Shearer~\cite{Sh95} and of Alon~\cite{alon1996independence} actually bound the average size of an independent set.  In this vein, Davies, Jenssen, Perkins, and Roberts~\cite{DJPR18} recently proved that the average size of an independent set in a triangle-free graph of maximum degree at most $\Delta$ is at least $(1 - o(1))(n/\Delta)\log\Delta$, which also generalizes Theorem~\ref{thm:tri-free-ind-set-bound} and even matches the earlier bound of Shearer~\cite{Sh83} for the special case of regular graphs\COMMENT{since a graph is regular if and only if its average degree is the same as its maximum degree}.

\subsection{Chromatic number}\label{subsection:chromatic-number}

Nearly all of the results bounding the independence number mentioned in the previous subsection can be generalized to bounds on the chromatic number.  In 1995, Kim~\cite{kim1995} proved that every graph of girth at least five and maximum degree at most $\Delta$ has (list) chromatic number at most $(1 + o(1))\Delta / \log \Delta$.  Independently, Johansson~\cite{johansson1996} proved that every triangle-free graph of maximum degree at most $\Delta$ has chromatic number at most $O(\Delta / \log \Delta)$, which generalizes Theorem~\ref{thm:tri-free-ind-set-bound}.\COMMENT{The result only directly implies $\alpha(G) = \Omega((n / \Delta)\log\Delta)$, but you can always pass to a subgraph with maximum degree at most twice the average degree of the original graph, on at least half the vertices, and you only lose four in the constant factor.} 
In 2019, Molloy~\cite{M17} simultaneously generalized both Kim's~\cite{kim1995} and Johansson's~\cite{johansson1996} result by improving the leading constant in Johansson's result to match that of Kim, as follows.
    
\begin{theorem}[Molloy~\cite{M17}]
\label{thm:tri-free-chi-bound}
    For every $\eps > 0$, there exists $\Delta_0$ such that the following holds for every $\Delta \geq \Delta_0$.
    If $G$ is a triangle-free graph of maximum degree at most $\Delta$, then
    \begin{equation*}
    \chi_\ell(G) \leq (1 + \eps)\frac{\Delta}{\log \Delta}.
    \end{equation*}
\end{theorem}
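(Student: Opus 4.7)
The plan is to follow the iterative semi-random (`nibble') approach pioneered by Johansson~\cite{johansson1996} and sharpened by Molloy~\cite{M17}. Fix $\eps > 0$, set $k \coloneqq \lceil (1+\eps)\Delta/\log\Delta\rceil$, and assume each $v \in V(G)$ is given a list $L(v) \subseteq \mathbb N$ of exactly $k$ colours. I will iteratively construct a proper list-colouring by a randomised partial-colouring procedure.

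First I would specify a single nibble step. Each uncoloured vertex $v$ keeps a current list $L(v)$ and a residual uncoloured neighbourhood; write $\ell(v) \coloneqq |L(v)|$ and let $d(v)$ denote its uncoloured degree. In one step, each uncoloured vertex independently activates with some small probability $p$, and if activated picks a uniformly random tentative colour from $L(v)$. A tentative colour is kept permanently iff no activated uncoloured neighbour of $v$ picks the same colour; otherwise $v$ is returned to the uncoloured state. Finally, any colour used permanently by a neighbour of $v$ is deleted from $L(v)$.

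Next I would track $\ell(v)$ and $d(v)$ and aim to maintain an invariant of the shape $\ell(v) \geq (1 + \eps/2)\, d(v)\, T_i$, where $T_i$ grows with the iteration index $i$. The key local computation, and the step where triangle-freeness enters, is the following. Fix a colour $c \in L(v)$: the colour $c$ disappears from $L(v)$ only if some $u \in N(v)$ activates, picks $c$, and keeps it. Because $G$ is triangle-free, $N(v)$ is an independent set, so whether $u$ keeps $c$ is determined entirely by the activations and choices of vertices in $N(u) \setminus \{v\}$, and for distinct neighbours of $v$ these controlling sets are essentially disjoint. This near-independence improves the bound on the probability that $c$ is removed from $L(v)$ by a factor of roughly $\log \Delta$ compared to the generic analysis, which is exactly the slack needed to go below the Vizing-style bound of $\Delta + 1$ colours.

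After the first-moment analysis, I would use Talagrand's inequality (or a bounded-differences martingale) to show that $\ell(v)$ and $d(v)$ are concentrated around their conditional expectations within an additive error of, say, $\log^{10} \Delta$. The Lov\'asz Local Lemma, applied to the bad events that $\ell(v)$ or $d(v)$ deviates too much, then ensures that with positive probability the invariant is preserved at every uncoloured vertex in a single step. Iterating $O(\log \log \Delta)$ times reduces the problem to a residual graph where $d(v) \leq \ell(v)/\log^2 \ell(v)$ everywhere, at which point a final application of the Local Lemma (using the large ratio $\ell(v)/d(v)$) completes the list-colouring.

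The hard part will be executing the triangle-free gain rigorously and making it survive concentration. The second-order correlations are genuine: two neighbours $u_1, u_2 \in N(v)$ are non-adjacent, but they may share common neighbours elsewhere, so conditioning can create subtle dependencies among the keep-events for a shared colour $c \in L(u_1) \cap L(u_2)$. Molloy's resolution is to introduce an auxiliary `equalising coin flip' that replaces the true keep-probability by a deterministic value, paying a negligible price in the constant; alternative entropy-compression arguments bypass the Local Lemma entirely. Either way, the main technical burden is absorbing these correlations without eroding the $\log \Delta$ gain that is the heart of the theorem.
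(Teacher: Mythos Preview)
The paper is a survey and does not give its own proof of this theorem; it is simply cited from Molloy~\cite{M17}. That said, the surrounding discussion in the paper explicitly describes Molloy's proof as using the \emph{entropy compression} method (with Bernshteyn's later simplification via the Lopsided Local Lemma), not an iterative nibble. Your proposal instead outlines the older Johansson/Kim-style semi-random iteration.

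Two points deserve comment. First, the `equalising coin flip' device you attribute to Molloy is in fact Kim's~\cite{kim1995}, used under the stronger girth-five hypothesis; Molloy's contribution in~\cite{M17} was precisely to \emph{replace} the iterative nibble by a one-shot argument (entropy compression, or equivalently a carefully designed random partial colouring analysed via the Local Lemma on a single round), which sidesteps the delicate multi-round concentration and correlation bookkeeping you describe. Second, and more substantively, Johansson's nibble for triangle-free graphs yielded only $O(\Delta/\log\Delta)$, not $(1+\eps)\Delta/\log\Delta$. Pushing the iterative scheme to the sharp leading constant under only the triangle-free hypothesis (rather than girth five) is exactly the gap that remained open between~\cite{johansson1996} and~\cite{M17}; the second-neighbourhood correlations you flag are real and are not neutralised by an equalising flip alone when $2$-cycles in the neighbourhood structure are permitted. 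So while your sketch is a reasonable plan for Johansson's theorem, it is not the route taken in~\cite{M17}, and as written it does not explain how the sharp constant is obtained.
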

    
Theorem~\ref{thm:tri-free-chi-bound} also matches Shearer's bound~\cite{Sh83} for regular graphs.  Improving the leading constant in Theorem~\ref{thm:tri-free-chi-bound}, or determining if it is best possible, is another major open problem.  By the same argument as in the previous subsection, the bound in Theorem~\ref{thm:tri-free-chi-bound} is within a factor of at most two of best possible.  In fact, Frieze and {\L}uczak~\cite{FL92} proved that random $\Delta$-regular graphs have chromatic number $(1/2 \pm o(1))\Delta / \log \Delta$ with high probability, and it is an open problem whether there is a polynomial-time algorithm which almost surely finds a proper vertex-colouring of such a graph with at most $(1 - \eps)\Delta / \log\Delta$ colours for some $\eps > 0$ (see~\cite{M17}).  Since random regular graphs of bounded degree have $O(1)$ cycles with high probability, the affirmative would follow if there exists such an algorithm for colouring triangle-free graphs of maximum degree at most $\Delta$ (again, see~\cite{M17}).  A related longstanding open problem of Karp~\cite{K76} is whether there exists a polynomial-time algorithm for finding an independent set of size within a factor two of best possible in a binomial random graph.

The proofs of Kim~\cite{kim1995} and Johansson~\cite{johansson1996} use a nibble approach inspired by Kahn's~\cite{kahn1996asymptotically} proof of Theorem~\ref{thm:asym-list-edge-col}, in which a small random selection of vertices are assigned a colour randomly in each step of the nibble.  Johansson~\cite{johansson1996} never published his proof, but Molloy and Reed~\cite[Chapters~12 and 13]{MR02} provided simpler proofs of the results of both Kim~\cite{kim1995} and Johansson~\cite{johansson1996}\COMMENT{in particular by showing that the nibble process produces a partial colouring which can be completed with a single application of the Lov\'asz Local Lemma by invoking a result of Reed~\cite{R99}, as in~\cite{MR2000}}.  Molloy's~\cite{M17} proof of Theorem~\ref{thm:tri-free-chi-bound}, which uses the `entropy compression' method, is even simpler, and Bernshteyn~\cite{B17} simplified this proof further by showing that the `Lopsided Local Lemma' can be used instead of `entropy compression'.  However, Bernshteyn's proof is non-constructive, and Molloy's `entropy compression' argument provides an efficient randomized algorithm for finding a proper colouring using $(1 + o(1))\Delta / \log \Delta$ colours, matching the `algorithmic barrier' for colouring random graphs described above.  Molloy's proof has inspired further algorithmic results such as in~\cite{AIS19, DKPS20alg}\COMMENT{In particular, the ideas have led to constructive versions of the Local Lemma that apply in settings beyond the `variable model' originally considered by Moser and Tardos~\cite{MT10} such as in applications of the Lopsided Local Lemma.}.

All of these proofs rely on a `coupon collector'-type approach.  Roughly speaking, this means that a useful heuristic is to consider a random colouring, where each vertex $v\in V(G)$ is assigned a colour uniformly at random from a set of colours $C$.  If $G$ is triangle-free, then $G[N(v)]$ is an independent set for every $v\in V(G)$ and is thus properly coloured.  Moreover, the well-known solution to the coupon collector's problem implies that if $d(v) \leq (1 - o(1))|C|\log|C|$, then there is a colour in $C$ not assigned to a neighbour of $v$, which we could potentially use to `recolour' $v$.  In particular, if $G$ has maximum degree $\Delta$ and $|C| \geq (1 + o(1))\Delta / \log \Delta$, then with non-zero probability, for every vertex $v \in V(G)$, less than $|C|$ colours are assigned to a vertex in $N(v)$.  This is of course not sufficient to prove Theorem~\ref{thm:tri-free-chi-bound} but is a useful intuition for the bound.

It is also believed that at the expense of a worse leading constant, Theorem~\ref{thm:tri-free-chi-bound} holds for $K_r$-free graphs for every fixed $r$, as follows.
\begin{conjecture}\label{conj:fm}
    For every $r \in \mathbb N$, there exists a constant $c_r$ such that the following holds.  If $G$ is a $K_r$-free graph with maximum degree at most $\Delta$, then $\chi_\ell(G) \leq c_r \Delta / \log \Delta$.
\end{conjecture}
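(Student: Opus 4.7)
The plan is to proceed by induction on $r$, with the base case $r=3$ supplied by Theorem~\ref{thm:tri-free-chi-bound}. The structural feature of a $K_r$-free graph $G$ that I would try to exploit is that for every $v \in V(G)$, the neighbourhood $G[N(v)]$ is $K_{r-1}$-free, so by the inductive hypothesis $\chi_\ell(G[N(v)]) \leq c_{r-1}\Delta/\log\Delta$. I would then try to feed this structural input into a nibble-based colouring procedure in the spirit of the Kim--Johansson framework~\cite{johansson1996, kim1995} or Molloy's entropy compression argument~\cite{M17} (or its Lopsided Local Lemma variant due to Bernshteyn~\cite{B17}).

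Concretely, I would equip each vertex $v$ with a list $L(v)$ of roughly $c_r\Delta/\log\Delta$ colours and iterate a nibble step in which a small random subset of vertices is activated and coloured uniformly from its list. After each nibble one updates the lists by removing colours used on neighbours, and tracks the quantities $\ell_v \coloneqq |L(v)|$ and $d_v \coloneqq $ the number of uncoloured neighbours of $v$. The aim would be to preserve an invariant of the form $\ell_v / \log d_v \geq \text{const}$, allowing the nibble to iterate until $d_v$ is polylogarithmic, after which a single application of the Lov\'asz Local Lemma (in the style of~\cite[Chapters 12--13]{MR02}) finishes the colouring. The $K_r$-freeness would enter through the inductive bound $\chi_\ell(G[N(v)]) = O_r(\Delta/\log\Delta)$, which gives a partition of each $N(v)$ into $O_r(\Delta/\log\Delta)$ independent sets, thereby attempting to emulate the $K_3$-free coupon-collector heuristic: within one such class the colours of neighbours of $v$ are automatically distinct.

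The decisive obstacle is essentially the same one that has blocked progress on the independence-number version of the conjecture for four decades. Even equipped with the inductive bound $\chi(G[N(v)]) = O(\Delta/\log\Delta)$, random colouring of $N(v)$ from a palette of size $\Theta(\Delta/\log\Delta)$ produces only about $\Theta(\log\Delta/\log\log\Delta)$ effectively new colour-draws per independent class; this is precisely the source of the $\log\log\Delta$ loss in Shearer's~\cite{Sh95} bound $\Omega((n/d)\log d/\log\log d)$ for $K_r$-free independence number, which remains the state of the art. Pushing past this $\log\log\Delta$ loss would seem to require a mechanism that exploits the global $K_r$-free structure rather than one neighbourhood at a time, and I expect that a proof of Conjecture~\ref{conj:fm} would need a genuinely new idea beyond the nibble and entropy-compression toolkit that suffices for the triangle-free case.
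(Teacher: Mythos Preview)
The statement you were asked to prove is Conjecture~\ref{conj:fm}, and the paper does \emph{not} prove it: it is presented precisely as an open conjecture. The paper explicitly notes that even the weaker independence-number version (due to Ajtai, Erd\H{o}s, Koml\'os, and Szemer\'edi) remains open for $r = 4$, and that the best known bound for the chromatic number is Johansson's $O(\Delta \log\log\Delta / \log\Delta)$, reproved by Molloy. There is therefore no proof in the paper to compare your attempt against.

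Your proposal is appropriate in that you do not claim a proof; you sketch a natural inductive nibble strategy and then correctly identify where it stalls. Your diagnosis is accurate and matches the state of the art described in the paper: the inductive input $\chi_\ell(G[N(v)]) = O_{r-1}(\Delta/\log\Delta)$ is exactly the kind of local information that feeds into the Johansson/Shearer arguments, and it yields only the $O(\Delta\log\log\Delta/\log\Delta)$ bound, not the conjectured $O(\Delta/\log\Delta)$. The $\log\log\Delta$ loss you pinpoint is precisely the gap between Shearer's bound and the conjecture, and your remark that overcoming it likely requires exploiting global $K_r$-free structure rather than one neighbourhood at a time is a fair summary of the difficulty. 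In short: there is no gap in your reasoning to flag, because you have correctly recognised that the conjecture is open and articulated the known obstruction.
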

The resulting bound on the independence number is already a major open problem proposed earlier by Ajtai, Erd{\H{o}}s, Koml{\'o}s, and Szemer{\'e}di~\cite{AEKS81} (as mentioned in Section~\ref{subsection:independence-number}) and is still open even for $r = 4$, and the resulting bound on the chromatic number was conjectured by Alon, Krivelevich, and Sudakov~\cite{AKS99}.
In this direction, Johansson~\cite{J96-Kr} proved that for every fixed $r$, every $K_r$-free graph of maximum degree at most $\Delta$ has list chromatic number $O(\Delta \log \log \Delta / \log \Delta)$, which generalizes the result of Shearer~\cite{Sh95} mentioned at the end of Section~\ref{subsection:independence-number}.  Johansson also proved that for every fixed $r$, if $G$ is a graph of maximum degree at most $\Delta$ that satisfies $\chi(G[N(v)]) \leq r$ for every $v\in V(G)$, then $\chi_\ell(G) = O(\Delta / \log \Delta)$, generalizing the result of Alon~\cite{alon1996independence} mentioned at the end of Section~\ref{subsection:independence-number}.  These results of Johansson were also not published, but Molloy~\cite{M17} gave a new proof of the former, and the latter was proved (using the approach of Bernshteyn~\cite{B17}) by Bonamy, Kelly, Nelson, and Postle~\cite{BKNP18}.
Alon, Krivelevich, and Sudakov~\cite{AKS99} generalized Johansson's result to `locally sparse graphs' by proving the following: if $G$ is a graph of maximum degree at most $\Delta$ such that the neighbourhood of any vertex spans at most $\Delta^2 / f$ edges, then $\chi(G) = O(\Delta / \log \sqrt f)$ for $f \leq \Delta^2 + 1$, and Vu~\cite{Vu02} generalized this result to list colouring.  Davies, Kang, Pirot, and Sereni~\cite{DKPS20} improved this result by showing that it holds with a leading constant of $1 + o(1)$ as $f\rightarrow\infty$, thus generalizing Theorem~\ref{thm:tri-free-chi-bound}\COMMENT{by setting $f = \Delta^2 + 1$}.

\begin{theorem}[Davies, Kang, Pirot, and Sereni~\cite{DKPS20}]\label{thm:DKPS}
   For every $\eps > 0$, there exists $\Delta_0$ such that the following holds for every $\Delta \geq \Delta_0$.  If $G$ is a graph of maximum degree at most $\Delta$ such that the neighbourhood of any vertex spans at most $\Delta^2 / f$ edges for $f \leq \Delta^2 + 1$, then 
   \begin{equation*}
       \chi_\ell(G) \leq (1 + \eps)\frac{\Delta}{\log \sqrt f}.
   \end{equation*}
\end{theorem}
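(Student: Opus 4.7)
The plan is to follow the local occupancy framework developed by Davies, Kang, Pirot, and Sereni, which extends the approach behind Molloy's proof of Theorem~\ref{thm:tri-free-chi-bound} by replacing the simple coupon-collector heuristic with a hard-core-model analysis in each vertex's neighbourhood. To begin, fix $\eps > 0$, set $k \coloneqq \lceil (1+\eps)\Delta/\log\sqrt{f}\rceil$, and let $L$ be an arbitrary list assignment with $|L(v)| \geq k$ for every $v$; the objective is to find a proper $L$-colouring of $G$.

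The core of the argument is a local occupancy lemma. For each vertex $v$, consider the hard-core distribution on independent sets of $G[N[v]]$ at a suitably optimised fugacity $\lambda$, and let $\alpha_v$ denote the occupancy fraction $\Pr[v \in I]$ of $v$ under this distribution. Using the hypothesis $e(G[N(v)]) \leq \Delta^2/f$ together with an entropy/convexity comparison of the hard-core partition function of $G[N(v)]$ against that of the empty graph on $|N(v)|$ vertices, one aims for the sharp bound $\alpha_v \geq (1-o(1))\log\sqrt{f}/\Delta$. Intuitively, sparsity of $G[N(v)]$ forces the hard-core measure to spread over many near-independent subsets, which boosts the probability that $v$ itself is sampled.

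The occupancy bound is then converted into a list colouring via a single random partial colouring step followed by a Lovász Local Lemma completion, in the style of Molloy~\cite{M17} and the streamlined reformulation by Bernshteyn~\cite{B17}. Each vertex $v$ independently proposes a colour uniformly from $L(v)$, retains it if no neighbour proposes the same colour, and otherwise stays uncoloured; the local occupancy bound controls the conditional probability that each colour survives at $v$ well enough to guarantee that the ratio of surviving available colours to surviving uncoloured neighbours stays above $1 + \eps/2$ with positive probability. Either a bounded iteration of this nibble step, or a single-shot (Lopsided) Local Lemma argument in the spirit of~\cite{BKNP18}, then finds a proper completion to a full $L$-colouring.

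I expect the main obstacle to be the local occupancy step: establishing the sharp bound $(1-o(1))\log\sqrt{f}/\Delta$ uniformly over all graphs $H$ on at most $\Delta$ vertices with at most $\Delta^2/f$ edges requires a delicate convex-optimisation argument interpolating between the edgeless extreme (where the bound reduces to Shearer's/Molloy's triangle-free case) and the denser locally sparse regime. The emergence of $\log\sqrt{f}$ rather than $\log f$ or $\log\Delta$ is precisely the output of this optimisation, and propagating it to the list colouring setting -- rather than merely the fractional chromatic number -- requires additional care in coupling the occupancy computation with the random partial colouring analysis.
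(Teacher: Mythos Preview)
The paper is a survey and does not give its own proof of Theorem~\ref{thm:DKPS}; the result is simply cited from~\cite{DKPS20}. The surrounding discussion only sketches the method at a very high level: Davies, Kang, Pirot, and Sereni introduce the `local occupancy method', which reduces the colouring problem to an optimisation involving the hard-core model on neighbourhoods, and then complete the colouring via the Lopsided Local Lemma (as in Bernshteyn~\cite{B17}) or via entropy compression (as in Molloy~\cite{M17}). Your proposal is an accurate outline of precisely this approach, so there is nothing in the paper to compare it against beyond confirming that your description matches the method the survey attributes to~\cite{DKPS20}.
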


We note that the aforementioned results of Kim~\cite{kim1995}, Johansson~\cite{johansson1996, J96-Kr}, and Vu~\cite{Vu02} all use the nibble method.  Davies, Kang, Pirot, and Sereni~\cite{DKPS20} provided a generalization of all of these results (and also Theorem~\ref{thm:DKPS}) by introducing the `local occupancy method'.  This method reduces these colouring problems to optimization problems involving relevant local properties of the `hard-core model', which is a family of probability distributions over the independent sets of a graph with origins in statistical physics.  Their approach builds on the work of Molloy~\cite{M17} and Bernshteyn~\cite{B17} and subsequent work in~\cite{BKNP18, DdJdVKP20}, and the approach used to prove the results of~\cite{Sh95, alon1996independence, DJPR18} bounding the average size of independent sets mentioned in the previous subsection and also of~\cite{DJPR17} may be viewed as a precursor to these methods.  The main result of Davies, Kang, Pirot, and Sereni~\cite{DKPS20} is proved using the Lopsided Local Lemma as in Bernshteyn's~\cite{B17} proof of Theorem~\ref{thm:tri-free-chi-bound}.  It can also be proved using entropy compression as in the original proof of Theorem~\ref{thm:tri-free-chi-bound} of Molloy~\cite{M17}, and indeed, Davies, Kang, Pirot, and Sereni~\cite{DKPS20alg} used this approach to obtain additional algorithmic coloring results.

All of the results mentioned so far in this subsection provide a bound of $o(\Delta)$ on the chromatic number of graphs of maximum degree $\Delta$ under a `local sparsity' condition.  Trivially, every graph $G$ satisfies $\chi(G) \leq \Delta(G) + 1$, and Brooks~\cite{B41} famously showed that equality holds if and only if $G$ is a complete graph or an odd cycle (when $G$ is connected).  With a considerably relaxed `local sparsity' condition, we can still bound the chromatic number away from $\Delta$, as in the following result.

\begin{theorem}[Molloy and Reed~\cite{MR02}] \label{local-sparsity-lemma}
 For every $\zeta > 0$, there exists $\Delta_0$ such that the following holds for every $\Delta \geq \Delta_0$.  If $G$ is a graph of maximum degree at most $\Delta$ and every $v\in V(G)$ satisfies $|E(G[N(v)])| \leq (1 - \zeta)\binom{\Delta}{2}$, then $\chi(G) \leq (1 - \zeta/e^6)\Delta$.
\end{theorem}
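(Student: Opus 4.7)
The plan is to apply the \emph{naive random colouring} procedure together with the Lov\'asz Local Lemma, following the framework developed in~\cite{MR02}. Set $C \coloneqq \lceil (1 - \zeta/e^{6})\Delta \rceil$; independently assign each vertex $v$ a uniformly random tentative colour $c_v \in [C]$, and declare $v$ \emph{retained} (with colour $\phi(v) \coloneqq c_v$) if $c_u \neq c_v$ for every $u \in N(v)$, and \emph{uncoloured} otherwise. Let $K_v$ be the number of retained neighbours of $v$, $U_v \coloneqq d(v) - K_v$ the number of uncoloured neighbours, and $S_v$ the number of colour \emph{coincidences} among retained neighbours of $v$, that is, $K_v$ minus the number of distinct colours appearing on those neighbours. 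Writing $L_v \subseteq [C]$ for the set of colours unused on retained neighbours of $v$, one has $|L_v| = C - (K_v - S_v)$, so the inequality $|L_v| > U_v$ reduces to $S_v > d(v) - C$, which is implied by $S_v \geq \zeta \Delta / e^{6} + 1$. If this holds at every $v$, the partial colouring $\phi$ extends greedily to a proper $C$-colouring, since each uncoloured vertex then has strictly more available colours in $L_v$ than uncoloured neighbours.

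Every non-edge $\{u, u'\}$ in $G[N(v)]$ contributes $1$ to $S_v$ whenever $c_u = c_{u'}$ and both $u$ and $u'$ are retained. For such a non-edge this event has probability at least
\[
\frac{1}{C}\left(1 - \frac{1}{C}\right)^{|N(u) \cup N(u')|} \geq (1 - o(1))\, \frac{e^{-2}}{C},
\]
since $|N(u) \cup N(u')| \leq 2\Delta$ and $C = (1+o(1))\Delta$. By hypothesis $G[N(v)]$ contains at least $\zeta \binom{\Delta}{2}$ non-edges, so linearity of expectation gives $\mathbb{E}[S_v] \geq (1 - o(1))\, \zeta \Delta / (2 e^{2})$, which comfortably exceeds the target $\zeta \Delta / e^{6}$ since $2 e^{2} \ll e^{6}$. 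Note that $S_v$ is determined by the tentative colours of vertices within graph distance $2$ of $v$, and altering a single colour changes $S_v$ by $O(\Delta)$; Talagrand's inequality (using the certificate given by the pairs witnessing the counted coincidences) then yields that $S_v$ lies within $\Delta^{2/3}$ of its mean with failure probability at most $\exp(-\Omega(\Delta^{1/3}))$.

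Let $B_v$ be the event that $S_v < \zeta \Delta / e^{6} + 1$; by the above, $\Pr[B_v] \leq \exp(-\Omega(\Delta^{1/3}))$. Each $B_v$ is mutually independent of all $B_u$ with $u$ at distance greater than $4$ from $v$, so the dependency degree is at most $\Delta^{4}$. For $\Delta$ sufficiently large in terms of $\zeta$, the symmetric Lov\'asz Local Lemma applies, producing a positive-probability outcome in which no $B_v$ holds, and the greedy completion of $\phi$ then yields the desired proper $C$-colouring.

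The principal technical obstacle is the concentration step. A single tentative colour can swing $S_v$ by $\Theta(\Delta)$ in the worst case, so naive Azuma--Hoeffding gives a concentration radius of order $\Delta^{3/2}$, which is far too weak to beat the dependency degree $\Delta^{4}$ required by the Local Lemma. The standard remedy is Talagrand's inequality exploiting the fact that $S_v$ is $1$-certifiable -- a lower bound of $s$ on $S_v$ is witnessed by a set of $O(s)$ vertices -- but carefully verifying the Lipschitz and certification hypotheses for this sum of weakly dependent indicator pairs, and optimising the constants to reach the claimed $e^{-6}$ saving, is where the real work of the proof lies.
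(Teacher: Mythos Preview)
The paper does not prove this theorem; it is quoted from Molloy and Reed's monograph and used as a black box (in the proof of Lemma~\ref{sparsity-corollary}). So there is no in-paper argument to compare against, and your sketch is indeed the standard Molloy--Reed framework (naive colouring, uncolour conflicts, Local Lemma, greedy completion).

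That said, two technical points in your sketch are not right. First, the claim that each non-edge pair ``contributes $1$ to $S_v$'' conflates $S_v = \sum_c (n_c-1)^+$ with the pair count $P_v = \sum_c \binom{n_c}{2}$; three retained neighbours sharing a colour give three pairs but only two coincidences, so your linearity-of-expectation computation bounds $\mathbb{E}[P_v]$ from below, and $P_v \geq S_v$ is the wrong inequality. This is easily repaired (work instead with the number of colours having $n_c \geq 2$, which is at most $S_v$), but it should be said.

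Second, and more substantively, the certifiability claim fails. To certify that a neighbour $u$ is \emph{retained} you must witness $c_w \neq c_u$ for every $w \in N(u)$, i.e.\ reveal $\Theta(\Delta)$ trials, so a certificate for $S_v \geq s$ has size $\Theta(s\Delta)$, not $O(s)$. Even granting $O(s)$-certifiability, combining it with your (correct) $O(\Delta)$ Lipschitz constant gives Talagrand fluctuations of order $\Delta\sqrt{s} \approx \Delta^{3/2}$, which still swamps $\mathbb{E}[S_v] = \Theta(\Delta)$. The actual Molloy--Reed proof does not concentrate $S_v$ directly: it decomposes the relevant quantity as a difference of two random variables --- roughly, colours assigned to at least two neighbours of $v$, minus those for which every such neighbour is uncoloured --- each of which has small Lipschitz constant and is genuinely $O(s)$-certifiable (uncolouring is witnessed by a single conflicting neighbour). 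That decomposition is the missing idea in your sketch, and without it the concentration step does not go through.
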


This result was improved by Bruhn and Joos~\cite{BJ15} and by Bonamy, Perrett, and Postle~\cite{BPP18}.  Recently, Hurley, de Joannis de Verclos, and Kang~\cite{HdVK20} improved it further by proving the bound $\chi(G) \leq (1 - \zeta/2 + \zeta^{3/2}/6 + o(1))\Delta$, which gives the correct dependence on $\zeta$ as $\zeta \rightarrow 0$.  Determining the best possible bound in Theorem~\ref{local-sparsity-lemma} for larger $\zeta$ is an interesting problem; any further improvements would also improve the best known bound for Reed's $\omega$, $\Delta$, $\chi$ conjecture~\cite{R98} and for the Erd\H{o}s-Ne{\v{s}}et{\v{r}}il conjecture~\cite{EN85}.
We also use Theorem~\ref{local-sparsity-lemma} in our proof of the Erd\H os-Faber-Lov\' asz conjecture (see Section~\ref{efl:large}), but we do not need the improvements of~\cite{BJ15, BPP18, HdVK20}.
The following related problem was posed by Vu~\cite{Vu02} in 2002.

\begin{conjecture}[Vu~\cite{Vu02}]\label{conj:vu}
For every $\zeta, \eps > 0$, there exists $\Delta_0$ such that the following holds for every $\Delta \geq \Delta_0$. If $G$ is a graph of maximum degree at most $\Delta$ and every two distinct vertices have at most $\zeta \Delta$ common neighbours in $G$, then $\chi_\ell (G) \leq (\zeta + \eps) \Delta$.
\end{conjecture}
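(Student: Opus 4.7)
The plan is to adapt the random nibble-based list-colouring procedure used by Molloy~\cite{M17} in his proof of Theorem~\ref{thm:tri-free-chi-bound} and by Kahn~\cite{kahn1996asymptotically} in his proof of Theorem~\ref{thm:asym-list-edge-col}, with the codegree bound $\zeta\Delta$ playing the role that triangle-freeness or local sparsity plays there. Shrinking each list to a common size $t \coloneqq \lceil(\zeta+\eps)\Delta\rceil$, I would iterate the following wasteful-colouring-type round: every still-uncoloured vertex $v$ is activated with a small probability $p$, and every activated $v$ picks a tentative colour $c_v \in L(v)$ uniformly at random; then $v$ retains $c_v$ provided no activated neighbour also picked $c_v$, and whenever a vertex is permanently coloured $c$, the colour $c$ is deleted from every neighbouring list.

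The analysis tracks, for every uncoloured vertex $v$, the list size $|L(v)|$, the uncoloured degree $d^*(v)$, and, for every $c \in L(v)$, the colour-degree $d_c(v) \coloneqq |\{u \in N(v) : u \text{ uncoloured and } c \in L(u)\}|$. The aim is to preserve the invariants $|L(v)| \geq (\zeta+\eps/2)\,d^*(v)$ and $d_c(v) \leq (1+o(1))\,\mathbb{E}[d_c(v)]$ over $\Theta(\log \Delta)$ rounds, after which the uncoloured subgraph $G'$ has $\Delta(G') = o(\Delta)$ while each surviving list still has size $\Omega(\Delta)$; a greedy finishing step then completes the colouring.

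The main obstacle is the concentration of the quantities above. The natural tool is a Talagrand- or McDiarmid-type inequality, and this is where the codegree hypothesis becomes essential: for $u, u' \in N(v)$, the events ``$c$ is deleted from $L(u)$'' and ``$c$ is deleted from $L(u')$'' are correlated chiefly through common neighbours of $u$ and $u'$, of which there are at most $\zeta\Delta$ by hypothesis. In Molloy's argument, triangle-freeness yields near-independence of the corresponding events across $N(v)$; here, small codegree furnishes this only in an averaged sense, and because the target palette $(\zeta+\eps)\Delta$ is only marginally larger than the codegree itself, the slack in the concentration estimates is extremely tight. Ruling out ``local clustering'' of individual colours within the lists of strongly overlapping neighbourhoods is, I expect, the crux of the problem, and is likely why Conjecture~\ref{conj:vu} remains open.

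An alternative route would be to encode partial list colourings as matchings in an auxiliary hypergraph in the spirit of Definition~\ref{def:incidence-hypergraph} and invoke the pseudorandom matching theorem (Theorem~\ref{thm:egj}), mirroring the derivation of Theorem~\ref{thm:pippenger-spencer} that follows Observation~\ref{obs:incidence-hypergraph}. The difficulty with this route is that any natural such construction inherits codegrees of order $\zeta\Delta$, which is of the same order as the degrees of the auxiliary hypergraph; this is too large for Theorem~\ref{thm:egj} to apply directly, so making this strategy work would seem to require a strengthening of Theorem~\ref{thm:egj} that tolerates a constant ratio between codegree and maximum degree.
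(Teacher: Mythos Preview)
There is no proof in the paper to compare against: Conjecture~\ref{conj:vu} is stated as an \emph{open} problem, and the paper explicitly remarks that it remains open even for the ordinary chromatic number and even for the much weaker assertion about $\alpha(G)$. So your task here was ill-posed from the start, and to your credit you seem to have realised this, since your write-up is not really a proof but an honest analysis of why the natural nibble approach stalls.

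Your diagnosis of the obstacle is accurate. In the Molloy/Kahn framework the concentration of the colour-degrees $d_c(v)$ across $N(v)$ relies on the relevant deletion events being nearly independent, and with a codegree bound of $\zeta\Delta$ that is the same order as the target palette size $(\zeta+\eps)\Delta$, the slack genuinely vanishes. The paper does note that Kelly, K\"uhn, and Osthus~\cite{KKO21} confirmed a special case of the conjecture sufficient to recover the application to Theorem~\ref{thm:asym-list-edge-col} for linear hypergraphs, and that the results of~\cite{HdVK20} give nontrivial bounds when $\zeta$ is close to~$1$; but the general statement is not known. Your alternative route via an incidence hypergraph and Theorem~\ref{thm:egj} is also correctly dismissed: the auxiliary hypergraph would have codegree a constant fraction of its maximum degree, well outside the regime where Theorem~\ref{thm:egj} applies.

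In short, you have not proved the conjecture, but neither has anyone else; your proposal should be read as a discussion of approaches and their limitations rather than as a proof.
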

This conjecture is still open if we replace $\chi_\ell(G)$ by $\chi(G)$, and even the much weaker conjecture, that $G$ satisfies $\alpha(G) \geq (1/\zeta - \eps)(n / \Delta)$ is still open\COMMENT{it is even not known if $\alpha(G) \geq K^{-1}(1/\zeta - o(1))|V(G)| / D$ for some absolute constant $K$}.  The results of~\cite{HdVK20} give nontrivial bounds when $\zeta$ is close to one.
If true, Conjecture~\ref{conj:vu} with $\zeta = 1/k$ implies Theorem~\ref{thm:asym-list-edge-col} for linear hypergraphs, as follows.  Let $\cH$ be a $k$-bounded linear hypergraph with maximum degree at most $D$. It is clear that $\Delta(L(\cH)) \leq kD$, and every two distinct vertices in $L(\cH)$ have at most $\max\{k^2 , (D-2) + (k-1)^2\} \leq \zeta k (D + k^2)$ common neighbours.  Letting $\Delta \coloneqq k(D + k^2)$, Conjecture~\ref{conj:vu} would imply $\chi'(\cH) = \chi(L(\cH)) \leq (\zeta + \eps)\Delta = D + o(D)$ when $k$ is fixed as $D\rightarrow \infty$.  Recently, Kelly, K\"{u}hn, and Osthus~\cite{KKO21} confirmed a special case of Conjecture~\ref{conj:vu} that also recovers this application to Theorem~\ref{thm:asym-list-edge-col}.

Our final problem on vertex-colouring graphs is the following conjecture of Alon and Krivelevich~\cite{AK98} from 1998 on the list chromatic number of bipartite graphs.

\begin{conjecture}[Alon and Krivelevich~\cite{AK98}]
    There exists $K$ such that the following holds.  If $G$ is a bipartite graph of maximum degree at most $\Delta$, then $\chi_\ell(G) \leq K\log \Delta$. 
\end{conjecture}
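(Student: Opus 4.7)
The plan is to mimic the iterative nibble strategy used in Kahn's proof of Theorem~\ref{thm:asym-list-edge-col} and in Molloy's proof of Theorem~\ref{thm:tri-free-chi-bound}, while trying to exploit bipartiteness beyond mere triangle-freeness. Let $G$ be a bipartite graph with parts $A, B$ and maximum degree at most $\Delta$, assume every $v \in V(G)$ has a list $L(v)$ with $|L(v)| = k \coloneqq K \log \Delta$ for $K$ a sufficiently large constant, and for each iteration write $d(v)$ for the degree of $v$ in the currently uncoloured subgraph and $L(v)$ for its pruned list.

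First I would set up a single nibble round: independently assign every uncoloured vertex a tentative colour drawn uniformly from its current list, and retain the colour at $v$ if no neighbour of $v$ receives the same colour. The expected number of surviving colours in $L(v)$ is approximately $|L(v)|(1 - 1/k)^{d(v)}$, and the effective degree shrinks by a similar factor. Using Talagrand-type concentration together with the Lopsided Local Lemma in the style of Bernshteyn~\cite{B17}, one aims to maintain an invariant of the form $|L(v)| \geq C \cdot d(v)$ throughout the iteration for some constant $C > 0$. If this invariant can be preserved across $O(\log \log \Delta)$ nibble rounds, then the residual graph has much smaller maximum degree relative to the list size and can be finished directly by a single application of LLL.

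The main obstacle is precisely the step where the triangle-free analyses of Kim~\cite{kim1995}, Johansson~\cite{johansson1996}, and Molloy~\cite{M17} already require lists of size $\Omega(\Delta / \log \Delta)$ to get even one nibble round off the ground, whereas here we only have lists of size $O(\log \Delta)$. To close this exponential gap one must use that in a bipartite graph the neighbourhood of a neighbourhood lies back on the original side, so that after one round of random colouring on one part the constraints induced on the other part should behave like roughly independent samples rather than like a worst-case adversary. Making this independence quantitative---perhaps via a coupling with a hard-core-like model as in Davies, Kang, Pirot, and Sereni~\cite{DKPS20}, via a sharper entropy compression argument in the spirit of Molloy~\cite{M17}, or via a Ramsey-type structural decomposition of the bipartite graph into pieces of controlled pseudorandomness---appears to be the crux of the problem, which is presumably why the conjecture has resisted attack for over two decades.
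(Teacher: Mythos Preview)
This statement is a \emph{conjecture}, not a theorem: the paper presents it as an open problem of Alon and Krivelevich from 1998 and does not claim or provide a proof. Immediately after stating it, the paper remarks that the best known bound is only $\chi_\ell(G) \leq (1+o(1))\Delta/\log\Delta$ via Theorem~\ref{thm:tri-free-chi-bound}, which is exponentially weaker than the conjectured $O(\log\Delta)$. So there is no ``paper's own proof'' to compare your proposal against.

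Your proposal is not a proof either, and you essentially say so yourself in the final paragraph. The nibble-with-concentration framework you outline is exactly the machinery that yields $\Theta(\Delta/\log\Delta)$ for triangle-free graphs, and you correctly identify that the gap between lists of size $\Omega(\Delta/\log\Delta)$ and lists of size $O(\log\Delta)$ is the entire content of the problem. The vague suggestions at the end---coupling with a hard-core model, sharper entropy compression, or a structural decomposition exploiting that $N(N(v))$ returns to the same side---are not arguments but wish-list items, and none of them is developed into even a heuristic calculation showing why bipartiteness should buy an exponential improvement over triangle-freeness. In short, the proposal names the obstacle accurately but does not overcome it; this is consistent with the conjecture being open.
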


The best known bound for this conjecture is provided by Theorem~\ref{thm:tri-free-chi-bound}; however, this bound can also be proved more directly with the `coupon collector' argument described earlier.%
\COMMENT{Let $(A, B)$ be the bipartition of $G$, and let $C$ be a list assignment for $G$ in which each vertex has $(1 + \eps)\Delta / \log \Delta$ available colours.  For each vertex $a \in A$, assign $a$ a colour uniformly at random from its list.  For each $b \in B$, the probability that a colour $c \in C(b)$ is not assigned to any neighbour of $b$ is at least $(1 - \log \Delta / ((1 + \eps)\Delta))^{d(b)} \geq \Delta^{-1/(1 + \eps)}$, and thus, there are at least $\Delta^{\eps / 2}$ such colours in expectation.  Standard arguments involving concentration inequalities and the Local Lemma imply that there is an outcome in which for every vertex $b \in B$, there is a colour $c \in C(b)$ not assigned to a neighbour of $b$.  In such an outcome, the colouring can immediately be extended to $B$.}
Alon, Cambie, and Kang~\cite{ACK21} used this argument to prove a stronger result for list colouring bipartite graphs when each vertex in one of the parts has a list of available colours of the conjectured size.
Alon and Krivelevich~\cite{AK98} also suggested that the stronger bound $\chi_\ell(G) \leq (1 + o(1))\log_2 \Delta$ may also hold, which would be best possible for complete bipartite graphs.  In fact, Saxton and Thomason~\cite{ST15} proved that every graph of minimum degree at least $d$ has list chromatic number at least $(1 - o(1))\log_2 d$, improving an earlier result of Alon~\cite{A00}.  
%This argument also works when $k$ is not necessarily fixed, as long as $k = o(D)$.  We believe this bound holds more generally; thus, we propose the following generalization of Theorem~\ref{thm:asym-list-edge-col} (for linear hypergraphs).\COMMENT{It is illustrative to consider the case when $\zeta = \max\{k^2, D - 2 + (k - 1)^2\}/ (kD)$ and $\Delta = kD$, in which case $(\zeta + \eps)\Delta = (D - 2 + (k - 1)^2) + \eps kD$ if $D \geq 2k + 1$ and $k^2 + \eps kD$ otherwise.  Both cases match Conjecture~\ref{conj:edge-col-large-uniformity} as long as $D \geq k / 2$ (and if not, we can use the trivial bound $\chi' \leq kD + 1$ satisfies Conjecture~\ref{conj:edge-col-large-uniformity}.  The codegree condition is satisfied; however we may not have $\Delta \geq \Delta_0$.}

%\begin{conjecture}\label{conj:edge-col-large-uniformity}
 %   For every $\eps > 0$, there exists $K$ such that the following holds.  If $\cH$ is a $k$-bounded linear hypergraph of maximum degree at most $D$, then $\chi'_\ell(\cH) \leq \max\{(1 + \eps)(D + (k - 1)^2), K\}$.%\COMMENT{\tom{WILD conjecture: This holds without the $(1 + \eps)$, which would nearly imply Conjectures~\ref{conj:mnr} and~\ref{conj:latin} and would be tight for projective planes.  More realistically, in the sparse case, perhaps it holds for $k$-uniform $D$-regular hypergraphs with some additional pseudorandomness conditions, like in~\cite{KPSY2020}}.}
%\end{conjecture}

\subsection{Hypergraph colourings}\label{subsection:vtx-col-hyper}

Theorem~\ref{thm:tri-free-ind-set-bound} can not only be generalized to vertex-colouring in the graphic setting but also for hypergraphs.  In 2013, Frieze and Mubayi~\cite{FM13} proved the following result, which generalizes both Johansson's theorem~\cite{johansson1996} and Theorem~\ref{thm:tri-free-ind-set-bound}. 

\begin{theorem}[Frieze and Mubayi~\cite{FM13}]\label{thm:fm-chi-bound}
  For every $k \geq 2$ there exists $c, \Delta_0 > 0$ such that the following holds for every $\Delta \geq \Delta_0$.  If $\cH$ is a $k$-uniform hypergraph with maximum degree at most $\Delta$ and girth at least four, then 
  \begin{equation*}
      \chi(\cH) \leq c\left(\frac{\Delta}{\log\Delta}\right)^{\frac{1}{k - 1}}.
  \end{equation*}
\end{theorem}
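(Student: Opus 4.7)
The plan is to obtain the bound by a semi-random ``nibble'' that iteratively assigns colours from shrinking lists, closed off with a Lovász Local Lemma step. Set $q \coloneqq c(\Delta/\log\Delta)^{1/(k-1)}$ for a sufficiently large $c = c(k)$, and give each vertex $v \in V(\cH)$ the initial list $L(v) = [q]$. The motivating coupon-collector heuristic is the hypergraph analogue of the one described after Theorem~\ref{thm:tri-free-chi-bound}: if one colours uniformly from $[q]$, then for a fixed $v$ and a fixed colour $\gamma$, the probability that some edge $e \ni v$ is forced to be monochromatic in colour $\gamma$ is at most $d_\cH(v)\, q^{-(k-1)} \leq (\log \Delta)/c^{k-1}$, so typically only $O(\log q)$ colours in $L(v)$ are ``blocked'' by the partial colouring -- leaving almost the entire list usable, which is exactly what allows $q \ll \Delta^{1/(k-1)}$.

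In round $i$ of the nibble, activate each still-uncoloured vertex independently with a small probability $p$, assign it a uniformly random colour from its current list, keep the assignment only if it does not complete a monochromatic edge (with vertices coloured now or in an earlier round), and then update the lists of the surviving uncoloured vertices by deleting every colour that would immediately complete a monochromatic edge. For each uncoloured $v$ we track the list size $\ell_i(v) \coloneqq |L_i(v)|$ and, for every colour $\gamma \in L_i(v)$, the threat count $t_i(v,\gamma)$ equal to the number of edges $e \ni v$ whose other $k-1$ vertices are still uncoloured and still have $\gamma$ in their lists. A direct first-moment calculation gives targets $\ell_i(v) \approx q\,(1-p/2)^i$ and $t_i(v,\gamma) \approx ((\log\Delta)/c^{k-1})(1-p/2)^{i(k-1)}$, so the Johansson--Kim invariant $t \asymp \Delta / \ell^{k-1}$ is preserved throughout. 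After $i^{\star} = \Theta(\log \log \Delta)$ rounds the list size drops to some manageable $\ell^{\star}$ while every threat count is at most $\ell^{\star} / (4(k-1)\log \ell^{\star})$; one then finishes the colouring of the uncoloured set $U$ by sampling each remaining vertex uniformly from its list, noting that the probability any given edge is monochromatic is at most $(\ell^{\star})^{1-k}$ and that each such bad event depends on at most $O(k\Delta \ell^{\star})$ others, so the Lovász Local Lemma applies.

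The crux is maintaining concentration of $\ell_i(v)$ and of all $t_i(v,\gamma)$ simultaneously over $\Theta(\log\log\Delta)$ rounds, and this is where the girth-at-least-four hypothesis is indispensable. Linearity (no $2$-cycles) guarantees that any two edges through $v$ meet only at $v$, and the absence of $3$-cycles means that two neighbours of $v$ lying together in some other edge cannot share a further edge with $v$; these near-independence properties bound the Lipschitz coefficients in a Talagrand or bounded-differences concentration inequality applied to $\ell_i(v)$ and to each $t_i(v,\gamma)$, yielding super-polynomially small failure probabilities per round, which are then glued round-to-round by an inner application of the Local Lemma. The main obstacle will be precisely this concentration step: a single activated vertex at distance two from $v$ can affect many $t_i(v,\gamma)$ at once, so the Talagrand analysis must exploit the girth condition carefully to keep the bounded-differences norm in check, and the mild conditional dependencies introduced by the ``keep only if not monochromatic'' filter must be handled either by an auxiliary Local Lemma within each round or by a deferred-decision exposure of the randomness. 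Controlling all of this so that the round-by-round error probabilities remain summable through the full $\Theta(\log\log\Delta)$ iterations of the nibble is the main technical burden; everything else -- the heuristic choice of $q$, the expected evolution of $\ell_i$ and $t_i$, and the final Local Lemma finish -- is essentially forced.
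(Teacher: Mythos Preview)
The paper is a survey and does not itself prove this theorem; it only records that Frieze and Mubayi ``analyzed a nibble procedure inspired by the proof of Johansson''. Your outline is exactly such a Johansson-style semi-random colouring with list and threat-count tracking, so at the level of approach you are aligned with the cited argument.

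Two details in your sketch would need correction, however. First, your formula $t_i(v,\gamma) \approx ((\log\Delta)/c^{k-1})(1-p/2)^{i(k-1)}$ is inconsistent with the initial value $t_0(v,\gamma) = d_\cH(v)$, which can be as large as $\Delta$; relatedly, the stated ``invariant $t \asymp \Delta/\ell^{k-1}$'' is garbled (at $i=0$ it would give $t_0 \asymp (\log\Delta)/c^{k-1}$, not $\Delta$). Second, and more substantively, the terminal Local Lemma you describe --- bad event ``edge $e$ is monochromatic'' with probability $(\ell^\star)^{1-k}$ and dependency of order $k\Delta$ --- does not close: since $\ell^\star \le q$ and $q^{k-1} = c^{k-1}\Delta/\log\Delta$, the product in the LLL condition is $\Theta(\log\Delta)$. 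The actual finish in Johansson and in Frieze--Mubayi exploits the reduced threat counts directly: once $t^\star(v,\gamma)$ is at most a small constant multiple of $\ell^\star$ for every $v$ and every $\gamma \in L(v)$, a uniform sample from each remaining list leaves in expectation a constant fraction of $L(v)$ unblocked at every $v$, and one applies the Local Lemma (with a concentration estimate) to the per-vertex events ``$v$ has no available colour'', whose probabilities and dependencies are governed by $t^\star$ rather than by $\Delta$. These are standard repairs within the Johansson framework; your high-level plan is the correct one.
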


To prove this result, Frieze and Mubayi~\cite{FM13} analyzed a nibble procedure inspired by the proof of Johansson~\cite{johansson1996}.
%The result actually holds more generally for the list chromatic number.
Molloy~\cite{MolloyCanaDAM} conjectured that for $k = 3$ the result holds for $c = \sqrt 2 + o(1)$, as this value is suggested by the `coupon collector' heuristic described in Section~\ref{subsection:chromatic-number}\COMMENT{If each vertex of $\cH$ is assigned a colour from a set $C$ independently and uniformly at random, then for every $v \in V(\cH)$ and $c \in C$, the probability that there is no edge $e \ni v$ in $\cH$ such that both vertices of $e \setminus \{v\}$ are assigned $c$ is at least $(1 - 1 / |C|^2)^{d_\cH(v)} \geq e^{-\Delta / |C|^2}$, and thus, if $|C| =\sqrt{(2 + \eps)\Delta / \log \Delta}$, then this probability is at least $\Delta^{-1/(2 + \eps)}$.  In particular, there are at least $|C|\Delta^{-1/(2 + \eps)} = \Omega(\Delta^{\eps} / \sqrt{\log \Delta})$ such colours in expectation.}, and he asked more broadly if it can also be proved with either the `entropy compression' or the `local occupancy' approach.  Iliopoulos~\cite{I19} showed that the bound $\chi_\ell(\cH) \leq (1 + o(1))(k - 1)(\Delta / \log \Delta)^{1 / (k - 1)}$ holds in Theorem~\ref{thm:fm-chi-bound} if $\cH$ has girth at least five.

For $k \geq 3$, Frieze and Mubayi~\cite{FM13} `bootstrapped' Theorem~\ref{thm:fm-chi-bound} to show that it actually holds for linear hypergraphs (that is, hypergraphs of girth at least three), by applying it with a distinct set of colours to vertex-disjoint induced subgraphs of girth at least four whose vertices partition $V(\cH)$.  This latter result generalizes the bound of Duke, Lefmann, and R{\"o}dl~\cite{DLR95} on the independence number mentioned in Section~\ref{subsection:independence-number} to vertex-colouring.
Cooper and Mubayi~\cite{CM15} also generalized Theorem~\ref{thm:fm-chi-bound} for $k = 3$ by showing that the girth hypothesis can be replaced with the condition that $\cH$ has no \textit{triangle}, where a triangle is a set of three edges $e$, $f$, and $g$ such that there exist vertices $u$, $v$, and $w$ satisfying $\{u, v\}\subseteq e$, $\{v, w\}\subseteq f$, $\{u, w\} \subseteq g$, and $\{u, v, w\}\cap e \cap f \cap g = \varnothing$.\COMMENT{They actually showed that $\chi(\cH) = O(\max\{\sqrt{\Delta_3 / \log \Delta_3}, \Delta_2 / \log \Delta_2\})$ for any triangle-free $3$-bounded hypergraph, where $\Delta_i$ is the maximum number of size-$i$ edges containing any fixed vertex for $i\in\{2,3\}$.}  
%(For example, the 3-uniform triangles are the `loose' triangle $C_3 \coloneqq \{\{a, b, c\}, \{c, d, e\}, \{e, f, a\}\}$, $F_5 \coloneqq \{\{a, b, c\}, \{a, b, d\}, \{c, d, e\}\}$, and $K_4^- \coloneqq \{\{a, b, c\}, \{b, c, d\}, \{a, b, d\}\}$.)
Cooper and Mubayi~\cite{CM16} later showed that both of these results hold under more general `local sparsity' conditions similar to that of Theorem~\ref{thm:DKPS} for graphs.  Frieze and Mubayi~\cite{FM08, FM13} conjectured a generalization of Conjecture~\ref{conj:fm} for $k$-uniform hypergraphs; however, Cooper and Mubayi~\cite{CM2017} disproved this conjecture for all $k \geq 3$.

\section{The Erd\H{o}s-Faber-Lov\'asz conjecture}\label{section:EFL}

In this section we introduce and provide background for the Erd\H os-Faber-Lov\' asz conjecture, which we abbreviate to the EFL conjecture.  %Recall that a hypergraph is linear if it has codegree one. 
Earlier developments related to the EFL conjecture are also detailed in the surveys of Kahn~\cite{kahn1995asymptotics, kahn1997} and of Kayll~\cite{kayll2016}.
The EFL conjecture states the following (recall that a hypergraph is linear if it has codegree one):\COMMENT{As we discuss later, there are several equivalent ways to state this conjecture, but we refer to this statement as `the EFL conjecture'.}

\begin{enumerate}[label={(EFL\arabic*)}, topsep = 6pt]
\item\label{EFL}Every $n$-vertex linear hypergraph has chromatic index at most $n$.
\end{enumerate}

Erd\H{o}s often wrote that this was one of his `three favourite combinatorial problems' (see e.g.,~\cite{kahn1997}). 
Erd\H{o}s, Faber, and Lov\'asz famously formulated this conjecture at a tea party in 1972.
 The simplicity and elegance of the EFL conjecture initially led them to believe it would be easily solved (see e.g., the discussion in~\cite{cg1998} and~\cite{erdos1981}). 
However, as the difficulty became apparent Erd\H{o}s offered successively increasing rewards for a proof of the conjecture, which eventually reached \$500. 

The following three infinite families of hypergraphs are extremal for this conjecture (see Figure~\ref{fig:extremal}):
\begin{figure}
  \begin{minipage}[b]{.33\linewidth}
    \centering
    \includegraphics{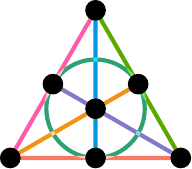}\\
    Projective plane%
  \end{minipage}%
  \begin{minipage}[b]{.34\linewidth}
    \centering
    \includegraphics{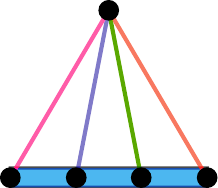}\\
    Degenerate plane%
  \end{minipage}%
  \begin{minipage}[b]{.33\linewidth}
    \centering
    \includegraphics{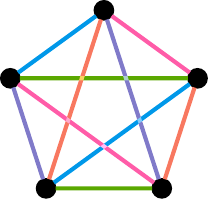}\\
    $K_{2t + 1}$%
  \end{minipage}%
  \caption{Extremal examples for the Erd\H os-Faber-Lov\' asz conjecture}
  \label{fig:extremal}
\end{figure}
\begin{itemize}
    \item finite projective planes of order $k$ (known to exist when $k$ is a prime power), which are $(k + 1)$-uniform, linear, intersecting hypergraphs on $n$ vertices with $n$ edges where $n \coloneqq k^2 + k + 1$;
    \item degenerate planes, also called `Near Pencils', which are linear, intersecting hypergraphs on $n$ vertices with $n$ edges for any $n \in \mathbb N$ consisting of one edge of size $n - 1$ and $n - 1$ edges of size two; and
    \item complete graphs on $n$ vertices where $n\in\mathbb N$ is odd (as well as some `local' modifications of these).
\end{itemize}
The vastly different structure of these extremal examples contributes to the difficulty of the EFL conjecture.  Note in particular that the first two examples have edges of unbounded size as $n \rightarrow \infty$, whereas complete graphs are $2$-uniform.\COMMENT{The first two examples have chromatic index $n$ (matching the upper bound of the EFL conjecture), because any intersecting hypergraph $\cH$ satisfies $\chi'(\cH) = |\cH|$.  To see that the third example has chromatic index $n$, first note that if $G \cong K_n$, then $\chi'(G) \leq n$ by Vizing's theorem~\cite{vizing1965}.  Moreover, $\chi'(G) > n - 1$ if $n$ is odd since $G$ does not contain a perfect matching and in any proper edge-colouring of an $(n - 1)$-regular graph with $(n - 1)$ colours, each colour class is a perfect matching.}
Let us note that we can (and will) assume without loss of generality that hypergraphs have no edges of size one in the EFL conjecture, since any proper edge-colouring of the edges of  size at least two in an $n$-vertex linear hypergraph $\cH$ with at most $n$ colours can be extended to the remaining size-one edges of $\cH$ (again with at most $n$ colours).  Without this assumption, the hypergraph obtained from an $n$-vertex star by adding the edge of size one containing the center vertex is also an extremal example.

\subsection{Equivalent formulations}

Part of the beauty of this conjecture lies in the fact that it can be equivalently stated in several simple, yet seemingly unconnected, ways.  The following are all in fact equivalent to the EFL conjecture:  
\begin{enumerate}[label={(EFL\arabic*)},  topsep = 6pt]\stepcounter{enumi}
\item\label{EFL-hg-dual} If $\cH$ is a linear hypergraph with $n$ edges, each of size at most $n$, then the vertices of $\cH$ can be coloured with at most $n$ colours such that no edge contains two vertices of the same colour. 

\item\label{EFL-set-theoretic} If $A_1 , \dots , A_n$ are sets of size $n$ such that every pair of them shares at most one element, then the elements of $\bigcup_{i=1}^{n} A_i$ can be coloured with $n$ colours so that all colours appear in each $A_i$.

\item\label{EFL-line-graph} If $G_1 , \dots , G_n$ are complete graphs, each on at most $n$ vertices, such that $|V(G_i) \cap V(G_j)| \leq 1$ for every $1 \leq i \ne j \leq n$, then the chromatic number of $\bigcup_{i=1}^{n}G_i$ is at most $n$.
\end{enumerate}
%Considering the incidence relation between the set $V(\bigcup_{i=1}^{n} G_i)$ of vertices and $n$ complete graphs $G_1 , \dots , G_n$, the Erd\H{o}s-Faber-Lov\'asz conjecture is equivalent to the following dual formulation.
\COMMENT{Let us note that every $n$-vertex linear hypergraph has maximum degree at most $n$ (the linearity condition, combined with the fact that every vertex of $\cH$ is contained in at most one edge of size one, implies that each vertex has degree at most $(n - 1) + 1 = n$).}  
We show that the EFL conjecture is equivalent to~\ref{EFL-hg-dual}--\ref{EFL-line-graph} by showing the following implications: \ref{EFL}~$\Rightarrow$~\ref{EFL-hg-dual}~$\Rightarrow$~\ref{EFL-set-theoretic}~$\Rightarrow$~\ref{EFL-line-graph}~$\Rightarrow$~\ref{EFL}.

For the first implication, we need to introduce the notion of \textit{hypergraph duality}.  The \textit{dual} of a hypergraph $\cH$ is the hypergraph $\cH^*$ with vertex set $\cH$ and edge set $\{ \{e \ni v: e \in \cH\}: v \in V(\cH)\}$  (see Figure~\ref{fig:dual} for an example).  Clearly, the dual of $\cH^*$ is isomorphic to $\cH$ itself\COMMENT{since the map $\{e \ni v: e \in \cH\} \mapsto v$ is a bijection from $V((\cH^*)^*)$ to $V(\cH)$ that preserves the edges, where $(\cH^*)^*$ is the dual of $\cH^*$.}.  Note that $\cH$ is linear if and only $\cH^*$ is linear.  Now suppose $\cH$ is a linear hypergraph with $n$ edges, each of size at most $n$.  We may assume without loss of generality that every vertex of $\cH$ has degree at least two\COMMENT{since we can remove any vertex of degree at most one, deleting it from any edge containing it, and extend any $n$-colouring of the result graph to one of $\cH$}.  Since $\cH$ has $n$ edges and is linear, $\cH^*$ has $n$ vertices and is also linear, so~\ref{EFL} implies that there is proper edge-colouring of $\cH^*$ using at most $n$ colours.  By assigning each vertex of $\cH$ the colour of the corresponding edge of $\cH^*$, we obtain the desired colouring, proving~\ref{EFL-hg-dual}.
%if $\cH$ is linear (with no size-one edges), then every edge of $\cH^*$ has size at most $n - 1$\COMMENT{since every vertex has degree at most $n - 1$}.  In this case,~\ref{EFL-hg-dual} implies that $V(\cH^*)$ can be coloured with at most $n$ colours such that no edge contains two vertices of the same colour.  Since $V(\cH^*)$ is the edge set of $\cH$, this colouring is also a proper edge-colouring of $\cH$, so $\chi'(\cH) \leq n$, as desired.  To prove that the EFL conjecture 

To show that \ref{EFL-hg-dual} $\Rightarrow$ \ref{EFL-set-theoretic}, let $\cH$ be the hypergraph with vertex set $\bigcup_{i=1}^n A_i$ and edge set $\{A_i : i \in [n]\}$.  Since $A_1, \dots, A_n$ have size $n$ and every pair of them shares at most one element, $\cH$ is linear with $n$ edges, each of size $n$.  By~\ref{EFL-hg-dual}, the vertices of $\cH$ can be coloured with at most $n$ colours such that no edge contains two vertices of the same colour.  Since every edge has size $n$, every edge contains a vertex of every colour, so this colouring satisfies~\ref{EFL-set-theoretic}.
To prove \ref{EFL-set-theoretic} $\Rightarrow$ \ref{EFL-line-graph}, first note that by possibly adding new vertices to each $G_i$, we may assume without loss of generality that $|V(G_i)| = n$ for each $i \in [n]$.  Letting $A_i = V(G_i)$ for each $i \in [n]$,~\ref{EFL-set-theoretic} implies there is a colouring of $\bigcup_{i=1}^n A_i$ with $n$ colours so that all colours appear in each $A_i$.  In particular, if $u, v\in A_i$, then $u$ and $v$ are assigned different colours, so this colouring is also a proper vertex-colouring of $\bigcup_{i=1}^n G_i$, proving~\ref{EFL-line-graph}.

Finally, to prove that the EFL conjecture follows from~\ref{EFL-line-graph}, let $\cH$ be a linear hypergraph on $n$ vertices, and for each $v \in V(\cH)$, let $G_v$ be the complete graph with vertex set $\{e \ni v : e \in \cH\}$.  Since $\cH$ is linear, each $G_v$ for $v\in V(\cH)$ has at most $n$ vertices, and $|V(G_u)\cap V(G_v)| \leq 1$ for distinct $u, v\in V(\cH)$.  Since $\bigcup_{v \in V(\cH)} G_v$ is in fact the line graph of $\cH$ (see Figure~\ref{fig:dual}), we have $\chi'(\cH) = \chi(\bigcup_{v \in V(\cH)} G_v)$, so \ref{EFL-line-graph} $\Rightarrow$ \ref{EFL}, as desired.

An interpretation of~\ref{EFL-line-graph} in terms of a scheduling problem was given by Haddad and Tardif~\cite{HT04}.

\begin{figure}
  \begin{minipage}[b]{.33\linewidth}
    \centering
    \includegraphics{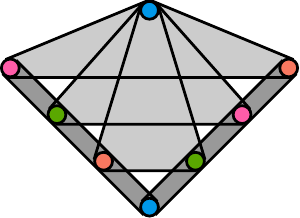}%
  \end{minipage}%
  \begin{minipage}[b]{.34\linewidth}
    \centering
    \includegraphics{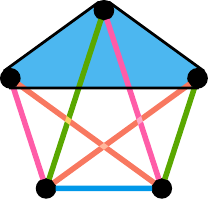}%
  \end{minipage}%
  \begin{minipage}[b]{.33\linewidth}
    \centering
    \includegraphics{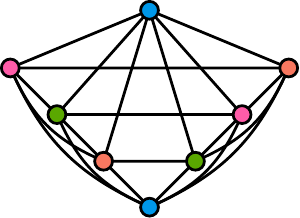}%
  \end{minipage}%
  \caption{The hypergraph dual (left) of the $5$-vertex hypergraph in the center, and the line graph (right).}
  \label{fig:dual}
\end{figure}

\subsection{Results}\label{efl:results}

Recently, we confirmed the EFL conjecture for all but finitely many hypergraphs.

\begin{theorem}[Kang, Kelly, K\"uhn, Methuku, and Osthus~\cite{KKKMO2021}]\label{thm:efl}
    For every sufficiently large $n$, every $n$-vertex linear hypergraph has chromatic index at most $n$.
\end{theorem}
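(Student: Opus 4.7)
The overall plan is to partition the edge set of $\cH$ into ``small'' edges (those of size at most some threshold $r$, where $r$ is a slowly growing function of $n$, for example $r = n^\beta$ for a small fixed $\beta > 0$) and ``large'' edges (those of size greater than $r$), and to colour the two parts using disjoint palettes $C_s$ and $C_\ell$ with $|C_s| + |C_\ell| \leq n$.

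For the large edges, I would exploit the following structural consequence of linearity: if $\cH_\ell \subseteq \cH$ denotes the subhypergraph of large edges, then in $L(\cH_\ell)$ the neighbourhood of any vertex $e$ decomposes as a union of $|e|$ cliques (one per vertex of $e$), and since any two large edges share at most one vertex these cliques are almost disjoint. A counting argument using $|e| \geq r \to \infty$ shows that the number of edges in the neighbourhood of $e$ in $L(\cH_\ell)$ is at most $(1 - \zeta)\binom{\Delta(L(\cH_\ell))}{2}$ for some fixed $\zeta > 0$. Theorem~\ref{local-sparsity-lemma} then supplies a proper vertex-colouring of $L(\cH_\ell)$, equivalently a proper edge-colouring of $\cH_\ell$, using at most $(1 - \zeta/e^6)n$ colours, leaving a positive proportion of colours for the small edges.

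For the small edges, I would employ the nibble/pseudorandom matching framework of Section~\ref{edge-colouring:pseudo}. First, randomly set aside a small ``reservoir'' of small edges that will serve as an absorbing structure. Then apply Theorem~\ref{thm:egj} to the incidence hypergraph of the remaining small-edge subhypergraph to obtain a partial edge-colouring in which each colour class is a matching with strong pseudorandom properties: each vertex lies in almost all colour classes, and each colour class intersects any prescribed moderately large subset of vertices in roughly the expected proportion. The small-edge subhypergraph has codegree one (by linearity of $\cH$) and maximum degree at most $n$, so this produces a partial edge-colouring using at most $(1 + o(1))n \cdot (|C_s|/n)$ colours; choosing the palette sizes so that $|C_s| \leq \zeta n/e^6 - O(n^{1-\varepsilon})$ leaves headroom. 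Finally, absorb each uncoloured small edge into one of the existing colour classes using Hall's theorem applied to a bipartite auxiliary graph between uncoloured edges and reservoir edges of compatible type, where the Hall condition follows from the pseudorandomness guaranteed by Theorem~\ref{thm:egj}.

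The main obstacle is the absorption step: the number of uncoloured edges after the nibble, the size of the reservoir, and the pseudorandomness guarantees must all align so that Hall's condition is satisfied at every stage and the augmenting matchings can be found without collisions. A secondary obstacle is choosing the threshold $r$ so that large edges genuinely enjoy the local sparsity needed for Theorem~\ref{local-sparsity-lemma} while simultaneously leaving $|C_s|$ large enough to accommodate the small edges. Edges of ``medium'' size, near the boundary between the two regimes, may require their own intermediate treatment (perhaps a tripartite split with a separate, short greedy or Vizing-type step), which is the most delicate technical point in tuning the $n$-colour budget.
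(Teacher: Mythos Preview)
Your plan has a genuine gap at the large-edge step: the local-sparsity bound you invoke is simply false for edges of size around $\sqrt{n}$. If $\cH$ is a projective plane of order $k$ on $n = k^2+k+1$ vertices, then every edge has size $k+1 \sim \sqrt{n}$ (hence ``large'' under your threshold $r = n^\beta$), yet $L(\cH_\ell) \cong K_n$, so every neighbourhood in the line graph induces a clique and spans exactly $\binom{\Delta}{2}$ edges. There is no $\zeta > 0$ for which Theorem~\ref{local-sparsity-lemma} applies, and indeed $\chi'(\cH_\ell) = n$, not $(1-\zeta/e^6)n$. More generally, the common-neighbour count for adjacent $e,f$ in $L(\cH_\ell)$ is roughly $n/r + r^2$, which is only bounded away from $n$ when $r$ is bounded away from $\sqrt{n}$; this is precisely why the paper's Lemma~\ref{sparsity-corollary} carries the hypothesis $r \leq (1-\zeta)\sqrt{n}$. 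The paper confronts this obstruction head-on via the Reordering Lemma (Lemma~\ref{reordering-lemma}): either one obtains an ordering with $\fwddeg(e) \leq (1-\tau)n$ throughout (and a greedy colouring succeeds), or one isolates a set $W$ of edges of nearly equal size and large volume. If those sizes are below $(1-\delta)\sqrt{n}$, local sparsity applies to $W$; if they are near $\sqrt{n}$, one is forced into the near-projective-plane regime, handled by a separate delicate argument (Lemma~\ref{extremal-case-lemma}).

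A second, related gap is your disjoint-palette budgeting. You allot $|C_s| \leq \zeta n / e^6$ colours to small edges, but the small-edge hypergraph can have maximum degree $n-1$ (take $\cH = K_n$: every edge is small and $\chi' = n-1$ or $n$), so $|C_s|$ may need to be essentially all of $n$. The paper does \emph{not} use disjoint palettes: colours are shared between $\cH_{\mathrm{lrg}}\cup\cH_{\mathrm{med}}$ and $\cH_{\mathrm{sml}}$, and the strengthened large-edge theorem guarantees extra structure on the large-edge colour classes (each covers at most $\beta n$ vertices, at most $\gamma n$ colours touch $\cH_{\mathrm{med}}$) precisely so that the nibble/absorption on small edges can be made compatible with the already-fixed large-edge colouring. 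Your outline correctly anticipates that medium-size edges are delicate, but the actual difficulty is not a boundary tuning issue: it is that the projective plane sits squarely in the large-edge regime and defeats any argument based purely on local sparsity.
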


The proof of Theorem~\ref{thm:efl} can be turned into a randomized polynomial-time algorithm.
The necessary modifications are discussed in detail in~\cite{KKKMO2021_focs}.  
We also proved the following `stability result', predicted by Kahn~\cite{kahn1995asymptotics}.
\begin{theorem}[Kang, Kelly, K\"uhn, Methuku, and Osthus~\cite{KKKMO2021}]\label{main-thm-2}
    For every $\delta > 0$, there exist $n_0, \sigma > 0$ such that the following holds for every $n \geq n_0$. If $\cH$ is an $n$-vertex linear hypergraph such that 
    \begin{enumerate}[(i)]
        \item\label{stability:degree} $\cH$ has maximum degree at most $(1-\delta)n$ and
        \item\label{stability:pp} the number of edges of size $(1 \pm \delta)\sqrt{n}$ in $\cH$ is at most $(1-\delta)n$,
    \end{enumerate} 
    then the chromatic index of $\cH$ is at most $(1-\sigma)n$.
\end{theorem}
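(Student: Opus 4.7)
The strategy is to adapt the proof of the Erd\H os-Faber-Lov\'asz conjecture (Theorem~\ref{thm:efl}) and extract a constant-factor savings from the hypotheses. The three extremal families for EFL are each excluded by (i) and (ii): near-pencils and odd complete graphs $K_{2t+1}$ contain a vertex of degree $n-1$ (ruled out by (i)), while projective planes of order $\approx \sqrt n$ consist of $\approx n$ edges of size $\approx \sqrt n$ (ruled out by (ii)). The savings $\sigma$ will arise from quantifying how far $\cH$ is from each of these.

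The plan is to partition the edges of $\cH$ by size into a \emph{small} part $\cH_S$ (edges of size at most a large constant $T = T(\delta)$), a \emph{medium} part $\cH_M$ (edges of size in $(T, T']$ for some $T' = T'(\delta)$ with $T' \gg \sqrt n$), and a \emph{large} part $\cH_L$ (the remaining edges). A pair-counting argument based on linearity of $\cH$ gives $|\cH_L| \leq O(n^2/T'^2) = o(n)$. Kahn's list-edge-colouring theorem (Theorem~\ref{thm:asym-list-edge-col}) applied with $\eps \ll \delta$ then yields $\chi'_\ell(\cH_S) \leq (1 - \delta/2)n$, since $\cH_S$ is $T$-bounded, has maximum degree at most $(1-\delta)n$, and has codegree $1$. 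Moreover, because this colouring is produced by a nibble process, its colour classes are pseudorandom matchings (in the sense of Theorem~\ref{thm:egj}). The medium and large edges are then absorbed into this partial colouring via a Hall-type argument in a reservoir set aside at the start: for each uncoloured edge $e$, one finds a colour class $M_c$ whose matching can be extended by $e$, using that the uncovered set $V(\cH) \setminus V(M_c)$ is well-distributed. Hypothesis~(ii) bounds the number of `central' medium edges (of size $(1 \pm \delta)\sqrt n$) by $(1 - \delta)n$, comfortably less than the $(1-\delta/2)n$ available colour classes, while a further pair-counting argument handles the edges of size outside $(1 \pm \delta)\sqrt n$, and hypothesis~(i) ensures that the auxiliary bipartite graph has enough edges to satisfy Hall's condition.

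The main obstacle is the quantitative coordination of the absorption step. Even though each individual component admits an efficient colouring, the medium and large edges can collectively number close to $n$, so a naive accounting exhausts the colour budget; the whole point is to re-use colours from the small-edge palette rather than introduce fresh ones. Verifying Hall's condition in the bipartite auxiliary graph between $\cH_M \cup \cH_L$ and the colour classes of Step~2 (where $e$ is joined to $c$ iff $V(M_c)$ avoids $e$) is where hypotheses~(i) and~(ii) are used in tandem: (ii) provides the slack on the medium-edge side, and (i) provides the slack on the colour-class side by guaranteeing that no vertex is over-used across classes. This is the type of argument sketched in Section~\ref{efl:small} and carried out in detail in~\cite{KKKMO2021}, and the constant $\sigma = \sigma(\delta) > 0$ emerges from the slack between $(1 - \delta/2)n$ available colours and the combined count of edges still to be coloured after Step~2.
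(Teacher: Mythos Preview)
Your absorption step has a genuine gap.  The number of edges of $\cH$ with size between your constant $T$ and $(1-\delta)\sqrt n$ is not controlled by either hypothesis: a linear $n$-vertex hypergraph can have $\Theta(n^{2}/T^{2})$ such edges (take, for instance, a partial Steiner system with all edges of size $\approx n^{1/3}$), and both (i) and (ii) are then trivially satisfied.  So $|\cH_M\cup\cH_L|$ can vastly exceed the $(1-\delta/2)n$ colour classes you produce, and no Hall-type matching between them is possible.  Pair-counting does give $|\{e:|e|>(1+\delta)\sqrt n\}|\lesssim n$, but it gives nothing useful for sizes below $(1-\delta)\sqrt n$.  There is a second obstruction even for a single medium edge $e$: your bipartite adjacency is ``$V(M_c)$ avoids $e$'', but a pseudorandom nibble matching $M_c$ covers all but roughly $\gamma n$ vertices, so heuristically only about $\gamma^{|e|}n\ll 1$ colour classes avoid a given $e$ with $|e|>T$.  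The auxiliary bipartite graph is essentially empty.

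The paper proceeds in the opposite order and with different tools: it colours the \emph{non-small} edges first, not by absorption but structurally.  The Reordering Lemma (Lemma~\ref{reordering-lemma}) shows that either a suitable ordering already yields $\chi'\le(1-\tau)n$ greedily, or there is a set $W$ of nearly-equal-size edges with $\vol_\cH(W)$ close to~$1$.  If the common edge size of $W$ is bounded away from $\sqrt n$, then the line graph $L(W)$ is locally sparse and Theorem~\ref{local-sparsity-lemma} (Molloy--Reed) gives $\chi'(W)\le(1-\zeta/500)n$ via Lemma~\ref{sparsity-corollary}; hypothesis~(ii) is invoked precisely to exclude the remaining case where $W$ consists of $\approx n$ edges of size $\approx\sqrt n$.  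This is the content of Theorem~\ref{large-edge-thm}, which already yields Theorem~\ref{main-thm-2} when all edges are large.  The small edges are handled afterwards, via the nibble-plus-absorption scheme of Section~\ref{efl:small}, made compatible with the colouring already fixed on the larger edges; hypothesis~(i) provides the slack there.  The point you are missing is that edges of intermediate size are dealt with by the local sparsity of their line graph, not by counting them.
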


The hypothesis~\ref{stability:degree} in Theorem~\ref{main-thm-2} ensures that $\cH$ does not too closely resemble the degenerate plane or the complete graph, while~\ref{stability:pp} ensures that $\cH$ does not too closely resemble a projective plane, since projective planes on $n$ vertices have $n$ edges of size roughly $\sqrt n$.

Let us overview previous progress leading up to these results.  
Predating the EFL conjecture, in 1948 de Bruijn and Erd\H{o}s~\cite{bruijn_erdos1948} showed that every intersecting $n$-vertex linear hypergraph has at most $n$ edges\COMMENT{They also proved that every intersecting $n$-vertex hypergraph with $n$ edges is either a degenerate plane or a projective plane.}.  Equivalently, the line graph of an $n$-vertex linear hypergraph contains no clique of size greater than $n$.  Seymour~\cite{seymour1982} proved that every $n$-vertex linear hypergraph $\cH$ contains a matching of size at least $|\cH| / n$, which implies the de Bruijn-Erd\H{o}s theorem, as an intersecting hypergraph has matching number one.  Kahn and Seymour~\cite{KS1992} strengthened this result by proving that every $n$-vertex linear hypergraph has fractional chromatic index at most $n$\COMMENT{since every hypergraph $\cH$ satisfies $|\cH| / \nu(\cH) \leq \chi'_f(\cH)$}.  (Recall that every hypergraph $\cH$ satisfies $\chi'_f(\cH) \leq \chi'(\cH)$, so all of these results are relaxations of the EFL conjecture).   Chang and Lawler~\cite{chang1988} proved that every $n$-vertex linear hypergraph has chromatic index at most $\lceil3n/2 - 2\rceil$.  

Interestingly, results from both Section~\ref{section:edge-colouring} and Section~\ref{section:vertex-colouring} have the following immediate applications to the EFL conjecture, which are illustrative to note.
\begin{enumerate}[label=(\theequation)]
    \stepcounter{equation}
    \item\label{thm:PS-EFL} For every $\eps, k > 0$, there exists $n_0$ such that the following holds for every $n \geq n_0$.  If $\cH$ is a $k$-bounded, $n$-vertex, linear hypergraph, then $\chi'(\cH) \leq n + \eps n$ and moreover, if every $e \in \cH$ satisfies $|e| \geq 3$, then $\chi'(\cH) \leq n$.
    \stepcounter{equation}
    \item\label{thm:AKS-EFL} For every $\eps > 0$, there exist $\delta, n_0 > 0$ such that the following holds for every $n \geq n_0$ and $k \coloneqq \delta \sqrt n$.  If $\cH$ is a $k$-uniform, $n$-vertex, linear hypergraph, then $\chi'(\cH) \leq \eps n$.
    \stepcounter{equation}
    \item\label{thm:MR-EFL} For every $\eps > 0$, there exist $\delta, n_0 > 0$ such that the following holds for every $n \geq n_0$ and $k \coloneqq (1 - \delta)\sqrt n$.  If $\cH$ is a $k$-uniform, $n$-vertex, linear hypergraph, then $\chi'(\cH) \leq (1 - \eps)n$.
\end{enumerate}
To prove~\ref{thm:PS-EFL}, it suffices to note that since $\cH$ is linear, it has maximum degree at most $n / (\min_{e\in\cH}|e| - 1)$) (assuming $\cH$ has no size-one edges), whence~\ref{thm:PS-EFL} follows immediately from Theorem~\ref{thm:asym-list-edge-col}.  To prove~\ref{thm:AKS-EFL} and~\ref{thm:MR-EFL}, it suffices to note that since $\cH$ is linear, the line graph $L \coloneqq L(\cH)$ has maximum degree at most $k(n - k)/(k - 1) \leq (1 + 2/k)n$ and every pair of adjacent vertices in $L$ have at most $(k - 1)^2 + n / k$ common neighbours.  Hence, \ref{thm:AKS-EFL} follows immediately from Theorem~\ref{thm:DKPS},\COMMENT{since $L$ has maximum degree at most $n + 2\delta^{-1}\sqrt n$ and every neighbourhood of $L$ spans at most $n(\delta^2n + \delta^{-1}\sqrt n)$ edges} and~\ref{thm:MR-EFL} follows immediately from Theorem~\ref{local-sparsity-lemma}.\COMMENT{since $L$ has maximum degree at most $n + 3\sqrt n$ and every neighbourhood of $L$ spans at most $\binom{n + 3\sqrt n}{2}((1 - \delta)^2 + 3 / \sqrt n)$ edges}  

In 1992, a breakthrough by Kahn~\cite{kahn1992coloring} confirmed the EFL conjecture asymptotically, by showing that every $n$-vertex linear hypergraph has chromatic index at most $n + o(n)$.  Note that this result strengthens the first part of~\ref{thm:PS-EFL} by showing the $k$-boundedness assumption is not necessary. 
% In fact, the pursuit of a proof of the EFL conjecture in part inspired Kahn~\cite{kahn1996asymptotically} to prove Theorem~\ref{thm:asym-list-edge-col}, since his proof of the $n + o(n)$ bound required a list colouring generalization of Theorem~\ref{thm:pippenger-spencer}.
Kahn's argument in~\cite{kahn1992coloring} relies on a `restricted' list colouring result which strengthens the Pippenger-Spencer theorem (Theorem~\ref{thm:pippenger-spencer}) but is still weaker than Theorem~\ref{thm:asym-list-edge-col}, and thus can be viewed as a `stepping stone' towards Theorem~\ref{thm:asym-list-edge-col}.  Moreover,
Kahn's argument from~\cite{kahn1992coloring}, combined with Theorem~\ref{thm:asym-list-edge-col} that he proved later in~\cite{kahn1996asymptotically}, can be adapted to prove that every $n$-vertex linear hypergraph has \textit{list} chromatic index at most $n + o(n)$, which we explain further in Section~\ref{section:asym-list-EFL}.
The second part of~\ref{thm:PS-EFL} was strengthened in 2019 by Faber and Harris~\cite{FH19}, who proved that for some absolute constant $c$, the EFL conjecture holds if every edge has size at least three and at most $c\sqrt n$.  In fact, the main result of~\cite{FH19} also implies~\ref{thm:AKS-EFL}.  Their argument relies on Theorem~\ref{thm:asym-list-edge-col} and the result of Vu~\cite{Vu02} mentioned before Theorem~\ref{thm:DKPS}.  That~\cite{AKS99, Vu02} have applications to the EFL conjecture was first observed by Faber~\cite{faber2010}, namely to prove a result similar to~\ref{thm:AKS-EFL}.

%Note the following useful preliminary observations.  If $\cH$ is an $n$-vertex linear hypergraph and $L \coloneqq L(\cH)$ is the line graph of $\cH$, then
%\begin{enumerate}[label=(\theequation)]
   % \stepcounter{equation}
  %\item\label{eqn:intersecting-bound} every $e \in \cH$ satisfies $|\{f \in N_L(e) : |f| \geq |e|\} \leq |e| (n - |e|) / (|e| - 1)$ and
   % \stepcounter{equation}
  %\item\label{eqn:common-nbr-bound} every pair of distinct, intersecting $e, f \in \cH$ have at most $(|e| - 1)(|f| - 1) + (n - |e| - |f|) / k$ common neighbours in $L$ of size at least $k$.
 %\end{enumerate}

Nevertheless, none of the results prior to Theorem~\ref{thm:efl} confirmed the conjecture for any nontrivial class of hypergraphs containing one of the extremal families.  In particular, the case of $k$-bounded (or even $3$-bounded) hypergraphs was still open (and was highlighted as a challenging problem by Kahn~\cite{kahn1995asymptotics}).  Similarly, the case of hypergraphs in which all edges have size $\omega(1)$ was also still open. % and quite challenging.
Both of these cases turned out to be significant stepping stones towards the proof of Theorem~\ref{thm:efl}, and their proofs contain several of the main ideas.  To highlight these ideas, we provide a detailed sketch of the following two results in Section~\ref{section:EFL-proof}.

\begin{restatable}{theorem}{smallEFL}
\label{thm:small-efl}
    There exists $n_0 > 0$ such that the following holds.  If $\cH$ is an $n$-vertex linear hypergraph such that every $e \in \cH$ satisfies $|e| \in \{2, 3\}$ and $n > n_0$, then $\chi'(\cH) \leq n + 1$.
\end{restatable}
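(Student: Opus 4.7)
The plan is to handle Theorem~\ref{thm:small-efl} by splitting on the maximum degree $\Delta := \Delta(\cH)$, applying Theorem~\ref{thm:asym-list-edge-col} directly in a bulk regime and using the pseudorandom matching result (Theorem~\ref{thm:egj}) together with a Hall-type absorption in a near-extremal regime. Fix a hierarchy $0 < \eta \ll \delta \ll 1$. Linearity together with $|e| \in \{2,3\}$ gives $\sum_{e \ni v}(|e|-1) \le n-1$ for every $v \in V(\cH)$, so $\Delta \le n-1$ and the codegree of $\cH$ is $1$.

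In the bulk case $\Delta \le (1-\delta)n$, apply Theorem~\ref{thm:asym-list-edge-col} with $k = 3$ and $\eps$ replaced by $\eta$ to obtain a constant $\delta_0 > 0$. If $\Delta \ge 1/\delta_0$, the codegree bound $1 \le \delta_0\Delta$ holds and the theorem gives $\chi'(\cH) \le \chi'_\ell(\cH) \le (1+\eta)\Delta \le (1+\eta)(1-\delta)n \le n$, using $\eta \ll \delta$. If instead $\Delta < 1/\delta_0$, then $\Delta$ is bounded by an absolute constant, the line graph of $\cH$ has maximum degree at most $3(\Delta-1)$, and a greedy edge-colouring uses at most $3\Delta - 2 \le n + 1$ colours for large $n$.

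The interesting case is the near-extremal regime $\Delta > (1-\delta)n$. Let $v^*$ be a vertex of maximum degree and set $E^* := \{e \in \cH : v^* \in e\}$. From $d_2(v^*)+2d_3(v^*) \le n-1$ and $d(v^*) \ge (1-\delta)n$ we get $d_3(v^*) \le \delta n$ and $d_2(v^*) \ge (1-3\delta)n$, so $E^*$ is essentially a star centred at $v^*$. My plan is (i)~to produce a partial proper edge-colouring of $\cH \setminus E^*$ with $n+1$ colours in which each colour class $M_i$ is a matching whose uncovered set $U_i \subseteq V(\cH) \setminus \{v^*\}$ has pseudorandom properties; and (ii)~to absorb $E^*$ by assigning injectively to each $e \in E^*$ a colour $i$ with $e \setminus \{v^*\} \subseteq U_i$ via a Hall-style bipartite-matching argument.

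For step (i) I would reserve a small random subhypergraph $R \subseteq \cH \setminus E^*$ (each edge included independently with probability $p = n^{-\gamma}$ for some small $\gamma > 0$) and apply Theorem~\ref{thm:egj} to the incidence hypergraph $\hgInc_{D}(\cH \setminus (E^* \cup R))$ for an appropriate $D$, translating the resulting pseudorandom matching into an edge-colouring via Observation~\ref{obs:incidence-hypergraph}. A further absorption step using $R$ tops up each colour class so that every vertex $u \ne v^*$ is left uncovered by a controlled, pseudorandom collection of colours. For step (ii), build the bipartite graph $B$ on $E^* \cup [n+1]$ in which $e$ is joined to colour $i$ precisely when $e \setminus \{v^*\} \subseteq U_i$; the pseudorandom design of the $U_i$ is engineered to ensure Hall's condition in $B$, yielding the desired injection and hence a proper edge-colouring of $\cH$ with at most $n+1$ colours. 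The main obstacle is the tight sub-case $d(v^*) = n-1$ (with all edges at $v^*$ of size $2$), where essentially every colour must absorb exactly one edge of $E^*$ and there is no slack; here one likely has to exploit the specific near-$K_n$ structure of the link of $v^*$ directly, rather than invoke the pseudorandom machinery as a black box.
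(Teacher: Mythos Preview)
Your bulk case $\Delta \le (1-\delta)n$ via Theorem~\ref{thm:asym-list-edge-col} is fine. The near-extremal case, however, has a genuine gap: removing the edges at a \emph{single} high-degree vertex $v^*$ does not simplify the problem. Take $\cH = K_n$: after deleting $E^*$ you are left with $K_{n-1}$ on $V(\cH)\setminus\{v^*\}$, so every other vertex still has degree $n-2$. Your step~(i) asks for a proper edge-colouring of $\cH\setminus E^*$ with $n+1$ colours whose colour classes have pseudorandom uncovered sets; but $\cH\setminus E^*$ is itself an $n$-vertex linear hypergraph with edges of size $2$ or $3$ and possibly many vertices of near-maximal degree, so this is essentially the original problem with an extra pseudorandomness requirement layered on top. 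Your proposed route---apply Theorem~\ref{thm:egj} to $\hgInc_D(\cH\setminus(E^*\cup R))$---does yield edge-disjoint matchings $M_1,\dots,M_D$, but it does \emph{not} colour every edge: a $D^{-\eps}$-fraction of the edges of $\cH\setminus(E^*\cup R)$ remain uncoloured, and neither your sparse reservoir $R$ (of density $n^{-\gamma}$) nor the ``top up'' step addresses how to colour these leftovers while maintaining the delicate uncovered-set structure needed for Hall in step~(ii). The difficulty you flag at the end (the tight sub-case $d(v^*)=n-1$) is not a corner case but the heart of the matter, and it recurs for every near-extremal vertex, not just $v^*$.

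The paper's approach differs precisely here. Rather than singling out one vertex, it works with the \emph{set} $U$ of all vertices of high $G$-degree (where $G$ is the graph of size-$2$ edges). After assuming without loss of generality that every pair of vertices lies in some edge, the identity $n-1 = 2d_\cH(v) - d_G(v)$ forces $\cH$ minus a random half $R$ of $G$ to be nearly $(n/2)$-regular; this is what makes the nibble applicable. The nibble then produces roughly $n/2$ matchings, each of which is extended via $R$ to cover (almost) all of $U$, and the uncoloured remainder is a \emph{graph} of maximum degree at most $n/2$, to which Vizing's theorem applies. Thus the paper reduces to Vizing rather than to a Hall-type absorption of a single star, and the near-regularity coming from the degree identity is the missing ingredient that your sketch lacks.
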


\begin{restatable}{theorem}{largeEFL}
\label{large-edge-thm}
    For every $\delta > 0$, there exist $n_0, r, \sigma > 0$ such that the following holds.
    %Let $0 < 1/n_0 \ll 1/r \ll \sigma \ll \delta \ll 1$, and let $n \geq n_0$.
    If $\mathcal H$ is an $n$-vertex linear hypergraph where $n > n_0$ and every $e\in \cH$ satisfies $|e| > r$, then $\chi'(\cH) \leq n$.  Moreover, if $\chi'(\cH) > (1 - \sigma)n$, then $|\{e \in \cH : |e| = (1 \pm \delta)\sqrt n\}| \geq (1 - \delta)n$.
\end{restatable}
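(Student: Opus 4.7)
The plan is to decompose $\cH$ by edge size and combine a nibble-based partial colouring with an absorption argument. Define $\cH_s := \{e \in \cH : r < |e| \le (1-\delta)\sqrt n\}$, $\cH_m := \{e \in \cH : (1-\delta)\sqrt n < |e| \le (1+\delta)\sqrt n\}$, and $\cH_l := \{e \in \cH : |e| > (1+\delta)\sqrt n\}$. Two elementary consequences of linearity will guide the whole argument: first, $\sum_{e \in \cH}\binom{|e|}{2} \le \binom{n}{2}$ forces $|\cH_l| \le (1-\delta_l)n$ for some $\delta_l = \delta_l(\delta) > 0$; second, since every edge has size exceeding $r$, every vertex has degree at most $(n-1)/r$, so $\Delta(\cH) \le (n-1)/r$.

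For the upper bound $\chi'(\cH) \le n$, I would apply a nibble on the incidence hypergraph $\cH^* = \hgInc_n(\cH)$, generalising to the non-uniform setting the reduction of the Pippenger-Spencer theorem from Theorem~\ref{thm:egj} sketched in Section~\ref{edge-colouring:pseudo}. This produces a partial proper edge-colouring of $\cH$ using $n$ colours whose uncoloured defect $\cH' \subseteq \cH$ satisfies $\Delta(\cH') \le n^{1-\eta}$ for some $\eta > 0$. To absorb $\cH'$ without exceeding the $n$-colour palette, I would reserve a structured subhypergraph $R \subseteq \cH$ in advance of the nibble; the pseudorandomness guaranteed by Theorem~\ref{thm:egj} implies each colour class behaves uniformly on $R$, so a Hall-type matching argument on a bipartite `defect edges vs.\ available colour swaps' structure lets each defect edge be re-routed against a reservoir edge. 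Choosing $r$ large enough relative to $\delta$ supplies the vertex-degree slack required for both reservoir construction and absorption.

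For the stability claim, I would prove its contrapositive: if $|\cH_m| < (1-\delta)n$, then $\chi'(\cH) \le (1-\sigma)n$ for some $\sigma = \sigma(\delta) > 0$. The starting point is Kahn's asymptotic EFL~\cite{kahn1992coloring}, giving $\chi'(\cH) \le n + o(n)$; the task is to save a positive fraction of colours using the slack afforded by $|\cH_m| < (1-\delta)n$. One of two cases occurs. If many edges lie in $\cH_l$, then $|\cH_l|\le(1-\delta_l)n$ permits a trivial assignment of distinct colours to $\cH_l$, leaving $\delta_l n$ colours free; since $\cH_s\cup\cH_m$ has maximum degree bounded by $(n-1)/r$ and all edges have size at most $(1+\delta)\sqrt n$, a second nibble-style colouring (in parallel on the spare palette) accommodates these remaining edges with room to spare. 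If instead most edges lie in $\cH_s$, the edges of $\cH_s$ have size at most $(1-\delta)\sqrt n$, so the line graph $L(\cH_s)$ has a strong local sparsity property; one can then apply a local-sparsity result in the style of Theorem~\ref{thm:DKPS} or~\ref{local-sparsity-lemma} (or a targeted nibble) to drop the chromatic index below $(1-\sigma)n$, after which the $O(n)$ edges of $\cH_m\cup\cH_l$ are inserted greedily on a reserved palette.

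I expect the main obstacle to be the absorption step in the upper bound proof: unlike Kahn's asymptotic EFL, which tolerates an $o(n)$ overshoot, we must match the palette size exactly. The reservoir must be simultaneously large enough to absorb every defect and small enough to fit within the $n$-colour budget; in near-extremal configurations (many edges of size close to $\sqrt n$), each colour class must operate at almost-maximum capacity, making every placement delicate. Balancing the absorber's structural rigidity against the nibble's inherent randomness, while preserving the precise count of $n$ colours, is where the technical depth of the argument concentrates.
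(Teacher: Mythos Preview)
Your proposal has a genuine gap in the proof of $\chi'(\cH) \leq n$, and the approach diverges substantially from the paper's. You propose a nibble on the incidence hypergraph $\hgInc_n(\cH)$ followed by a reservoir-based absorption, transplanting the machinery of Section~\ref{efl:small}. But that machinery depends essentially on the abundance of size-two edges: the reservoir $R$ in Lemma~\ref{lemma:reservoir} consists entirely of graph edges, and the absorption in Lemma~\ref{absorption-lemma} extends each matching by adding such edges to cover $U$. When every edge has size exceeding $r$, there is no analogous reservoir, and you do not say what $R$ would be or how a Hall-type argument would operate on it. A second obstruction is that $\hgInc_n(\cH)$ is far from regular here: the ``edge'' vertices have degree $n$ while the $(i,v)$ vertices have degree $d_\cH(v) \leq (n-1)/r$, so Theorem~\ref{thm:egj} does not apply directly. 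You correctly flag the near-extremal (projective-plane-like) configurations as the crux, but your outline does not resolve them.

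The paper's proof is structurally quite different and avoids absorption altogether. Its engine is the Reordering Lemma (Lemma~\ref{reordering-lemma}): starting from a size-monotone ordering and iteratively pushing edges forward, one shows that either every edge has forward degree at most $(1-\tau)n$ (so a greedy colouring with $(1-\tau)n+1$ colours succeeds), or there is a set $W$ of edges with nearly equal sizes and volume close to $1$. Applying this twice yields a partition $\cH_1 \cup W \cup \cH_2$ with a common ordering in which $\cH_1$ can be coloured greedily after $W\cup\cH_2$. If the common edge size in $W$ is below $(1-\delta)\sqrt n$, Theorem~\ref{local-sparsity-lemma} applied to $L(W)$ gives $\chi'(W)\leq(1-\zeta/500)n$ (this is the use of local sparsity you anticipated, but for $W$ rather than for a fixed $\cH_s$), and the small remaining volume forces $\chi'(\cH_2)$ to be tiny, contradicting $\chi'(\cH)>(1-\sigma)n$. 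If instead the edges of $W$ have size at least $(1-\delta)\sqrt n$, then all of $W\cup\cH_2$ has large edges and a separate extremal lemma (Lemma~\ref{extremal-case-lemma}, handling the projective-plane regime directly) gives $\chi'(W\cup\cH_2)\leq n$; greedy then finishes $\cH_1$. So the exact bound of $n$ comes not from absorption but from a delicate analysis of near-projective-plane hypergraphs combined with the ordering argument; your proposal is missing both the reordering idea and any replacement for the extremal lemma.
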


% For readers that wish to understand the proof of Theorem~\ref{thm:efl}, we believe it would be useful to study the proofs of Theorems~\ref{thm:small-efl} and~\ref{large-edge-thm} first.
Notice that in Theorem~\ref{thm:small-efl}, the bound on the chromatic index is one larger than in the EFL conjecture.
%The proof of Theorem~\ref{thm:efl} yields a fairly short proof of this slightly weaker, yet still new result.
In Section~\ref{efl:small}, we prove Theorem~\ref{thm:small-efl} and briefly explain the additional ideas required to prove the stronger bound of the EFL conjecture in this case.  The proof of Theorem~\ref{thm:small-efl} can also be adapted with little additional effort to prove the same result for $k$-bounded hypergraphs, for any fixed $k$.  We focus on the case of $3$-bounded hypergraphs as it is slightly cleaner yet complex enough to capture many of the important ideas.  

Note also that Theorem~\ref{large-edge-thm} implies Theorem~\ref{main-thm-2} in the case when all edges of $\cH$ are sufficiently large.  This `stability result' is needed to combine the arguments of Theorem~\ref{thm:small-efl} and~\ref{large-edge-thm} to obtain Theorem~\ref{thm:efl}.  Roughly, we apply (a stronger version of) Theorem~\ref{large-edge-thm} first to the `large' edges of $\cH$, and then we apply the arguments of Theorem~\ref{thm:small-efl} to find a proper edge-colouring of the `small' edges of $\cH$ that is compatible with the colouring of the `large' edges.  If Theorem~\ref{large-edge-thm} only requires $(1 - \sigma) n$ colours, then only minor adaptations to the arguments of Theorem~\ref{thm:small-efl} are required, which we briefly describe in Section~\ref{efl:large} after proving Theorem~\ref{large-edge-thm}.  If Theorem~\ref{large-edge-thm} only guarantees a proper edge-colouring of the large edges of $\cH$ with $n$ colours, then additional ideas are required, for which we refer the interested reader to~\cite{KKKMO2021} (which in particular contains a sketch of the overall argument).

\subsection{Open problems}

 We now discuss some open problems related to the EFL conjecture.  
 First, it would be interesting to characterize when equality holds in Theorem~\ref{thm:efl}.  As mentioned, finite projective planes, degenerate planes, and complete graphs on an odd number of vertices are extremal examples.  In fact, any $n$-vertex hypergraph with more than $(n - 1)^2 / 2$ size-two edges has chromatic index at least $n$ when $n$ is odd\COMMENT{since $\chi'(G) \geq \Gamma(G) \geq 2|E(G)| / (|V(G)| - 1| > n - 1$}, and any hypergraph $\cH$ obtained from $K_n$ by replacing a complete subgraph with a single edge $e$ is linear and has chromatic index $n$ if $|V(\cH)\setminus e|$ is odd\COMMENT{Let $X \coloneqq V(\cH)\setminus e$.  Suppose for a contradiction that there is a proper edge-colouring using $n - 1$ colours, and let $c$ denote the colour assigned to $e$.  Since every vertex in $X$ has degree $n - 1$, every vertex in $X$ is contained in an edge assigned $c$, and moreover, the other end of this edge is in $X$.  Thus, the edges assigned $c$ form a perfect matching in $X$, contradicting that $|X|$ is odd.}  (note that the degenerate plane is obtained in this way).  These may include all of the extremal examples.

Berge~\cite{berge1989} and F{\"{u}}redi~\cite{furedi1986} independently posed the following beautiful conjecture.
 \begin{conjecture}[Berge~\cite{berge1989}, F{\"{u}}redi~\cite{furedi1986}]\label{conj:bf}
    If $\cH$ is a linear hypergraph with vertex set $V$, then $\chi'(\cH) \leq \max_{v\in V} |\bigcup_{e\ni v}e|$.
 \end{conjecture}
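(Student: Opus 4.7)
My plan would be to adapt the framework of Theorem~\ref{thm:efl}, replacing the global vertex count $n$ everywhere by the local parameter $D := \max_{v \in V(\cH)}|\bigcup_{e \ni v} e|$. In a linear hypergraph, edges through a fixed vertex $v$ are disjoint outside $v$, so $|\bigcup_{f \ni v}f| = 1 + \sum_{f\ni v}(|f|-1)$; in particular, for every edge $e \ni v$ we have $d(v) \le D - |e| + 1$ (since each of the other $d(v) - 1$ edges through $v$ contributes at least one further vertex). This immediately yields the line-graph bound $d_{L(\cH)}(e) = \sum_{v\in e}(d(v)-1) \le |e|(D-|e|)$, which is the local analogue of the bounds that drive the proof of EFL. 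A useful warm-up would be to re-run Kahn's~\cite{kahn1992coloring} nibble argument with these local bounds in place of the $n$-based ones to obtain $\chi'(\cH) \le D + o(D)$; if this passage is smooth (which I expect), it confirms that the global-to-local translation does not fail at the asymptotic level.

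For the exact statement, I would split edges at a threshold $r = r(D)$ into \emph{large} edges ($|e| > r$) and \emph{small} edges ($|e| \le r$) and mirror the structure described after Theorem~\ref{large-edge-thm}. For the large-edge part, I would first prove a stability analogue of Theorem~\ref{large-edge-thm}: there exist $\sigma,\delta>0$ and $r$ such that either (i) the subhypergraph of large edges admits a proper edge-colouring with at most $(1-\sigma)D$ colours, or (ii) the vertex $v_0$ realizing $D$ lies in a subhypergraph that closely resembles a projective plane of order $\approx \sqrt{D}$. In case (i), one extends the colouring to small edges: their line graph has maximum degree $O(rD)$ and is locally sparse at the scale of $D$ (by the bound $d(v)\le D-|e|+1$ above), which lets us apply Theorem~\ref{thm:DKPS} or the Observation~\ref{obs:incidence-hypergraph}/Theorem~\ref{thm:egj} scheme described at the end of Section~\ref{edge-colouring:pseudo} to absorb the small edges into the leftover $\sigma D$ colours. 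In case (ii), I would emulate the absorption argument sketched for Theorem~\ref{thm:small-efl} in Section~\ref{efl:small}, but anchored at $N[v_0]$ instead of at $V(\cH)$: reserve a small batch of colour classes that match nearly all of $N[v_0]$, then apply the stability/absorption tools to the residual hypergraph, which by construction is no longer near-extremal at $v_0$ and so falls back into case (i).

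The main obstacle, to my mind, is case (ii), the near-extremal analysis. All known extremal examples for EFL --- projective planes, degenerate planes, and $K_{2t+1}$ --- are \emph{spanning} in the sense that $|N[v]| = n$ for every vertex; they give no guidance for what a local obstruction at $v_0$ should look like when $|N[v_0]| \ll |V(\cH)|$. Even the easier question, identifying the extremal hypergraphs for Berge--F\"uredi (equivalently, characterizing the linear hypergraphs with $\chi'(\cH)=D$), seems open, and without a clean structural dichotomy at scale $D$ it is unclear how to set up the absorber. A natural intermediate target is therefore the fractional version $\chi'_f(\cH) \le D$ (refining Kahn--Seymour~\cite{KS1992}), which one might attempt via an LP-duality or edge-weighting argument that exploits the bound $d(v) \le D - |e| + 1$; success here would both validate the local approach and suggest the right notion of ``local extremality'' needed for the integral statement.
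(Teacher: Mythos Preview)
The statement you are attempting is Conjecture~\ref{conj:bf}, which the paper presents as an \emph{open problem}; it does not contain a proof. The paper explicitly remarks that even the fractional relaxation $\chi'_f(\cH) \le D$ is still open, and the only positive result it proves in this direction is the asymptotic bound of Theorem~\ref{thm:asym-list-bf}, namely $\chi'(\cH) \le (1+\eps)D$ for large $D$. That asymptotic step is essentially the warm-up you describe in your first paragraph, so there is nothing further in the paper to compare your proposal against.

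What you have written is a research plan, not a proof, and you appear to recognise this: you correctly flag the near-extremal case as the obstruction and propose the fractional version as an intermediate target. The difficulty you identify is genuine. The EFL argument in~\cite{KKKMO2021} relies throughout on global control over an $n$-vertex hypergraph --- volume bounds summing to~$1$, the reordering lemma (Lemma~\ref{reordering-lemma}), a reservoir and a set $U$ of high-degree vertices defined over all of $V(\cH)$ --- whereas under the Berge--F\"uredi hypothesis the hypergraph may have arbitrarily many vertices, with $|\bigcup_{e\ni v}e|$ varying across $V(\cH)$ and $D$ realised at possibly a single vertex. Your suggestion to anchor the absorption at $N[v_0]$ does not address how to control edges that meet $N[v_0]$ but live mostly outside it, nor how to run the stability dichotomy when there is no global volume constraint forcing a projective-plane-like structure. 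In short, none of the steps in your outline beyond the asymptotic one are currently known to go through, and the paper confirms this by listing both the conjecture and its fractional relaxation as open.
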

 If true, Conjecture~\ref{conj:bf} implies the EFL conjecture, since every linear hypergraph $\cH$ satisfies $\max_{v\in V(\cH)} |\bigcup_{e\ni v}e| \leq n$.  Note also that if $G_\cH$ is the graph obtained from $\cH$ by replacing each edge $e\in\cH$ with a complete graph on the vertices of $e$ (sometimes called the \textit{shadow} of $\cH$), then $|\bigcup_{e\ni v}e| = \Delta(G_\cH) + 1$.  In particular, if $\cH$ is $2$-uniform, then $G_\cH = \cH$, so Conjecture~\ref{conj:bf} if true, also implies Vizing's theorem.  The fractional relaxation of Conjecture~\ref{conj:bf} is still open.  The following related conjecture was posed by F{\"u}redi, Kahn, and Seymour~\cite{FKS93}: If $\cH$ is a multi-hypergraph with vertex set $V$, then $\chi'_f(\cH) \leq \max_{v \in V}\sum_{e\ni v}(|e| - 1 + 1/|e|)$.  This conjecture may even hold for the chromatic index, which if true, would generalize Shannon's theorem~\cite{S49} and imply Conjecture~\ref{conj:alon-kim} in the case $k = t$.

 It is also natural to ask whether the EFL conjecture holds more generally for list colouring.  Faber~\cite{F17} conjectured that it does, as follows.

\begin{conjecture}[The `List' EFL conjecture~\cite{F17}]\label{conj:list-efl}
    Every $n$-vertex linear hypergraph has list chromatic index at most $n$.
\end{conjecture}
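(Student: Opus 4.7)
The plan is to follow the overall strategy that yields Theorem~\ref{thm:efl} and try to upgrade every ingredient to the list-colouring setting. First I would split the edges of $\cH$ by a threshold $r$ into ``small'' ($|e|\le r$) and ``large'' ($|e|>r$) families, as in the proof of Theorem~\ref{thm:efl}. Working with an arbitrary list assignment $C$ with $|C(e)|\ge n$ for every edge $e$, I would first colour the large edges. Since the line graph of the large edges has maximum degree close to $n$ and satisfies the `local sparsity' conditions verified in the proofs of~\ref{thm:AKS-EFL} and~\ref{thm:MR-EFL}, a list version of Theorem~\ref{large-edge-thm} should be attainable by substituting list-analogues of Theorem~\ref{local-sparsity-lemma} and Theorem~\ref{thm:DKPS}; the latter is already stated for $\chi_\ell$, and Molloy--Reed-type local sparsity arguments typically lift to list-colouring. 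The output would be a partial proper list edge-colouring of the large edges leaving a set of uncoloured large edges that is either empty or has the quasi-projective-plane structural form dictated by the stability part of Theorem~\ref{large-edge-thm}.

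Next I would extend this to the small edges. Here the key tool is the pseudorandom matching theorem (Theorem~\ref{thm:egj}), applied not to the ``plain'' incidence hypergraph of Definition~\ref{def:incidence-hypergraph} but to a \emph{list-adapted} incidence hypergraph whose vertex set includes, for each uncoloured small edge $e$, a ``slot'' for every colour $c\in C(e)$ still usable at $e$ after the large edges have been coloured; each original edge $e$ contributes the edge $\{e\}\cup(\{c\}\times e)$ to the auxiliary hypergraph exactly when $c$ is admissible at $e$. A matching in this auxiliary hypergraph corresponds to a partial list edge-colouring of the small edges, and the pseudorandomness conclusion of Theorem~\ref{thm:egj} (applied with $\cF$ consisting of the sets $\{c\}\times N_\cH(v)$ over all admissible pairs $(c,v)$) should guarantee that every colour class is ``spread out'' in a quantitatively usable way. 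Then, following the template of~\cite{KKKMO2021}, I would reserve an ``absorber'' structure of edges and colours \emph{before} the pseudorandom matching step, so that any residue of uncoloured small edges can be patched up using the absorber.

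The main obstacle I expect is the absorption step in the list setting. In the non-list proof of Theorem~\ref{thm:small-efl}, one argues that after a pseudorandom partial colouring, each colour class can be augmented using a matching in a fixed reservoir via a Hall-type argument, exploiting that every colour behaves identically. For arbitrary lists one cannot fix a reservoir ``per colour'' because different edges see different lists; the natural analogue becomes the problem of finding a rainbow matching (or an independent transversal) compatible with the given lists, which is substantially more delicate. My plan would be to combine the quantitative spreading guaranteed by the pseudorandom matching theorem with a strong rainbow-matching tool in the spirit of Haxell's theorem or the Aharoni--Berger framework, exploiting that the relevant auxiliary conflict hypergraph has very small local clustering because $\cH$ is linear. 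A secondary difficulty is the quasi-projective-plane regime produced by the stability output: when the uncoloured large edges nearly form a projective plane of order $\approx\sqrt n$ the slack between $\chi'_\ell(\cH)$ and $n$ disappears, so the absorber must be engineered so that each near-$\sqrt n$ edge receives a colour from its list even though no colour has any spare global capacity, which is precisely where a purely list-based absorbing gadget must be designed from scratch.
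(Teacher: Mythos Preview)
The statement you are attempting to prove is Conjecture~\ref{conj:list-efl} in the paper, and the paper does \emph{not} prove it: it is presented as an open problem. The paper only remarks that the authors have confirmed it in~\cite{KKKMO2021b} for the special case of linear hypergraphs of maximum degree at most $n - o(n)$, and that Kahn's arguments give the asymptotic bound $\chi'_\ell(\cH)\le n+o(n)$ (Theorem~\ref{thm:asym-list-bf}). There is therefore no ``paper's own proof'' to compare your proposal against.

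What you have written is not a proof but a research plan, and you yourself flag the two places where it breaks. Both are genuine gaps. First, the absorption step: in the ordinary-colouring proof of Theorem~\ref{thm:small-efl} the reservoir $R$ is a single set of graph edges and every colour can draw on it identically, which is exactly what makes the Hall-type argument in Lemma~\ref{absorption-lemma} go through. With arbitrary lists, different colours are available at different edges, so you would need a per-colour reservoir or a rainbow-matching argument in an auxiliary structure whose parts are indexed by colours; invoking ``Haxell's theorem or the Aharoni--Berger framework'' is not enough, since those results require quantitative slack (typically a factor of $2$ in degrees or list sizes) that you do not have when the target is exactly $n$. Second, the near-projective-plane regime handled by Lemma~\ref{extremal-case-lemma} is tight already for ordinary colouring, and no list-colouring analogue of that lemma is known; your proposal concedes that the required absorbing gadget ``must be designed from scratch''. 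Until those two pieces exist, the outline does not constitute a proof, and indeed the full conjecture remains open as far as the paper is concerned.
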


Conjecture~\ref{conj:list-efl} was recently confirmed by the authors~\cite{KKKMO2021b} for the special case of hypergraphs of maximum degree at most $n - o(n)$, and their result also implies that in this case projective planes are the only extremal examples.  The main result in~\cite{KKKMO2021b} also solves a conjecture of Erd\H{o}s on the chromatic index of hypergraphs of small codegree.

A related problem to Conjecture~\ref{conj:list-efl} is an algebraic strengthening of the EFL conjecture involving the Combinatorial Nullstellensatz, posed by Janzer and Nagy~\cite{JN20}.

As mentioned, the arguments of Kahn~\cite{kahn1992coloring, kahn1996asymptotically} can be adapted to prove that the List EFL conjecture holds asymptotically.  Kahn's proof in~\cite{kahn1992coloring} also implies that Conjecture~\ref{conj:bf} holds asymptotically.  In fact, assuming the sizes of the lists are polylogarithmic in $n$, it is easy to show that the argument can be adapted to work for list colouring, as follows.

\begin{theorem}\label{thm:asym-list-bf}
  For every $\eps > 0$, there exists $n_0$ such that the following holds for every  $n, D \geq n_0$.  If $\cH$ is an $n$-vertex linear hypergraph such that $|\bigcup_{e\ni v}e| \leq D$ for every $v \in V(\cH)$, then $\chi'(\cH) \leq (1 + \eps)D$.  Moreover, if $D \geq \log^2 n$, then $\chi'_\ell(\cH) \leq (1 + \eps)D$.
\end{theorem}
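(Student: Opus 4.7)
The plan is to adapt Kahn's nibble arguments from~\cite{kahn1992coloring, kahn1996asymptotically}. Kahn's 1992 argument proves $\chi'(\cH) \leq n + o(n)$ for any $n$-vertex linear hypergraph by analysing a semi-random edge-colouring process whose guarantees depend only on the \emph{local} quantities $d_\cH(v)$ and $|\bigcup_{e\ni v}e|$ through each vertex. Since the Berge--F\"uredi bound $D$ directly controls the latter (and hence the former, as $d_\cH(v) \leq D - 1$), the same argument applied with $n$ replaced by $D$ throughout yields $\chi'(\cH) \leq D + o(D)$, giving the first assertion once $D \geq n_0(\eps)$ so that the $o(D)$ error can be absorbed into $\eps D$.

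For the list version I would adapt Kahn's proof of Theorem~\ref{thm:asym-list-edge-col}. Fix a list assignment $L$ with $|L(e)| \geq (1+\eps)D$ for every $e \in \cH$. Set $\cH_0 := \cH$, $L_0 := L$, and run a nibble in which, at each round $t$, every uncoloured edge $e \in \cH_t$ samples a colour $\phi_t(e) \in L_t(e)$ uniformly at random, independently of all other edges. An edge $e$ is \emph{retained}---and permanently coloured with $\phi_t(e)$---if $\phi_t(e) \neq \phi_t(f)$ for every $f \in \cH_t$ with $f \cap e \neq \varnothing$. Then $\cH_{t+1}$ consists of the non-retained edges and $L_{t+1}(e) := L_t(e) \setminus \{\phi_t(f) : f \text{ retained}, f \cap e \neq \varnothing\}$. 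The aim is to preserve, for every $e \in \cH_t$ and every $v \in e$, the invariant
\[
|L_t(e)| \geq (1 + \eps/2)\left(1 + \sum_{f \in \cH_t,\, f \ni v,\, f \neq e}(|f|-1)\right),
\]
which the Berge--F\"uredi hypothesis supplies at $t = 0$. A ``coupon-collector''-style expectation calculation shows that in one round both sides of this inequality contract by essentially the same geometric factor, so the invariant is preserved in expectation.

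The main obstacle is the concentration step: one must show that the actual random quantities appearing on each side of the invariant closely track their expectations. Bounded-differences concentration (Talagrand's or the Azuma--Hoeffding inequality) gives, for each vertex $v$ and round $t$, a failure probability of order $\exp(-\Omega(\sqrt{D}))$. In the ordinary chromatic-index setting all edges share a single palette and Kahn's 1992 argument handles concentration in aggregate, avoiding a vertex-level union bound---this is why the first assertion needs no polylog hypothesis on $D$. For the list version the lists are private and one must union-bound over $O(n \log D)$ separate concentration events, which is affordable precisely when $\sqrt{D} \gtrsim \log n$, i.e.\ $D \geq \log^2 n$. After $O_\eps(\log D)$ rounds the residual is a linear hypergraph in which every vertex lies in at most $\eps D/10$ uncoloured edges, each with list of size at least $\eps D/4$, and a straightforward greedy step using this slack completes the colouring.
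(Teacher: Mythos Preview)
Your approach is genuinely different from the paper's and has a fundamental gap arising from the non-uniformity of $\cH$.

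The paper does \emph{not} run a nibble directly on $\cH$. Instead it partitions $\cH$ by edge size into $\cH_{\mathrm{sml}}$ ($|e|\le r_1$), $\cH_{\mathrm{med}}$ ($r_1<|e|\le r_0$), and $\cH_{\mathrm{lrg}}$ ($|e|>r_0$), with $1/r_0\ll 1/r_1\ll\gamma\ll\eps$. Large edges are coloured greedily in a size-decreasing order, using that any such $e$ has at most $|e|(D-|e|)/(|e|-1)\le(1+\eps/3)D$ neighbours of the same or larger size. A random set $R$ of colours is reserved (each colour independently with probability $\gamma$); the hypothesis $D\ge\log^2 n$ is used precisely here, so that Chernoff plus a union bound over the $e(\cH)\le n^2$ edges guarantees $|R\cap C(e)|=(1\pm 1/2)\gamma|C(e)|$ for every $e$. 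Medium edges are coloured from $R$ via Theorem~\ref{thm:asym-list-edge-col} (since $\Delta(\cH_{\mathrm{med}})\le D/r_1$), and small edges are coloured from the unreserved colours via Theorem~\ref{thm:asym-list-edge-col} (after deleting the at most $r_1 D/r_0\le\eps D/4$ colours used on large neighbours). Thus Theorem~\ref{thm:asym-list-edge-col} is invoked as a black box and no new nibble analysis is carried out.

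Your direct nibble fails because edges of moderate size are essentially never retained. An edge $e$ has up to $\sum_{v\in e}(d_\cH(v)-1)\le |e|(D-|e|)$ neighbours in $L(\cH)$; when every uncoloured edge samples a colour uniformly from a list of size $\sim D$, the probability that no neighbour of $e$ collides with $e$ is roughly $(1-1/D)^{|e|(D-|e|)}\approx e^{-|e|}$. So edges with $|e|\to\infty$ (e.g.\ $|e|\sim\sqrt D$) survive each round with probability $o(1)$, and after $O(\log D)$ rounds they remain uncoloured. Your claimed greedy finish then cannot handle them either: if the residual has maximum degree $\eps D/10$ at each vertex and lists of size $\eps D/4$, an edge of size $3$ already has up to $3\eps D/10>\eps D/4$ uncoloured neighbours. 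The invariant you state is indeed preserved (even deterministically, since removing a coloured neighbour $f$ at $v$ drops the right side by $(1+\eps/2)(|f|-1)\ge 1$ and the left side by at most $1$), but it bounds only the per-vertex contribution, not the total line-graph degree of $e$, so it does not enable a greedy completion. The size-based decomposition together with the forward-degree bound for large edges is exactly what is missing from your sketch.
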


For completeness, we prove Theorem~\ref{thm:asym-list-bf} in Section~\ref{section:asym-list-EFL}.  It would be interesting to prove that the bound on the list chromatic index in Theorem~\ref{thm:asym-list-bf} holds without the assumption $D \geq \log^2 n$.\COMMENT{This would follow if one could find the set $R\subseteq \bigcup_{e\in\cH}C(e)$ satisfying~\eqref{list-reserved-colours}.  A natural approach would be to apply the Lov\'asz Local Lemma instead of the Union Bound.  However, if $e, f \in \cH$ satisfy $C(e) \cap C(f) \neq \varnothing$, then the trials determining if $c \in R$ for each $c \in C(e)\cap C(f)$ affects both $|R\cap C(e)|$ and $|R\cap C(f)|$.  Thus, the `dependencies' cannot be bounded by a function of $D$.}

The next open problem is the following special case of a conjecture of Larman.

\begin{conjecture}[`Restricted' Larman's conjecture]\label{conj:larman}
    If $\cH$ is an $n$-vertex intersecting hypergraph, then there exists a decomposition of $\cH$ into $\cF_1, \dots, \cF_n \subseteq \cH$ such that $|F \cap F'| \geq 2$ for every $F, F' \in \cF_i$ and $i \in [n]$.
\end{conjecture}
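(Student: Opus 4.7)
The plan is to reformulate the conjecture as a proper graph colouring problem and then adapt the EFL-style arguments from Section~\ref{section:EFL-proof}. Define the \emph{conflict graph} $G$ with vertex set $\cH$ and $e \sim f$ whenever $|e \cap f| = 1$; since $\cH$ is intersecting, the decomposition condition in the statement is exactly that $\cF_1, \dots, \cF_n$ form a proper $n$-colouring of $G$. I would look for the strong form in which the colour set is identified with $V(\cH)$, and an edge $e$ is allowed to receive colour $v$ only when $v \in e$. Equivalently, I seek a choice function $e \mapsto v_e \in e$ such that for every vertex $v$, the traces $\{e \setminus \{v\} : v_e = v\}$ form an intersecting family of subsets of $V(\cH) \setminus \{v\}$.

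The first step is to dispatch the extremal cases. When $\cH$ is linear, the de Bruijn--Erd\H{o}s theorem gives $|\cH| \leq n$, so singleton classes trivially work. When $\cH$ is close to a ``star'' through some vertex $v^{\ast}$, almost all edges contain $v^{\ast}$, and the problem reduces to partitioning the link hypergraph $\cL_{v^{\ast}} = \{e \setminus \{v^{\ast}\} : v^{\ast} \in e\}$ into at most $n - 1$ intersecting families; this is straightforward via a ``minimum-element'' partition on $V(\cH) \setminus \{v^{\ast}\}$. Near-pencils, complete graphs, and hypergraphs resembling projective planes can be handled by a bespoke analysis, analogously to the case split in the proof of Theorem~\ref{thm:efl}.

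For the generic case I would apply a nibble/absorption argument. First, set aside a sparse reservoir of edges $\cR \subseteq \cH$ to be used for absorption in the final step. Then build an auxiliary hypergraph $\cA$ on vertex set $(\cH \setminus \cR) \cup V(\cH)$ whose hyperedges encode valid partial assignments $e \mapsto v_e \in e$ that respect the 2-intersecting condition pairwise, and apply Theorem~\ref{thm:egj} to extract a pseudorandom near-matching in $\cA$. This yields a partial decomposition covering all but a $o(1)$-fraction of $\cH \setminus \cR$, with each prospective class $\cF_v$ already 2-intersecting and controlled in size. Finally, use the reservoir $\cR$, together with a Hall-type / iterative absorption argument (as in Section~\ref{section:EFL-proof}), to slot the remaining edges into the partially built classes without violating the 2-intersecting condition.

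The main obstacle is twofold. First, the 2-intersecting condition is nonlocal: whether $e$ may be assigned to colour $v$ depends on every other edge eventually assigned to $v$, so encoding valid partial decompositions as a hypergraph matching requires hyperedges whose size reflects the entire link structure at $v$, which threatens the codegree condition required by Theorem~\ref{thm:egj}. A workaround would be to iterate the nibble vertex-by-vertex, building each $\cF_v$ via a separate matching step in the Kneser-type disjointness graph of $\cL_v$, and to interleave these rounds with a ``freezing'' argument to prevent re-assignment conflicts. The second, deeper obstacle is that projective planes and degenerate planes are tight extremal examples admitting \emph{no} nontrivial decomposition, so the proof must simultaneously cope with hypergraphs having abundant 2-intersection structure (where the nibble is effective) and hypergraphs that locally look like a projective plane (where one is forced into singleton classes). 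Resolving this tension, presumably through a stability argument parallel to Theorem~\ref{main-thm-2}, is likely where the bulk of the difficulty lies.
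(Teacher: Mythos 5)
You are attempting to prove something the paper does not prove: Conjecture~\ref{conj:larman} is stated in the survey as an open problem (it is the special case of Larman's conjecture that Kahn and Kalai, after disproving the general conjecture, asked whether it survives), so there is no proof in the paper to compare against, and your text is a research programme rather than an argument. Your opening reduction is fine but contentless: since $\cH$ is intersecting, partitioning into $n$ two-intersecting families is by definition a proper $n$-colouring of the conflict graph in which $e\sim f$ iff $|e\cap f|=1$; this is just a restatement of the conjecture. The substantive steps are all missing. The auxiliary hypergraph $\cA$ of ``valid partial assignments'' is never defined, and you give no reason it would satisfy the uniformity, maximum-degree and codegree hypotheses of Theorem~\ref{thm:egj}; indeed you acknowledge yourself that encoding the $2$-intersection constraint threatens exactly those hypotheses, and the proposed ``freezing'' workaround is not an argument. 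More fundamentally, nibble-type results deliver asymptotic bounds of the shape $n+o(n)$, whereas the conjecture demands exactly $n$ classes and has tight instances (projective planes and near-pencils, where every class is forced to be a singleton), so the entire difficulty is the analogue of the passage from Kahn's $n+o(n)$ bound to Theorem~\ref{thm:efl} — which in the EFL setting required the stability theorem (Theorem~\ref{main-thm-2}), the reservoir/absorption machinery, and a separate treatment of near-extremal hypergraphs. No analogue of any of these is known for $2$-intersecting decompositions, and you do not supply one; your ``second obstacle'' paragraph concedes that this is where the proof would have to live.

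Two smaller inaccuracies: complete graphs are not intersecting hypergraphs (two disjoint edges exist for $n\ge 4$), so they are not extremal cases of this conjecture and need no ``bespoke analysis''; and ``close to a star'' is never quantified, so the claimed reduction to partitioning the link $\cL_{v^*}$ into intersecting families only covers the exact star case (where the minimum-element colouring does work) and says nothing about hypergraphs that merely resemble one. As it stands the proposal identifies plausible ingredients but proves nothing beyond the trivial linear case already covered by de Bruijn--Erd\H{o}s.
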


The full version of Larman's conjecture was a combinatorial relaxation of Borsuk's conjecture from 1933, which states that every set of diameter at most one in $\mathbb R^d$ can be partitioned into at most $d + 1$ sets of diameter strictly less than one.  However, in 1993, Kahn and Kalai~\cite{KK93} disproved Larman's conjecture (and thus in turn Borsuk's conjecture).  Nevertheless, they asked if the special case of Larman's conjecture presented in Conjecture~\ref{conj:larman} still holds (see also~\cite{kalai_2015}), in part because of its resemblance to the EFL conjecture.   

Finally, we note that Alon, Saks, and Seymour (see Kahn~\cite{kahn1991}) conjectured the following `bipartite version' of~\ref{EFL-line-graph}: If a graph $G$ can be decomposed into $k$ edge-disjoint bipartite graphs, then the chromatic number of $G$ is at most $k+1$. This conjecture was a generalization of the Graham--Pollak theorem~\cite{gp1972} on edge decompositions of complete graphs into bipartite graphs, which has applications to communication complexity. However, it was disproved by Huang and Sudakov~\cite{HS2012} in a strong form, i.e., the conjectured bound on the chromatic number is far from being true.
\subsection{Asymptotic list colouring version of the Berge-F\"uredi conjecture}\label{section:asym-list-EFL}

In this subsection we prove Theorem~\ref{thm:asym-list-bf}.  We only prove the bound on the list chromatic index when $D \geq \log^2 n$, as the proof of the general bound on the chromatic index will be evident from the argument we provide here.\COMMENT{To prove the general bound on the chromatic index, we apply the same argument, assuming $C(e) = [\lceil(1 + \eps)D\rceil]$ for every $e \in \cH$.  Letting $R \coloneqq [\lceil \gamma(1 + \eps)D\rceil]$,~\eqref{list-reserved-colours} holds and the rest of the proof remains the same.}  Our proof closely follows the approach of~\cite{kahn1992coloring}, but with a simple additional trick, and using Theorem~\ref{thm:asym-list-edge-col} instead of~\cite[Theorem 1.3]{kahn1992coloring}.

\begin{proof}[Proof of Theorem~\ref{thm:asym-list-bf}]
  Let
  \begin{equation*}
    1 / n_0 \ll 1 / r_0 \ll 1 / r_1 \ll \gamma \ll \eps \ll 1,
  \end{equation*}
  let $n \geq n_0$, let $D \geq \log^2 n$, and let $\cH$ be an $n$-vertex linear hypergraph such that $|\bigcup_{e\ni v}e| \leq D$ for every $v \in V(\cH)$.  It suffices to show that if $C$ is an assignment of lists $C(e)$ to every $e \in \cH$, such that every $e \in \cH$ satisfies $|C(e)| \geq (1 + \eps)D$, then $\cH$ has a proper edge-colouring $\phi$ such that $\phi(e) \in C(e)$ for every $e \in \cH$.  We assume without loss of generality that $|C(e)| = (1 + \eps)D \pm 1$.

  Let $\ordering$ be a linear ordering of the edges of $\cH$ satisfying $e \ordering f$ if $|e| > |f|$, and decompose $\cH$ into the following spanning subhypergraphs:
  \begin{itemize}
  \item $\cH_{\mathrm{sml}} \coloneqq \{ e \in \cH \: : \: |e| \leq r_1 \}$,
  \item $\cH_{\mathrm{med}} \coloneqq \{ e \in \cH \: : \: r_1 < |e| \leq r_0 \}$, and 
  \item $\cH_{\mathrm{lrg}} \coloneqq \{ e \in \cH \: : \: |e| > r_0 \}$.
  \end{itemize}
  Since $\cH$ is linear, 
  \begin{enumerate}[label=(\theequation)]
    \stepcounter{equation}
  \item\label{eqn:fwd-deg-bound} every $e \in \cH_{\mathrm{lrg}}$ satisfies $|\{f \in N_\cH(e) : f \ordering e\}| \leq |e| (D - |e|) / (|e| - 1) \leq (1 + \eps / 3) D$,
    \stepcounter{equation}
  \item\label{eqn:med-max-deg} $\Delta(\cH_{\mathrm{med}}) \leq D / r_1$ (and thus, by Theorem~\ref{thm:asym-list-edge-col}, $\chi'_\ell(\cH_{\mathrm{med}}) \leq 2 D / r_1 \leq \gamma D / 2$), and
    \stepcounter{equation}
  \item\label{eqn:sml-lrg-intersect} every $e \in \cH_{\mathrm{sml}}$ satisfies $|N_\cH(e) \cap \cH_{\mathrm{lrg}}| \leq r_1 D / r_0 \leq \eps D / 4$.
  \end{enumerate}
  %, for each $e\in\cH$, we define $\fwdnbr_\cH(e)\coloneqq \{f \in N_\cH(e) : f\ordering e \}$ and $\fwddeg_\cH(e)\coloneqq |\fwdnbr_\cH(e)|$.  We omit the subscript $\cH$ when it is clear from the context. For each $e\in \cH$, we also let $\cH^{\ordering e} \coloneqq \{f\in \cH : f \ordering e\}$.  

  Now we show there exists a set $R \subseteq \bigcup_{e\in\cH}C(e)$ such that
  \begin{equation}\label{list-reserved-colours}
    \text{every $e \in \cH$ satisfies $|R \cap C(e)| = (1 \pm 1/2)\gamma |C(e)|.$}
  \end{equation}
  Include every colour in $R$ randomly and independently with probability $\gamma$.  By a standard application of the Chernoff bound, every $e \in \cH$ satisfies $|R \cap C(e)| = (1\pm 1/2)\gamma |C(e)|$ with probability at least $1 - 2\exp(-\gamma |C(e)| / 12) \geq 1 - 2\exp(- \gamma \log^2 n / 12)$, so by the Union Bound,
  %\COMMENT{For every edge $e$, let $\cA_e$ be the event that $e$ does not satisfy~\eqref{list-reserved-colours}.  By the Chernoff bound, we have $$}
  ~\eqref{list-reserved-colours} holds with high probability, so there indeed exists such a set $R$.

  Fix $R$ satisfying~\eqref{list-reserved-colours}, and for every $e \in \cH$, let $C'(e) \coloneqq C(e) \setminus R$ and $R(e) \coloneqq C(e) \cap R$. 
  By~\eqref{list-reserved-colours},
    \begin{enumerate}[label=(\theequation)]
    \stepcounter{equation}
  \item\label{eqn:unreserved-col-lb} every $e \in \cH_{\mathrm{lrg}}\cup \cH_{\mathrm{sml}}$ satisfies $|C'(e)| \geq (1 + \eps / 2)D$, and
    \stepcounter{equation}
  \item\label{eqn:reserved-col-lb} every $e \in \cH_{\mathrm{med}}$ satisfies $|R(e)| \geq \gamma D / 2$.
    \end{enumerate}
    Therefore, by~\ref{eqn:fwd-deg-bound} and~\ref{eqn:unreserved-col-lb}, there exists a proper edge-colouring $\phi_{\mathrm{lrg}}$ of $\cH_{\mathrm{lrg}}$ such that $\phi_{\mathrm{lrg}}(e) \in C'(e)$ for every $e \in \cH_{\mathrm{lrg}}$, and by~\ref{eqn:med-max-deg} and~\ref{eqn:reserved-col-lb}, there exists a proper edge-colouring $\phi_{\mathrm{med}}$ of $\cH_{\mathrm{med}}$ such that $\phi_{\mathrm{med}}(e) \in R(e)$ for every $e\in \cH_{\mathrm{med}}$.  Now for each $e \in \cH_{\mathrm{sml}}$, let $C''(e) \coloneqq C'(e)\setminus \{\phi_{\mathrm{lrg}}(f) : f \in N_\cH(e) \cap \cH_{\mathrm{lrg}}\}$.  By~\ref{eqn:sml-lrg-intersect} and~\ref{eqn:unreserved-col-lb}, $|C''(e)| \geq (1 + \eps / 4)D$ for every $e \in \cH_{\mathrm{sml}}$.  Therefore, by Theorem~\ref{thm:asym-list-edge-col}, there exists a proper edge-colouring $\phi_{\mathrm{sml}}$ of $\cH_{\mathrm{sml}}$ such that $\phi_{\mathrm{sml}}(e) \in C''(e)$ for every $e \in \cH_{\mathrm{sml}}$.  By combining $\phi_{\mathrm{lrg}}$, $\phi_{\mathrm{med}}$, and $\phi_{\mathrm{sml}}$, we obtain the desired colouring.  
\end{proof}

\section{Proving the Erd\H os-Faber-Lov\' asz conjecture}\label{section:EFL-proof}

In this section we give detailed sketches of the proofs of Theorems~\ref{thm:small-efl} and~\ref{large-edge-thm}, the special cases of the proof of the EFL conjecture in~\cite{KKKMO2021} discussed in Section~\ref{efl:results}.  

\subsection{Using $n + 1$ colours when edge-sizes are bounded}\label{efl:small}

We begin with Theorem~\ref{thm:small-efl}, which we restate for the readers' convenience.

\smallEFL*

In this subsection, we fix constants satisfying the hierarchy
\begin{equation}\label{eqn:small-efl-hierarchy}
    0 < 1/n_0  \ll \xi \ll \kappa \ll \gamma \ll \eps \ll 1,
\end{equation}
we let $n \geq n_0$, and we let $\cH$ be an $n$-vertex linear hypergraph such that every $e\in \cH$ satisfies $|e| \in \{2, 3\}$.  We assume without loss of generality that every pair of vertices of $\cH$ is contained in an edge, since otherwise we can add a size-two edge to $\cH$ to obtain an $n$-vertex linear hypergraph with chromatic index greater than or equal to $\chi'(\cH)$.
Let $G$ be the graph with $V(G) \coloneqq V(\cH)$ and $E(G) \coloneqq \{e \in \cH : |e| = 2\}$, and let  $U\coloneqq\{u\in V(\cH) : d_G(u) \geq (1 - \eps)n\}$.  
Since every pair of vertices is contained in precisely one edge, we have
\begin{equation}\label{eq:degree}
    \text{$n - 1 = 2(d_\cH(v) - d_G(v)) + d_G(v) = 2 d_\cH(v) - d_G(v)$ for every vertex $v \in V(\cH)$.}
\end{equation}

\begin{figure}
\begin{minipage}[b]{.5\linewidth}
    \centering
    \includegraphics[scale=1.5]{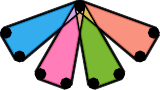}\\\textbf{Low degree:} more flexibility
\end{minipage}%
\begin{minipage}[b]{.5\linewidth}
    \centering
    \includegraphics[scale=1.5]{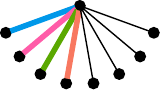}\\\textbf{High degree:} more graph-like
    \end{minipage}  
    \caption{Two partial edge-colourings using $4$ colours (when $n$ = 9, say).  The uncoloured edges form a graph of maximum degree at most $4$ and can be coloured with at most $5$ colours by Vizing's theorem.}
    \label{fig:degrees}
\end{figure}

Our strategy to prove Theorem~\ref{thm:small-efl} is to reduce it to Vizing's theorem.  In order to do that, it suffices to partially colour $\cH$ with $k < n$ colours (for some suitable $k \sim n / 2$) such that every edge of $\cH\setminus E(G)$ is coloured and the remaining uncoloured edges of $G$ form a graph of maximum degree at most $n - k$ (see Figure~\ref{fig:degrees} and Lemma~\ref{lem:graph-reduction}).
Roughly speaking, each colour class of this partial colouring will be obtained by first constructing a large matching via the R\"odl nibble and then extending it to cover (essentially) all of $U$.  The latter step is of course necessary in order to obtain an (uncoloured) leftover graph of small maximum degree. (It is also sufficient since $U$ consists of precisely those $v\in V(\cH)$ with $d_\cH(v)\sim n$).  On the other hand, while this is the reason we need to pay special attention to the vertices in $U$, the definition of $U$ also means that we have many (graph) edges at our disposal, which allow us to carry out the extension step mentioned above.
To make this precise, we introduce the following important definition.

\begin{definition}[Perfect and nearly perfect coverage]
  Let $\cM$ be a set of edge-disjoint matchings in $\cH$, and let $S \subseteq U$.
    
    \begin{itemize}
        \item We say $\cM$ has \emph{perfect coverage} of $U$ if each $M \in \cM$ covers $U$.
        
        \item We say $\cM$ has \emph{nearly perfect coverage of $U$ with defects in $S$} if
        \begin{enumerate}[(i)]
            \item each $u \in U$ is covered by at least $|\cM|-1$ matchings in $\cM$ and
    
            \item each $M \in \cM$ covers all but at most one vertex in $U$, and $U \setminus V(M) \subseteq S$.
        \end{enumerate}
    \end{itemize}
\end{definition}
More precisely, using $k \coloneqq \lceil n/2\rceil + \lceil \gamma^{1/3}n\rceil$ colours, we will partially colour $\cH$ such that
\begin{itemize}
    \item every edge of $\cH\setminus E(G)$ is coloured,
    \item at least $d_G(v) / 2 - 2\xi n$ edges of $G$ containing $v$ are coloured for every $v \in V(G)$, and
    \item the colour classes have nearly perfect coverage of $U$ (with defects in $U$).
\end{itemize}
As we will show, these conditions ensure that the partial colouring can be extended via Vizing's theorem to all of $\cH$ using at most $n + 1$ total colours.

The first step of the proof is to randomly construct a `reservoir' consisting of edges of $G$ (which will be used for the extension step), as in the following lemma.
\begin{lemma}[Reservoir lemma]\label{lemma:reservoir}
  There exists $R\subseteq E(G)$ satisfying the following:
  \begin{enumerate}[label=\textbf{R\arabic*}]
  \item\label{reservoir-typicality} (Typicality) every $v \in V(\cH)$ satisfies
    \begin{equation*}
      \text{$|N_R(v) \cap U| = |N_G(v) \cap U|/2 \pm \xi n$ and $|N_R(v) \setminus U| = |N_G(v) \setminus U|/2 \pm \xi n$;}
    \end{equation*}
  \item\label{reservoir-upper-regularity} (Upper regularity) for every pair of disjoint sets $S, T\subseteq V(\cH)$ with $|S|, |T| \geq \xi n$, we have
    \begin{equation*}
      \pushQED{\qed} 
      |E_G(S, T)\cap R| \leq (1/2 + \xi)|S||T|.
      \qedhere
      \popQED
    \end{equation*}
  \end{enumerate}
\end{lemma}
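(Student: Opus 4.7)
The plan is to take $R$ to be a uniformly random subset of $E(G)$ obtained by including each edge independently with probability $1/2$, and to show that both \ref{reservoir-typicality} and \ref{reservoir-upper-regularity} hold with probability $1 - o(1)$, so in particular such a set $R$ exists. The only tools needed are the Chernoff--Hoeffding inequality and the union bound, together with the hierarchy $1/n_0 \ll \xi$ from~\eqref{eqn:small-efl-hierarchy}.

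For \ref{reservoir-typicality}, fix $v \in V(\cH)$. The quantity $|N_R(v) \cap U|$ is a sum of $|N_G(v) \cap U| \leq n$ independent Bernoulli$(1/2)$ random variables, with mean $|N_G(v) \cap U|/2$, so Chernoff--Hoeffding gives
\[
\Prob{\,|\,|N_R(v)\cap U| - |N_G(v)\cap U|/2\,| > \xi n\,} \leq 2\exp(-2\xi^2 n).
\]
The same estimate applies to $|N_R(v) \setminus U|$. A union bound over the $2n$ resulting events shows that \ref{reservoir-typicality} fails with probability at most $4n\exp(-2\xi^2 n) = o(1)$.

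For \ref{reservoir-upper-regularity}, fix disjoint $S,T\subseteq V(\cH)$ with $|S|,|T|\ge \xi n$. The random variable $|E_G(S,T)\cap R|$ is a sum of $|E_G(S,T)|\le |S||T|$ independent Bernoulli$(1/2)$ variables with mean $|E_G(S,T)|/2 \leq |S||T|/2$, so Chernoff--Hoeffding yields
\[
\Prob{\,|E_G(S,T)\cap R| > (1/2+\xi)|S||T|\,} \leq \exp(-2\xi^2|S||T|) \leq \exp(-2\xi^4 n^2).
\]
There are at most $3^n$ choices of ordered disjoint $(S,T)$ (each vertex lies in $S$, $T$, or neither), so a union bound bounds the failure probability by $3^n\exp(-2\xi^4 n^2) = o(1)$, using $1/n_0 \ll \xi$.

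A final union bound of the two estimates shows that both \ref{reservoir-typicality} and \ref{reservoir-upper-regularity} hold simultaneously with positive (in fact, high) probability, yielding the desired set $R$. There is essentially no obstacle: the independence of the Bernoulli trials makes both concentration estimates immediate, and the hierarchy of constants comfortably absorbs the entropy cost of the union bounds. (One could replace the $\pm \xi n$ in \ref{reservoir-typicality} by a relative error on larger neighbourhoods via a multiplicative Chernoff bound, but the additive form given here is all that will be needed in the subsequent argument.)
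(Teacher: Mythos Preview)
Your proposal is correct and takes essentially the same approach as the paper: the paper explicitly states that the lemma follows from a straightforward application of the Chernoff bound and the union bound to the random $R$ obtained by including each edge of $G$ independently with probability $1/2$, and omits the details. Your write-up simply fills in those details accurately.
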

This lemma can be proved with a straightforward application of the Chernoff Bound and the Union Bound, by considering the set $R$ to be chosen randomly, where each edge of $G$ is included independently and with probability $1/2$, so we omit the details.  For the remainder of the subsection, we fix $R$ satisfying Lemma~\ref{lemma:reservoir}.

By~\ref{reservoir-typicality}, every vertex $v \in V(\cH)$ satisfies $d_{\cH\setminus R}(v) = d_\cH(v) - d_G(v)/2 \pm 2\xi n$.  Hence, by~\eqref{eq:degree},
\begin{equation}\label{non-reserved-is-regular}
    \text{every vertex $v \in V(\cH)$ satisfies } d_{\cH\setminus R}(v) = \frac{n - 1}{2} \pm 2\xi n.
\end{equation}
Note that by~\eqref{non-reserved-is-regular}, the Pippenger-Spencer theorem already implies $\chi'(\cH\setminus R) \leq (1/2 + \gamma^{1/3})n$, but we need to prove the stronger result that there is a set of pairwise edge-disjoint matchings $\cM = \{M_1, \dots, M_k\}$ such that $M_1\cup\cdots\cup M_k \supseteq \cH\setminus R$ and $\cM$ has nearly perfect coverage of $U$ (with defects in $U$).  

\subsubsection{Absorption}

To obtain these matchings with nearly perfect coverage of $U$, we combine the nibble method with an absorption strategy.  We first find matchings in $\cH\setminus R$ covering almost all of $U$ using Theorem~\ref{thm:egj}, and then for each such matching, we find a vertex-disjoint matching in $R$ covering (all but at most one of) the remaining vertices of $U$.  We extend the first matching by adding the matching in $R$, and in this way we `absorb' the uncovered vertices of $U$.  It will be convenient to work with the following definitions, which will apply to the matchings produced by Theorem~\ref{thm:egj}.

\begin{definition}[Pseudorandom matchings]\label{pseudorandom-matching-def}
  % Let $n \in \mathbb{N}$, $\gamma , \kappa \in (0,1)$, and let $\cH$ be an $n$-vertex multi-hypergraph.
  For a family $\cF$ of subsets of $V(\cH)$, a matching $M$ in $\cH$ is \textit{$(\gamma, \kappa)$-pseudorandom} with respect to $\cF$ if every $S\in\cF$ satisfies $|S \setminus V(M)| = \gamma|S| \pm \kappa n$.
\end{definition}

\begin{definition}[Absorbable matchings]
  \label{def:absorbable}
  Let $R'\subseteq R$, let $S\subseteq U$, and let $M$ be a matching in $\cH\setminus R$.   We say $(M , R' , S)$ is \textit{absorbable} if
  \begin{enumerate}[label=\textbf{AB\arabic*}]
  \item\label{AB:S-bound} $|S| \geq \min\{|U|, \gamma n\}$,
  \item\label{AB:reservoir-preserved} $\Delta(R\setminus R') \leq \gamma n$, and
  \item\label{AB-pseudo} either $|V(M)| \leq \sqrt\gamma n$, or $M$ is  $(\gamma,\kappa)$-pseudorandom with respect to $\cF(R') \cup \{U,S\}$, where
    \begin{equation*}
      \cF(R') \coloneqq \{N_{R'}(u) \cap U \: : \: u \in U \} \cup \{N_{R'}(u) \setminus U \: : \: u \in U \}.
    \end{equation*}
  \end{enumerate}
  If the former holds in~\ref{AB-pseudo}, we say $(M, R', S)$ is absorbable by the \textit{smallness} of $M$, and if the latter holds, we say $(M, R', S)$ is absorbable by the \textit{pseudorandomness} of $M$.
\end{definition}

In the proof of Theorem~\ref{thm:small-efl}, we apply our absorption argument successively to each matching constructed by the nibble.  Hence, in each step we will consider absorbable tuples $(M, R', S)$ where $M$ was obtained via a nibble process, $R'$ consists of reservoir edges not used in previous absorption steps, and $S$ consists of vertices of $U$ that are not the `defect' from any of the previous absorption steps.
Now we can state our main absorption lemma, but first we note the following proposition, which is used in its proof.  

\begin{proposition}\label{internal-matching-prop}
  Let $0 < 1/m_0 \ll \alpha \ll 1$, and let $m \geq m_0$ be even.
  %The following holds for $m$ sufficiently large.
  If $H$ is an $m$-vertex graph such that 
  \begin{enumerate}[(i)]
      \item\label{internal-matching-prop:degree} every $v\in V(H)$ satisfies $d_H(v) \geq 3 m / 8$ and
      \item\label{internal-matching-prop:upper-regularity} every pair of disjoint sets $S, T \subseteq V(H)$ with $|S|, |T| \geq \alpha m$ satisfies $e_H(S, T) \leq (1/2 + \alpha)|S||T|$,
  \end{enumerate}
  then $H$ has a perfect matching.
\end{proposition}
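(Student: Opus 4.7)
The plan is to argue by contradiction via the Tutte--Berge formula, using the upper regularity condition (ii) to compensate for the minimum degree being only $3m/8$ rather than the $m/2$ needed for Dirac-type conclusions. Suppose $H$ has no perfect matching. Then there exists $S \subseteq V(H)$ with $o(H - S) \geq |S| + 2$, where the $+2$ gap comes from the parity of $m$ (which is even). Write $s \coloneqq |S|$ and $k \coloneqq o(H - S) \geq s + 2$, and let $C_1, \ldots, C_k$ be the odd components. Since these are disjoint from $S$ and from each other, $s + k \leq m$ yields $s \leq m/2 - 1$. Moreover, every $v \in C_i$ has at most $s$ of its at least $3m/8$ neighbours in $S$, forcing $|C_i| \geq 3m/8 - s + 1$ for every $i$.

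The analysis then splits according to whether $s$ is bounded away from $3m/8$. In Case~A, when $s \leq 3m/8 - 2\alpha m$, the basic bound gives $|C_i| \geq 2\alpha m + 1$ for every $i$, which is exactly what is needed to apply upper regularity \emph{inside} each $C_i$. Taking a uniformly random near-balanced bipartition $(A, B)$ of $C_i$, the expectation of $e_H(A, B)$ is at least $e_H(C_i)/2$, so some such bipartition attains this; condition (ii) then bounds it by $(1/2 + \alpha)|A||B| \leq (1/2 + \alpha)|C_i|^2/4$. Combined with the density lower bound $e_H(C_i) \geq |C_i|(3m/8 - s)/2$ from the degree condition, this forces $|C_i| \geq (3m/4 - 2s)(1 - O(\alpha))$. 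Since the $k$ odd components are disjoint subsets of $V(H)\setminus S$, we must have $k(3m/4 - 2s)(1 - O(\alpha)) \leq m - s$; together with $k \geq s + 2$, a short calculation produces a contradiction throughout the range (already at $s = 0$ we would need two components of total size $\approx 3m/2 > m$).

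In Case~B, when $s > 3m/8 - 2\alpha m$, the in-component argument is no longer available, but now $s \geq \alpha m$, which enables upper regularity to be applied to the cut across $S$ itself. Split the odd components into \emph{large} ones with $|C_i| > m/100$, of which at most $100$ can fit into $m$ vertices, and \emph{small} ones, whose union is $T$. Because distinct components of $H-S$ are non-adjacent, every $v \in T$ has all its neighbours either in $S$ or in its own component of size at most $m/100$; hence $e_H(T, S) \geq |T|(3m/8 - m/100)$. If $|T| \geq \alpha m$, then upper regularity (applied to $T$ and $S$) gives $e_H(T, S) \leq (1/2 + \alpha)|T|s \leq (1/2 + \alpha)|T|(m/2 - 1)$, which simplifies to $3/8 - o(1) \leq 1/4 + O(\alpha)$, false for $\alpha$ small. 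Thus $|T| < \alpha m$, which bounds the number of small odd components by $\alpha m$. Altogether $k \leq 100 + \alpha m$, but $k \geq s + 2 > 3m/8 - 2\alpha m + 2$, which is incompatible with this once $m \geq m_0$ is large enough.

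The main subtlety, and the reason the threshold $3m/8$ appears, lies in the transition between the two regimes: the in-component argument breaks as soon as some $|C_i|$ drops below $2\alpha m$, while the cut argument across $S$ requires $s \geq \alpha m$. The minimum degree value $3m/8$ is chosen so that these two regimes meet with margin to spare in both, and I expect the bookkeeping in the transition---verifying that the quadratic inequalities in $s$ in Case~A really are violated at \emph{every} $s \leq 3m/8 - 2\alpha m$, not just at the endpoints---to be the most delicate part.
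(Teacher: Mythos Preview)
Your argument via Tutte--Berge is correct. The quadratic check in Case~A is indeed the delicate point, but it goes through: writing $p(s)\coloneqq(s+2)(3m/4-2s)-(1+2\alpha)(m-s)$, one has $p(0)=m/2-2\alpha m>0$ and $p(3m/8-2\alpha m)=\tfrac{3}{2}\alpha m^2-\tfrac{5}{8}m+O(\alpha m)+O(\alpha^2m^2)>0$ once $m\gg 1/\alpha$; since $p$ is concave, positivity at both endpoints gives positivity on the whole interval, so the verification you flagged as ``most delicate'' is in fact just an endpoint check. Case~B is routine.

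The paper takes a shorter route. It chooses a uniformly random equitable bipartition $(A,B)$ of $V(H)$; by a Chernoff-type bound this yields a spanning balanced bipartite subgraph $H'$ with minimum degree at least $m/6$. Hall's condition is then verified directly in $H'$: for $S\subseteq A$ with $|S|\geq\alpha m$, the edge count $e_{H'}(S,N(S))\geq|S|\,m/6$ combined with the upper-regularity bound $e_{H'}(S,N(S))\leq(1/2+\alpha)|S||N(S)|$ forces $|N(S)|\geq 3m/10$; if Hall fails for $S$, one applies the same bound to $T\coloneqq B\setminus N(S)$ to get $|N(T)|\geq 3m/10$, whence $|S|\leq|A|-|N(T)|\leq m/5<|N(S)|$, a contradiction. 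Small $S$ are handled trivially by the minimum degree. Your Case~A actually uses the same random-bipartition idea, but inside each odd component rather than globally; applying it once to all of $V(H)$ sidesteps Tutte--Berge entirely and avoids the case split. On the other hand, your argument makes the role of the threshold $3m/8$ more transparent, and would adapt more readily if one wanted to weaken the degree hypothesis further at the cost of a sharper upper-regularity assumption.
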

To prove Proposition~\ref{internal-matching-prop}, one can consider a random equitable partition of $V(H)$ and apply Hall's theorem.
\COMMENT{
\begin{proof}
  By a standard probabilistic argument, $H$ has a spanning bipartite subgraph $H'$ with bipartition $(A, B)$ such that $|A| = |B| = m / 2$ and every $v\in V(H)$ satisfies $d_{H'}(v) \geq m / 6$.
  %\COMMENT{Consider such a bipartition chosen uniformly at random.  The distribution of the random subset $A$ coincides with the distribution on subsets of $V(H)$ obtained by choosing each vertex to be in $A$ with probability $1/2$, conditioned on the event that $|A| =  m / 2 $.  The probability of this event is $\Theta(1/\sqrt m)$, but by Chernoff's bound, if each vertex is chosen in $A$ independently with probability $1/2$, then the probability $v$ has fewer than $ m / 6$ neighbours in $A$ is $\exp(-\Omega(m))$.}  By Hall's Theorem, it suffices to show that every $S\subseteq A$ satisfies $|N(S)| \geq |S|$.  If $S\neq\varnothing$, then $|N(S)| \geq m / 6 \geq \alpha m$, so we assume $|S| \geq \alpha m$.
  Since every $v\in A$ satisfies $d_{H'}(v) \geq m / 6$, every $S\subseteq A$ satisfies
  \begin{equation*}
    |E_{H'}(S, N(S))| \geq |S| m / 6.
  \end{equation*}
   However, by~\ref{internal-matching-prop:upper-regularity}, we also have
  \begin{equation*}
    |E_{H'}(S, N(S))| \leq (1/2 + \alpha)|S||N(S)|.
  \end{equation*}
  Combining these two inequalities, we have $|N(S)| \geq 3 m / 10$.  Let $T\coloneqq B\setminus N(S)$.  If $T = \varnothing$, then $S$ satisfies the Hall condition, as desired, so we assume $T \neq \varnothing$.  By the same argument applied to $T$, we have $|N(T)| \geq 3m / 10$, so $|S| \leq |A| - |N(T)| \leq m / 5$.  Now $|S| \leq m / 5 \leq 3m / 10 \leq |N(S)|$, as desired.
\end{proof}}

\begin{lemma}[Absorption lemma]\label{absorption-lemma}
  Let $R' \subseteq R$, let $S\subseteq U$, and let $\cN := \{N_1,\dots, N_{k}\}$ be a set of pairwise edge-disjoint matchings in $\cH\setminus R$.
  If either
  \begin{enumerate}[(i)]
  \item\label{absorption-lemma:pseudorandom} $k \leq \kappa n$ and for every $i \in [k]$, $(N_i , R' , S)$ is absorbable by the pseudorandomness of $N_i$, or 
  \item $k \leq \gamma^{1/3} n$ and for every $i\in[k]$, $(N_i , R' , S)$ is absorbable by the smallness of $N_i$,
  \end{enumerate}
    then there is a set of pairwise edge-disjoint matchings $\cM \coloneqq \{M_1, \dots, M_k\}$ in $\cH$ such that
    \begin{itemize}
        \item $M_i \supseteq N_i$ and $M_i \setminus N_i \subseteq R'$ for all $i \in [k]$, and 
        \item $\cM$ has nearly perfect coverage of $U$ with defects in $S$, and moreover, if $|U| < 3n / 4$, then $\cM$ has perfect coverage of $U$.
    \end{itemize}
\end{lemma}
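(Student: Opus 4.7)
The plan is to construct the matchings $M_i = N_i \cup P_i$ iteratively over $i = 1,\ldots,k$, maintaining a shrinking reservoir $R_i \subseteq R'$ defined by $R_1 := R'$ and $R_{i+1} := R_i \setminus E(P_i)$. For each $i$, I would find a matching $P_i \subseteq R_i$ that is vertex-disjoint from $V(N_i)$ and covers $U_i := U \setminus V(N_i)$ up to the allowed defect. Then $M_i = N_i \cup P_i$ is a matching, and edge-disjointness across the $M_i$'s is automatic: the $N_i$'s are edge-disjoint by assumption, every $P_i \subseteq R$ is disjoint from every $N_j \subseteq \cH \setminus R$, and the $P_i$'s are pairwise edge-disjoint because each lives in $R_i$.

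To construct $P_i$, I would apply Proposition~\ref{internal-matching-prop} to an appropriate subgraph $H_i$ of $R_i$, with the choice depending on the size regime of $|U|$. When $|U| \geq 3n/4$, take $H_i := R_i[U_i]$ and adjust the parity of $|V(H_i)|$ either by removing one vertex from $U_i \cap S$ or, when possible, by adding a single vertex from $V(\cH) \setminus V(N_i) \setminus U$; a perfect matching of $H_i$ then gives $P_i$, leaving at most one vertex of $U$ uncovered (and in $S$). When $|U| < 3n/4$, take $H_i := R_i[U_i \cup X_i]$ for a set $X_i \subseteq V(\cH) \setminus V(N_i) \setminus U$ of size $|U_i|$ (adjusted by one for parity); such an $X_i$ can be chosen since $|V(\cH) \setminus U| > n/4$, and a perfect matching of $H_i$ saturates $U_i$, giving perfect coverage.

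Verifying the hypotheses of Proposition~\ref{internal-matching-prop} on $H_i$ is the technical core. Upper regularity passes directly from~\ref{reservoir-upper-regularity} since $R_i \subseteq R$. For the minimum degree at a vertex $u \in U_i$, I would chain together (a) the bound $|N_G(u) \cap U| \geq |U| - \eps n$ since $u \in U$, (b) the typicality~\ref{reservoir-typicality}, (c) the bound $\Delta(R \setminus R') \leq \gamma n$ from~\ref{AB:reservoir-preserved}, (d) a loss of at most $k - 1 \leq \gamma^{1/3} n$ edges from previous iterations, and finally (e) the pseudorandomness of $N_i$ with respect to $\cF(R')$ in case (i) (contributing a factor of $\gamma$) or the bound $|V(N_i)| \leq \sqrt\gamma n$ in case (ii). The hierarchy $\xi \ll \kappa \ll \gamma \ll \eps$ then ensures each error term fits comfortably into the slack between $d_{H_i}(u)$ and $3|V(H_i)|/8$. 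In case (i), the pseudorandomness of $N_i$ with respect to $S$ combined with~\ref{AB:S-bound} also gives a nonempty parity-donor $U_i \cap S$.

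The main obstacle I expect is twofold. First, when $|U| < 3n/4$ the minimum-degree condition must also be verified at the auxiliary vertices $x \in X_i$ outside $U$, which have no a priori lower bound on their $G$-degree; I would handle this by selecting $X_i$ from vertices of highest $R_i$-degree into $U_i$ (whose abundance follows from upper regularity and the degree bounds on $U$), or equivalently by replacing Proposition~\ref{internal-matching-prop} with a bipartite Hall's theorem argument that directly saturates $U_i$. Second, in the smallness case (ii) combined with $|U|$ close to $n$, the bound $|S| \geq \gamma n$ need not dominate $|V(N_i)| \leq \sqrt\gamma n$, so removing a parity-vertex from $U_i \cap S$ may be infeasible; the remedy is to prefer parity adjustments via $V(\cH) \setminus U$ whenever that set is nonempty, and to exploit the structural constraints of the residual regime $|U| \approx n$ to exclude scenarios where neither device applies.
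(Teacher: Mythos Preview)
Your iterative framework (shrinking reservoir $R_i$, augmenting each $N_i$ by a matching $P_i$ found via Proposition~\ref{internal-matching-prop}, with the degree verification chained through~\ref{reservoir-typicality},~\ref{AB:reservoir-preserved}, and~\ref{AB-pseudo}) is essentially the paper's approach. The main simplification you are missing concerns the regime $|U|<3n/4$. Rather than enlarging $H_i$ by an entire set $X_i\subseteq V(\cH)\setminus U$ of size $|U_i|$ (which creates your first obstacle, since such vertices carry no degree guarantee), the paper still applies Proposition~\ref{internal-matching-prop} to $R_i[U_i\setminus\{u_i\}]$ with only a single parity vertex $u_i$ removed, and then covers $u_i$ by \emph{one} extra reservoir edge $u_iv_i$ with $v_i\notin U\cup V(N_i)$. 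Such a $v_i$ exists because $u_i\in U$ forces $|N_G(u_i)\setminus U|\geq (1/4-\eps)n$, and~\ref{reservoir-typicality},~\ref{AB:reservoir-preserved} together with~\ref{AB-pseudo} leave plenty of these neighbours available. When $|U|$ is very small (the paper uses the threshold $n/100$), Proposition~\ref{internal-matching-prop} is dropped altogether and $U_i$ is matched greedily into $V(\cH)\setminus U$. This dissolves your first obstacle without any high-degree selection of $X_i$ or bipartite Hall argument.

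Your second obstacle is genuine and well spotted: in case (ii) with $|U|\geq 3n/4$, the bound $|S|\geq\gamma n$ from~\ref{AB:S-bound} is indeed too weak, since $|V(N_i)|\leq\sqrt\gamma\,n$ may exceed it, and one further needs up to $k\leq\gamma^{1/3}n$ distinct defects. The paper gives a formal proof only for case (i); in its single application of case (ii), the set $S$ in fact satisfies $|S|\geq n/4$, which is ample. So your concern reflects a slack in the lemma's stated hypotheses rather than a flaw in the method.
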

\begin{proof}
    Let $\cF := \cF(R') \cup \{U, S\}$, and for each $i\in [k]$, let $U_i \coloneqq U\setminus V(N_i)$.
    In both cases, the proof proceeds roughly as follows.  If $|U| < n / 100$, then one-by-one for each $i \in [k]$, we can greedily find a matching $N_i^{\mathrm{abs}}$ of edges in $R'$, with precisely one end in $U_i$ and the other end not in $V(N_i)$, edge-disjoint from those previously chosen, that covers $U_i$.\COMMENT{We can do greedy instead of Hall's because our absorber has density 1/2 instead of $\rho$}
    Letting $M_i \coloneqq N_i \cup N_i^{\mathrm{abs}}$ for each $i \in [k]$, $\{M_1, \dots, M_k\}$ has perfect coverage of $U$, as desired.  If $|U| \geq n / 100$, then one-by-one for each $i \in [k]$, using Proposition~\ref{internal-matching-prop}, we can find a matching $N_i^{\mathrm{abs}}$ of edges in $R'$, with both ends in $U_i$, edge-disjoint from those previously chosen, that contain all but at most one vertex of $U_i$.  Moreover, we ensure that the vertices in each $U_i$ not covered by $N_i^{\mathrm{abs}}$ are distinct, and if $|U| < 3n/4$, we can also augment each $N_i^{\mathrm{abs}}$ with an edge of $R'$ that has an end in $V(\cH)\setminus (U \cup V(N_i) \cup V(N_i^{\mathrm{abs}}))$ to cover $U$.  Hence, $\{M_1, \dots, M_k\}$ has nearly perfect coverage of $U$ with defects in $S$ and perfect coverage of $U$ if $|U| < 3n/4$, where $M_i \coloneqq N_i \cup N_i^{\mathrm{abs}}$ for each $i \in [k]$, as desired.  We only provide a formal proof of the case when~\ref{absorption-lemma:pseudorandom} holds and $|U| \geq n / 100$, as this case is the most challenging. 
    
    For each $i \in [k]$, let $G_i$ be the graph with $V(G_i) := U_i$ and $E(G_i) \coloneqq \{ e \in R' \: : \: e \subseteq U_i\}$.  Since $N_i$ is $(\gamma,\kappa)$-pseudorandom with respect to $\cF \ni U$, we have
  \begin{equation}\label{eqn:size_ui}
      \textrm{$|U_i| = \gamma |U| \pm \kappa n$, and in particular, $|U_i| \geq \gamma n / 200$}.
  \end{equation}

    We claim that for each $i\in[k]$ there exists $u_i \in U_i$ and a matching $N^{\mathrm{abs}}_i$ in $G_i$ such that the following holds.  The vertices $u_1, \dots, u_k$ are distinct, the matchings $N^{\mathrm{abs}}_1, \dots, N^{\mathrm{abs}}_k$ are pairwise edge-disjoint, and $N^{\mathrm{abs}}_i$ covers every vertex of $U_i\setminus\{u_i\}$ for each $i\in[k]$.  Moreover, if $|U| < 3n / 4$, then $N^{\mathrm{abs}}_i$ covers every vertex of $U_i$ for each $i\in [k]$, and otherwise $u_i \in S$.
  
  To that end, we choose distinct $u_i \in U_i$ for each  $i\in[k]$, as follows.

  \begin{itemize}
    \item If $|U| \leq 3n / 4$, then by~\ref{reservoir-typicality} and~\ref{AB:reservoir-preserved}, every $u\in U_i$ satisfies $|N_{R'}(u)\setminus U| \geq (1/4 - \eps)n / 2 - \xi n - \gamma n \geq n / 10$.  By~\ref{AB-pseudo}, since $N_i$ is $(\gamma, \kappa)$-pseudorandom with respect to $\cF \supseteq \cF(R')$ for each $i\in[k]$, this inequality implies that every $u \in U_i$ satisfies $|N_{G_i}(u)\setminus U| \geq \gamma n / 20$.  Since $k \le \kappa n$ and $\kappa \ll \gamma$, by \eqref{eqn:size_ui}, we can choose $u_i \in U_i$ one-by-one such that there is a matching $\{u_iv_i : i\in[k]\}$ where $v_i \in N_{G_i}(u_i)\setminus U$ for each $i\in[k]$.  
  
    \item Otherwise,~\ref{AB:S-bound} implies $|S| \geq \gamma n$, and since $N_i$ is $(\gamma, \kappa)$-pseudorandom with respect to $\mathcal F \ni S$, by~\ref{AB-pseudo}, we have $|S \setminus V(N_i)| \geq \gamma |S| - \kappa n \ge \gamma^2 n /2 > \kappa n$ for each $i \in [k]$, so we can choose $u_i \in U_i \cap S = S \setminus V(N_i)$ one-by-one such that they are distinct, as required.
  \end{itemize}
 % We claim that for each $i\in[k]$ there exists $u_i \in U_i \cap S$ and a matching $N^{\mathrm{abs}}_i$ in $G_i$ such that the following holds.  The vertices $u_1, \dots, u_k$ are distinct, the matchings $N^{\mathrm{abs}}_1, \dots, N^{\mathrm{abs}}_k$ are pairwise edge-disjoint, and $N^{\mathrm{abs}}_i$ covers every vertex of $U_i\setminus\{u_i\}$ for each $i\in[k]$. 
  
  %To that end, we choose distinct $u_i \in U_i$ for each $i\in[k]$, as follows.  Since $|S| \geq \gamma n$, and since $N_i$ is $(\gamma, \kappa)$-pseudorandom with respect to $\mathcal F \ni S$, by~\ref{AB-pseudo}, we have $|S \setminus V(N_i)| \geq \gamma |S| - \kappa n \ge \gamma^2 n /2 > \kappa n$ for each $i \in [k]$, so we can choose $u_i \in U_i \cap S = S \setminus V(N_i)$ one-by-one such that they are distinct, as required.
  
  Now let $U'_i \coloneqq U_i\setminus\{u_i\}$ if $|U_i|$ is odd. Otherwise, let $U'_i\coloneqq U_i$.
  By the choice of the vertices $u_1, \dots, u_k$, it suffices to find pairwise edge-disjoint perfect matchings ${N'}_i^{\mathrm{abs}}$ in $G_i[U'_i]$ for each $i\in [k]$.  Indeed if $|U| \leq 3n / 4$ and $|U_i|$ is odd, then $N_i^{\rm abs} \coloneqq {N'}_i^{\mathrm{abs}} \cup \{u_i v_i \}$ satisfies the claim, and otherwise $N_i^{\rm abs} \coloneqq {N'}_i^{\mathrm{abs}}$ satisfies the claim.
  
  We find these matchings one-by-one using Proposition~\ref{internal-matching-prop}.  To this end, we assume that for some $\ell \leq k$, we have found such matchings ${N'}^{\mathrm{abs}}_i$ for $i\in[\ell - 1]$, and we show that there exists such a matching ${N'}^{\mathrm{abs}}_{\ell}$, which proves the claim.  Let $G'_\ell \coloneqq G_\ell[U'_\ell] \setminus \bigcup_{i\in[\ell-1]}{N'}^{\mathrm{abs}}_i$.
  Since $|U| \geq n / 100$, by~\ref{reservoir-typicality} and~\ref{AB:reservoir-preserved}, every $u\in U$ satisfies
  \begin{equation}\label{internal-absorption-degree-outside-tmp}
    |N_{R'}(u) \cap U| \geq |N_R(u) \cap U| - \gamma n \geq (|U| - \eps n)/2 - 2\gamma n \geq 49|U| / 100.
  \end{equation}
  
  Note that $N_{\ell}$ is $(\gamma , \kappa)$-pseudorandom with respect to $\cF \supseteq \cF(R') \cup \{U\}$ by~\ref{AB-pseudo}. Together with ~\eqref{internal-absorption-degree-outside-tmp}, this implies that every $u\in U'_{\ell}$ satisfies $d_{G_{\ell}[U'_\ell]}(u) \geq \gamma |N_{R'}(u)\cap U| - \kappa n - 1 \geq  48\gamma |U| / 100$.  Since $\ell \leq k \leq \kappa n$, we have
  \begin{equation}\label{internal-absorption-degree-outside2-tmp}
    d_{G'_{\ell}}(u) \geq d_{G_\ell[U_i']}(u) - \kappa n \geq 47\gamma  |U| / 100.
  \end{equation}
  
  By~\eqref{eqn:size_ui}, we also have
  \begin{equation}\label{internal-absorption-lemma-part-size-bound-tmp}
    |U'_\ell| \pm 1 = |U_\ell| = \gamma |U| \pm \kappa n, \text{ and in particular } |U'_\ell| \leq 5\gamma |U| / 4.
  \end{equation}
  Combining~\eqref{internal-absorption-degree-outside2-tmp} and~\eqref{internal-absorption-lemma-part-size-bound-tmp}, we have $d_{G'_\ell}(u) \geq 3|U'_\ell| / 8$ for every $u \in U'_\ell$. So by~\ref{reservoir-upper-regularity} and~\eqref{internal-absorption-lemma-part-size-bound-tmp}, we can apply Proposition~\ref{internal-matching-prop} to $G'_\ell$ with $200\xi/\gamma$ as $\alpha$\COMMENT{$|U'_\ell| \overset{\eqref{internal-absorption-lemma-part-size-bound-tmp}}{\ge} \gamma n/100 - \kappa n -1 \ge \gamma n/200$, so by \ref{reservoir-upper-regularity}, $G'_\ell$ satisfies Proposition~\ref{internal-matching-prop}\ref{internal-matching-prop:upper-regularity}, and by~\eqref{internal-absorption-degree-outside2-tmp}, $G'_\ell$ satisfies Proposition~\ref{internal-matching-prop}\ref{internal-matching-prop:degree}.}, to obtain a perfect matching ${N'}_\ell^{\mathrm{abs}}$ in $G'_\ell$, as desired.  
  
  %If $N_i$ is $(1 - \gamma,  \gamma)$-pseudorandom with respect to $\cF$, then the proof is essentially the same as the case when $N_i$ is $(\gamma , \kappa)$-pseudorandom with the following changes.  The right side of~\eqref{internal-absorption-degree-outside2-tmp} is replaced by $96\rho |U| / 100$, and the right side of~\eqref{internal-absorption-lemma-part-size-bound-tmp} is simply at most\COMMENT{since $|U| \ge n/100$.} $(1-\gamma)|U| + \gamma n \leq (1+99\gamma)|U|$, so we still have $d_{G'_\ell}(u) \geq 3\rho|U'_\ell| / 4$ for any $u \in U_\ell'$ and we can again find the desired perfect matching ${N'}_\ell^{\mathrm{abs}}$ in $G'_\ell$ using Observation~\ref{internal-matching-prop}.
  
  Therefore we have pairwise edge-disjoint matchings $N_i^{\rm abs}$ in $G_i$, as claimed, which by construction are edge-disjoint from $N_1 , \dots , N_k$. For each $i \in [k]$, let $M_i \coloneqq N_i\cup N_i^{\mathrm{abs}}$, and let $\cM = \{M_1, \dots, M_k\}$. Now $M_i \supseteq N_i$ and $M_i\setminus N_i\subseteq R$ for each $i\in[k]$, and $\cM$ has nearly-perfect coverage of $U$ with defects in $S$, as desired.  Moreover, if $|U| < 3n/4$, then $\cM$ has perfect coverage of $U$, as desired.
\end{proof}

\subsubsection{Finding absorbable matchings}

Lemma~\ref{absorption-lemma} allows us to apply absorption for up to $\kappa n$ `pseudorandom' matchings at a time. We construct these collections of matchings in the following lemma using Theorem~\ref{thm:egj} and the strategy described in Section~\ref{edge-colouring:pseudo}.

\begin{lemma}[Nibble Lemma]\label{lem:pseudorandom_matching}
  Let $D \in [n^{1/2} , n]$, and let $\cH' \subseteq \cH$ be a spanning subhypergraph such that for every $w \in V(\cH')$ we have $d_{\cH'}(w) = (1 \pm \sqrt\xi)D$.  If $\cF_V$ and $\cF_E$ are families of subsets in $V(\cH')$ and $E(\cH')$, respectively, such that $|\cF_V|,|\cF_E|  \leq n^{\log n}$,
then there exist pairwise edge-disjoint matchings $N_1 , \dots , N_D$ in $\cH'$ such that 
\begin{enumerate}[label=\textbf{N\arabic*}]
    \item\label{pseudorandom_matching:vtx} $N_i$ is $(\gamma, \kappa)$-pseudorandom with respect to $\cF_V$ for every $i \in [D]$, and
    
    \item\label{pseudorandom_matching:edge} $|F \setminus \bigcup_{i=1}^{D} N_i| \leq \gamma |F| + \kappa \max(|F|,D)$ for each $F \in \cF_E$.
\end{enumerate}
\end{lemma}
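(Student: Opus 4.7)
The plan is to apply Theorem~\ref{thm:egj} to the $D$-wise incidence hypergraph of $\cH'$ and then independently sparsify each of the resulting matchings by keeping each edge with probability $1-\gamma$; this follows the derivation of a Pippenger-Spencer-type bound from Theorem~\ref{thm:egj} described in Section~\ref{edge-colouring:pseudo}. To set up, let $\cH^* \coloneqq \hgInc_D(\cH')$ and pad every edge of $\cH^*$ of size $3$ with a private dummy vertex so that $\cH^*$ becomes $4$-uniform (since $\cH' \subseteq \cH$ is $3$-bounded, all edges of $\cH^*$ have size $3$ or $4$ before padding). By Observation~\ref{obs:incidence-hypergraph} and the linearity of $\cH$, the padded $\cH^*$ has maximum degree $(1 \pm \sqrt\xi)D$, codegree at most $1$, and $e(\cH^*) \leq Dn^2$. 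Since $D \geq n^{1/2}$, these quantities comfortably satisfy the hypotheses of Theorem~\ref{thm:egj} applied with $k = 4$ and some fixed small $\delta_0$ (say $\delta_0 = 1/100$), with associated constant $\eps_0 \coloneqq \delta_0/800$.

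Let $\cF^*$ be the family of subsets of $V(\cH^*)$ consisting of $\{i\}\times S$ for every $S \in \cF_V$ with $|S| \geq \kappa n$ and every $i \in [D]$, together with every $F \in \cF_E$ with $|F| \geq \kappa D$ (viewed as a subset of $E(\cH') \subseteq V(\cH^*)$). Using $D \geq n^{1/2}$ and $|\cF_V|,|\cF_E| \leq n^{\log n}$, we have $|\cF^*| \leq \exp(D^{\eps_0^2})$, and the size thresholds ensure $\sum_{v\in S^*} d_{\cH^*}(v) \geq 4D^{1+\delta_0}$ for every $S^*\in\cF^*$. Theorem~\ref{thm:egj} thus yields a matching $M$ of $\cH^*$ satisfying $|S^* \cap V(M)| = (1 \pm D^{-\eps_0})\sum_{v\in S^*} d_{\cH^*}(v)/D$ for every $S^* \in \cF^*$, and by Observation~\ref{obs:incidence-hypergraph}\ref{incidence-hypergraph:colouring}, the sets $N_i^\circ \coloneqq \{e\in \cH' : \{e\}\cup(\{i\}\times e) \in M\}$ are pairwise edge-disjoint matchings of $\cH'$ that nearly cover every set in $\cF^*$.

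Finally, independently for each $i\in[D]$ and each $e \in N_i^\circ$, include $e$ in $N_i$ with probability $1-\gamma$; since $N_i \subseteq N_i^\circ$, the $N_i$ are automatically pairwise edge-disjoint matchings. For $S \in \cF_V$ with $|S|\geq \kappa n$, the pseudorandomness of $M$ combined with the near-regularity of $\cH'$ gives $|S\cap V(N_i^\circ)| = (1\pm O(\sqrt\xi + D^{-\eps_0}))|S|$, so the expected value of $|S\setminus V(N_i)|$ is $\gamma|S| \pm \kappa n/4$; McDiarmid's inequality applied to the $\leq |S|$ independent trials (each of which changes $|S \cap V(N_i)|$ by at most $3$) gives concentration within $\kappa n/4$, and a union bound over the $D|\cF_V|\leq n^{\log n + 1}$ events succeeds since $D\geq n^{1/2}$. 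An analogous Chernoff argument for $F\in\cF_E$ with $|F|\geq \kappa D$, in which each edge of $F\cap V(M)$ survives sparsification independently with probability $1-\gamma$, yields~\ref{pseudorandom_matching:edge}. For $S$ and $F$ below the thresholds, both conclusions are immediate from $|S\setminus V(N_i)|\leq |S|<\kappa n$ and $|F\setminus\bigcup_i N_i|\leq |F|<\kappa D\leq \kappa\max(|F|,D)$. The main bookkeeping point is that the thresholds $\kappa n$ and $\kappa D$ must be chosen large enough for the hypothesis $\sum d\geq 4D^{1+\delta_0}$ of Theorem~\ref{thm:egj} to hold, yet small enough that the trivial bound below them fits inside the error terms $\kappa n$ and $\kappa\max(|F|,D)$ of the conclusion.
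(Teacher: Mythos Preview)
Your proposal is correct and follows essentially the same approach as the paper: apply Theorem~\ref{thm:egj} to the $D$-wise incidence hypergraph and then sparsify the resulting matching by deleting each edge independently with probability~$\gamma$. The only cosmetic difference is how $4$-uniformity is arranged: the paper first embeds $\cH'$ into a $3$-uniform, nearly $D$-regular linear hypergraph $\cH''$ on $O(n^4)$ vertices before forming $\hgInc_D(\cH'')$, whereas you form $\hgInc_D(\cH')$ directly and pad the size-$3$ edges with private dummy vertices; since Theorem~\ref{thm:egj} only needs a maximum-degree (not a regularity) hypothesis, your shortcut is perfectly adequate, with the sole caveat that the theorem must be invoked with $(1+\sqrt\xi)D$ in place of~$D$.
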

%We only provide a sketch of the proof.
\begin{proof}[Proof (sketch)]
     First we embed $\cH'$ into a 3-uniform linear hypergraph $\cH''$ with $O(n^4)$ vertices, in which every vertex has degree $(1 \pm \sqrt \xi)D$.  We then let $\cH^* \coloneqq \hgInc_D(\cH'')$ be the $D$-wise incidence hypergraph of $\cH''$ (recall Definition~\ref{def:incidence-hypergraph}).  By Observation~\ref{obs:incidence-hypergraph}\ref{incidence-hypergraph:uniformity}--\ref{incidence-hypergraph:degree}, $\cH^*$ is 4-uniform, linear, and every vertex of $\cH^*$ has degree $(1 \pm \sqrt \xi)D$.  Thus, we can apply Theorem~\ref{thm:egj} to $\cH^*$ with $\delta = 1/4$ and an appropriately chosen $\cF$, determined by $\cF_V$ and $\cF_E$, %and $\kappa / 2$ and $\sqrt \xi$ playing the roles of $\eps$ and $\delta$, respectively, 
     to obtain a matching $M$ in $\cH^*$ such that every $S \in \cF$ satisfies $|S\setminus V(M)| \leq \kappa \max\{|S|, D\} / 2$.  Next, we `sparsify' $M$, by randomly and independently removing each edge with probability $\gamma$, to obtain a new matching $N$.  For each $i \in [D]$, we let $N_i \coloneqq \{e \in \cH : \exists f \in N,~f\supseteq \{i\}\times e\}$.  By Observation~\ref{obs:incidence-hypergraph}\ref{incidence-hypergraph:colouring}, the matchings $N_1, \dots, N_D$ are pairwise edge-disjoint, and the pseudorandomness property of $M$ guaranteed by Theorem~\ref{thm:egj} ensures that~\ref{pseudorandom_matching:vtx} and~\ref{pseudorandom_matching:edge} are satisfied.
\end{proof}

We will use Lemmas~\ref{lem:pseudorandom_matching} and~\ref{absorption-lemma} to construct $\lceil n / 2\rceil$ pairwise edge-disjoint matchings with nearly perfect coverage of $U$ such that the remaining edges of $\cH\setminus R$ comprise a subhypergraph of small maximum degree.  We apply the following lemma to this subhypergraph, to decompose it into matchings which are absorbable by `smallness'.  
\begin{lemma}[Leftover colouring lemma]\label{lem:leftover}
  If $\cH^\prime \subseteq \cH \setminus R$ is a spanning subhypergraph such that $\Delta(\cH^\prime) \leq \gamma n$,
  then there exist pairwise edge-disjoint matchings $N_1, \dots, N_{k}$ where $k \leq \lceil \gamma^{1/3} n\rceil$ such that 
\begin{enumerate}[label=\textbf{L\arabic*}]
    \item\label{F1} $|V(N_i)| \leq \sqrt\gamma n$ for every $i \in [k]$, and
    \item\label{F2} $\cH^\prime = \bigcup_{i=1}^k N_i$.
\end{enumerate}
\end{lemma}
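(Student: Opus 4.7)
The plan is to first decompose $\cH'$ into edge-disjoint matchings via a proper edge-colouring with few colours, and then to subdivide each colour class so that the vertex-size bound~\ref{F1} is met while keeping the total number of matchings small.

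For the first step, note that since $\cH' \subseteq \cH$, the hypergraph $\cH'$ is $3$-bounded and linear with $\Delta(\cH') \leq \gamma n$, so in particular its codegree is at most $1$. Since $\gamma$ is a fixed constant and $n \geq n_0$ is large (by the hierarchy~\eqref{eqn:small-efl-hierarchy}), we have $1 \leq \delta \cdot \gamma n$ for any fixed $\delta > 0$, so Theorem~\ref{thm:asym-list-edge-col} applied with $k = 3$ and $\eps = 1$ yields $\chi'(\cH') \leq 2\gamma n$. (Even the trivial greedy bound $\chi'(\cH') \leq 3(\gamma n - 1) + 1$ coming from colouring the line graph would suffice here.) Fix such a proper edge-colouring with colour classes $M_1, \dots, M_t$ where $t \leq 2\gamma n$.

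For the second step, since every edge of $\cH$ has size at least $2$, each $M_i$ contains at most $\lfloor n/2 \rfloor$ edges. Partition each $M_i$ arbitrarily into sub-matchings of at most $\lfloor \sqrt{\gamma}\,n / 3 \rfloor$ edges each. Because every edge has size at most $3$, any such sub-matching $N$ satisfies $|V(N)| \leq 3 \lfloor \sqrt{\gamma}\,n / 3 \rfloor \leq \sqrt{\gamma}\,n$, giving~\ref{F1}. Listing all the sub-matchings as $N_1, \dots, N_k$ yields $\bigcup_{i=1}^{k} N_i = \bigcup_{i=1}^{t} M_i = \cH'$, which is~\ref{F2}.

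It remains to bound $k$. For $n$ large enough that $\lfloor \sqrt{\gamma}\,n/3 \rfloor \geq \sqrt{\gamma}\,n/6$, each $M_i$ is split into at most $4/\sqrt{\gamma}$ sub-matchings, so
\[ k \;\leq\; t \cdot \frac{4}{\sqrt{\gamma}} \;\leq\; 2\gamma n \cdot \frac{4}{\sqrt{\gamma}} \;=\; 8\sqrt{\gamma}\, n \;\leq\; \gamma^{1/3}\, n, \]
using $\gamma \ll 1$ in the last step. There is no genuine obstacle in the argument: the linearity of $\cH$ makes the codegree hypothesis of Theorem~\ref{thm:asym-list-edge-col} automatic, and the vertex-size bound is enforced by an elementary subdivision that only inflates the number of matchings by a factor of $O(1/\sqrt{\gamma})$, comfortably absorbed by the slack between $\sqrt{\gamma}$ and $\gamma^{1/3}$.
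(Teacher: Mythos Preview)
Your proof is correct and follows essentially the same two-step approach as the paper: first properly edge-colour $\cH'$ with $O(\gamma n)$ colours, then subdivide each colour class into pieces of at most $\sqrt\gamma n/3$ edges so that~\ref{F1} holds, and observe that the $O(1/\sqrt\gamma)$ blow-up is absorbed by $\gamma^{1/3}$. The only cosmetic difference is that the paper uses the elementary greedy bound $\chi'(\cH')\leq 3\lceil\gamma n\rceil+1$ directly (which you also mention suffices), whereas you invoke Theorem~\ref{thm:asym-list-edge-col}; either works.
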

\begin{proof}
  Let $D \coloneqq \lceil \gamma n\rceil$.
  For every $e\in\cH$, since $|e| \leq 3$, we have $\sum_{v\in e}d_{\cH'}(v) \leq 3D$.  Thus, $\chi'(\cH) \leq 3D + 1$, so there exist pairwise edge-disjoint matchings $M_1, \dots, M_{3D + 1}$ such that $\bigcup_{i=1}^{3D+1}M_i = \cH'$.  Let $\ell \coloneqq \lceil\gamma^{-1/2}\rceil + 1$.  For each $i \in [3D + 1]$, there exist pairwise edge-disjoint matchings $N_{i, 1}, \dots, N_{i, \ell}$ such that $\bigcup_{j=1}^{\ell}N_{i, j} = M_i$ and $|V(N_{i, j})| \leq \sqrt \gamma n$ for each $j \in [\ell]$.  By reindexing, $\bigcup_{i=1}^{3D+1}\{N_{i, 1}, \dots, N_{i, \ell}\}$ is the desired set of matchings, since $(3D + 1)\ell \leq \gamma^{1/3} n$.
\end{proof}

\subsubsection{Proof of Theorem~\ref{thm:small-efl}}

By combining Lemmas~\ref{absorption-lemma},~\ref{lem:pseudorandom_matching}, and~\ref{lem:leftover}, we prove the following lemma, which effectively reduces Theorem~\ref{thm:small-efl} to Vizing's theorem.

\begin{lemma}[Main colouring lemma]\label{lem:graph-reduction}
    There exists $\cH' \subseteq \cH$ and a proper edge-colouring of $\cH'$ using $\lceil n / 2\rceil + \lceil \gamma^{1/3}n\rceil$ colours such that 
    \begin{itemize}
        \item $\cH' \supseteq \cH\setminus R$ and
        \item the colour classes have nearly perfect coverage of $U$.
    \end{itemize}
    Moreover, $\cH\setminus \cH'$ is a graph and satisfies $\Delta(\cH\setminus\cH') \leq  n - \lceil n / 2\rceil - \lceil \gamma^{1/3}n\rceil$.
\end{lemma}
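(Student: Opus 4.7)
The plan is to build $\cH'$ in three stages — a main R\"odl nibble producing $\lceil n/2 \rceil$ pseudorandom matchings in $\cH \setminus R$, an iterative absorption stage augmenting these to achieve nearly perfect coverage of $U$, and a leftover stage handling the edges missed by the nibble.

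Set $D \coloneqq \lceil n/2 \rceil$. By~\eqref{non-reserved-is-regular} we have $d_{\cH \setminus R}(v) = (1 \pm \sqrt{\xi}) D$ for all $v$, so Lemma~\ref{lem:pseudorandom_matching} applies to $\cH \setminus R$. I will take $\cF_V$ to consist of $U$ together with $N_R(u) \cap U$ and $N_R(u) \setminus U$ for every $u \in U$, and $\cF_E$ to consist of the edge-neighbourhoods $\{e \in \cH \setminus R : v \in e\}$ for every $v \in V(\cH)$. The output is a family of pairwise edge-disjoint matchings $N_1, \dots, N_D \subseteq \cH \setminus R$, each $(\gamma, \kappa)$-pseudorandom with respect to $\cF_V$, and by~\ref{pseudorandom_matching:edge} the residual hypergraph $\cH^\prime \coloneqq (\cH \setminus R) \setminus \bigcup_i N_i$ satisfies $\Delta(\cH^\prime) \leq (\gamma + \kappa) D \leq \gamma n$.

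Next partition $[D]$ into consecutive batches of size at most $\kappa n$ and process them in order while maintaining a shrinking reservoir $R' \subseteq R$ (initially $R$) and defect-eligible set $S \subseteq U$ (initially $U$). For each batch $J$, apply Lemma~\ref{absorption-lemma}(i) to $\{N_i : i \in J\}$, $R'$, $S$ to obtain augmented matchings $M_i \coloneqq N_i \cup N_i^{\mathrm{abs}}$ with $N_i^{\mathrm{abs}} \subseteq R'$, then delete $\bigcup_{i \in J} N_i^{\mathrm{abs}}$ from $R'$ and the $\leq |J|$ defect vertices from $S$. Condition~\ref{AB:reservoir-preserved} is maintained because~\ref{pseudorandom_matching:edge} forces every $u \in U$ to lie in at most $(\gamma + \kappa) D \leq \gamma n$ of the sets $U_i \coloneqq U \setminus V(N_i)$, bounding the per-vertex reservoir depletion; condition~\ref{AB:S-bound} holds since the total number of defects over all batches is at most $D \leq n/2 + 1$, comfortably less than $|U| - \gamma n$ when $|U| \geq 3n/4$ (and when $|U| < 3n/4$ Lemma~\ref{absorption-lemma} yields perfect coverage, so no defects are recorded). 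The main obstacle is condition~\ref{AB-pseudo}: each $N_i$ is pseudorandom with respect to the \emph{initial} family $\cF(R)$ rather than the evolving $\cF(R')$, and a naive transfer loses $|N_R(u) \setminus N_{R'}(u)| \leq \gamma n$ in the additive error of pseudorandomness (rather than the required $\kappa n$). I would address this by iterating Lemma~\ref{lem:pseudorandom_matching} itself in batches—re-running the nibble at the start of each batch on the current residual subhypergraph of $\cH \setminus R$ (which remains $(1 \pm O(\gamma)) D_j$-regular throughout) with $\cF_V$ refreshed to include $\cF(R')$ for the current $R'$—so that each extracted matching's pseudorandomness is already with respect to the reservoir state at the time of its absorption.

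Finally, apply Lemma~\ref{lem:leftover} to $\cH^\prime$ to obtain matchings $N^L_1, \dots, N^L_\ell$ with $\ell \leq \lceil \gamma^{1/3} n \rceil$, $|V(N^L_j)| \leq \sqrt\gamma n$, and $\bigcup_j N^L_j = \cH^\prime$; then apply Lemma~\ref{absorption-lemma}(ii) with the post-absorption values of $R'$ and $S$ (which still satisfy the hypotheses by the depletion budget above) to obtain augmented matchings $M^L_1, \dots, M^L_\ell$ preserving nearly perfect coverage of $U$. Set $\cH' \coloneqq \bigcup_{i=1}^D M_i \cup \bigcup_{j=1}^\ell M^L_j$: these $D + \ell \leq \lceil n/2 \rceil + \lceil \gamma^{1/3} n \rceil$ matchings form a proper edge-colouring of $\cH'$ with nearly perfect coverage of $U$, and $\cH' \supseteq \cH \setminus R$. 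For the max-degree bound on $\cH \setminus \cH' \subseteq R \subseteq G$: each $u \in U$ lies in at least $D + \ell - 1$ colour classes, so $d_{\cH \setminus \cH'}(u) \leq n - 1 - (D + \ell - 1) = n - D - \ell$; each $v \notin U$ has all its $\cH \setminus R$-edges coloured (by Stages 1 and 3), so $d_{\cH \setminus \cH'}(v) \leq d_R(v) \leq d_G(v)/2 + \xi n < (1 - \eps) n/2 + \xi n \leq n - D - \ell$, using~\ref{reservoir-typicality}, the definition of $U$, and $\gamma^{1/3} \ll \eps$.
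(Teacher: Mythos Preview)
Your approach is essentially the paper's: run the nibble in $\approx 1/\kappa$ batches (refreshing $\cF(R')$ each time so that~\ref{AB-pseudo} holds against the current reservoir), absorb each batch via Lemma~\ref{absorption-lemma}\ref{absorption-lemma:pseudorandom}, then handle the leftover via Lemmas~\ref{lem:leftover} and~\ref{absorption-lemma}(ii), and finally bound $\Delta(\cH\setminus\cH')$ separately on $U$ (via nearly perfect coverage) and on $V(\cH)\setminus U$ (via~\ref{reservoir-typicality}) exactly as you do. The one implementation point worth flagging: rather than ``re-running the nibble on the current residual subhypergraph'' --- which is awkward because Lemma~\ref{lem:pseudorandom_matching} outputs $D$ matchings where $D$ is the degree parameter, and because the $(1\pm\sqrt\xi)D_j$-regularity hypothesis is too tight to survive $1/\kappa$ iterations of error accumulation --- the paper randomly partitions $\cH\setminus R$ \emph{upfront} into $K=\lceil 1/\kappa\rceil$ spanning subhypergraphs $\cH_1,\dots,\cH_K$, each with all degrees $(1\pm 3\xi)n/K$, and applies Lemma~\ref{lem:pseudorandom_matching} once to each $\cH_i$ with $D=n_i\approx n/(2K)$ and $\cF_V=\cF(R_i)\cup\{U,S_i\}$ for the current $R_i,S_i$. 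This decouples the batches, matches the nibble's output size to the batch size, and keeps the regularity hypothesis clean throughout; otherwise your outline and the paper's proof coincide.
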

\begin{proof}
The proof proceeds in two steps.
\begin{pf-step}
    Using Lemmas~\ref{lem:pseudorandom_matching} and~\ref{absorption-lemma}, find a set $\cM$ of $\lceil n/2\rceil $ pairwise edge-disjoint matchings $M_1, \dots, M_{\lceil n / 2\rceil}$ such that the following holds:
    \begin{enumerate}[label=\textbf {M\arabic*}]
        \item\label{main-colouring:coverage} $\cM$ has nearly perfect coverage of $U$, and moreover, if $|U| < 3n/4$, then $\cM$ has perfect coverage of $U$,
        \item\label{main-colouring:leftover-degree} $\Delta(\cH\setminus (R\cup\bigcup_{i=1}^{\lceil n / 2 \rceil} M_i)) \leq \gamma n$, and
        \item\label{main-colouring:reservoir-preserved} $\Delta(R \cap \bigcup_{i=1}^{\lceil n / 2 \rceil} M_i) \leq \gamma n$.
    \end{enumerate}
\end{pf-step}
  First we partition $\cH\setminus R$ into $K\coloneqq \lceil 1 / \kappa\rceil$ pairwise edge-disjoint hypergraphs $\cH_1, \dots, \cH_K$ such that $\bigcup_{i=1}^{K} \cH_i = \cH\setminus R$, and every vertex has degree $(1/2 \pm 3\xi)n / K$ in $\cH_i$ for each $i \in [K]$.  (To show the desired partition exists, consider a partition chosen uniformly at random, which by~\eqref{non-reserved-is-regular}, will satisfy the vertex degree condition with high probability.)\COMMENT{To show that the desired partition exists, for each $e \in \cH\setminus R$, we put $e$ into exactly one of the hypergraphs $\cH_{1} , \dots , \cH_{K}$ with probability $K^{-1}$ independently at random. By~\eqref{non-reserved-is-regular}, $\mathbb{E}[d_{\cH_i}(w)] =  d_{\cH\setminus R}(w)/K = (1/2 \pm 2\xi)n/K - 1/(2K)$. Hence, by the Chernoff bound, the following holds with probability at least $1-\exp\left({-n^{1/4}}\right)$. For every $w \in V(\cH)$ and $i \in [K]$, $d_{\cH_{i}}(w) = \mathbb{E}[d_{\cH_i}(w)] \pm \xi n / (2K) = (1/2 \pm 3 \xi) n/K$.}
  We iteratively apply alternating applications of Lemmas~\ref{lem:pseudorandom_matching} and~\ref{absorption-lemma} to each $\cH_i$.
  
 Now, for each $i \in [K]$, we choose $n_{i}$ to be either $\lfloor \lceil n/2\rceil /K \rfloor$ or $\lceil \lceil n/2\rceil /K\rceil$ such that $\sum_{j=1}^{K} n_{j} = \lceil n / 2\rceil$, and we partition the set $[\lceil n / 2\rceil]$ into $K$ disjoint parts $I_1 , \dots , I_{K}$ such that $|I_i| = n_i$.  Note that $n_i \leq \kappa n$, and every vertex in $\cH_{i}$ has degree $(1 \pm 7 \xi) n_{i}$ for every $i \in [K]$.

 For $j \in [K]\cup\{0\}$, let us define the following inductive properties, where $\cM_k := \{ M_c : c\in I_k \}$ is a set of matchings in $\cH$ for each $k \in [j]$.
\begin{enumerate}[label=\textbf{M\arabic*}$_j$]
    \item\label{N1_appl} For every $k \in [j]$, $M_c \subseteq \cH_k \cup R$ for every $c \in I_k$ and moreover, the matchings in $\bigcup_{k=1}^{j} \cM_k$ are pairwise edge-disjoint.
    
    \item\label{N2_appl} For every $w \in V(\cH)$,
        \begin{align*}
      |E_{R}(w) \cap \bigcup_{k \in [j]}\bigcup_{M \in \cM_k} M| &\leq (\gamma + 3\kappa) \sum_{k \in [j]} n_k &&\textrm{ and }\\
      |E_{\bigcup_{k=1}^{j}\cH_k }(w) \setminus \bigcup_{k \in [j]}\bigcup_{M \in \cM_k} M| &\leq (\gamma + 3\kappa) \sum_{k \in [j]} n_k.
    \end{align*}

    \item\label{N3_appl}  If $|U| < 3n / 4$, then $\bigcup_{k=1}^{j} \cM_{k}$ has perfect coverage of $U$. Otherwise, $\bigcup_{k=1}^{j} \cM_{k}$ has nearly perfect coverage of $U$ with defects in $U$.
\end{enumerate}

Using induction on $j$, we will show that there exist sets of matchings $\cM_1, \dots, \cM_K$ satisfying~\ref{N1_appl}--\ref{N3_appl} for $j = K$.  
%Letting $\cH' \coloneqq \bigcup_{i=1}^K \bigcup_{N\in\cN_i}N$ and assigning a distinct colour to each $N \in \cN_i$ yields the desired colouring. 
Note that~\ref{N1_appl}--\ref{N3_appl} trivially hold for $j = 0$. Let $i \in [K]$, and suppose that $\cM_1, \dots, \cM_{i-1}$ satisfy~\ref{N1_appl}--\ref{N3_appl} for $j = i-1$. Our goal is to find a collection $\cM_i$ of $n_i$ pairwise edge-disjoint matchings in $\cH$ satisfying~\ref{N1_appl}--\ref{N3_appl} for $j=i$. 

Let $R_i := R \setminus \bigcup_{k=1}^{i-1} \bigcup_{M \in \cM_k} M$, let $S_{i} := U \setminus \bigcup_{k=1}^{i-1} \bigcup_{M \in \cM_k} (U \setminus V(M))$, and let $\cW := \cF(R_i) \cup \{U,S_i\}$, where $\cF(R_i) := \{ N_{R_i}(u) \cap U \: : \: u \in U \} \cup \{ N_{R_i}(u) \setminus U \: : \: u \in U \}$.

Now we apply Lemma~\ref{lem:pseudorandom_matching} with $\cH_i$, $\cW$, $\{ E_{\cH_i}(w) \: : \: w \in V(\cH) \}$, and $n_i$ playing the roles of $\cH'$, $\cF_V$, $\cF_E$, and $ D$, respectively\COMMENT{we use the index set $I_i$ instead of $[n_i]$ for the matchings.} to obtain a set $\cN_i := \{ N_c \: : \: c \in I_i \}$ of $n_{i}$ pairwise edge-disjoint matchings in $\cH_i$ such that the following hold.

\begin{enumerate}[label=\textbf{N'\arabic*}]
    \item\label{inductive-matchings:vtx-pseudorandomness} For every $c \in I_i$, $N_c$ is $(\gamma,\kappa)$-pseudorandom with respect to $\cW$.
    
    \item\label{inductive-matchings:edge-pseudorandomness} For every $w \in V(\cH)$, $d_{\cH_i\setminus \bigcup_{c \in I_i} N_c}(w) \leq\COMMENT{$\gamma(1 + 7\xi)n_i + \kappa(1 + 7\xi)n_i \leq$}
    (\gamma + 2\kappa) n_i$.
\end{enumerate}

Now we show that for every $c \in I_i$, $(N_c , R_i , S_i)$ is absorbable by pseudorandomness of $N_c$, as follows.
\begin{itemize}
\item By~\ref{N3_appl} for $j = i-1$, if $|U| < 3n / 4$, then $S_i = U$, and otherwise $|S_{i}| \geq |U|-\sum_{k=1}^{i-1} n_{k} \geq n/4 - 1$, so~\ref{AB:S-bound} holds.
\item By~\ref{N2_appl} for $j = i-1$, since $(\gamma + 3\kappa)\lceil n / 2\rceil \leq \gamma n$,~\ref{AB:reservoir-preserved} holds.
\item By~\ref{inductive-matchings:vtx-pseudorandomness}, $N_c$ is $(\gamma ,\kappa)$-pseudorandom with respect to $\cW$  so~\ref{AB-pseudo} holds, as required.
\end{itemize}

Therefore we can apply Lemma~\ref{absorption-lemma} to obtain a set $\cM_i := \{ M_c \: : \: c \in I_i \}$ of $n_{i}$ pairwise edge-disjoint matchings in $\cH$ such that the following hold. 
\begin{itemize}
    \item For every $c \in I_i$, $M_c \supseteq N_c$ and $M_c \setminus N_c \subseteq R_i$, and consequently~\ref{N1_appl} holds for $j=i$. 
    
    \item By~\ref{inductive-matchings:edge-pseudorandomness}, for every $w \in V(\cH)$, $|E_{\cH_i}(w)\setminus \bigcup_{c\in I_i}M_c| \leq (\gamma + 2\kappa) n_i$.  Moreover, again by~\ref{inductive-matchings:edge-pseudorandomness}, all but at most $n_i - (d_{\cH_i}(w) - d_{\cH_i \setminus \bigcup_{c \in I_i} N_c}(w)) \COMMENT{$\leq 7\xi n_i + (\gamma + 2 \kappa)n_i$} \leq (\gamma + 3\kappa) n_i$ of the matchings in $\cN_i$ cover $w$, so\COMMENT{only in case $w$ is not covered by the matchings in $\cN_i'$,  then it may be covered using an edge of $R_i$} $|E_{R_i}(w) \cap \bigcup_{c \in I_i} M_c| \leq (\gamma + 3\kappa) n_i$. Hence,~\ref{N2_appl} holds for $j=i$.
    
    \item If $|U| < 3n / 4$, then $\cM_i$ has perfect coverage of $U$, and otherwise, $\cM_i$ has nearly perfect coverage of $U$ with defects in $S_i \subseteq U$. Hence,~\ref{N3_appl} holds for $j=i$.
\end{itemize}
Therefore by induction, there exist sets of matchings $\cM_1, \dots, \cM_K$ such that for every $i\in[K]$, $\cM_i$ satisfies~\ref{N1_appl}--\ref{N3_appl} for $j = i$, as claimed.  Now $\cM\coloneqq \bigcup_{i=1}^K\cM_i$ satisfies~\ref{main-colouring:coverage}--\ref{main-colouring:reservoir-preserved}.  Indeed, by~\ref{N1_appl} and~\ref{N3_appl} for $j = K$, $\cM$ satisfies~\ref{main-colouring:coverage}, and by~\ref{N2_appl} for $j = K$, $\cM$ satisfies~\ref{main-colouring:leftover-degree} and~\ref{main-colouring:reservoir-preserved}, since $(\gamma + 3\kappa)\lceil n / 2\rceil \leq \gamma n$.

\begin{pf-step}\label{step:leftover}
    Using Lemmas~\ref{lem:leftover} and~\ref{absorption-lemma}, find a set $\cM'$ of $\lceil\gamma^{1/3}n\rceil$ pairwise edge-disjoint matchings $M'_1, \dots, M'_{\lceil \gamma^{1/3}n\rceil}$ such that the following holds:
    \begin{enumerate}[label=\textbf{M'\arabic*}]
        \item\label{leftover-colouring:compatibility} $\bigcup_{M\in \cM}M\cap\bigcup_{M'\in\cM'}M' = \varnothing$, 
        \item\label{leftover-colouring:coverage} $\cM \cup \cM'$ has nearly perfect coverage of $U$, and
        \item\label{leftover-colouring:finishing} $\cH\setminus R \subseteq \bigcup_{M\in \cM}M\cup\bigcup_{M'\in\cM'}M'$.
    \end{enumerate}
\end{pf-step}
    %Let $I_{K+1} \coloneqq [\lceil n / 2\rceil + \lceil \gamma^{1/3}n\rceil] \setminus [\lceil n / 2\rceil]$. 
    By~\ref{main-colouring:leftover-degree} and Lemma~\ref{lem:leftover}\COMMENT{possibly adding empty matchings} applied with $\cH\setminus (R\cup \bigcup_{M\in\cM}M)$ playing the role of $\cH'$, there exists a set $\cN' \coloneqq \{N'_1, \dots, N'_{\lceil \gamma^{1/3}n\rceil}\}$ of pairwise edge-disjoint matchings such that
    \begin{enumerate}[label=\textbf{L'\arabic*}]
        \item\label{leftover-matchings:small} $|V(N'_i)| \leq \sqrt \gamma n$ for every $i \in [\lceil \gamma^{1/3}n\rceil]$ and
        \item\label{leftover-matchings:coverage} $\cH\setminus(R\cup \bigcup_{M\in\cM}M) = \bigcup_{i=1}^{\lceil\gamma^{1/3}n\rceil} N'_i$.
    \end{enumerate}
    Now we show that for every $i \in \lceil \gamma^{1/3}n\rceil$, $(N'_i, R', S')$ is absorbable by smallness of $N'_i$, where $R' \coloneqq R \setminus \bigcup_{M\in\cM}M$ and $S' \coloneqq U\setminus \bigcup_{M\in\cM}(U\setminus V(M))$.  Indeed,
    \begin{itemize}
        \item by~\ref{main-colouring:coverage}, if $|U| < 3n/4$, then $S' = U$, and otherwise $|S'| \geq |U| - \lceil n / 2\rceil \geq \gamma n$, so~\ref{AB:S-bound} holds,
        \item by~\ref{main-colouring:reservoir-preserved}, $\Delta(R\setminus R') = \Delta(R\cap \bigcup_{M\in\cM}M) \leq \gamma n$, so~\ref{AB:reservoir-preserved} holds, and
        \item by~\ref{leftover-matchings:small},~\ref{AB-pseudo} holds.
    \end{itemize}
    Therefore we can apply Lemma~\ref{absorption-lemma} to obtain a set $\cM' \coloneqq \{M'_1, \dots, M'_{\lceil \gamma^{1/3}n\rceil}\}$ of pairwise edge-disjoint matchings in $\cH$ such that the following hold:
    \begin{itemize}
        \item for every $i \in [\lceil \gamma^{1/3}n\rceil]$, $M'_i \supseteq N'_i$ and $M'_i\setminus N'_i \subseteq R'$, and
        \item $\cM'$ has nearly perfect coverage of $U$ with defects in $S'$
    \end{itemize}
    Therefore, by the choice of $R'$, $\cM'$ satisfies~\ref{leftover-colouring:compatibility}, by the choice of $S'$, $\cM'$ satisfies~\ref{leftover-colouring:coverage}, and by~\ref{leftover-matchings:coverage}, $\cM'$ satisfies~\ref{leftover-colouring:finishing}, as desired.
    
    Now let $\cH' \coloneqq \bigcup_{M\in \cM}M\cup\bigcup_{M'\in\cM'}M'$, assign colour $c$ to each edge in $M_c$ for every $c \in [\lceil n / 2\rceil]$, and assign colour $c = \lceil n/2\rceil + i$ to each edge in $M'_i$ for every $i \in [\lceil\gamma^{1/3}n\rceil$].  By~\ref{leftover-colouring:compatibility}, we have a proper edge-colouring of $\cH'$ using at most $\lceil n / 2\rceil + \lceil \gamma^{1/3} n\rceil$ colours, as required.  By~\ref{leftover-colouring:finishing}, $\cH' \supseteq \cH\setminus R$, as desired, and by~\ref{leftover-colouring:coverage}, the colour classes $\cM \cup \cM'$ of $\cH'$ have nearly perfect coverage of $U$, as desired.  Since $\cH'\supseteq \cH \setminus R$, it follows that $\cH\setminus \cH' \subseteq R$ is a graph.  Since $\cM \cup \cM'$ has nearly perfect coverage of $U$, every vertex $w \in U$ satisfies $d_{\cH \setminus \cH'}(w) \leq (n - 1) - (\lceil n / 2\rceil + \lceil\gamma^{1/3}n\rceil - 1) = n - \lceil n / 2\rceil - \lceil\gamma^{1/3}n\rceil$, 
    and by~\ref{reservoir-typicality}, every vertex $w \in V(\cH)\setminus U$ satisfies $d_{\cH\setminus \cH'}(w) \leq (1 - \eps) n / 2 + 2\xi n \leq n - \lceil n / 2\rceil - \lceil\gamma^{1/3}n\rceil$.  Hence, $\Delta(\cH\setminus \cH') \leq n - \lceil n / 2\rceil - \lceil\gamma^{1/3}n\rceil$, as desired.
\end{proof}

Now we can immediately deduce Theorem~\ref{thm:small-efl}.

\begin{proof}[Proof of Theorem~\ref{thm:small-efl}]
    By Lemma~\ref{lem:graph-reduction}, there exists $\cH'\subseteq \cH$ such that $\chi'(\cH') \leq \lceil n / 2\rceil + \lceil \gamma^{1/3}n\rceil$  and $\cH \setminus \cH'$ is a graph with $\Delta(\cH\setminus\cH') \leq n - \lceil n / 2\rceil - \lceil \gamma^{1/3}n\rceil$.  By Vizing's theorem, $\chi'(\cH\setminus \cH') \leq \Delta(\cH\setminus\cH') + 1 \leq n - \lceil n / 2\rceil - \lceil \gamma^{1/3}n\rceil + 1$.  Therefore,
    \begin{equation*}
        \chi'(\cH) \leq \chi'(\cH') + \chi'(\cH\setminus \cH') \leq n + 1,
    \end{equation*}
    as desired.
\end{proof}

We conclude this subsection by briefly discussing how Theorem~\ref{thm:small-efl} can be improved to show that $\chi'(\cH) \leq n$.  First, we note that the same argument combined with Vizing's theorem proves $\chi'(\cH) \leq n$ if at least one of the following holds:
\begin{enumerate}[(a)]
    \item\label{extra:perfect-coverage} the colour classes of $\cH'$ in Lemma~\ref{lem:graph-reduction} have perfect coverage of $U$, or
    \item\label{extra:defects} every $v\in U$ which is a `defect vertex' of some colour class of $\cH'$ in Lemma~\ref{lem:graph-reduction} satisfies $d_G(v) < n - 1$.
\end{enumerate}
Indeed, in either case, $\Delta(\cH \setminus \cH') \leq n - k - 1$ for $k = \lceil n / 2 \rceil + \lceil \gamma^{1/3} n\rceil$, and since $\chi'(\cH') \leq k$, we have $\chi'(\cH) \leq \chi'(\cH') + \chi'(\cH\setminus \cH') \leq k + (n - k - 1 + 1) = n$, as desired.  

Recall that Lemma~\ref{absorption-lemma} actually guarantees~\ref{extra:perfect-coverage} if $|U| < 3n / 4$.  In fact, this argument even works as long as $|U| \leq (1 - 10\eps)n$.  Moreover, the proof of Lemma~\ref{lem:graph-reduction} also guarantees~\ref{extra:defects} if $|\{v \in U : d_G(v) < n - 1\}| \geq (1 / 2 + 2\gamma^{1/3})n$.  In particular, with only minor modifications to the proof of Lemma~\ref{lem:graph-reduction}, we can prove either~\ref{extra:perfect-coverage} or~\ref{extra:defects} unless $U$ consists of nearly all of the vertices of $\cH$ and nearly half of the vertices of $\cH$ have degree $n - 1$.  Note that this means that $\cH$ resembles the complete graph, one of the extremal examples for the EFL conjecture.  In this case, additional ideas are needed to prove that $\chi'(\cH\setminus \cH') = \Delta(\cH\setminus \cH')$, which then ensures that $\chi'(\cH) \leq \chi'(\cH') + \chi'(\cH\setminus \cH') \leq k + (n - k) = n$, as desired.  To obtain this improved bound on $\chi'(\cH\setminus\cH')$, we modify the above approach to ensure that $\cH\setminus \cH'$ is quasirandom and then apply an edge-colouring result of Glock, K\"uhn, and Osthus~\cite{GKO2016}.  (This edge-colouring result in turn is deduced from the theorem of K\"uhn and Osthus~\cite{kuhn2013hamilton} that dense even-regular robustly expanding graphs have a Hamilton decomposition.  This deduction is based on the fact that a $\Delta$-regular graph of even order with a Hamilton decomposition has chromatic index $\Delta$.)

\subsection{Proving the EFL conjecture when all edges are large}\label{efl:large}

This subsection is devoted to the proof of Theorem~\ref{large-edge-thm}, which we restate here.

\largeEFL*

If $\ordering$ is a linear ordering of the edges of a hypergraph $\cH$, for each $e\in\cH$, we define $\fwdnbr_\cH(e)\coloneqq \{f \in N_\cH(e) : f\ordering e \}$ and $\fwddeg_\cH(e)\coloneqq |\fwdnbr_\cH(e)|$.  We omit the subscript $\cH$ when it is clear from the context. 
For each $e\in \cH$, we also let $\cH^{\ordering e} \coloneqq \{f\in \cH : f \ordering e\}$.  

For every $n$-vertex hypergraph $\cH$ and $W \subseteq \cH$, the \textit{normalized volume} of $W$ is $\vol_\cH(W) \coloneqq \sum_{e \in W}\binom{|e|}{2}\big/\binom{n}{2}$.  If $\cH$ is linear, then $\vol_\cH(W) \in [0, 1]$ for every $W \subseteq \cH$.

As in~\ref{eqn:fwd-deg-bound}, if $\cH$ is an $n$-vertex linear hypergraph such that $|e| > r$ for every $e \in \cH$, then every $e \in \cH$ satisfies $\fwddeg_\cH(e) \leq (1 + 2/r)n$, if $\ordering$ is \textit{size-monotone-decreasing} (i.e.~satisfying $e\ordering f$ if $|e| > |f|$).  In the following key lemma, we showed that we can either obtain an improved bound on $\fwddeg_\cH(e)$ (by modifying the ordering if necessary), or find a highly structured set $W \subseteq \cH$.  In particular, the edges of $W$ have similar size, and $W$ has large volume.  The fact that the edges have similar size will allow us to colour $W$ efficiently via Theorem~\ref{local-sparsity-lemma}, unless $W$ closely resembles a projective plane.
\begin{lemma}[Reordering lemma~\cite{KKKMO2021}]\label{reordering-lemma}
  Let $0 < 1/r \ll \tau, 1/K$ where $\tau < 1$, $K \geq 1$, and $1 - \tau - 7\tau^{1/4}/K > 0$.  If $\cH$ is an $n$-vertex linear hypergraph where every $e\in \cH$ satisfies $|e| \geq r$, then there exists a linear ordering $\ordering$ of the edges of $\cH$ such that at least one of the following holds.
  \begin{enumerate}[(\ref*{reordering-lemma}:a), topsep = 6pt]
  \item Every $e\in\cH$ satisfies $\fwddeg(e) \leq (1 - \tau)n$.\label{reordering-good}
  \item There is a set $W\subseteq \cH$ such that
    \begin{enumerate}[label=\textbf{W\arabic*}]
    \item\label{W-max-size} $\max_{e\in W}|e| \leq (1 + 3\tau^{1/4}K^{4})\min_{e\in W}|e|$ and
    \item\label{W-volume} $\vol_{\cH}(W) \geq \frac{(1 - \tau - 7\tau^{1/4}/K)^2}{1 + 3\tau^{1/4}K^4}$.
    \end{enumerate}
    Moreover, if $e^*$ is the last edge of $W$, then
    \begin{enumerate}[label=\textbf{O\arabic*}]
    \item for all $f\in \cH$ such that $e^*\ordering f$ and $f\neq e^*$, we have $\fwddeg(f) \leq (1 - \tau)n$ and\label{ordering-goodness}
    \item for all $e,f\in \cH$ such that $f\ordering e \ordering e^*$, we have $|f| \geq |e|$\label{ordering-by-size}.
    \end{enumerate}
    \label{reordering-volume}
  \end{enumerate}
\end{lemma}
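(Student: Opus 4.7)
The plan is to output an ordering $\ordering$ equal to a fixed size-monotone-decreasing ordering $\ordering_0$ of $E(\cH)$ (with ties broken arbitrarily), which makes~\ref{ordering-by-size} automatic. The dichotomy then reduces to: either every edge has small forward degree in $\ordering_0$, or we exhibit a high-volume set $W$ of similar-size edges whose $\ordering_0$-last element $e^*$ satisfies~\ref{ordering-goodness}.

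The starting point is the linearity bound already used to prove~\ref{eqn:fwd-deg-bound}: in $\ordering_0$ each forward neighbor of $e$ has size $\geq |e|$ and (by linearity) shares exactly one vertex with $e$, so at most $(n-|e|)/(|e|-1)$ forward neighbors meet each $v \in e$, giving
\[
\fwddeg_{\ordering_0}(e) \;\leq\; \frac{|e|(n - |e|)}{|e| - 1}.
\]
Combined with the global linearity inequality $\sum_{f \in \cH}\binom{|f|}{2} \leq \binom{n}{2}$, a violation $\fwddeg_{\ordering_0}(e) > (1 - \tau)n$ pins $|e|$ into a narrow range around $\sqrt n$ and forces $\vol(\fwdnbr_{\ordering_0}(e) \cup \{e\})$ to be at least roughly $(1 - \tau)^2$; this accounts for the square in the numerator of~\ref{W-volume}.

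If no edge violates, we are in case~\ref{reordering-good}; otherwise let $e^*$ be the $\ordering_0$-last violating edge, so~\ref{ordering-goodness} holds by maximality. Partition the set of edge sizes into geometric buckets of common ratio $1 + 3\tau^{1/4}K^4$, let $I^*$ be the bucket containing $|e^*|$, and set
\[
W \coloneqq \{f \in \cH \;:\; |f| \in I^*,~f \ordering_0 e^* \text{ or } f = e^*\}.
\]
Every $f \in W$ satisfies $|e^*| \leq |f| < (1 + 3\tau^{1/4}K^4)|e^*|$ (the lower bound because $\ordering_0$ is size-monotone-decreasing), establishing~\ref{W-max-size} and making $e^*$ the $\ordering_0$-last edge of $W$, as required by the ``Moreover'' clause.

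The main work is~\ref{W-volume}. I would show that most of the volume of $F(e^*) \coloneqq \fwdnbr_{\ordering_0}(e^*) \cup \{e^*\}$ concentrates in $I^*$: the slack in the global linearity bound is only $1 - \vol(F(e^*)) \leq 1 - (1 - \tau)^2 \approx 2\tau$, which constrains the total contribution of buckets much larger than $|e^*|$, and distributing this slack geometrically across the higher buckets yields a tail contribution of at most $7\tau^{1/4}/K$. The remaining $(1 - \tau - 7\tau^{1/4}/K)^2$ of volume lies in $I^*$, while the denominator $1 + 3\tau^{1/4}K^4$ absorbs the factor by which individual sizes in $I^*$ may exceed $|e^*|$ when lower-bounding $\vol(W)$. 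The main obstacle is the case $|e^*|^2 < (1 - \tau)n$, where the bound $\vol(F(e^*)) \geq (1 - \tau)^2$ is not directly available; there the forward neighbors of $e^*$ must themselves be much larger than $|e^*|$, forcing some bucket of size near $\sqrt n$ to carry many edges that supply the required volume for $W$. Balancing these two sub-cases and verifying the precise constants in the $\tau^{1/4}$ estimate via an optimal choice of $K$ is the technical heart of the proof.
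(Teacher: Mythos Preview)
Your approach has a genuine gap: fixing the ordering to be size-monotone throughout discards exactly the mechanism that makes the lemma work, and your proposed workaround for the case $|e^*|^2 < (1-\tau)n$ is inconsistent with the conclusions you need. Concretely, consider a linear $\cH$ in which all edges have size $r$ (with $r$ constant, so $r \ll \sqrt n$), consisting of a single edge $e^*$ together with about $n$ further size-$r$ edges, each meeting $e^*$ in one vertex, arranged (via $r$ mutually ``orthogonal'' near-partitions of $V\setminus e^*$) so that every edge $f\neq e^*$ has line-graph degree only $\approx n/r + r^2 \ll (1-\tau)n$. With $e^*$ placed last in $\ordering_0$, it is the unique violator, so your bucket $W$ containing $|e^*|$ is all of $\cH$ and has volume $\approx r^2/n$, nowhere near the bound in~\ref{W-volume}. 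Your fallback (``some bucket near $\sqrt n$ carries the volume'') is unavailable here since no such edges exist; and even when such a bucket does exist, its $\ordering_0$-last edge $f^*$ has $|f^*|>|e^*|$, so $e^*$ lies strictly after $f^*$ and violates~\ref{ordering-goodness}.

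The paper's argument avoids this by \emph{reordering}: starting from $\ordering_0$, while the last violator $e^*$ has a forward neighbour $f$ with $|N_\cH(f)\cap \cH^{\ordering e^*}|\le (1-\tau)n-1$, move $f$ to just after $e^*$. In the example above every $f$ qualifies, so after moving about $\tau n$ edges past $e^*$ its forward degree drops below $(1-\tau)n$ and~\ref{reordering-good} holds. When the process instead terminates with $e^*$ still a violator, one gains the crucial extra information that \emph{every} forward neighbour of $e^*$ has more than $(1-\tau)n$ neighbours inside $\cH^{\ordering e^*}$; it is this that drives the double-counting yielding~\ref{W-volume} for $W=\{f\in\cH^{\ordering e^*}:|f|\le(1+3\tau^{1/4}K^4)|e^*|\}$. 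Without the reordering step you only know that $e^*$ itself has high forward degree, which, as the example shows, is not enough.
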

We do not provide a proof of Lemma~\ref{reordering-lemma}, but we briefly sketch the idea.  Beginning with a size-monotone-decreasing ordering $\ordering$, we `reorder' $\ordering$ as follows.  Let $e^*$ be the last edge of $\cH$ that does not satisfy~\ref{reordering-good}.  If there exists $f \in \fwdnbr(e^*)$ such that $|N_\cH(f) \cap \cH^{\ordering e^*}| \leq (1 - \tau)n - 1$, then let $\ordering'$ be the ordering obtained from $\ordering$ by moving $f$ to be the successor of $e^*$.  If $\ordering$ satisfies~\ref{ordering-goodness} and~\ref{ordering-by-size}, then $\ordering'$ does as well, and moreover, $|\cH^{\ordering' e^*}| < |\cH^{\ordering e^*}|$.  Thus, by iterating this argument, we may assume that there is no such $f \in \fwdnbr(e^*)$.  Moreover, since we started with a size-monotone-decreasing ordering, we may also assume that $e^*$ satisfies~\ref{ordering-goodness} and~\ref{ordering-by-size}.  Now a double-counting argument shows that $W \coloneqq \{f \in \cH^{\ordering e^*} : |f| \leq (1 + 3\tau^{1/4}K)|e^*|\}$ satisfies~\ref{W-volume}. 

By applying Lemma~\ref{reordering-lemma} twice, we obtain the following.
\begin{lemma}\label{good-ordering-lemma}
  Let $0 < 1 / r \ll \sigma \ll 1$.  Let $\cH$ be an $n$-vertex linear hypergraph such that every $e \in \cH$ satisfies $|e| > r$.  If $\chi'(\cH) > (1 - \sigma)n$, then there exists a partition of $\cH$ into three spanning subhypergraphs, $\cH_1$, $W$, and $\cH_2$ such that
  \begin{enumerate}[label=\textbf{P\arabic*}]
  \item\label{W2-max-size} $\max_{e\in W}|e| \leq (1 + 4\sigma^{1/4})\min_{e\in W}|e|$,
  \item\label{W2-volume} $\vol_\cH(W) \geq 1 - 4\sigma^{1/5}$, and
  \item\label{d3-bound} $|e| \geq \max_{f\in W}|f|$ for all $e\in \cH_2$,
  \end{enumerate}
  and a linear ordering $\ordering$ of the edges of $\cH$ such that
  \begin{enumerate}[label=\textbf{FD\arabic*}]
  \item\label{reordering-goodness2} every $e\in \cH_1$ satisfies $\fwddeg_\cH(e) \leq (1 - 2\sigma)n$ and  $f \ordering e$ for every $f \in \cH_2 \cup W$, and
  \item\label{reordering-goodness3} $\fwddeg_\cH(e) \leq n/2000$ for all $e\in \cH_2$.
  %\item\label{partition-order} $e \ordering f$ for every $e \in \cH_2$ and $f \in W$ and for every $e \in W$ and $f \in \cH_1$
  \end{enumerate}
\end{lemma}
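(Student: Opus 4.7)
The plan is to apply Lemma~\ref{reordering-lemma} twice with carefully chosen parameters, first to extract the structured set $W$ and the tail $\cH_1$, then to arrange the leftover edges $\cH_2$ so that they have small forward degree.

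For the first application I take $\tau_1 \coloneqq 2\sigma$ and $K_1 \coloneqq 1$. Conclusion~\ref{reordering-good} cannot hold under our hypothesis: if every edge of $\cH$ satisfied $\fwddeg_\cH(e) \leq (1-2\sigma)n$, then greedily colouring along that ordering would give $\chi'(\cH) \leq (1-2\sigma)n + 1 \leq (1-\sigma)n$ for $n$ large, contradicting $\chi'(\cH) > (1-\sigma)n$. Hence conclusion~\ref{reordering-volume} produces an ordering $\ordering_1$, a set $W \subseteq \cH$, and its last edge $e^*_1$; these will be my choice of $W$ and $e^*_1$. I set $\cH_1 \coloneqq \{f \in \cH : e^*_1 \ordering_1 f,\ f \neq e^*_1\}$ and $\cH_2 \coloneqq \cH \setminus (W \cup \cH_1)$. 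Then~\ref{reordering-goodness2} is~\ref{ordering-goodness}, while~\ref{W2-max-size} and~\ref{W2-volume} follow from~\ref{W-max-size} and~\ref{W-volume} once one checks $3 \cdot 2^{1/4} \leq 4$ and (for $\sigma$ small) $(1 - 2\sigma - 7(2\sigma)^{1/4})^2/(1+3(2\sigma)^{1/4}) \geq 1 - 4\sigma^{1/5}$. The size condition~\ref{d3-bound} follows from~\ref{ordering-by-size} together with the explicit construction in the proof sketch of Lemma~\ref{reordering-lemma}: $W$ consists of those edges preceding $e^*_1$ with size at most a cap slightly above $|e^*_1|$, so any $e \in \cH_2$ has $|e|$ strictly above that cap, hence above $\max_{f \in W}|f|$.

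For the second application I take the spanning subhypergraph $\cH' \coloneqq \cH^{\preceq_1 e^*_1}$ (with vertex set $V(\cH)$) and apply Lemma~\ref{reordering-lemma} with $\tau_2 \coloneqq 1 - 1/2000$ and $K_2$ chosen so that $1 - \tau_2 - 7\tau_2^{1/4}/K_2 > 0$. In conclusion~\ref{reordering-good} of the second application, every edge of $\cH'$ has forward degree at most $(1-\tau_2)n = n/2000$ in the new ordering $\ordering_2$ of $\cH'$. I then form $\ordering$ by placing $\cH'$ first in the order $\ordering_2$ and $\cH_1$ afterwards in the order inherited from $\ordering_1$. Property~\ref{reordering-goodness2} is preserved since the forward neighbourhood in $\cH$ of each edge of $\cH_1$ is unchanged, and for $e \in \cH_2 \subseteq \cH'$ we obtain $\fwddeg_\cH(e) = \fwddeg_{\cH', \ordering_2}(e) \leq n/2000$, giving~\ref{reordering-goodness3}.

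The main obstacle is conclusion~\ref{reordering-volume} of the second application: with $\tau_2$ so close to $1$, the volume bound for the resulting $W_2$ is essentially vacuous, so $W_2$ cannot be absorbed into $W$. The way out is that~\ref{W2-volume} already forces $\vol_\cH(\cH_2) \leq 4\sigma^{1/5}$, while every $e \in \cH_2$ satisfies $|e| > (1+4\sigma^{1/4})|e^*_1|$, hence $|\cH_2| = O(\sigma^{1/5}(n/|e^*_1|)^2)$. When $|e^*_1|$ is not too small this yields $|\cH_2| \leq n/2000$, so placing $\cH_2$ in any order at the very start of $\ordering$ makes~\ref{reordering-goodness3} trivial. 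When $|e^*_1|$ is small, instead $W$ is a dense subfamily of similarly-sized short edges covering nearly all vertex-pairs of $\cH$; applying the second application directly to $\cH_2$ (rather than to $\cH'$) must then return conclusion~\ref{reordering-good}, since conclusion~\ref{reordering-volume} would produce a subset whose volume combined with that of $W$ exceeds the trivial cap $1$. Tracking the numerical hierarchy $1/r \ll \sigma \ll 1$ through both applications to match the exact thresholds in~\ref{W2-max-size}--\ref{reordering-goodness3} is the most delicate step, but all required inequalities can be arranged by taking $\sigma$ small enough in terms of absolute constants.
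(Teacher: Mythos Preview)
Your proposal arrives at a correct proof but takes a detour the paper avoids. The paper applies Lemma~\ref{reordering-lemma} the second time directly to $\cH_2$ (not to $\cH' = W \cup \cH_2$), with $\tau = 1 - 1/2000$ and $K = 2000^2$, and then rules out conclusion~\ref{reordering-volume} in a single stroke: any $W'$ produced there would satisfy $\vol_\cH(W') \geq c$ for an absolute constant $c > 0$, but since $W' \subseteq \cH_2$ is disjoint from $W$ and $\vol_\cH(W) \geq 1 - 4\sigma^{1/5}$, this forces $c \leq 4\sigma^{1/5}$, which fails because $\sigma \ll 1$. This is exactly the argument you give in your ``small $|e^*_1|$'' case, but it works uniformly --- no information about $|e^*_1|$ is needed --- so your case split and the preliminary attempt via $\cH'$ are both unnecessary.

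Two smaller points. First, your verification of~\ref{d3-bound} appeals to the explicit description of $W$ in the proof \emph{sketch} of Lemma~\ref{reordering-lemma}; the paper instead uses~\ref{ordering-by-size} (which is part of the lemma's stated output): letting $f^*$ be the first edge of $W$ in $\ordering_1$, all edges preceding $f^*$ have size at least $|f^*| = \max_{f\in W}|f|$, and one may absorb any edges between $f^*$ and $e^*$ into $W$ without harming~\ref{W2-max-size} or~\ref{W2-volume}. Second, once you apply the second reordering to $\cH_2$ rather than $\cH'$, there is no need to worry about where $W$ sits in the final ordering: one simply uses $\ordering_2$ on $\cH_2$, followed by $\ordering_1$ on $W \cup \cH_1$, and the forward degrees of edges in $\cH_1$ are unchanged.
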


  \begin{proof}
    We apply Lemma~\ref{reordering-lemma} twice and combine the resulting orderings to obtain the desired ordering $\ordering$ of $\cH$.  
    First, we apply Lemma~\ref{reordering-lemma} to $\cH$ with $2\sigma$ and $1$ playing the roles of $\tau$ and $K$, respectively, to obtain an ordering $\ordering_1$.  
    If $\ordering_1$ satisfies~\ref{reordering-good}, then $\chi'(\cH) < (1 - \sigma)n$, so we assume~\ref{reordering-volume} holds.  Let $W$ be the set $W$ obtained from~\ref{reordering-volume}, let $e^*$ be the last edge of $W$ in $\ordering_1$, and let $\cH_1 \coloneqq \cH\setminus \cH^{\ordering_1 e^*}$.  Let $f^*$ be the edge of $W$ which comes first in $\ordering_1$, and let $\cH_2 \coloneqq \cH \setminus \{e \in \cH : f^*\ordering_1 e\}$. By the choices of $\tau$ and $K$, and since $\sigma \ll 1$, we have $\max_{e\in W}|e| \leq (1 + 4\sigma^{1/4})|e^*|$ and $\vol_\cH(W) \geq (1 - \sigma^{1/5})^3 \geq 1 - 4\sigma^{1/5}$, 
    so $W$ satisfies~\ref{W2-max-size} and~\ref{W2-volume}, as desired, and by~\ref{ordering-by-size} of~\ref{reordering-volume}, we may assume without loss of generality that every $e\in \cH$ satisfying $f^* \ordering_1 e \ordering_1 e^*$ is in $W$, so $\cH$ is partitioned into $\cH_1$, $W$, and $\cH_2$, as required, and $\cH_2$ satisfies~\ref{d3-bound}, as desired.
    
    Now we reapply Lemma~\ref{reordering-lemma} to $\cH_2$ and show that the resulting ordering satisfies~\ref{reordering-goodness2} and~\ref{reordering-goodness3}, as follows.  Apply Lemma~\ref{reordering-lemma} with $\cH_2$, $1 - 1/2000$, and $2000^2$ playing the roles of $\cH$, $\tau$, and $K$, respectively, to obtain an ordering $\ordering_2$.  Since $W\cap \cH_2 = \varnothing$, we have $\vol_\cH(W) + \vol_\cH(\cH_2) \leq 1$.  Thus, $\ordering_2$ satisfies~\ref{reordering-good}, because~\ref{reordering-volume} would imply there is a set $W'\subseteq \cH_2$ disjoint from $W$ with $\vol_{\cH}(W') >  4\sigma^{1/5}$, contradicting~\ref{W2-volume}.  
    Combine $\ordering_1$ and $\ordering_2$ to obtain an ordering $\ordering$ of $\cH$ where 
    \begin{itemize}
    \item if $f\in\cH_1 \cup W$, then $e\ordering f$ for every $e\in \cH^{\ordering_1 f}$, and
    \item if $f\in\cH_2$, then $e \ordering f$ for every $e\in \cH_2^{\ordering_2 f}$.
    \end{itemize}
    Since $\cH_1$ and $\ordering_1$ satisfy~\ref{ordering-goodness} of~\ref{reordering-volume} with $\tau = 2\sigma$,~\ref{reordering-goodness2} holds, and since $\cH_2$ and $\ordering_2$ satisfy~\ref{reordering-good} with $\tau = 1 - 1/2000$,~\ref{reordering-goodness3} holds, as desired.
  \end{proof}

To prove Theorem~\ref{large-edge-thm}, we apply Lemma~\ref{good-ordering-lemma} and consider two cases depending on the size of the edges in $W$.  In either case, by~\ref{reordering-goodness2}, it suffices to show that $\chi'(\cH_2 \cup W) \leq n$.  When the edges in $W$ have size close to or larger than $\sqrt n$, we apply the following lemma to colour $\cH_2 \cup W$.  As its proof covers the case when $\cH$ is close to a projective plane, the argument is quite delicate and we refer the reader to~\cite[Lemma~5.1]{KKKMO2021} for a proof.

\begin{lemma}\label{extremal-case-lemma}
Let $0 < 1/n_0 \ll \delta \ll 1$, and let $n \geq n_0$. If $\cH$ is an $n$-vertex linear hypergraph where every $e\in \cH$ satisfies $|e| \geq (1 - \delta)\sqrt n$, then $\chi'(\cH) \leq n$.\hfill\qed
\end{lemma}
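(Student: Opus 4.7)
The plan is to first derive the basic structural bounds forced by the edge-size hypothesis, and then split into two regimes depending on how close $\cH$ is to a projective plane.

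Linearity together with $|e| \geq (1-\delta)\sqrt n$ gives, for any vertex $v$, the bound $d_\cH(v) \leq (n-1)/((1-\delta)\sqrt n - 1) \leq (1 + O(\delta))\sqrt n$; double counting pairs of vertices in edges gives $|\cH| \leq (1 + O(\delta))n$. Consequently the line graph $L \coloneqq L(\cH)$ has at most $(1 + O(\delta))n$ vertices, and every $e \in \cH$ satisfies $d_L(e) = \sum_{v \in e}(d_\cH(v)-1) \leq (1 + O(\delta))n$. We aim to show $\chi(L) \leq n$.

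Fix $\eta$ with $\delta \ll \eta \ll 1$. Call $\cH$ \emph{near-projective} if it has at least $(1-\eta)n$ edges of size in $[(1-\eta)\sqrt n, (1+\eta)\sqrt n]$. In the \emph{dispersed} regime (not near-projective), a large portion of the edges have size noticeably different from $\sqrt n$. My plan here is to apply Theorem~\ref{local-sparsity-lemma} to $L$. The key estimate is that for each $e \in \cH$ the neighbourhood $N_L(e)$ spans at most $(1-\zeta)\binom{\Delta(L)}{2}$ edges for some $\zeta = \zeta(\eta) > 0$. This follows because two edges $f, f' \in N_L(e)$ meeting $e$ at different vertices can intersect in at most one vertex outside $e$ (by linearity), and a double count shows that if either $|e|$ is noticeably smaller than $\sqrt n$ (so the two sums $\sum_{v \in e}d_\cH(v)$ and $\Delta(L)$ part ways) or many edges have size noticeably different from $\sqrt n$, a constant fraction of pairs in $N_L(e)$ must be non-adjacent. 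Theorem~\ref{local-sparsity-lemma} then yields $\chi(L) \leq (1 - \zeta/e^6)\Delta(L) < n$, since $\Delta(L) \leq (1 + O(\delta))n$ and $\delta \ll \eta$.

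In the near-projective regime I would use a stability version of the de Bruijn--Erd\H os theorem to show that, up to $O(\eta n)$ edges, $\cH$ agrees with (or embeds into) a projective plane $\cH_0$ of order $q \approx \sqrt n$, where $q^2 + q + 1 \leq n$. The plane $\cH_0$ has a canonical proper edge-colouring with exactly $|V(\cH_0)| \leq n$ colours, given by the parallel classes arising from the point--line duality. The remaining $O(\eta n)$ edges of $\cH \setminus \cH_0$ are then added in using an absorption/extension argument: one uses the unused positions at the $n - |V(\cH_0)|$ spare vertices, or performs Vizing-type colour swaps along short alternating paths, to slot each extra edge into an existing colour class without creating conflicts. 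The number of extra edges is controlled by the stability bound, and each extension uses only a bounded portion of the slack, so the total colour count remains at most $n$.

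The main obstacle is the near-projective regime, since the extremal example is tight: adding even a single edge to a projective plane forces a complete recolouring. The stability version of de Bruijn--Erd\H os must be made sufficiently quantitative (with explicit dependence on $\eta$), and one must handle values of $n$ for which no projective plane of order $\lfloor \sqrt n\rfloor$ exists by working with the closest available near-design (possibly a degenerate plane). The extension step also requires care at vertices where $\cH$ and $\cH_0$ differ significantly, where alternating-path swaps may cascade and must be shown to terminate without consuming more than the available $n - |V(\cH_0)|$ spare colours.
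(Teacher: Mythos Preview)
Your proposal contains a concrete error and several unresolved gaps.

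First, the claim that a projective plane $\cH_0$ ``has a canonical proper edge-colouring with exactly $|V(\cH_0)|$ colours, given by the parallel classes arising from the point--line duality'' is wrong. A projective plane is an \emph{intersecting} hypergraph: any two lines meet, so every matching has size one and $\chi'(\cH_0)=|\cH_0|=|V(\cH_0)|$ simply by assigning each edge its own colour. There are no parallel classes (those live in affine planes). This matters because it means the plane colouring has zero slack: you cannot ``slot each extra edge into an existing colour class'' at all, since every class is a singleton. Your absorption/extension step therefore has no mechanism to work with. You partly acknowledge this tightness at the end, but the proposal offers no actual way around it.

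Second, in the dispersed regime you need the local sparsity hypothesis of Theorem~\ref{local-sparsity-lemma} to hold at \emph{every} vertex of $L(\cH)$, not just on average. Two edges $f,f'$ meeting $e$ at distinct vertices share up to $(|f|-1)(|f'|-1)+O(\sqrt n)$ common neighbours in $L$, which is $(1-o(1))\Delta(L)$ when $|f|,|f'|\approx\sqrt n$; so an edge $e$ whose local neighbourhood happens to consist of edges of size close to $\sqrt n$ will fail the sparsity condition even if globally many edges are large. Compare Lemma~\ref{sparsity-corollary}, where the bound $r\le(1-\zeta)\sqrt n$ on \emph{all} edge sizes is exactly what makes the computation go through. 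Your ``double count'' sentence does not bridge this gap.

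The paper itself does not prove Lemma~\ref{extremal-case-lemma}; it notes that ``the argument is quite delicate'' and defers to \cite[Lemma~5.1]{KKKMO2021}. The actual proof there does not proceed via a stability form of de~Bruijn--Erd\H os plus absorption into a nearby plane; it handles the near-extremal structure directly, and in particular must work for all $n$, including those for which no projective plane of the right order exists.
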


When the edges in $W$ have size bounded away from $\sqrt n$, we apply the following lemma~\cite[Corollary~6.5]{KKKMO2021}, which we prove using Theorem~\ref{local-sparsity-lemma}, to colour $W$. 

\begin{lemma}\label{sparsity-corollary}
  %For all $c > 0$, there exists $\eps, c_0 > 0$ such that the following holds.
  Let $0 < 1/n_0, 1/r \ll \alpha \ll \zeta < 1$, let $n \geq n_0$, and suppose $r \leq (1 - \zeta)\sqrt n$.  If $\cH$ is an $n$-vertex linear hypergraph such that every $e\in\cH$ satisfies $|e| \in [r, (1 + \alpha)r]$, then $\chi'(\cH) \leq (1 - \zeta/500)n$.
\end{lemma}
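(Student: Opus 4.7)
The plan is to bound $\chi'(\cH) = \chi(L)$, where $L \coloneqq L(\cH)$, by applying Theorem~\ref{local-sparsity-lemma} to $L$.  The guiding intuition is that since $\cH$ is linear with edges of size within a factor $1+\alpha$ of $r$, for every $e \in V(L)$ the neighbourhood $N_L(e)$ decomposes into $|e|$ vertex-disjoint cliques---one for each $v\in e$, consisting of the other edges of $\cH$ through $v$---plus relatively few cross edges, so that $L[N_L(e)]$ spans far fewer edges than the trivial $\binom{\Delta(L)}{2}$.

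The first step is to bound the maximum degree of $L$.  Linearity together with the lower bound $r$ on edge sizes gives $d_\cH(v) \leq (n-1)/(r-1)$ for every $v \in V(\cH)$, and hence every $e\in\cH$ satisfies
\[
d_L(e) \;=\; \sum_{v \in e}\bigl(d_\cH(v) - 1\bigr) \;\leq\; |e| \cdot \frac{n-r}{r-1} \;\leq\; (1+\eta)\,n,
\]
where $\eta \coloneqq \alpha + O(1/r) \ll \zeta$ by the hierarchy.  Setting $\Delta \coloneqq \lceil (1+\eta) n\rceil$, we obtain $\Delta(L) \leq \Delta$.

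The second step is to verify local sparsity of $L$.  For each $e \in \cH$ and $v \in e$, let $\cH_v \coloneqq \{f \in N_L(e) : v \in f\}$; by linearity the sets $\{\cH_v : v \in e\}$ partition $N_L(e)$ and satisfy $|\cH_v| \leq (n-r)/(r-1)$.  The \emph{vertical} edges of $L[N_L(e)]$---those joining two neighbours sharing a common vertex of $e$---form a clique on each $\cH_v$, and thus contribute in total at most $|e|\binom{(n-r)/(r-1)}{2} \leq \bigl(1 + O(\alpha + 1/r)\bigr) n^2/(2r)$.  The \emph{horizontal} edges---those joining $f \in \cH_u$ to $g \in \cH_v$ for distinct $u, v \in e$---are each, by linearity, determined by the pair $\{u, v\} \subseteq e$ together with the unique shared vertex $w \in (f \cap g) \setminus e$, so they number at most $\binom{|e|}{2}(n-|e|) \leq (1 + O(\alpha))\, r^2 n/2$.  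Using $r^2 \leq (1-\zeta)^2 n$, the total is at most $\bigl((1-\zeta)^2 + O(\alpha + 1/r)\bigr) n^2/2$, which is at most $(1-\zeta)\binom{\Delta}{2}$: indeed $(1-\zeta)^2 \leq 1-\zeta$ for $\zeta \in [0, 1]$, and the error terms are absorbed by the slack $\zeta(1-\zeta) > 0$ under the hierarchy $1/r \ll \alpha \ll \zeta$.

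Finally, Theorem~\ref{local-sparsity-lemma} applied to $L$ with parameters $\Delta$ and $\zeta$ yields $\chi(L) \leq (1 - \zeta/e^6)\Delta \leq (1 + \eta - \zeta/e^6)\,n$, and the desired bound $\chi(L) \leq (1 - \zeta/500)n$ then follows once $\eta \leq \zeta(1/e^6 - 1/500)$, which holds because $\alpha, 1/r \ll \zeta$ and, numerically, $e^6 < 500$.  I expect the main obstacle to be precisely this numerical balancing: the savings $\zeta(1-\zeta)$ that local sparsity provides (relative to the trivial constant $1$) must survive both the $1/e^6$ loss incurred by Theorem~\ref{local-sparsity-lemma} and the $(1+\eta)$-inflation of $\Delta$ relative to $n$, and it is only because $1/e^6$ is (narrowly) larger than $1/500$ that the constant $1/500$ in the conclusion is attainable along this route.
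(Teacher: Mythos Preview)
Your proof is correct and follows essentially the same route as the paper: bound $\Delta(L)$ by roughly $n$, verify the local sparsity hypothesis of Theorem~\ref{local-sparsity-lemma} for the line graph $L$, and conclude.  The only cosmetic difference is in how local sparsity is checked: the paper bounds $|N_L(e)\cap N_L(f)| \leq n/(r-1) + (1+\alpha)^2 r^2 \leq (1 - 5\zeta/6)n$ for adjacent $e,f$ and deduces $e(L[N(e)]) \leq (1 - 5\zeta/6)\binom{\Delta}{2}$, whereas you count the edges of $L[N_L(e)]$ directly via the vertical/horizontal decomposition; both arrive at the same conclusion.
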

\begin{proof}
  Let $\Delta \coloneqq (1 + \alpha)r(n - r)/(r - 1)$, and let $L\coloneqq L(\cH)$.  For every edge $e\in\cH$, there are at most $(1 + \alpha)r(n - r)$ pairs of vertices $\{u, v\}$ of $\cH$ where $u\notin e$ and $v\in e$.  Thus, since $\cH$ is linear and every edge has size at least $r$, we have $\Delta(L) \leq \Delta$.  Similarly, if $e, f\in\cH$ share a vertex, then $|N_L(e)\cap N_L(f)| \leq n/(r - 1) + (1 + \alpha)^2r^2 \leq (1 - 5\zeta / 6)n$.  Thus, every $v\in V(L)$ satisfies $e(L[N(v)]) \leq \Delta (1 - 5\zeta / 6)n / 2 \leq (1 - 5\zeta / 6)\binom{\Delta}{2}$.  Therefore by Theorem~\ref{local-sparsity-lemma}, $\chi'(\cH) = \chi(L) \leq (1 - 5\zeta / (6e^6))\Delta \leq (1 - \zeta / 500)n$, as desired.
\end{proof}

Now we can combine Lemmas~\ref{good-ordering-lemma}--\ref{sparsity-corollary} to prove Theorem~\ref{large-edge-thm}.

\begin{proof}[Proof of Theorem~\ref{large-edge-thm}]
  We may assume without loss of generality that $\delta \ll 1$, and we let $0 < 1 / n_0 \ll 1/r \ll \sigma \ll \delta$.  We assume $\chi'(\cH) > (1 - \sigma)n$, or else there is nothing to prove.  
  
  Apply Lemma~\ref{good-ordering-lemma} to obtain a partition of $\cH$ into $\cH_1$, $W$, and $\cH_2$ satisfying~\ref{W2-max-size}--\ref{d3-bound} and an ordering $\ordering$ of the edges of $\cH$ satisfying~\ref{reordering-goodness2} and~\ref{reordering-goodness3}, and let $r' \coloneqq \min_{e\in W}|e|$.  We assume
  \begin{equation}\label{eqn:d3-upper-bound}
    r' \leq \sqrt{n / (1 - 4\sigma)},
  \end{equation}
   as otherwise the fact that $\vol_\cH(\cH_2\cup W) \leq 1$ and~\ref{d3-bound} together would imply $e(\cH_2\cup W) \leq (1 - 2\sigma)n$\COMMENT{We have $1 \geq \vol_{\cH}(\cH_2\cup W) \geq \left.e(\cH_2 \cup W)\binom{r'}{2}\middle/\binom{n}{2}\right.$, and if $r' \geq \sqrt{n / (1 - 4\sigma)}$, then $\left.\binom{r'}{2}\middle/\binom{n}{2}\right. \geq (1 - 1/\sqrt n)/((1 - 4\sigma)n)$.}.  Together with~\ref{reordering-goodness2}, this fact would imply that every $e \in \cH$ satisfies $\fwddeg_\cH(e) \leq (1 - 2\sigma)n$, in which case $\chi'(\cH) \leq (1 - \sigma)n$, a contradiction.
  
  We now consider two cases: $r' < (1 - \delta)\sqrt n$, and $r' \geq (1 - \delta)\sqrt n$.  In the former case, we derive a contradiction by showing $\chi'(\cH) \leq (1 - \sigma)n$, and in the latter case, we prove that $\chi'(\cH) \leq n$ and $|\{e\in\cH : |e| = (1\pm\delta)\sqrt n\}| \geq (1 - \delta)n$.
  
  \noindent {\bf Case 1:} $r' < (1 - \delta)\sqrt n$.
  
  Let $\zeta \coloneqq 1 - r' / \sqrt n$.  Since $r' < (1 - \delta)\sqrt n$, we have $\zeta > \delta$.  
  By~\ref{W2-max-size} and Lemma~\ref{sparsity-corollary} with $r'$ and $4\sigma^{1/4}$ playing the roles of $r$ and $\alpha$, respectively, we have $\chi'(W) \leq (1 - \zeta / 500)n$.
  
  Now we claim that $\chi'(\cH_2) \leq \zeta n / 1000$.  To that end, let $k \coloneqq e(\cH_2)$.  If $k \leq \zeta n/ 1000$, then we can simply assign each edge of  $\cH_2$ a distinct colour and the claim holds, so we assume $k > \zeta n/ 1000$.  Since $\zeta > \delta$, we have $k > \zeta n/ 1000 > 2\delta^2 n$.  By~\ref{d3-bound}, every edge of $\cH_2$ has size at least $r'$, so we have $\vol_{\cH}(\cH_2) \geq k(r' - 1)^2 / n^2$.  On the other hand, by~\ref{W2-volume}, and since $\cH_2 \cap W = \varnothing$, we have $\vol_{\cH}(\cH_2) \leq 4\sigma^{1/5} \leq \delta^3$.  Thus, $2\delta^2 n < k \leq \delta^3 n^2 / (r' - 1)^2$, so $r' < \delta^{1/4} \sqrt{n}$.
  Therefore, $\zeta > 1000/1001$.  Now by~\ref{reordering-goodness3}, we can properly colour $\cH_2$ greedily in the ordering provided by $\ordering$ using at most $n/2000 + 1 \leq \zeta n/ 1000$ colours, as claimed.

  Since $\chi'(\cH_2) \leq \zeta n/ 1000$ and $\chi'(W) \leq (1 - \zeta / 500)n$, there is a proper edge-colouring of $\cH_2\cup W$ using at most $(1 - \zeta / 1000)n \leq (1 - \sigma)n$ colours\COMMENT{since $\sigma \ll \delta < \zeta$}, and by~\ref{reordering-goodness2}, we can extend such a colouring to $\cH_1$ greedily without using any additional colours, contradicting that $\chi'(\cH) > (1 - \sigma)n$.

  \noindent {\bf Case 2:} $r' \geq (1 - \delta)\sqrt n$.
  
  By~\ref{d3-bound} and Lemma~\ref{extremal-case-lemma}, there is a proper edge-colouring of $\cH_2\cup W$ using at most $n$ colours, and as before, by~\ref{reordering-goodness2}, we can extend such a colouring to $\cH_1$ greedily without using any additional colours.  Hence, $\chi'(\cH) \leq n$, as desired.

  Since $r' \geq (1 - \delta)\sqrt n$, by~\ref{W2-max-size} and~\eqref{eqn:d3-upper-bound}, the edges in $W$ have size $(1\pm \delta)\sqrt n$.  In fact, the edges in $W$ have size at most $(1 + \delta^2)\sqrt n$, so by~\ref{W2-volume}, since $\vol_{\cH}(W) \geq 1 - \delta^2$, we have $e(W) \geq \vol_\cH(W)(n - 1) / (1 + \delta^2)^2 \geq (1 - \delta)n$\COMMENT{since $\vol_\cH(W) \leq e(W)\left.\binom{(1 + \delta^2)\sqrt n}{2}\middle / \binom{n}{2}\right. \leq e(W) (1 + \delta^2)^2 / (n - 1)$}, as desired.
\end{proof}

We conclude by briefly discussing how to combine the arguments of Theorems~\ref{thm:small-efl} and~\ref{large-edge-thm} to obtain Theorem~\ref{thm:efl}.  First, we merge the hierarchy used in the proof of Theorem~\ref{large-edge-thm} with~\eqref{eqn:small-efl-hierarchy} and also introduce constants $r_1$, $r_0$, $\beta$, and $\rho$ into the hierarchy, letting
\begin{equation*}
    1 / n_0 \ll 1 / r_0 \ll \xi \ll 1 / r_1 \ll \beta \ll \kappa \ll \gamma \ll \eps \ll \rho \ll \sigma \ll \delta \ll 1.
\end{equation*}
As in the proof of Theorem~\ref{thm:asym-list-bf}, we decompose $\cH$ into three spanning subhypergraphs $\cH_{\mathrm{sml}} \coloneqq \{ e \in \cH \: : \: |e| \leq r_1 \}$, $\cH_{\mathrm{med}} \coloneqq \{ e \in \cH \: : \: r_1 < |e| \leq r_0 \}$, and $\cH_{\mathrm{lrg}} \coloneqq \{ e \in \cH \: : \: |e| > r_0 \}$.  We apply a stronger version of Theorem~\ref{large-edge-thm} to $\cH_{\mathrm{lrg}}\cup\cH_{\mathrm{med}}$ in which 
\begin{enumerate}[(a)]
\item\label{extra:bounded-color-classes} every colour class either covers at most $\beta n$ vertices or consists of a single edge, and
\item\label{extra:medium-edges} at most $\gamma n$ colours are used to colour $\cH_{\mathrm{med}}$. 
\end{enumerate}
% We obtain~\ref{extra:bounded-color-classes} with only minor modifications to the proof of Theorem~\ref{large-edge-thm}, and we obtain~\ref{extra:medium-edges} as in Section~\ref{section:asym-list-EFL}.
This strengthening of Theorem~\ref{large-edge-thm} enables us to modify the proof of Lemma~\ref{lem:graph-reduction} to find a colouring of some $\cH'$ satisfying $\cH_{\mathrm{sml}}\setminus R \subseteq \cH' \subseteq \cH_{\mathrm{sml}}$ compatible with the colouring of $\cH_{\mathrm{lrg}}\cup\cH_{\mathrm{med}}$.  As in the proof of Theorem~\ref{thm:small-efl}, we can ensure that $\cH\setminus \cH'$ is a graph of maximum degree at most $n - \lceil n / 2\rceil - \lceil \gamma^{1/3}n\rceil$; however, Vizing's theorem does not guarantee a colouring of $\cH\setminus \cH'$ that avoids conflicts with $\cH_{\mathrm{lrg}}\cup\cH_{\mathrm{med}}$.  To that end, we modify Lemma~\ref{lem:graph-reduction} further to colour $\cH'$ with $k \coloneqq \lceil (1 - \rho)n \rceil$ colours.  If $\cH_{\mathrm{lrg}}\cup\cH_{\mathrm{med}}$ can be coloured with at most $(1 - \sigma)n$ colours, then we can colour $\cH\setminus \cH'$ with $n - k$ colours that are not used on  $\cH_{\mathrm{lrg}}\cup\cH_{\mathrm{med}}$ (either using Vizing's theorem or the more involved argument discussed at the end of Section~\ref{efl:small}).  If $\cH_{\mathrm{lrg}}\cup\cH_{\mathrm{med}}$ requires more than $(1 - \sigma) n$ colours, then we need a different approach, using the fact that in this case we know that $|\{e \in \cH : |e| = (1 \pm \delta)\sqrt n\}| \geq (1 - \delta)n$, i.e.~that $\cH$ is close to a projective plane.  See~\cite{KKKMO2021} for the full proof.

\section*{Acknowledgements}

We are grateful to Jeff Kahn for helpful comments on a preliminary version of the survey and to Dhruv Mubayi for informing us that the conjecture of Frieze and Mubayi~\cite{FM13} listed in a previous version of the survey was disproved in~\cite{CM2017}.  We also thank an anonymous reviewer for their helpful comments.

\bibliographystyle{amsabbrv}
\bibliography{ref}

\providecommand{\bysame}{\leavevmode\hbox to3em{\hrulefill}\thinspace}
\providecommand{\MR}{\relax\ifhmode\unskip\space\fi MR }
% \MRhref is called by the amsart/book/proc definition of \MR.
\providecommand{\MRhref}[2]{%
  \href{http://www.ams.org/mathscinet-getitem?mr=#1}{#2}
}
\providecommand{\href}[2]{#2}
\begin{thebibliography}{100}

\bibitem{AIS19}
D.~Achlioptas, F.~Iliopoulos, and A.~Sinclair, \emph{Beyond the {L}ovász local
  lemma: Point to set correlations and their algorithmic applications}, 2019
  IEEE 60th Annual Symposium on Foundations of Computer Science (FOCS), 2019,
  725--744.

\bibitem{AEKS81}
M.~Ajtai, P.~Erd{\H{o}}s, J.~Koml{\'o}s, and E.~Szemer{\'e}di, \emph{On
  {T}ur{\'a}n’s theorem for sparse graphs}, Combinatorica \textbf{1} (1981),
  313--317.

\bibitem{AKPSS82}
M.~Ajtai, J.~Koml{\'o}s, J.~Pintz, J.~Spencer, and E.~Szemer{\'e}di,
  \emph{Extremal uncrowded hypergraphs}, J. Combin. Theory Ser. A \textbf{32}
  (1982), 321--335.

\bibitem{AKS1980}
M.~Ajtai, J.~Koml{\'{o}}s, and E.~Szemer{\'{e}}di, \emph{A note on {R}amsey
  numbers}, J. Combin. Theory Ser. A \textbf{29} (1980), 354--360.

\bibitem{ajtai1981dense}
\bysame, \emph{A dense infinite {Sidon} sequence}, European J. Combin.
  \textbf{2} (1981), 1--11.

\bibitem{alon1996independence}
N.~Alon, \emph{Independence numbers of locally sparse graphs and a {R}amsey
  type problem}, Random Structures Algorithms \textbf{9} (1996), 271--278.

\bibitem{A00}
\bysame, \emph{Degrees and choice numbers}, Random Structures Algorithms
  \textbf{16} (2000), 364--368.

\bibitem{ACK21}
N.~Alon, S.~Cambie, and R.~J. Kang, \emph{Asymmetric list sizes in bipartite
  graphs}, Ann. Comb. (2021), 1--21.

\bibitem{AK97}
N.~Alon and J.~H. Kim, \emph{On the degree, size, and chromatic index of a
  uniform hypergraph}, J. Combin. Theory Ser. A. \textbf{77} (1997), 165--170.

\bibitem{AKS1997}
N.~Alon, J.-H. Kim, and J.~Spencer, \emph{Nearly perfect matchings in regular
  simple hypergraphs}, Israel J. Math. \textbf{100} (1997), 171--187.

\bibitem{AK98}
N.~Alon and M.~Krivelevich, \emph{The choice number of random bipartite
  graphs}, Ann. Comb. \textbf{2} (1998), 291--297.

\bibitem{AKS99}
N.~Alon, M.~Krivelevich, and B.~Sudakov, \emph{Coloring graphs with sparse
  neighborhoods}, J. Combin. Theory Ser. B \textbf{77} (1999), 73--82.

\bibitem{alonspencer2016}
N.~Alon and J.~H. Spencer, \emph{The {P}robabilistic {M}ethod}, John Wiley \&
  Sons, 2016.

\bibitem{AY2005}
N.~Alon and R.~Yuster, \emph{On a hypergraph matching problem}, Graphs Combin.
  \textbf{21} (2005), 377--384.

\bibitem{BB2019}
P.~Bennett and T.~Bohman, \emph{A natural barrier in random greedy hypergraph
  matching}, Combin. Probab. Comput. \textbf{28} (2019), 816--825.

\bibitem{berge1989}
C.~Berge, \emph{On the chromatic index of a linear hypergraph and the
  {C}hv\'{a}tal conjecture}, Combinatorial {M}athematics: {P}roceedings of the
  {T}hird {I}nternational {C}onference ({N}ew {Y}ork, 1985), Ann. New York
  Acad. Sci., vol. 555, New York Acad. Sci., New York, 1989, 40--44.

\bibitem{B17}
A.~Bernshteyn, \emph{The {Johansson-Molloy} theorem for {DP}-coloring}, Random
  Structures Algorithms \textbf{54} (2019), 653--664.

\bibitem{bgmm2016}
N.~Besharati, L.~Goddyn, E.~S. Mahmoodian, and M.~Mortezaeefar, \emph{On the
  chromatic number of {L}atin square graphs}, Discrete Math. \textbf{339}
  (2016), 2613--2619.

\bibitem{BK21}
T.~Bohman and P.~Keevash, \emph{Dynamic concentration of the triangle-free
  process}, Random Structures Algorithms \textbf{58} (2021), 221--293.

\bibitem{B81}
B.~Bollob\'{a}s, \emph{The independence ratio of regular graphs}, Proc. Amer.
  Math. Soc. \textbf{83} (1981), 433--436.

\bibitem{BDLP20}
M.~Bonamy, M.~Delcourt, R.~Lang, and L.~Postle, \emph{Edge-colouring graphs
  with local list sizes}, arXiv preprint arXiv:2007.14944 (2020).

\bibitem{BKNP18}
M.~Bonamy, T.~Kelly, P.~Nelson, and L.~Postle, \emph{Bounding $\chi$ by a
  fraction of {$\Delta$} for graphs without large cliques}, arXiv preprint
  arXiv:1803.01051 (2018).

\bibitem{BPP18}
M.~Bonamy, T.~Perrett, and L.~Postle, \emph{Colouring graphs with sparse
  neighbourhoods: Bounds and applications}, arXiv preprint arXiv:1810.06704
  (2018).

\bibitem{B41}
R.~L. Brooks, \emph{On colouring the nodes of a network}, Proc. Cambridge
  Philos. Soc. \textbf{37} (1941), 194--197.

\bibitem{brouwer1981size}
A.~E. Brouwer, \emph{On the size of a maximum transversal in a {S}teiner triple
  system}, Can. J. Math. \textbf{33} (1981), 1202--1204.

\bibitem{br1991}
R.~A. Brualdi and H.~J. Ryser, \emph{Combinatorial matrix theory}, Encyclopedia
  of Mathematics and its Applications, vol.~39, Cambridge University Press,
  Cambridge, 1991.

\bibitem{BJ15}
H.~Bruhn and F.~Joos, \emph{A stronger bound for the strong chromatic index},
  Combin. Probab. Comput. \textbf{27} (2018), 21--43.

\bibitem{BCHW2017}
D.~Bryant, C.~J. Colbourn, D.~Horsley, and I.~M. Wanless, \emph{Steiner triple
  systems with high chromatic index}, SIAM J. Discrete Math. \textbf{31}
  (2017), 2603--2611.

\bibitem{ck2015}
N.~J. Cavenagh and J.~Kuhl, \emph{On the chromatic index of {L}atin squares},
  Contrib. Discrete Math. \textbf{10} (2015), 22--30.

\bibitem{chang1988}
W.~I. Chang and E.~L. Lawler, \emph{Edge coloring of hypergraphs and a
  conjecture of {E}rd{\H{o}}s, {F}aber, {L}ov{\'{a}}sz}, Combinatorica
  \textbf{8} (1988), 293--295.

\bibitem{cjz2019}
G.~Chen, G.~Jing, and W.~Zang, \emph{Proof of the {G}oldberg-{S}eymour
  conjecture on edge-colorings of multigraphs}, arXiv preprint arXiv:1901.10316
  (2019).

\bibitem{cg1998}
F.~Chung and R.~Graham, \emph{Erd{\H{o}}s on graphs: {H}is legacy of unsolved
  problems}, A K Peters, Ltd., Wellesley, MA, 1998.

\bibitem{C14}
J.~Cilleruelo, \emph{Infinite {S}idon sequences}, Adv. Math. \textbf{255}
  (2014), 474--486.

\bibitem{CDGKO20}
P.~Condon, A.~Espuny~D{\'\i}az, A.~Gir{\~a}o, D.~K{\"u}hn, and D.~Osthus,
  \emph{Hamiltonicity of random subgraphs of the hypercube}, arXiv preprint
  arXiv:2007.02891 (2020).

\bibitem{CM15}
J.~Cooper and D.~Mubayi, \emph{List coloring triangle-free hypergraphs}, Random
  Structures Algorithms \textbf{47} (2015), 487--519.

\bibitem{CM16}
\bysame, \emph{Coloring sparse hypergraphs}, SIAM J. Discrete Math \textbf{30}
  (2016), 1165--1180.

\bibitem{CM2017}
\bysame, \emph{Sparse hypergraphs with low independence number}, Combinatorica
  \textbf{37} (2017), 31--40.

\bibitem{DdJdVKP20}
E.~Davies, R.~de~Joannis~de Verclos, R.~J. Kang, and F.~Pirot, \emph{Coloring
  triangle-free graphs with local list sizes}, Random Structures Algorithms
  \textbf{57} (2020), 730--744.

\bibitem{DJPR17}
E.~Davies, M.~Jenssen, W.~Perkins, and B.~Roberts, \emph{Independent sets,
  matchings, and occupancy fractions}, J. Lond. Math. Soc. \textbf{96} (2017),
  47--66.

\bibitem{DJPR18}
\bysame, \emph{On the average size of independent sets in triangle-free
  graphs}, Proc. Amer. Math. Soc \textbf{146} (2018), 111--124.

\bibitem{DKPS20alg}
E.~Davies, R.~J. Kang, F.~Pirot, and J.-S. Sereni, \emph{An algorithmic
  framework for colouring locally sparse graphs}, arXiv preprint
  arXiv:2004.07151 (2020).

\bibitem{DKPS20}
\bysame, \emph{Graph structure via local occupancy}, arXiv preprint
  arXiv:2003.14361 (2020).

\bibitem{bruijn_erdos1948}
N.~G. de~Bruijn and P.~Erd{\"o}s, \emph{On a combinatorial problem}, Indag.
  Math. \textbf{10} (1948), 421--423.

\bibitem{DLR95}
R.~A. Duke, H.~Lefmann, and V.~R{\"o}dl, \emph{On uncrowded hypergraphs},
  Random Structures Algorithms \textbf{6} (1995), 209--212.

\bibitem{edmonds1965}
J.~Edmonds, \emph{Maximum matching and a polyhedron with {$0,1$}-vertices}, J.
  Res. Nat. Bur. Standards Sect. B \textbf{69B} (1965), 125--130.

\bibitem{E65}
\bysame, \emph{Paths, trees, and flowers}, Can. J. Math. \textbf{17} (1965),
  449--467.

\bibitem{E31}
J.~Egerv\'ary, \emph{Matrixok kombinatorius tulajdons\'agair\'ol}, Matematikai
  \'es Fizikai Lapok \textbf{38} (1931), 16--28.

\bibitem{EGJ2020}
S.~Ehard, S.~Glock, and F.~Joos, \emph{Pseudorandom hypergraph matchings},
  Combin. Probab. Comput. (2020), 1--18.

\bibitem{ErdosHanani}
P.~Erd\H{o}s and H.~Hanani, \emph{On a limit theorem in combinatorial
  analysis}, Publ. Math. Debrecen \textbf{10} (1963), 10--13.

\bibitem{erdos1981}
P.~Erd\H{o}s, \emph{On the combinatorial problems which {I} would most like to
  see solved}, Combinatorica \textbf{1} (1981), 25--42.

\bibitem{EN85}
P.~Erd{\H{o}}s and J.~Ne{\v{s}}et{\v{r}}il, Irregularities of partitions
  (G.~Hal{\'a}sz and V.~T. S{\'o}s, eds.), Algorithms and Combinatorics 8,
  Springer, Berlin, Heidelberg, 1989, 162--163.

\bibitem{faber2010}
V.~Faber, \emph{The {E}rd{\H{o}}s-{F}aber-{L}ov{\'{a}}sz conjecture---the
  uniform regular case}, J. Comb. \textbf{1} (2010), 113--120.

\bibitem{F17}
\bysame, \emph{Linear hypergraph list edge coloring - generalizations of the
  {EFL} conjecture to list coloring}, arXiv preprint arXiv:1701.03774 (2017).

\bibitem{FH19}
V.~Faber and D.~G. Harris, \emph{Edge-coloring linear hypergraphs with
  medium-sized edges}, Random Structures Algorithms \textbf{55} (2019),
  153--159.

\bibitem{FJS19}
A.~Ferber, V.~Jain, and B.~Sudakov, \emph{Number of 1-factorizations of regular
  high-degree graphs}, Combinatorica \textbf{40} (2020), 315--344.

\bibitem{PGM20}
G.~{Fiz Pontiveros}, S.~Griffiths, and R.~Morris, \emph{The triangle-free
  process and the {Ramsey} number ${R}(3, k)$}, Mem. Amer. Math. Soc.
  \textbf{263} (2020).

\bibitem{FGKMT17}
K.~Ford, B.~Green, S.~Konyagin, J.~Maynard, and T.~Tao, \emph{Long gaps between
  primes}, J. Amer. Math. Soc. (2017).

\bibitem{FranklRodl}
P.~Frankl and V.~R{\"o}dl, \emph{Near perfect coverings in graphs and
  hypergraphs}, European J. Combin. \textbf{6} (1985), 317--326.

\bibitem{FL92}
A.~Frieze and T.~{\L}uczak, \emph{On the independence and chromatic numbers of
  random regular graphs}, J. Combin. Theory Ser. B \textbf{54} (1992),
  123--132.

\bibitem{FM08}
A.~Frieze and D.~Mubayi, \emph{On the chromatic number of simple triangle-free
  triple systems}, Electron. J. Combin. \textbf{15} (2008), R121.

\bibitem{FM13}
\bysame, \emph{Coloring simple hypergraphs}, J. Combin. Theory Ser. B
  \textbf{103} (2013), 767--794.

\bibitem{furedi1986}
Z.~F{\"{u}}redi, \emph{The chromatic index of simple hypergraphs}, Graphs
  Combin. \textbf{2} (1986), 89--92.

\bibitem{furedi1988}
\bysame, \emph{Matchings and covers in hypergraphs}, Graphs Combin. \textbf{4}
  (1988), 115--206.

\bibitem{FKS93}
Z.~F{\"u}redi, J.~Kahn, and P.~D. Seymour, \emph{On the fractional matching
  polytope of a hypergraph}, Combinatorica \textbf{13} (1993), 167--180.

\bibitem{GJKKO21}
S.~Glock, F.~Joos, J.~Kim, D.~K{\"u}hn, and D.~Osthus, \emph{Resolution of the
  {O}berwolfach problem}, J. Eur. Math. Soc., to appear.

\bibitem{GKLO16}
S.~Glock, D.~K{\"u}hn, A.~Lo, and D.~Osthus, \emph{The existence of designs via
  iterative absorption: Hypergraph {$F$}-designs for arbitrary {$F$}}, Mem.
  Amer. Math. Soc., to appear.

\bibitem{GKMO21}
S.~Glock, D.~K{\"u}hn, R.~Montgomery, and D.~Osthus, \emph{Decompositions into
  isomorphic rainbow spanning trees}, J. Combin. Theory Ser. B \textbf{146}
  (2021), 439--484.

\bibitem{GKO2016}
S.~Glock, D.~K{\"u}hn, and D.~Osthus, \emph{Optimal path and cycle
  decompositions of dense quasirandom graphs}, J. Combin. Theory Ser. B
  \textbf{118} (2016), 88--108.

\bibitem{goldberg1973}
M.~K. Goldberg, \emph{On multigraphs of almost maximal chromatic class},
  Diskret. Analiz \textbf{23} (1973), 7.

\bibitem{grable1996nearly}
D.~A. Grable, \emph{Nearly-perfect hypergraph packing is in {NC}}, Inf.
  Process. Lett. \textbf{60} (1996), 295--299.

\bibitem{grable1999more}
\bysame, \emph{More-than-nearly-perfect packings and partial designs},
  Combinatorica \textbf{19} (1999), 221--239.

\bibitem{gp1972}
R.~L. Graham and H.~O. Pollak, \emph{On embedding graphs in squashed cubes},
  Graph theory and applications ({P}roc. {C}onf., {W}estern {M}ichigan {U}niv.,
  {K}alamazoo, {M}ich., 1972; dedicated to the memory of {J}. {W}. {T}.
  {Y}oungs), 1972, 99--110. Lecture Notes in Math., Vol. 303.

\bibitem{HT04}
L.~Haddad and C.~Tardif, \emph{A clone-theoretic formulation of the
  {Erd{\H{o}}s-Faber-Lov{\'{a}}sz} conjecture}, Discuss. Math. Graph Theory
  \textbf{24} (2004), 545--549.

\bibitem{HJ97}
R.~H{\"a}ggkvist and J.~Janssen, \emph{New bounds on the list-chromatic index
  of the complete graph and other simple graphs}, Combin. Probab. Comput.
  \textbf{6} (1997), 295--313.

\bibitem{H35}
P.~Hall, \emph{On representatives of subsets}, J. Lond. Math. Soc. \textbf{1}
  (1935), 26--30.

\bibitem{H81}
I.~Holyer, \emph{The {NP}-completeness of edge-coloring}, SIAM J. Comput.
  \textbf{10} (1981), 718--720.

\bibitem{HS2012}
H.~Huang and B.~Sudakov, \emph{A counterexample to the {A}lon-{S}aks-{S}eymour
  conjecture and related problems}, Combinatorica \textbf{32} (2012), 205--219.

\bibitem{HdVK20}
E.~Hurley, R.~de~Joannis~de Verclos, and R.~J. Kang, \emph{An improved
  procedure for colouring graphs of bounded local density}, arXiv preprint
  arXiv:2007.07874 (2020).

\bibitem{I19}
F.~Iliopoulos, \emph{Improved bounds for coloring locally sparse hypergraphs},
  arXiv preprint arXiv:2004.02066 (2021).

\bibitem{JN20}
O.~Janzer and Z.~L. Nagy, \emph{Coloring linear hypergraphs: the
  {Erd{\H{o}}s--Faber--Lov{\'a}sz conjecture and the Combinatorial
  Nullstellensatz}}, Des. Codes Cryptogr. (2021), 1--11.

\bibitem{JT95}
T.~R. Jensen and B.~Toft, \emph{Graph coloring problems}, Wiley-Interscience
  Series in Discrete Mathematics and Optimization, John Wiley \& Sons, Inc.,
  New York, 1995, A Wiley-Interscience Publication.

\bibitem{johansson1996}
A.~Johansson, \emph{Asymptotic choice number for triangle free graphs}, Tech.
  report, DIMACS technical report, 1996.

\bibitem{J96-Kr}
\bysame, \emph{The choice number of sparse graphs}, Unpublished Manuscript
  (1996).

\bibitem{kahn1992coloring}
J.~Kahn, \emph{Coloring nearly-disjoint hypergraphs with {$n+o(n)$} colors}, J.
  Combin. Theory Ser. A \textbf{59} (1992), 31--39.

\bibitem{kahn1991}
\bysame, \emph{Recent results on some not-so-recent hypergraph matching and
  covering problems}, Extremal problems for finite sets ({V}isegr\'{a}d, 1991),
  Bolyai Soc. Math. Stud., vol.~3, J\'{a}nos Bolyai Math. Soc., Budapest, 1994,
  305--353.

\bibitem{kahn1995asymptotics}
\bysame, \emph{Asymptotics of hypergraph matching, covering and coloring
  problems}, Proceedings of the International Congress of Mathematicians,
  Springer, 1995, 1353--1362.

\bibitem{kahn1996asymptotically}
\bysame, \emph{Asymptotically good list-colorings}, J. Combin. Theory Ser. A
  \textbf{73} (1996), 1--59.

\bibitem{kahn1996multi}
\bysame, \emph{Asymptotics of the chromatic index for multigraphs}, J. Combin.
  Theory Ser. B \textbf{68} (1996), 233--254.

\bibitem{kahn1996linear}
\bysame, \emph{A linear programming perspective on the
  {F}rankl{-}{R}{\"o}dl{-}{P}ippenger theorem}, Random Structures Algorithms
  \textbf{8} (1996), 149--157.

\bibitem{kahn1997}
\bysame, \emph{On some hypergraph problems of {P}aul {E}rd{\H{o}}s and the
  asymptotics of matchings, covers and colorings}, The Mathematics of {P}aul
  {E}rd{\"o}s I, Springer, 1997, 345--371.

\bibitem{K00}
\bysame, \emph{Asymptotics of the list-chromatic index for multigraphs}, Random
  Structures Algorithms \textbf{17} (2000), 117--156.

\bibitem{KK93}
J.~Kahn and G.~Kalai, \emph{A counterexample to {B}orsuk’s conjecture}, Bull.
  Amer. Math. Soc. \textbf{29} (1993), 60--62.

\bibitem{KK97}
J.~Kahn and P.~M. Kayll, \emph{Fractional v. integral covers in hypergraphs of
  bounded edge size}, J. Combin. Theory Ser. B \textbf{78} (1997), 199--235.

\bibitem{KS1992}
J.~Kahn and P.~D. Seymour, \emph{A fractional version of the
  {E}rd{\H{o}}s-{F}aber-{L}ov{\'a}sz conjecture}, Combinatorica \textbf{12}
  (1992), 155--160.

\bibitem{kalai_2015}
G.~Kalai, \emph{Some old and new problems in combinatorial geometry {I}: around
  {B}orsuk's problem}, Surveys in Combinatorics 2015 (A.~Czumaj,
  A.~Georgakopoulos, D.~Král, V.~Lozin, and O.~Pikhurko, eds.), London
  Mathematical Society Lecture Note Series, Cambridge University Press, 2015,
  147--174.

\bibitem{KKKMO2021}
D.~Kang, T.~Kelly, D.~K{\"u}hn, A.~Methuku, and D.~Osthus, \emph{A proof of the
  {Erd{\H{o}}s-Faber-Lov{\'{a}}sz} conjecture}, arXiv preprint arXiv:2101.04698
  (2021).

\bibitem{KKKMO2021_focs}
\bysame, \emph{A proof of the {E}rd{\H{o}}s–{F}aber–{L}ov{\'{a}}sz
  conjecture: {A}lgorithmic aspects}, 2021 IEEE 62th Annual Symposium on
  Foundations of Computer Science (FOCS), 2021.

\bibitem{KKKMO2021b}
\bysame, \emph{Solution to a problem of {Erd{\H{o}}s} on the chromatic index of
  hypergraphs with bounded codegree}, arXiv preprint arXiv:2110.06181 (2021).

\bibitem{KKMO2020}
D.~Kang, D.~K{\"u}hn, A.~Methuku, and D.~Osthus, \emph{New bounds on the size
  of nearly perfect matchings in almost regular hypergraphs}, arXiv preprint
  arXiv:2010.04183 (2020).

\bibitem{K72}
R.~M. Karp, \emph{Reducibility among combinatorial problems}, Complexity of
  computer computations, Springer, 1972, 85--103.

\bibitem{K76}
\bysame, \emph{The probabilistic analysis of some combinatorial search
  algorithms}, Algorithms and Complexity: New Directions and Recent Results
  (J.~F. Traub, ed.), 1976.

\bibitem{kayll2016}
P.~M. Kayll, \emph{Two {C}hromatic {C}onjectures: {O}ne for {V}ertices and
  {O}ne for {E}dges}, Graph Theory: {F}avorite {C}onjectures and {O}pen
  {P}roblems - 1, Springer, 2016, 171--194.

\bibitem{keevash2014existence}
P.~Keevash, \emph{The existence of designs}, arXiv preprint arXiv:1401.3665
  (2014).

\bibitem{K19}
\bysame, \emph{Hypergraph matchings and designs}, Proceedings of the
  International Congress of Mathematicians (ICM 2018), vol.~3, World Scientific
  Publishing, 2019.

\bibitem{KPSY2020}
P.~Keevash, A.~Pokrovskiy, B.~Sudakov, and L.~Yepremyan, \emph{New bounds for
  {R}yser's conjecture and related problems}, arXiv preprint arXiv:2005.00526
  (2020).

\bibitem{KKO21}
T.~Kelly, D.~K{\"u}hn, and D.~Osthus, \emph{A special case of {V}u's
  conjecture: Coloring nearly disjoint graphs of bounded maximum degree}, arXiv
  preprint arXiv:2109.11438 (2021).

\bibitem{KKKO20}
J.~Kim, D.~K{\"u}hn, A.~Kupavskii, and D.~Osthus, \emph{Rainbow structures in
  locally bounded colorings of graphs}, Random Structures Algorithms
  \textbf{56} (2020), 1171--1204.

\bibitem{kim1995}
J.~H. Kim, \emph{On {B}rooks' theorem for sparse graphs}, Combin. Probab.
  Comput. \textbf{4} (1995), 97--132.

\bibitem{kim1995_ramsey}
\bysame, \emph{The {R}amsey number {$R(3,t)$} has order of magnitude {$t^2/\log
  t$}}, Random Structures Algorithms \textbf{7} (1995), 173--207.

\bibitem{KPS82}
J.~Koml{\'o}s, J.~Pintz, and E.~Szemer{\'e}di, \emph{A lower bound for
  {H}eilbronn's problem}, J. Lond. Math. Soc. \textbf{2} (1982), 13--24.

\bibitem{K31}
D.~K\"onig, \emph{Gr\'afok \'es m\'atrixok}, Matematikai \'es Fizikai Lapok
  \textbf{38} (1931), 116--–119.

\bibitem{KR1998}
A.~V. Kostochka and V.~R\"{o}dl, \emph{Partial {S}teiner systems and matchings
  in hypergraphs}, Random Structures Algorithms \textbf{13} (1998), 335--347.

\bibitem{kuhn_osthus_2009}
D.~K\"{u}hn and D.~Osthus, \emph{Embedding large subgraphs into dense graphs},
  Surveys in Combinatorics 2009 (S.~Huczynska, J.~D. Mitchell, and C.~M.
  Roney-Dougal, eds.), London Mathematical Society Lecture Note Series,
  Cambridge University Press, 2009, 137–168.

\bibitem{kuhn2013hamilton}
\bysame, \emph{Hamilton decompositions of regular expanders: a proof of
  {K}elly's conjecture for large tournaments}, Adv. Math. \textbf{237} (2013),
  62--146.

\bibitem{meszka2013}
M.~Meszka, \emph{The chromatic index of projective triple systems}, J. Combin.
  Des. \textbf{21} (2013), 531--540.

\bibitem{MNR2006}
M.~Meszka, R.~Nedela, and A.~Rosa, \emph{Circulants and the chromatic index of
  {S}teiner triple systems}, Math. Slovaca \textbf{56} (2006), 371--378.

\bibitem{M18}
M.~Molloy, \emph{Asymptotically good edge correspondence colouring}, arXiv
  preprint arXiv:1808.08594 (2018).

\bibitem{MolloyCanaDAM}
\bysame, \emph{Graph colouring with the probabilistic method}, Talk at CanaDAM
  (2019).

\bibitem{M17}
\bysame, \emph{The list chromatic number of graphs with small clique number},
  J. Combin. Theory Ser. B \textbf{134} (2019), 264--284.

\bibitem{MR2000}
M.~Molloy and B.~Reed, \emph{Near‐optimal list colorings}, Random Structures
  Algorithms \textbf{17} (2000), 376--402.

\bibitem{MR02}
\bysame, \emph{Graph colouring and the probabilistic method}, Algorithms and
  Combinatorics, vol.~23, Springer-Verlag, Berlin, 2002.

\bibitem{PippengerSpencer}
N.~Pippenger and J.~Spencer, \emph{Asymptotic behavior of the chromatic index
  for hypergraphs}, J. Combin. Theory Ser. A \textbf{51} (1989), 24--42.

\bibitem{RW1971}
D.~K. Ray-Chaudhuri and R.~M. Wilson, \emph{Solution of {K}irkman’s
  schoolgirl problem}, Proc. {S}ymp. {P}ure {M}ath, vol.~19, 1971, 187--203.

\bibitem{R98}
B.~Reed, \emph{{$\omega,\ \Delta$}, and {$\chi$}}, J. Graph Theory \textbf{27}
  (1998), 177--212.

\bibitem{RR10}
V.~R{\"o}dl and A.~Ruci{\'{n}}ski, \emph{Dirac-type questions for hypergraphs
  --- a survey (or more problems for {E}ndre to solve)}, An Irregular Mind:
  Szemer{\'e}di is 70 (I.~B{\'a}r{\'a}ny, J.~Solymosi, and G.~S{\'a}gi, eds.),
  Springer Berlin Heidelberg, Berlin, Heidelberg, 2010, 561--590.

\bibitem{rodl1996asymptotic}
V.~R{\"{o}}dl and L.~Thoma, \emph{Asymptotic packing and the random greedy
  algorithm}, Random Structures Algorithms \textbf{8} (1996), 161--177.

\bibitem{rodl1985}
V.~R\"{o}dl, \emph{On a packing and covering problem}, European J. Combin.
  \textbf{6} (1985), 69--78.

\bibitem{Ruzsa98}
I.~Z. Ruzsa, \emph{An infinite {S}idon sequence}, J. Number Theory \textbf{68}
  (1998), 63--71.

\bibitem{ryser1967}
H.~J. Ryser, \emph{Neuere {P}robleme der {K}ombinatorik}, Vortr{\"a}ge {\"u}ber
  Kombinatorik, Oberwolfach \textbf{69} (1967), 91.

\bibitem{ST15}
D.~Saxton and A.~Thomason, \emph{Hypergraph containers}, Invent. Math.
  \textbf{201} (2015), 925--992.

\bibitem{seymour1979}
P.~D. Seymour, \emph{Some unsolved problems on one-factorizations of graphs},
  Graph {T}heory and {R}elated {T}opics. {A}cademic {P}ress, {N}ew {Y}ork
  (1979).

\bibitem{seymour1982}
\bysame, \emph{Packing nearly-disjoint sets}, Combinatorica \textbf{2} (1982),
  91--97.

\bibitem{S49}
C.~E. Shannon, \emph{A theorem on coloring the lines of a network}, J. Math.
  Physics \textbf{28} (1949), 148--151.

\bibitem{Sh83}
J.~B. Shearer, \emph{A note on the independence number of triangle-free
  graphs}, Discrete Math. \textbf{46} (1983), 83--87.

\bibitem{Sh91}
\bysame, \emph{A note on the independence number of triangle-free graphs,
  {II}}, J. Combin. Theory Ser. B \textbf{53} (1991), 300--307.

\bibitem{Sh95}
\bysame, \emph{On the independence number of sparse graphs}, Random Structures
  Algorithms \textbf{7} (1995), 269--271.

\bibitem{S90}
J.~Spencer, \emph{Uncrowded graphs}, Mathematics of Ramsey Theory, Springer,
  1990, 253--262.

\bibitem{spencer1995asymptotic}
\bysame, \emph{Asymptotic packing via a branching process}, Random Structures
  Algorithms \textbf{7} (1995), 167--172.

\bibitem{stein1975}
S.~K. Stein, \emph{Transversals of {L}atin squares and their generalizations},
  Pacific J. Math. \textbf{59} (1975), 567--575.

\bibitem{T47}
W.~T. Tutte, \emph{The factorization of linear graphs}, J. Lond. Math. Soc.
  \textbf{1} (1947), 107--111.

\bibitem{VSSRRCCC1993}
S.~A. Vanstone, D.~R. Stinson, P.~J. Schellenberg, A.~Rosa, R.~Rees, C.~J.
  Colbourn, M.~Carter, and J.~Carter, \emph{Hanani triple systems}, Israel J.
  Math. \textbf{83} (1993), 305--319.

\bibitem{vizing1965}
V.~G. Vizing, \emph{The chromatic class of a multigraph}, Cybernetics
  \textbf{1} (1965), 32--41.

\bibitem{vu2000}
V.~H. Vu, \emph{New bounds on nearly perfect matchings in hypergraphs: {H}igher
  codegrees do help}, Random Structures Algorithms \textbf{17} (2000), 29--63.

\bibitem{Vu02}
\bysame, \emph{A general upper bound on the list chromatic number of locally
  sparse graphs}, Combin. Probab. Comput. \textbf{11} (2002), 103--111.

\bibitem{wormald1999}
N.~C. Wormald, \emph{The differential equation method for random graph
  processes and greedy algorithms}, Lectures on {A}pproximation and
  {R}andomized {A}lgorithms \textbf{73} (1999), 73--155.

\bibitem{Z16}
Y.~Zhao, \emph{Recent advances on {D}irac-type problems for hypergraphs},
  Recent Trends in Combinatorics (A.~Beveridge, J.~R. Griggs, L.~Hogben,
  G.~Musiker, and P.~Tetali, eds.), Springer International Publishing, Cham,
  2016, 145--165.

\end{thebibliography}

\end{document}